\title[Derived Isogeny Theory for K3 Surfaces in Positive Characteristic]{Twisted Derived Equivalences and Isogenies between K3 Surfaces in Positive Characteristic}
\author{Daniel Bragg and Ziquan Yang}
\begin{document}


\maketitle
\setcounter{tocdepth}{1}

\begin{abstract}
    We study isogenies between K3 surfaces in positive characteristic. Our main result is a characterization of K3 surfaces isogenous to a given K3 surface $X$ in terms of certain integral sublattices of the second rational $\ell$-adic and crystalline cohomology groups of $X$. This is a positive characteristic analog of a result of Huybrechts \cite{Huy}, and extends results of \cite{Yang}. We give applications to the reduction types of K3 surfaces and to the surjectivity of the period morphism. To prove these results we describe a theory of B-fields and Mukai lattices in positive characteristic, which may be of independent interest. We also prove some results on lifting twisted Fourier--Mukai equivalences to characteristic 0, generalizing results of Lieblich and Olsson \cite{LO1}.
\end{abstract}

\tableofcontents

\section{Introduction}

The purpose of this paper is to study twisted Fourier-Mukai partners of K3 surfaces in positive characteristics and to develop an isogeny theory for these surfaces which is analogous to that of abelian varieties. 

Let $k$ be an algebraically closed field and $p$ be a prime number. When $\mathrm{char\,} k = p$, we simply write $W$ for the ring of Witt vectors $W(k)$. Let $\what{\bZ}^p$ denote the prime to $p$ part of $\what{\bZ}$. For a variety $Y$ over $k$, we set $\H^*(Y) := \H^*_\et(Y, \what{\bZ})$ if $\mathrm{char\,} k = 0$, and $\H^*(Y) := \H^*_\et(Y, \what{\bZ}^p) \times \H^*_\cris(Y/W)$ if $\mathrm{char\,} k = p$, and write $\H^*(Y)_\bQ:=\H^*(Y)\otimes_{\bZ}\bQ$.
\begin{definition}\emph{\upshape{(cf. \cite[Def.~1.1]{Yang})}}
Let $X, X'$ be K3 surfaces over $k$. An isogeny $f : X \isog X'$ is a correspondence, i.e., a $\bQ$-linear combination of algebraic cycles on $X \times X'$, such that the induced action $\H^2(X')_\bQ \to \H^2(X)_\bQ$ is an isomorphism which preserves the Poincar\'e pairing. Two isogenies are deemed equivalent if they induce the same map $\H^2(X')_\bQ \sto \H^2(X)_\bQ$.
\end{definition} 

Our main results concern the existence and uniqueness of isogenies with prescribed cohomological action. We begin with the former. A natural source for isogenies between K3 surfaces is provided by twisted Fourier-Mukai equivaleces: For a K3 surface $X$ and Brauer class $\alpha \in \Br(X)$, we denote by $D^b(X, \alpha)$ the bounded derived category of $\alpha$-twisted sheaves. Given another K3 surface $X'$ and Brauer class $\alpha'$, an equivalence $D^b(X, \alpha) \iso D^b(X', \alpha')$ induces, up to some choices, an isogeny $f : X \isog X'$. We call isogenies which arise this way \textbf{primitive derived isogenies}, and compositions of such isogenies \textbf{derived isogenies}. The precise definitions are given in \S\ref{ssec:rational chow motives}. There we also give a motivic reformulation of the above definition, which will be used for the rest of the paper. 

To state our theorems, we denote the K3 lattice $\mathbf{U}^{\oplus 3} \oplus \mathbf{E}_8^{\oplus 2}$ by $\Lambda$ and recall Ogus' notion of K3 crystals \cite[Def.~3.1]{Ogus}. Here $\mathbf{U}$ denotes the standard hyperbolic plane and $\mathbf{E}_8$ denotes the unique unimodular even negative definite lattice of rank $8$. Our first theorem is an existence result on derived isogenies: 

\begin{theorem}
\label{thm: existence}
Assume $\mathrm{char\,} k = p \ge 5$. Let $X$ be a K3 surface over $k$. Endow $\Lambda \tensor W$ with a K3 crystal structure and denote it by $H_p$ and let $H^p$ denote $\Lambda \tensor \what{\bZ}^p$. 

Let $\iota : H^p \times H_p \into \H^2(X)_\bQ$ be an isometric embedding which respects the Frobenius actions on $H_p$ and $\H^2_\cris(X/W)[1/p]$. There exists a derived isogeny $f : X \isog X'$ to another K3 surface $X'$ such that $f^*(\H^2(X')) = \mathrm{im}(\iota)$ if and only if $\iota$ sends the slope $< 1$ part of $H_p$ isomorphically onto that of $\H^2_\cris(X/W)$. 
\end{theorem}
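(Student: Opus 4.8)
The strategy is to split into the "only if" and "if" directions, with the latter being the substantial one. For the "only if" direction, I would argue that a derived isogeny $f : X \isog X'$, being a composition of primitive derived isogenies coming from twisted Fourier–Mukai equivalences $D^b(Y,\alpha) \iso D^b(Y',\alpha')$, induces on the prime-to-$p$ and crystalline realizations an isomorphism that is compatible with the extra structure carried by the Mukai lattice. In particular, on crystalline cohomology a twisted FM equivalence identifies the twisted Mukai crystals $\widetilde{\mathrm{H}}(Y/W,\alpha)$ and $\widetilde{\mathrm{H}}(Y'/W,\alpha')$ as F-crystals (after inverting $p$, but in a way that preserves the integral structure and the Frobenius up to the appropriate twist). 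Since the Mukai crystal is an extension involving $\mathrm{H}^0 \oplus \mathrm{H}^4$ in slope $1$, the slope decomposition forces the induced map on $\mathrm{H}^2$ to carry the slope $<1$ (equivalently, by self-duality, the slope $\neq 1$) part of $\mathrm{H}^2_\cris(X/W)$ isomorphically onto that of $\mathrm{H}^2_\cris(X'/W)$; chasing this through the embedding $\iota$ gives the stated necessary condition on $H_p$.

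For the "if" direction, suppose $\iota$ sends the slope $<1$ part of $H_p$ isomorphically onto that of $\mathrm{H}^2_\cris(X/W)$. The plan is to realize $X'$ as a twisted moduli space of sheaves on $X$ and to produce the derived isogeny from a twisted FM kernel, using the B-field/Mukai lattice formalism developed earlier in the paper. Concretely, I would: (1) use the theory of B-fields in positive characteristic to promote $\iota$ to an isometric embedding of a twisted Mukai lattice $H^p \times H_p$, extended by a hyperbolic plane $\mathbf{U}$ in slope $1$, into the (rational, $\ell$-adic times crystalline) Mukai lattice $\widetilde{\mathrm{H}}(X)_\bQ$, compatibly with Frobenius — here the slope hypothesis is exactly what lets the extra $\mathbf{U}$ be matched with $\mathrm{H}^0 \oplus \mathrm{H}^4$ in slope $1$ while the rest matches up; (2) locate inside the image of this extended embedding an isotropic vector $v$ whose realizations are integral and Frobenius-fixed of the correct type (a "Mukai vector" of a twisted sheaf), by a lattice-theoretic/approximation argument combining $p$-adic and prime-to-$p$ inputs — this is where one needs that $\H^2_\cris(X/W)$ with its K3 crystal structure, together with the given $H_p$, admits a common refinement as an $F$-crystal, which is guaranteed by Ogus' theory of K3 crystals and the Tate-type results available for $p \ge 5$; (3) invoke the existence and smoothness/properness of the moduli space of twisted sheaves with this Mukai vector (Lieblich–Olsson, and their twisted analogs) to get a K3 surface $X'$ with a Brauer class $\alpha'$ and a twisted universal sheaf, hence a twisted FM equivalence $D^b(X,\alpha) \iso D^b(X',\alpha')$; and (4) check that the induced isogeny $f : X \isog X'$ has $f^*\H^2(X') = \mathrm{im}(\iota)$, which follows by construction since the cohomological action of the FM kernel is, by the Mukai-lattice formalism, the orthogonal complement of $v$ in the extended lattice, which is precisely $\mathrm{im}(\iota)$. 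Possibly one iterates this (composing several primitive derived isogenies) to reach an arbitrary $\iota$, since a single twisted moduli space may only realize embeddings of a restricted form.

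The main obstacle is step (2): producing the integral, Frobenius-fixed isotropic Mukai vector $v$ inside the prescribed lattice, simultaneously over $\widehat{\bZ}^p$ and over $W$. On the prime-to-$p$ side this is a lattice-embedding problem handled by Nikulin-style arguments (as in Huybrechts' characteristic-$0$ result), but on the crystalline side one must work with $F$-crystals rather than just lattices, and ensure the Mukai vector is both integral in $\H^*_\cris(X/W)$ and fixed by the (suitably twisted) Frobenius; controlling the slope $1$ part and the interaction with the Artin–Tate/Ogus machinery for K3 crystals in characteristic $p \ge 5$ is the technical heart of the argument. A secondary difficulty is ensuring the moduli space in step (3) is nonempty and has the expected dimension $2$ (i.e., that $v$ is primitive and $v^2 = 0$ in the Mukai pairing), which again feeds back into how carefully $v$ is chosen in step (2).
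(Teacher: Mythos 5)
Your ``only if'' argument is essentially the paper's: a twisted Fourier--Mukai equivalence identifies the twisted Mukai crystals, and the Newton--Hodge decomposition of the twisted Mukai crystal has slope $<1$ part equal to $\H^2(X/W)_{<1}$, which forces the condition. That half is fine.

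For the ``if'' direction there is a genuine gap at what you yourself flag as the main obstacle, namely step (2). You propose to find a single integral, Frobenius-fixed isotropic Mukai vector $v$ whose associated moduli space of twisted sheaves realizes $\mathrm{im}(\iota)$ as the image of $\H^2(X')$, with iteration mentioned only as a possible afterthought. This cannot work for a single $v$: the cohomological image of $\H^2(X')$ under one such primitive derived isogeny is not an arbitrary sublattice but precisely a \emph{reflection} $s_b(\H^2(X))$ of the original lattice, where $v=(n,0,0)$ in the Mukai lattice twisted by the B-field $B=b/n$ with $n=b^2/2$ (this is the content of the paper's key lattice lemma, proved by an explicit integrality computation with $e^B\Phi e^{-B'}$). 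The heart of the proof is therefore the mechanism you do not supply: showing that the isometry $\Theta$ determined by $\iota$ can be realized, on integral lattices, by a finite composition of such reflections $s_{b_1}\circ\cdots\circ s_{b_m}$ with each $b_i$ primitive, $b_i^2/2$ an integer, and $b_i/n_i$ a mixed B-field. The paper achieves this by constructing auxiliary global $\bZ$-lattices $L$ and $L'\subset L$ matching $\H^2(X,\what{\bZ}^p)$ and the Tate module $T(X)=\H^2(X/W)^{\varphi=1}$ locally (this uses Ito's existence theorem for K3 surfaces of prescribed height and Picard rank, and is where $p\geq 5$ enters), then applying strong approximation to reduce to a global isometry and a generalized Cartan--Dieudonn\'e theorem over $\bZ_{(p)}$ to factor it into reflections in vectors of unit norm; the condition $\varphi(b_i)=b_i$ then guarantees via the characterization of crystalline B-fields that $b_i/n_i$ is a B-field. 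Your appeal to ``Nikulin-style arguments'' and ``a lattice-theoretic/approximation argument'' gestures at this but does not identify the reflection decomposition, which is the actual engine. A second, smaller omission: when $X$ is supersingular, $H_p$ need not be abstractly isomorphic to $\H^2(X/W)$ as a K3 crystal, so one cannot even begin by fixing an isomorphism; the paper handles this by first invoking the crystalline Torelli theorem together with the fact that any two supersingular K3 surfaces are derived isogenous.
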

We refer the reader to Rmk~\ref{rmk: relax assumption} for the reason to restrict to $p \ge 5$. The above result is inspired by a theorem of Huybrechts \cite[Thm~0.1]{Huy}, which can be stated as follows in our terminology: 
\begin{theorem}\label{thm:Huybrechts theorem}
\emph{(Huybrechts)}
Let $X, X'$ be two K3 surfaces over $\bC$. Every isomorphism of Hodge structures $\H^2(X', \bQ) \sto \H^2(X, \bQ)$ which preserves the Poicar\'e pairings is induced by a derived isogeny $f : X \isog X'$.
\end{theorem}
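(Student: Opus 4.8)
The plan is to reduce Huybrechts' theorem (Theorem~\ref{thm:Huybrechts theorem}) to the existence statement of Theorem~\ref{thm: existence}, which over $\bC$ is essentially the classical story of twisted Fourier--Mukai partners. The key point is that over $\bC$ the cohomology $\H^2(X,\bQ)$ carries a Hodge structure rather than a Frobenius, and the "slope $<1$" condition of Theorem~\ref{thm: existence} should be understood as the complex-analytic analog, namely that the isometry is compatible with the Hodge filtrations (i.e. sends $\H^{2,0}$ to $\H^{2,0}$). Since we are given an \emph{isomorphism of Hodge structures}, this compatibility is automatic, so there is no obstruction to overcome --- the whole content is in producing the derived isogeny realizing a given admissible lattice embedding.

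Concretely, I would proceed as follows. First, let $\phi : \H^2(X',\bQ) \sto \H^2(X,\bQ)$ be the given Hodge isometry. Set $\iota$ to be the composite of $\phi$ (viewed on integral lattices after clearing denominators, or better, passing to $\what{\bZ}$-coefficients) with the natural inclusion $\H^2(X') \hookrightarrow \H^2(X)_\bQ$; after choosing an abstract marking identifying the transcendental-plus-algebraic data, this realizes $\H^2(X')$ as a sublattice $\iota(H^p\times H_p) \subset \H^2(X)_\bQ$ of the prescribed form, and $\iota$ respects the Hodge structures by hypothesis. Next, invoke the characteristic-zero analog of Theorem~\ref{thm: existence} (which is what the present paper is modeled on, and which over $\bC$ follows from the theory of twisted FM partners of K3 surfaces together with the surjectivity of the period map): because $\iota$ is a Hodge isometry, the Hodge-theoretic version of the "slope $<1$" hypothesis is satisfied, so there exists a K3 surface $X'$ and a derived isogeny $f : X \isog X'$ with $f^*(\H^2(X')) = \mathrm{im}(\iota)$. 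Finally, unwind the identification to check that the induced map $\H^2(X',\bQ)\to\H^2(X,\bQ)$ is exactly $\phi$ --- this is a matter of tracking the marking and using that a derived isogeny is determined by its action on rational cohomology (the uniqueness built into the definition of isogeny).

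The main obstacle --- and the only real content --- is the passage between the Hodge-theoretic and the positive-characteristic formulations: one must verify that a Hodge isometry $\phi$ supplies exactly the data that Theorem~\ref{thm: existence}'s hypotheses demand in the crystalline setting when one specializes the proof to $\mathrm{char\,} k = 0$. In particular, one needs that "$\iota$ sends the slope $<1$ part onto the slope $<1$ part of $\H^2_\cris$" becomes, under the de Rham comparison, the condition "$\iota$ is compatible with the Hodge filtration", and that the latter is automatic for a Hodge \emph{structure} isomorphism. Granting this dictionary, Huybrechts' theorem is a formal corollary, and this is presumably how the authors intend to recover it --- as a sanity check showing Theorem~\ref{thm: existence} is the correct positive-characteristic generalization. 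A secondary subtlety is purely book-keeping: matching the abstract lattice $\Lambda$ and K3 crystal/Hodge structure $H_p, H^p$ to the concrete cohomology of $X'$ requires choosing markings consistently, but this is routine once the existence input is in hand.
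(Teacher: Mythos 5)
The paper does not prove this theorem: it is imported verbatim from Huybrechts (\cite[Thm~0.1]{Huy}), and the paper's logical flow runs in the \emph{opposite} direction from yours --- the characteristic-zero existence statement (Thm~\ref{thm: existence, char 0}) is the one the authors say ``can be deduced directly from Thm~\ref{thm:Huybrechts theorem}'', or alternatively proved by redoing the char-$p$ argument (moduli of twisted sheaves realizing reflections $s_b$, Cartan--Dieudonn\'e, strong approximation) over $\bC$. Your proposal inverts this: you deduce Huybrechts' theorem from the char-$0$ existence theorem, which you in turn justify only by asserting it ``follows from the theory of twisted FM partners together with the surjectivity of the period map''. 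That assertion is precisely the content of Huybrechts' proof, so as written your argument is circular unless you independently establish Thm~\ref{thm: existence, char 0} by actually running the reflection/moduli construction in characteristic zero. You never do this, so the core existence input is missing.

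There is a second, independent gap in your final ``unwinding'' step. The lattice statement hands you a K3 surface $X''$ and a derived isogeny $f$ with $f^*(\H^2(X'')) = \phi(\H^2(X',\bZ))$; it does \emph{not} give you $X''\cong X'$ or that $f$ induces the prescribed $\phi$. The two isometries $(f^*)^{-1}\circ\phi$ and the identity differ by an arbitrary integral Hodge isometry $\H^2(X',\bZ)\sto\H^2(X'',\bZ)$, and converting that into an isomorphism of surfaces requires the global Torelli theorem, after correcting by $\pm 1$ and reflections in $(-2)$-classes (realized derived-categorically by shifts and spherical twists). The paper flags exactly this: the equivalence of the two formulations uses ``the global Torelli theorem and surjectivity of the period map''. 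Your claim that this is ``a matter of tracking the marking'' because an isogeny is determined by its cohomological action misses the point --- determinacy of the map by its action does not produce the isomorphism $X''\cong X'$. Finally, your discussion of a Hodge-theoretic ``slope $<1$'' condition is a red herring: Thm~\ref{thm: existence, char 0} imposes no condition at all on $\iota$ beyond being an isometric embedding, so there is nothing to translate.
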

This refines an earlier theorem of Buskin \cite[Thm~1.1]{Buskin}, which affirms a conjecture of Shafarevich's. Using the global Torelli theorem and surjectivity of the period map, one checks that Huybrechts' theorem is equivalent to an existence theorem for isogenies: For every K3 surface $X$ over $\bC$ and every isometric embedding $\iota: \Lambda \into \H^2(X, \bQ)$, there exists another K3 surface $X'$ over $\bC$ and a derived isogeny $f : X \isog X'$ such that $f^*(\H^2(X', \bZ)) = \mathrm{im}(\iota)$ (see \S\ref{ssec: Huybrecht's theorem algebraic version in char 0}). Note that this statement does not involve Hodge structures. Our Thm~\ref{thm: existence} is a positive characteristic analog for this version of Huybrechts' theorem. 

Huybrechts' refinement shows in particular that every isogeny between K3 surfaces over $\bC$ is equivalent to a derived isogeny. In contrast, the ``only if'' part of Thm~\ref{thm: existence} implies that the cohomological actions of derived isogenies in characteristic $p$ obey a certain nontrivial constraint at $p$. In particular, not every isogeny is equivalent to a derived isogeny. Given this, it is of interest to characterize also the possible cohomological actions of all (not necessarily derived) isogenies. The following result shows that, under some technical assumptions, the ``if'' part of Thm~\ref{thm: existence} can be removed for $k = \bar{\bF}_p$ if one is willing to consider all isogenies: 

\begin{theorem}
\label{thm: crys-isog}
Let $X, H_p, H^p$ and $\iota$ be as in Thm~\ref{thm: existence}. If $k = \bar{\bF}_p$ and 
\begin{enumerate}[label=\upshape{(\alph*)}]
    \item $\Pic(X)$ has rank $\ge 12$ or contains a standard hyperbolic plane, or
    \item $\Pic(X)$ contains an ample line bundle $L$ of degree $L^2 < p - 4$,
\end{enumerate}
then there exists another K3 surface $X'$ over $k$ and an isogeny $f : X \isog X'$ such that $f^*(\H^2(X')) = \mathrm{im}(\iota)$. 
\end{theorem}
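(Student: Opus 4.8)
The plan is to bootstrap from Thm~\ref{thm: existence}. By that theorem, the sole obstruction to realizing $\iota$ by a \emph{derived} isogeny is the position of the slope $<1$ part, so it suffices to first produce an auxiliary isogeny over $\bar\bF_p$ that corrects this position and then invoke Thm~\ref{thm: existence}. Write $V := \H^2_\cris(X/W)[1/p]$, let $V^{<1}\subseteq V$ be the sub-isocrystal of slopes $<1$, and let $N\subseteq H_p$ be the saturated sub-$F$-crystal of slopes $<1$; via the $p$-adic component of $\iota$ we regard $N$ as a $W$-lattice $\iota(N)\subseteq V^{<1}$, which in general differs from the slope $<1$ part $\H^2_\cris(X/W)\cap V^{<1}$ of the K3 crystal $\H^2_\cris(X/W)$. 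I claim it is enough to produce a K3 surface $Y$ over $\bar\bF_p$ and an isogeny $g\colon X\isog Y$ with
\[
g^*\big(\H^2_\cris(Y/W)\big)\cap V^{<1} \;=\; \iota(N).
\]
Indeed, put $V_Y:=\H^2_\cris(Y/W)[1/p]$ and $\iota^{Y}:=(g^*)^{-1}\circ\iota\colon H^p\times H_p\into \H^2(Y)_\bQ$. Since $g^*$ is an isometry respecting Frobenius that carries $V_Y^{<1}$ isomorphically onto $V^{<1}$, the map $\iota^Y$ is again an isometric, Frobenius-compatible embedding, and by the displayed identity it sends $N$ isomorphically onto $\H^2_\cris(Y/W)\cap V_Y^{<1}$. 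Thus $(Y,\iota^Y)$ satisfies the hypotheses of Thm~\ref{thm: existence} (the remaining data, and the inequality $p\ge 5$, are as in that theorem), which yields a derived isogeny $f'\colon Y\isog X'$ with $(f')^*\H^2(X')=\mathrm{im}(\iota^Y)$. Composing correspondences, $f:=f'\circ g\colon X\isog X'$ satisfies $f^*\H^2(X')=g^*\big((f')^*\H^2(X')\big)=g^*\,\mathrm{im}(\iota^Y)=\mathrm{im}(\iota)$, as desired.

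It remains to construct the correcting isogeny $g$, and this is the crux; it is here that the base field $\bar\bF_p$ and the hypotheses (a), (b) enter. The statement required is of the kind proved in \cite{Yang}, and is obtained by passing through the Kuga--Satake construction. Hypotheses (a) and (b) are exactly what guarantees --- after, if necessary, replacing the given polarization by one of small degree, which is possible because $\Pic(X)$ contains a standard hyperbolic plane or a sublattice of large rank --- that $X$ corresponds to a point of a $\mathrm{GSpin}$ Shimura variety with a smooth integral model at $p$ and a period morphism enjoying the integral comparison properties of Madapusi Pera; thus $X$ has a Kuga--Satake abelian variety $A$ over $\bar\bF_p$ with $\H^2_\cris(X/W)$ (more precisely its primitive part) realized, compatibly with the spinor action, inside $\mathrm{End}\big(\H^1_\cris(A/W)\big)$. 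The target lattice $\iota(N)$ then corresponds to a prescribed $W$-lattice $M'$ in the isocrystal $\H^1_\cris(A/W)[1/p]$ satisfying $pM'\subseteq FM'\subseteq M'$ and compatible with the spinor similitude and the polarization form; since we are over $\bar\bF_p$, Dieudonn\'e theory realizes $M'$ as $\H^1_\cris(A'/W)$ for an abelian variety $A'$ equipped with a $p$-power $\mathrm{GSpin}$-isogeny $A\to A'$, and the spinor and polarization structures transport to $A'$, so that $A'$ is the Kuga--Satake variety of a point of the Shimura variety, hence --- via the moduli interpretation --- of a K3 surface $Y$ over $\bar\bF_p$. Finally, the algebraicity of the Kuga--Satake correspondence over $\bar\bF_p$, which rests on the Tate conjecture for K3 surfaces over finite fields (known for $p\ge 5$), upgrades $A\to A'$ to a correspondence $g\colon X\isog Y$; unwinding the construction, the induced isometry on $\H^2$ carries the slope $<1$ part of $\H^2_\cris(Y/W)$ onto $\iota(N)$, i.e.\ the displayed identity holds.

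The step I expect to be the main obstacle is this last construction: one must balance the Dieudonn\'e-theoretic freedom of abelian-variety isogenies against the rigidity imposed by the spinor symmetry and the polarization, and then ensure that the abelian variety produced is genuinely the Kuga--Satake variety of an actual K3 surface and that the isogeny descends to a correspondence between K3 surfaces. This is exactly why the argument is confined to $\bar\bF_p$ and to the lattice-theoretic hypotheses (a), (b), under which the integral Kuga--Satake theory and the Tate conjecture over finite fields are both available. The reduction carried out in the first paragraph is formal; the content of the proof is the existence of $g$, for which the most efficient route is to cite and repackage the relevant results of \cite{Yang}.
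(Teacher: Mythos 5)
Your reduction in the first paragraph is formally correct: Thm~\ref{thm: existence} tells you the prime-to-$p$ and slope-$\ge 1$ parts of $\iota$ can always be realized by a derived isogeny, so the only task is to ``move'' the slope-$<1$ lattice, and any isogeny $g\colon X\isog Y$ accomplishing that reduces you to a case Thm~\ref{thm: existence} handles. The paper performs this same formal reduction, just in the opposite order (it applies Thm~\ref{thm: existence} at the start so that $\iota^p$ is the identity and $(\iota_p)_K$ is the identity on the slope-$1$ part).

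The genuine gap is in your second paragraph, and it is exactly the step you flag as ``the main obstacle.'' After you modify $\H^1_\cris(A/W)$ by Dieudonn\'e theory to produce an abelian variety $A'$ with a $\mathrm{GSpin}$-isogeny $A\to A'$, you assert that $A'$ ``is the Kuga--Satake variety of a point of the Shimura variety, hence --- via the moduli interpretation --- of a K3 surface $Y$.'' The first half is fine (the isogeny formalism on the canonical integral model puts $A'$ at a point of $\wt{\shS}_\cK(L_d)$), but the second half is precisely the surjectivity of the period morphism $\Mod_{2d,\sfK}\to\shS_{\sfK}(L_d)$ at the relevant mod-$p$ point, i.e.\ a $p$-power Hecke-orbit statement. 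This is \emph{not} a consequence of the Tate conjecture or of the integral Kuga--Satake machinery --- both of which hold unconditionally here and are not the bottleneck --- and the paper's own \S\ref{sec: Hecke} is devoted to showing that such surjectivity statements are essentially equivalent to (so far unproven cases of) the Hecke orbit conjecture. In particular, hypotheses (a), (b) are not what makes the Dieudonn\'e-theoretic step work; they have nothing to do with the validity of Kuga--Satake or of Tate.

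What the paper actually does is avoid the $p$-power Hecke problem entirely by lifting to characteristic $0$: after the initial reduction via Thm~\ref{thm: existence}, the pair $(h,g)\in\End(\BR_X)[1/p]\times\End(D)[1/p]$ extracted from $\iota_p$ is lifted via Lubin--Tate theory to a lift $G_V$ of $\BR_X$, one passes to the corresponding Nygaard--Ogus lifting $X_V$ of $X$, and the integral $p$-adic Hodge theory of Thm~\ref{thm: char NO lifting} (Breuil--Kisin/Cais--Liu, applied to the strongly divisible $S$-module $\IK(G_V)_S$) translates $(\iota_p)_K$ into a Galois-stable $\bZ_p$-lattice $\Lambda'_p\subset \H^2_\et(X_{\bar F},\bQ_p)$. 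One then invokes the characteristic-$0$ existence theorem (Thm~\ref{thm: existence, char 0}, essentially Huybrechts plus strong approximation) to find $X'$ over $F$ with $f_*(\H^2_\et(X'_{\bar F},\what{\bZ}))=\Lambda'_p\times\prod_\ell\Lambda'_\ell$, and shows $\Pic(X'_F)\cong\Pic(X_F)$. The hypotheses (a) and (b) are then used for a single purpose: they ensure, by the good-reduction results of Matsumoto and Ito (and Thm~\ref{thm: GR}), that $X'_F$ has potential good reduction, so that $X'$ can be taken to be a special fiber over $\bar\bF_p$. Your proposal omits the lifting machinery that is the actual engine of the proof and assigns the hypotheses (a), (b) a role they do not play.
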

This is a strengthening of \cite[Thm~1.4]{Yang}. We mention that a byproduct in the course of proving the above is a generalization (Thm~\ref{thm: char NO lifting}) of Taelman's characterization of the canonical liftings of ordinary K3 surfaces \cite[Thm~C]{TaelmanOrd}. In \cite{NO}, Nygaard and Ogus constructed for every non-supersingular K3 surface $X$ a ``section'' to the natural morphism $\mathrm{Def}(X) \to \mathrm{Def}({\what{\Br}_X})$ from the deformation space of $X$ to that of its formal Brauer group, such that a lifting of $\what{\Br}_X$ induces a lifting of $X$. We call liftings of $X$ which arise this way ``Nygaard-Ogus liftings''. When $X$ is ordinary, a Nygaard-Ogus lifting is the same as a canonical lifting. Thm~\ref{thm: char NO lifting} gives an integral $p$-adic Hodge theoretic characterization of Nygaard-Ogus liftings. See \S\ref{sec: NO revisited} for details.

We now describe our uniqueness results. We recall some terminology from \cite[\S6]{Yang}: An isogeny $f : X \isog X'$ between K3 surfaces is said to be \textbf{polarizable} if the induced map $\Pic(X')_\bQ \sto \Pic(X)_\bQ$ sends an ample class to another ample class, and \textbf{$\bZ$-integral} if the induced isomorphism $\H^2(X')_\bQ \sto \H^2(X)_\bQ$ restricts to an isomorphism $\H^2(X') \sto \H^2(X)$. We prove the following Torelli theorem for derived isogenies: 

\begin{theorem}
\label{thm: Torelli}
Assume $\mathrm{char\,} k \ge 5$. Let $X, X'$ be K3 surfaces over $k$. A derived isogeny $f : X \isog X'$ is equivalent to the graph of an isomorphism $X' \sto X$, if and only if $f$ is polarizable and $\bZ$-integral. 
\end{theorem}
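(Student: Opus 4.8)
The plan is to reduce the statement to a Torelli theorem for K3 surfaces in characteristic $p$. The ``only if'' direction is immediate: if $f$ is equivalent to the graph $\Gamma_g$ of an isomorphism $g : X' \sto X$, then $f^* = g^*$ carries $\H^2(X')$ isometrically onto $\H^2(X)$, so $f$ is $\bZ$-integral, and $g^*$ carries the ample cone of $X$ onto that of $X'$, so $f$ is polarizable.

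For the converse, assume $f$ is polarizable and $\bZ$-integral. First I would record that, being a correspondence, $f$ induces a map $f^* : \H^2(X')_\bQ \sto \H^2(X)_\bQ$ which is equivariant for the natural Frobenius action on the crystalline factor and, over a finitely generated subfield of definition, for the Galois actions on the $\ell$-adic factors ($\ell \neq p$); by the definition of an isogeny it preserves the Poincar\'e pairings. By $\bZ$-integrality, $f^*$ restricts to an isometry $\psi : \H^2(X') \sto \H^2(X)$, that is, isometries $\psi_\ell : \H^2_\et(X',\bZ_\ell) \sto \H^2_\et(X,\bZ_\ell)$ for $\ell \neq p$ (Galois-equivariant over a common field of definition) together with an isometry of K3 crystals $\psi_p : \H^2_\cris(X'/W) \sto \H^2_\cris(X/W)$.

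The next step is to promote $\psi$ to data that feeds into Torelli, namely to check that $\psi$ restricts to an isometry $\Pic(X') \sto \Pic(X)$ carrying the ample cone of $X'$ onto the ample cone of $X$. Using the Tate conjecture for K3 surfaces (and its crystalline analogue), available under the hypothesis $\mathrm{char\,} k \geq 5$, together with the saturation of the N\'eron--Severi lattice in cohomology, one identifies $\Pic(X')\otimes \bZ_\ell$ inside $\H^2_\et(X',\bZ_\ell)$ as the intersection with the span of the Tate classes, and likewise identifies $\Pic(X')\otimes\bZ_p$ inside $\H^2_\cris(X'/W)$ via the appropriate Frobenius condition, and similarly for $X$ (this identification may alternatively be read off from the motivic reformulation of isogenies). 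Since $\psi$ respects these structures it carries $\Pic(X')\otimes\bZ_\ell$ onto $\Pic(X)\otimes\bZ_\ell$ for every prime $\ell$; as the underlying $\bQ$-linear map is therefore $\bZ_{(\ell)}$-integral for all $\ell$ and $\bigcap_\ell \bZ_{(\ell)} = \bZ$, it carries the lattice $\Pic(X')$ onto $\Pic(X)$. Polarizability provides one ample class of $X'$ mapped by $\psi$ to an ample class of $X$; since $\psi$ is a lattice isometry it permutes the chambers cut out in the positive cone by the $(-2)$-classes, and a chamber is determined by any of its interior points, so $\psi$ sends the ample cone of $X'$ onto that of $X$.

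Finally I would invoke the Torelli theorem for K3 surfaces in characteristic $p$ --- Ogus's crystalline Torelli theorem in the supersingular case, and its non-supersingular counterpart (via Kuga--Satake and the results of Madapusi Pera and others) otherwise --- to conclude that the compatible system $\psi$ of $\ell$-adic and crystalline isometries, respecting Frobenius and Galois and sending ample cone to ample cone, is induced by a unique isomorphism $g : X' \sto X$. Then $g^*$ agrees with $f^*$ on $\H^2(X')$, hence on $\H^2(X')_\bQ$, so $f$ is equivalent to $\Gamma_g$. \textbf{The main difficulty} I anticipate is marshalling the precise positive-characteristic Torelli statement in the generality needed --- both the crystalline and all the $\ell$-adic data at once, with the ample-cone constraint, over a general algebraically closed base of characteristic $\geq 5$ --- together with the integral refinement of the Tate conjecture used to pin down $\Pic$ simultaneously at every prime. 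It is worth noting that the ``derived'' hypothesis is not actually used in this direction --- any $\bZ$-integral, polarizable isogeny is the graph of an isomorphism --- and that the twisted/Mukai structure attached to $f$ is irrelevant here, since the entire argument takes place at the level of $\H^2$.
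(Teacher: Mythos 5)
The ``only if'' direction and the reductions you perform along the way (restricting $\psi$ to an integral isometry of cohomology, using the Tate conjecture to identify $\Pic$ at every prime, using polarizability plus the chamber structure to get an isometry of ample cones, and dispatching the supersingular case via Ogus's crystalline Torelli theorem) are all sound and match what the paper does. The gap is in your final step for the finite-height case: there is no ``non-supersingular counterpart'' to Ogus's crystalline Torelli theorem that takes as input a Frobenius-equivariant isometry of $\ell$-adic and crystalline cohomology and produces an isomorphism of surfaces. For a finite-height K3 surface over $\bar{\bF}_p$, the $F$-crystal $\H^2_\cris(X/W)$ is determined up to isomorphism by the height, and the $\ell$-adic cohomology $\H^2_\et(X,\bZ_\ell)$ is abstractly the constant lattice $\Lambda\otimes\bZ_\ell$, so these data carry essentially no moduli information --- yet finite-height K3 surfaces vary in positive-dimensional families. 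A Torelli theorem of the kind you invoke would force all such surfaces with the same Picard lattice to be isomorphic, which is false. (The Kuga--Satake period map of Madapusi Pera is \'etale but not injective, and its target is constructed precisely because the F-crystal plus $\ell$-adic lattice do not suffice.)

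This missing theorem is exactly what makes the finite-height case hard, and it is why the paper's proof cannot stay in characteristic $p$. Instead the paper exploits the derived hypothesis crucially: it writes $f$ as a composition of primitive derived isogenies, adjusts by reflections in $(-2)$-curves so that each factor preserves a chosen polarization (Lem.~6.8/Lem.~7.9 of Ogus), lifts each twisted Fourier--Mukai kernel to characteristic $0$ (Thm.~\ref{thm:lifting isogenies}), composes these liftings via CSpin-isogenies on the Kuga--Satake side of the Shimura variety so that the composition again lifts compatibly, and only then applies Huybrechts' theorem and the classical global Torelli theorem over $\bC$. A nontrivial further input is needed to verify that the resulting Hodge isometry over $\bC$ is $\bZ_p$-integral: this uses the Cais--Liu theorem from integral $p$-adic Hodge theory (Thm.~\ref{thm: Cais-Liu}), comparing the Breuil $S$-module lattice with the \'etale lattice. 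Your remark that ``the derived hypothesis is not actually used'' is therefore incorrect --- it is what makes the lifting possible, and without it the argument has no access to a Torelli statement at all. Whether the theorem is true for arbitrary polarizable, $\bZ$-integral isogenies (not necessarily derived) appears to be open.
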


Finally, we remark that Li and Zou considered derived isogenies and Torelli type theorems for abelian surfaces in a recent preprint \cite{LiZou}. 

\subsection{Applications to good reductions of K3 surfaces}
We apply our results to study the good reduction conjecture for K3 surfaces: 

\begin{conjecture}
\label{conj: GR}
Let $k$ be an algebraically closed field of characteristic $p > 0$ and let $F$ be a finite extension of $W[1/p]$. Let $X_F$ be a K3 surface over $F$ such that $\H^2_\et(X_{\bar{F}}, \bQ_\ell)$ is unramified for some prime $\ell \neq p$. Then, $X_F$ has potentially good reduction. 
\end{conjecture}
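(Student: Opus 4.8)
The plan is to combine the isogeny theorems above (principally Theorem~\ref{thm: crys-isog}, together with Theorem~\ref{thm: existence}) with the ``reduction up to rational double points'' results of Liedtke and Matsumoto; this should at least establish the conjecture for $k = \bar{\bF}_p$ under hypotheses on $\Pic$ of the type appearing in Theorem~\ref{thm: crys-isog}. By the work of Liedtke and Matsumoto one may, after replacing $F$ by a finite extension, assume that $X_F$ extends to a proper flat model $\mathcal{X} \to \mathrm{Spec}\, \mathcal{O}_F$ whose special fiber $\mathcal{X}_0$ is a K3 surface with at worst rational double point singularities; let $\pi : Y \to \mathcal{X}_0$ be the minimal resolution, a genuine K3 surface over $k$. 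The conjecture for $X_F$ then reduces to producing another K3 surface $X'$ over $k$ --- the candidate good reduction --- together with a projective lift of $X'$ over $\mathcal{O}_{F'}$, for some finite extension $F'/F$, whose generic fiber recovers $X_F$.

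To produce $X'$ I would use the isogeny theory. Since $\H^2_\et(X_{\bar F}, \bQ_\ell)$ is unramified for one (hence every) $\ell\neq p$ and, correspondingly, the $p$-adic realization is crystalline, the cohomology $\H^2(X_{\bar F})$ carries the structure of a pair $(H^p, H_p)$ of an unramified $\what{\bZ}^p$-lattice and a K3 crystal over $W$, abstractly isometric to $\Lambda\tensor\what{\bZ}^p$ and to $\Lambda\tensor W$ with a suitable K3 crystal structure. Crystalline and $\ell$-adic cospecialization along $\mathcal{X}$, together with $\pi^*$ and the vanishing-cycle analysis of the rational double points (whose vanishing lattices are negative definite of $ADE$ type), should allow one to produce an isometric embedding $\iota : H^p \times H_p \into \H^2(Y)_\bQ$ respecting the Frobenius actions and satisfying the slope hypothesis of Theorem~\ref{thm: existence} --- or, if the Newton polygons of $Y$ and $X'$ differ, one drops the slope hypothesis and appeals instead to Theorem~\ref{thm: crys-isog}, which is where the constraints $k = \bar{\bF}_p$ and condition (a) or (b) on $\Pic$ enter. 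In either case one obtains an isogeny $f : Y \isog X'$ with $f^*(\H^2(X')) = \mathrm{im}(\iota)$, so that the full cohomology of $X'$, with its Frobenius, is identified with that of $X_{\bar F}$. One should check along the way that $f$ may be taken polarizable, so that a polarization of $X_F$ transports to an ample class on $X'$.

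Finally one must show that $X'$ really is the reduction of $X_F$, and this is the principal obstacle. I would feed the filtered $F$-isocrystal attached to $\H^2_\et(X_{\bar F}, \bQ_p)$ --- whose underlying $F$-isocrystal is that of $H_p \cong \H^2_\cris(X'/W)$ --- into the Nygaard--Ogus lifting construction, using Theorem~\ref{thm: char NO lifting} to pin the lift down $p$-adic Hodge theoretically. This produces, after a further finite extension, a projective lift $\mathcal{X}' \to \mathrm{Spec}\, \mathcal{O}_{F'}$ of $X'$ whose de Rham cohomology and Hodge filtration agree with those of $X_F$. One then identifies the generic fiber of $\mathcal{X}'$ with $X_F$ (after enlarging $F'$ again) by a Torelli argument: transport the matching of $p$-adic and $\ell$-adic realizations through the Kuga--Satake construction and invoke the N\'eron--Ogg--Shafarevich criterion for the associated abelian variety, together with the Torelli theorem for K3 surfaces with good reduction. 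The delicate steps are this upgrade from ``isometric cohomology with compatible Hodge-theoretic structure'' to ``isomorphic K3 surface over a $p$-adic field'' --- which uses genuine $p$-adic Hodge theory rather than only the isogeny machinery, and needs the isometry to respect polarizations --- together with the behaviour of the crystalline comparison isomorphism and of cospecialization for the singular model $\mathcal{X}$, where the rationality of the singularities is what makes the vanishing-cycle bookkeeping work, and the reduction of the case of general $k$ to the case $k = \bar{\bF}_p$.
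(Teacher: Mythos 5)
The statement you are trying to prove is stated in the paper as a \emph{conjecture}, and the paper does not prove it unconditionally: its actual results (Thm~\ref{thm: intro GR} and Thm~\ref{thm: GR}) establish potential good reduction only under a prime-to-$p$ line bundle hypothesis and either (i) conditionally on the Hecke orbit conjecture (Conj.~\ref{conj: HO for i}) for the relevant stratum, or (ii) unconditionally in the ordinary and supersingular cases. So a complete proof of Conj.~\ref{conj: GR} should not be expected, and indeed your argument has a fatal gap at the very first step. The Liedtke--Matsumoto result you invoke to obtain a proper flat model with RDP special fiber is itself conditional on \emph{potential semistable reduction} of $X_F$, which is known only when $X_F$ carries a polarization of low degree relative to $p$ (this is where the MMP input enters in \cite{Matsumoto} and \cite{LMGR}); in the generality of Conj.~\ref{conj: GR} no such model is available. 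Worse, the step is essentially circular: once one has an RDP model, potential good reduction already follows from Artin's simultaneous resolution after a finite base change, so the entire isogeny apparatus in your middle step would be doing no work --- the hard content of the conjecture is precisely the existence of such a model, which you are assuming.

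The paper's route is entirely different and deliberately avoids analyzing any degeneration of K3 surfaces. It attaches to $(X_F,\xi)$ a point of the moduli space $\Mod_{2d,\sfK}$, maps it to the orthogonal Shimura variety via the Kuga--Satake period morphism $\rho_\sfK$, and uses good reduction of the Kuga--Satake abelian variety plus the extension property of the integral canonical model to extend the period point over $\cO_F$ (this gives parts (a) and (b) of Thm~\ref{thm: GR} unconditionally). The good reduction question then becomes whether the mod $p$ period point lies in $\mathrm{im}(\rho_\sfK)$. Theorem~\ref{thm: surj on HO} --- which is where Thm~\ref{thm: existence} is actually used --- shows $\mathrm{im}(\rho_\sfK)$ is stable under the prime-to-$p$ Hecke action, so since the image is open and meets every stratum, density of Hecke orbits in the central leaves $\shS^i$ (the Hecke orbit conjecture, known for $i=1$ by Maulik--Shankar--Tang and proved for the superspecial stratum in Thm~\ref{thm: Hecke orbit for superspecial}) forces $\shS^i\subset\mathrm{im}(\rho_\sfK)$; one concludes by \'etaleness of $\rho_\sfK$ and the global Torelli theorem. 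Your final ``Nygaard--Ogus lift plus Torelli'' step is a reasonable shadow of this last identification, but without the Hecke-orbit density input there is no way to produce the candidate special fiber $X'$ in the first place, and restricting to the hypotheses of Thm~\ref{thm: crys-isog} would in any case yield only a special case of the conjecture.
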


This conjecture is a K3 analogue of the N\'eron-Ogg-Shafarevich criterion for abelian varieties. It admits many variants (e.g., ones that concern semistable reductions) and is verified in cases when $X_F$ admits a polarization of low degree (see \cite{Matsumoto} and \cite{LMGR}). We prove the following: 

\begin{theorem}
\label{thm: intro GR}
Let $X_F$ be as in Conj.~\ref{conj: GR}. Assume that $p > 2$ and that $X_F$ admits a line bundle of degree prime to $p$. Then the $\Gal_F$-representation $\H^2_\et(X_{\bar{F}}, \bQ_p)$ is potentially crystalline. If $p \ge 5$ (resp. $p > 2$) and $\H^2_\et(X_{\bar{F}}, \bQ_p)$ has potentially good ordinary or (resp. supersingular) reduction, then so does $X_F$. 
\end{theorem}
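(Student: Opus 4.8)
The plan is to derive Theorem~\ref{thm: intro GR} from the existence results (Thm~\ref{thm: existence} and Thm~\ref{thm: crys-isog}) together with the known partial cases of the good reduction conjecture, by an isogeny-descent argument. The rough idea: potential good reduction of the $p$-adic Galois representation gives a K3 crystal over $W$ (after a finite base change) realizing the crystalline cohomology; one then wants to produce a K3 surface over a finite extension of $W[1/p]$ with this crystalline cohomology and with the right $\ell$-adic cohomology, and this is exactly the kind of statement our existence theorems provide after reducing mod $p$.

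\textbf{Step 1: Potential crystallinity.} First I would establish the first assertion, that $\H^2_\et(X_{\bar F}, \bQ_p)$ is potentially crystalline. Since $X_F$ admits a line bundle of degree prime to $p$, after a finite extension it admits a primitive polarization of degree prime to $p$, and one can use the theory of the period morphism for the moduli of polarized K3 surfaces (which is étale onto its image in this degree range, as $p \nmid$ the degree): the Kuga--Satake construction attaches to $X_F$ an abelian variety over a finite extension of $F$ whose $p$-adic Tate module controls $\H^2_\et(X_{\bar F},\bQ_p)$. Unramifiedness of $\H^2_\et(X_{\bar F},\bQ_\ell)$ forces (via the compatibility of the two realizations through the Kuga--Satake motive and the known good reduction criterion for abelian varieties) the Kuga--Satake abelian variety to have potentially good reduction, hence its $p$-adic representation is potentially crystalline, and therefore so is $\H^2_\et(X_{\bar F},\bQ_p)$. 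This is essentially the argument of Matsumoto/\cite{LMGR}, extended slightly.

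\textbf{Step 2: From potentially crystalline to an abstract K3 crystal.} After replacing $F$ by a finite extension, assume $\H^2_\et(X_{\bar F},\bQ_p)$ is crystalline. Dieudonné theory then produces an $F$-isocrystal, and using integral $p$-adic Hodge theory (Fontaine--Laffaille, which applies since $p\ge 5$ so the Hodge--Tate weights $0,1,2$ lie in the Fontaine--Laffaille range), the $\bZ_p$-lattice $\H^2_\et(X_{\bar F},\bZ_p)$ yields a filtered $\varphi$-module over $W$; together with the Galois-invariant lattice coming from $\H^2_\et$ and the prime-to-$p$ étale cohomology over the residue field $k$, this assembles into the data $(H^p, H_p)$ of the form appearing in Thm~\ref{thm: existence}. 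Crucially, the polarization class gives a Tate class in $H_p$ and $H^p$, and the complementary lattice is a K3 crystal in Ogus' sense. One must check the slope condition in Thm~\ref{thm: existence}: in the ordinary case the K3 crystal has the generic Newton polygon so the slope $<1$ part matches by construction, and in the supersingular case the crystal is supersingular, so the slope $<1$ part is zero on both sides — hence the slope hypothesis in Thm~\ref{thm: existence} is automatically satisfied in \emph{both} the ordinary and supersingular regimes, which is exactly why these two cases (and not the general finite-height case) appear in the theorem.

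\textbf{Step 3: Produce a K3 surface with good reduction and descend.} Let $X_0$ be \emph{any} K3 surface over $k$ in the right "neighborhood" — concretely, take $X_0$ to be a K3 surface over $k$ obtained by reducing a polarized lift (which exists: polarized K3 surfaces lift, using $p\nmid$ degree and the deformation theory / Artin approximation), or more robustly, observe that the abstract K3 crystal $H_p$ arises as the crystalline cohomology of \emph{some} K3 surface $X_0/k$ by surjectivity of the crystalline period morphism (Ogus, in the supersingular case; and in the ordinary case one can use the canonical lift / Nygaard--Ogus theory). Then Thm~\ref{thm: existence} (ordinary case, $p\ge5$) or Thm~\ref{thm: crys-isog} together with an analysis of Nygaard--Ogus liftings (the supersingular case, $p>2$ — here one uses that supersingular K3 crystals and their liftings are well-understood, cf. Thm~\ref{thm: char NO lifting}) produces a K3 surface $X_0'$ over $k$ isogenous to $X_0$ with $\H^2(X_0')$ matching the mod-$p$ reduction of the lattice data coming from $X_F$. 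Now lift $X_0'$ to a K3 surface $\mathcal X'$ over (a finite extension of) $W$ whose generic fiber has the prescribed $p$-adic and $\ell$-adic cohomology — the Nygaard--Ogus / canonical lifting gives a canonical such lift in the ordinary case, and supersingular K3s lift in families with prescribed crystalline data. Finally, the generic fiber $\mathcal X'_{\bar F}$ and $X_{\bar F}$ now have isomorphic $\ell$-adic and $p$-adic cohomology compatibly, hence (enlarging $F$) are related by the period morphism / are "twin" K3 surfaces in the same period; since $\mathcal X'$ has good reduction, a specialization / Torelli-type argument for the polarized period morphism (injectivity up to finitely many choices in the relevant degree range) shows $X_F$ itself has potentially good reduction.

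\textbf{The main obstacle} I anticipate is Step~3: matching up \emph{integral} structures rather than just rational or isocrystal data, and then descending the good reduction from the constructed surface $\mathcal X'$ back to the original $X_F$. The existence theorems produce an isogenous K3 surface over $k$, but one must promote this to a statement over the $p$-adic field $F$ and conclude that $X_F$ — not merely some surface cohomologically indistinguishable from it — has good reduction. This requires knowing that two K3 surfaces over $F$ with the same polarized $p$-adic and $\ell$-adic cohomology (with compatible integral structures) and with one having good reduction force the other to also have good reduction; in the polarized low-degree or prime-to-$p$-degree setting this follows from properness of the moduli space and the fact that the period map detects the reduction, but assembling the compatible integral structures across $\ell \ne p$ and $\ell = p$ and matching the polarization is the technically delicate part. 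The restriction to ordinary or supersingular reduction is precisely what makes the slope condition in Thm~\ref{thm: existence} vacuous, and the restriction on the polarization degree (prime to $p$) is what makes the period morphism well-behaved; removing either would require genuinely new input.
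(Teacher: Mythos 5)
Your Step 1 matches the paper's argument for potential crystallinity (Kuga--Satake plus Deligne's good-reduction argument for the abelian variety and the extension property of the integral canonical model), and your instinct that the existence theorems must enter is right. But Step 3 has a genuine gap, and it is exactly the step where the paper's key idea lives. The existence theorems (Thm~\ref{thm: existence}, Thm~\ref{thm: crys-isog}) only let you move \emph{within a prime-to-$p$ Hecke orbit of a point already known to lie in the image of the period morphism}; they do not let you directly realize the specific mod-$p$ period point $\tau\otimes k$ (the reduction of the $\mathcal{O}_F$-point extending the Kuga--Satake period of $X_F$) as coming from a K3 surface. Your appeal to "surjectivity of the crystalline period morphism" produces at best a K3 surface with abstractly isomorphic crystal, not one whose full adelic-plus-crystalline period point equals $\tau\otimes k$, and there is no such surjectivity statement in the ordinary case. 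The paper's mechanism is different: it first shows (Thm~\ref{thm: surj on HO}, using Thm~\ref{thm: existence} with $H_p=\H^2_\cris(X/W)$, so the slope condition is vacuous for \emph{all} heights) that the image of the period morphism $\rho_\sfK$ is stable under the prime-to-$p$ Hecke action; since that image is also open (étaleness of $\rho_\sfK$), the Hecke orbit conjecture --- density of prime-to-$p$ Hecke orbits in the stratum $\shS^i$ --- forces $\shS^i\subset\mathrm{im}(\rho_\sfK)$ (Thm~\ref{thm: HO implies surj precise}), and then étaleness plus the global Torelli theorem descends good reduction to $X_F$ itself. Without this density-plus-openness argument your Step 3 cannot close.

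Relatedly, your explanation of why only the ordinary and supersingular cases appear is not the right one. The slope condition of Thm~\ref{thm: existence} is not the constraint (as noted above it is vacuous in the Hecke-orbit application at every height); the constraint is the current status of the Hecke orbit conjecture, which is known for the ordinary stratum by Maulik--Shankar--Tang (whence $p\ge 5$ there) and which the paper establishes for the superspecial locus (Thm~\ref{thm: Hecke orbit for superspecial}) and then propagates to the whole supersingular locus by a specialization argument using Rudakov--Shafarevich (Thm~\ref{thm: HO implies surj precise}, case $i\ge 11$). For general finite height the argument is conditional on Conj.~\ref{conj: HO for i}, which is why the unconditional theorem is restricted as stated.
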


Roughly speaking, the theorem is saying that if the cohomology of $X_F$ predicts that $X_F$ should have potential ordinary or supersingular reduction, then it does. We derive this as a consequence of a more general result (Thm~\ref{thm: GR}), which essentially reduces Conj. \ref{conj: GR} to the Hecke orbit conjecture (see Conj. \ref{conj: HO for i}), which is a purely Shimura--theoretic statement. In particular, we prove the following.

\begin{theorem}
    Let $X_F$ be an in Conj.~\ref{conj: GR}. Suppose that $p>2$ and that $X_F$ admits a line bundle of degree prime to $p$. Assume the Hecke orbit conjecture (Conj. \ref{conj: HO for i}) holds all $i$. Then, $X_F$ has potentially good reduction.
\end{theorem}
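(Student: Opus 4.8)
The plan is to derive this statement as a direct corollary of the more general Theorem~\ref{thm: GR}, which is stated to reduce Conjecture~\ref{conj: GR} to the Hecke orbit conjecture (Conj.~\ref{conj: HO for i}). First I would recall the hypotheses: $X_F$ is a K3 surface over a finite extension $F$ of $W[1/p]$ with $p > 2$, admitting a line bundle $L$ of degree prime to $p$, and such that $\H^2_\et(X_{\bar F}, \bQ_\ell)$ is unramified for some $\ell \neq p$. The line bundle $L$ equips $X_F$ with a polarization of degree $d = L^2$ prime to $p$, hence after replacing $F$ by a finite extension we may assume $L$ is defined over $F$ and the pair $(X_F, L)$ gives an $F$-point of the moduli stack $\mathcal{M}_{2d}$ of polarized K3 surfaces of degree $2d$, which (since $p \nmid 2d$) has good reduction properties and an associated Kuga--Satake period morphism to a Shimura variety of orthogonal type.

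Next I would invoke Theorem~\ref{thm: intro GR}: since $X_F$ admits a line bundle of degree prime to $p$ and $p > 2$, the $\Gal_F$-representation $\H^2_\et(X_{\bar F}, \bQ_p)$ is potentially crystalline. Combined with the unramifiedness of $\H^2_\et(X_{\bar F}, \bQ_\ell)$, this means the $p$-adic and $\ell$-adic realizations both behave as though $X_F$ has potential good reduction; via the Kuga--Satake construction this produces, after a finite extension of $F$, an unramified (resp.\ crystalline) Galois representation attached to an abelian variety, hence by N\'eron--Ogg--Shafarevich a point of the integral canonical model of the orthogonal Shimura variety over the ring of integers $\mathcal{O}_{F'}$ of a finite extension $F'/F$. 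The content of Theorem~\ref{thm: GR} is then that producing an actual K3 model over $\mathcal{O}_{F'}$ specializing $X_{F'}$ amounts to showing that the special fiber of this Shimura-variety point lies in the image of the period morphism from $\mathcal{M}_{2d}$ in the appropriate Hecke orbit, which is precisely what Conj.~\ref{conj: HO for i} guarantees for all $i$. So assuming Conj.~\ref{conj: HO for i} for all $i$, Theorem~\ref{thm: GR} applies and gives potential good reduction of $X_F$.

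The main obstacle — and the reason the statement is conditional — is bridging the gap between ``the point of the Shimura variety coming from the Kuga--Satake abelian variety extends integrally'' and ``the original K3 surface extends integrally''. The period morphism $\mathcal{M}_{2d} \to \mathrm{Sh}$ is not an isomorphism onto its image integrally: its image in the special fiber can miss certain Hecke-translates, so a priori the special point of $\mathrm{Sh}$ one obtains need not come from an actual polarized K3 surface over $k$. The Hecke orbit conjecture is exactly the input that lets one move the special point within its Hecke orbit (a Shimura-theoretic operation, corresponding cohomologically to isogenies of the sort studied in Theorems~\ref{thm: existence} and~\ref{thm: crys-isog}) to land in the K3 locus, whereupon the global Torelli theorem and the surjectivity of the period map for K3 surfaces over $k$ — together with the derived-isogeny machinery developed in this paper, which realizes the relevant cohomological correspondences by geometric isogenies $X \isog X'$ — allow one to descend from the Shimura variety back to a genuine K3 surface with the prescribed reduction. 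I would carry out the argument by: (i) reducing to the polarized moduli problem and constructing the Kuga--Satake point; (ii) using Theorem~\ref{thm: intro GR} plus N\'eron--Ogg--Shafarevich to extend that point integrally; (iii) invoking Conj.~\ref{conj: HO for i} to move it into the image of the K3 period morphism; and (iv) applying Theorem~\ref{thm: GR} (whose proof uses the isogeny results above) to produce the K3 model. Steps (i), (ii) and (iv) are essentially formal given the earlier results; step (iii) is where the conjectural input is consumed, and it is the crux of why the unconditional theorem remains open.
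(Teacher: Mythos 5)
Your proposal is correct and follows essentially the same route as the paper: the statement is deduced as a corollary of Thm~\ref{thm: GR}, whose proof constructs the Kuga--Satake point, extends it integrally via the extension property of the canonical model, and then uses the Hecke orbit conjecture (through Thm~\ref{thm: HO implies surj precise}) together with the \'etaleness of $\rho_\sfK$ and the Torelli theorem to descend to a K3 model. The only point worth sharpening is the mechanism in your step (iii): one does not move the special point into the K3 locus, but rather combines density of its Hecke orbit with the openness and prime-to-$p$ Hecke-stability of $\mathrm{im}(\rho_\sfK)$ (Thm~\ref{thm: surj on HO}, which is where the derived isogeny existence theorem enters) to conclude that the entire stratum $\shS^i$ lies in the image.
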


Our unconditional Thm \ref{thm: intro GR}, in the ordinary case, is then a consequence of recent work of Maulik--Shankar--Tang  \cite[Thm~1.4]{maulik2020picard} proving the Hecke orbit conjecture in certain special cases. The supersingular case will be treated by a slightly different argument. Moreover, it seems very likely that a slight generalization of the conjecture can remove the condition on the existence of a prime-to-$p$ line bundle as well, and hence completely affirms Conj.~\ref{conj: GR}.

We remark that nowhere in the proofs of the above results do we directly analyze a degeneration of K3 surfaces, unlike in \cite{Matsumoto} and \cite{LMGR}. In particular, we avoid the use of any techniques from the minimal model program. As far as the authors are aware, our method of proving good reduction results by marrying moduli theory of sheaves with density arguments is new in the literature.

\subsection{Ideas of proof}
(1) The ``only if'' part of Thm~\ref{thm: existence} follows from the general theory of twisted derived equivalences in positive characteristics. The idea for the ``if'' part is to construct the desired $X'$ together with the isogeny $f : X \isog X'$ by iteratively taking moduli spaces of twisted sheaves on $X$. This approach is inspired by that of \cite[Thm 1.1]{Huy}. A key technical tool is the theory of B-fields in $\ell$-adic and crystalline cohomology, described in \S\ref{sec:twisted Mukai lattices}. This allows us to relate classes in $\H^2(X)_{\bQ}$ to the Brauer group, and provides a replacement for the Hodge-theoretic B-fields in Huybrecht's proof, although there are some additional complications at $p$. There are some further technical difficulties caused by the fact that in positive characteristic the cohomology $\H^2(X)_\bQ$ can only take on adelic coefficients (i.e., $\bA^p_f \times W[1/p]$) instead of $\bQ$-coefficients.
For instance, the Mukai vector which one must specify in order to form a moduli of sheaves is not an adelic object. That is, unlike Brauer classes, one cannot specify a Mukai vector by prescribing its local factors in $\H^2(X)_\bQ$. We solve these problems by using local-global type results on quadratic forms (e.g., the strong approximation theorem), and the theory of quadratic forms over local rings.\\\\
(2) Thm~\ref{thm: crys-isog} is obtained by the realizing $X'$ as the reduction of a suitable K3 surface in characteristic zero. This strategy is a simultaneous simplification and strengthening of that of \cite{Yang}, with the additional input of Thm~\ref{thm: existence}. The characterization of Nygaard-Ogus liftings (Thm~\ref{thm: char NO lifting}) is obtained by applying recent advance on integral $p$-adic Hodge theory from \cite{BMS} and \cite{CaisLiu} to study deformations of K3 crystals. These techniques for handling crystalline cohomology were unnecessary in Taelman's case \cite{TaelmanOrd}, as the deformation of the formal Brauer group of an ordinary K3 is rigid, which is not true for a general finite height K3. We remark that here the restriction $p \ge 5$ is mainly due to our usage of the deformation theory of K3 crystals.   \\\\
(3) Thm~\ref{thm: Torelli} is a twisted generalization of the derived Torelli theorem of Lieblich-Olsson \cite[Thm 6.1]{LO1}. Just as in \textit{loc. cit.}, we prove this result by using a lifting argument to reduce to the global Torelli theorem over $\bC$. The main difficulty which arises in our generalization is that instead of considering isogenies which arise directly from a (twisted or untwisted) derived equivalence, we are allowing any finite compositions of such. The derived equivalences involved may not be simultaneously liftable to characteristic zero. To overcome this difficulty, we combine the lifting results on derived equivalences with the Kuga-Satake method. This helps us reduce composing isogenies of K3's to composing isogenies abelian varieties, which is much better understood. There is a technical problem which arises from the usage of Kuga-Satake. Namely, we need to put the relevant K3 surfaces into the same moduli space. However, the K3 surfaces themselves may not have a quasi-polarization of a common degree. To overcome this problem, we pass from K3 surfaces to their Hilbert squares, which are treated in \cite{Yang2}. The restriction to $p \ge 5$ is imposed because in \textit{loc. cit.} the second author only treated $\mathrm{K3}^{[n]}$-type varieties when $p > n + 1$ for certain technical reasons.  \\\\
(4) For Thm~\ref{thm: intro GR}, we first show that the derived prime-to-$p$ isogeny classes of K3's match up with the notion of prime-to-$p$ Hecke orbit on the period domains of Kuga-Satake morphisms, which are some orthogonal Shimura varieties. It follows from some intermediate steps in the proof of Thm~\ref{thm: existence} that the property of satisfying Conj.~\ref{conj: GR} is invariant in a prime-to-$p$ derived isogeny class. On the other hand, any $X_F$ which satisfies the hypothesis of Thm~\ref{thm: intro GR} produces a mod $p$ point $x(X_F)$ on the period domain, and the set $\sL_{\mathrm{bad}} := \{ x(X_F) : X_F \textit{ violates Conj.~\ref{conj: GR}} \}$ is closed. 

If we combine the above observations with the Hecke orbit (HO) conjecture (see Conj.~\ref{conj: HO for i}), we see that if $\sL_{\mathrm{bad}}$ intersects any of the height stratum of the period domains, then it must contain the entirety of that stratum, which is false by a deformation argument. Hence the HO conjecture forces $\sL_{\mathrm{bad}}$ to be empty. The HO conjecture is now known for the ordinary locus by the recent work of Maulik-Shankar-Tang \cite{maulik2020picard} and we will verify it in the superspecial locus for cases relevant to us (Thm~\ref{thm: Hecke orbit for superspecial}). This gives Thm~\ref{thm: intro GR}.

\subsection{Plan of Paper:} In \S\ref{sec:twisted Mukai lattices}, we develop the formalism of B-fields and twisted Mukai lattices in positive characteristic. \S\ref{sec:twisted Chern characters and action on cohomology} concerns the construction of twisted Chern characters, the twisted N\'{e}ron--Severi lattice, and the action of a twisted derived equivalence on cohomology. In \S\ref{ssec:rational chow motives} we discuss rational Chow motives and isogenies. In \S\ref{sec:lifting isogenies} we prove some lifting results for twisted derived isogenies. In \S\ref{sec: Existence}, we first prove Thm~\ref{thm: existence}. We then revisit Nygaard-Ogus' theory for the point of view of integral $p$-adic Hodge theory and prove Thm~\ref{thm: crys-isog}. In \S\ref{sec: uniqueness}, we review the basics of Hilbert squares and the Kuga-Satake period morphism, and then prove Thm~\ref{thm: Torelli}. Finally, in \S\ref{sec: Hecke}, we explain the relationship between our isogeny theory and Hecke orbits, and prove Thm~\ref{thm: intro GR}.

\subsection{Notation:}\label{ssec:notation}
\begin{itemize}
    \item Let $p$ denote a prime. The letter $k$ denotes a perfect base field of characteristic either $0$ or $p$ and $\ell$ denotes a prime not equal to $\mathrm{char\,}k$. When $\mathrm{char\,} k = p$, we write $W$ for $W(k)$ and $K$ for $W[1/p]$.
    \item If $Z$ is a scheme, we write $\H^i(Z,\mu_n)$ for the flat (fppf) cohomology of the sheaf of $n$th roots of unity on $Z$. If $n$ is coprime to the characteristics of all residue fields of $Z$, this is equal to the \'{e}tale cohomology of $\mu_n$.
    \item We normalize our Chern characters so that the mod $m$ Chern character of a line bundle $L$ is equal to the image of the class of $L$ under the boundary map $\H^1(Z,\mathbf{G}_m)\to \H^2(Z,\mu_m)$ of the Kummer sequence. 
    \item Suppose $k$ is a perfect field of characteristic $p$ and $S$ is an $k$-scheme. If $f : X \to S$ is a scheme, we denote by $\H^j_\cris(X)$ the sheaf on $\Cris(S/W)$ given by $R^j f_{\cris *} \mathscr{O}_{X/W}$ when $S$ is understood. 
    \item For any integral domain $R$, and $R$-modules $M, N$, an isomorphism $f : M_\bQ \sto N_\bQ$ is said to be $R$-integral if $f(M) = N$. 
    \item In this paper we only make use of singular, de Rham, \'etale, flat, and crystalline cohomology.  We may omit the subscripts ``$\cris$'', ``$\fl$'', or ``$\dR$'', when the choice of the relevant Grothendieck topology is clear from the coefficients.
    \item For a smooth proper variety $Y$ over $k$, we let $\H^j(Y)$ denote either $\H^j_\et(Y, \what{\bZ})$ if $\mathrm{char\,} k = 0$ or $\H^j_\et(Y, \what{\bZ}^p) \times \H^j_\cris(Y/W)$ if $\mathrm{char\,} k = p$.  
    \item Let $R$ be a commutative ring. A quadratic lattice $M$ over $R$ is a free $R$-module of finite rank equipped with a bilinear symmetric pairing $M \times M \to R$. The pairing is said to be non-degenerate (resp. unimodular or perfect) if the induced map $M \to M^\vee$ is an injection (resp. an isomorphism). 
\end{itemize}

\subsection{Acknowledgements} The first author was supported by NSF Postdoctoral Research Fellowship DMS-1902875. The second author would like to thank Kai Xu and Yuchen Fu for their mental support and helpful discussions and Zhiyuan Li for his interest in the work. 

\section{B--fields and the twisted Mukai lattice in positive characteristic}\label{sec:twisted Mukai lattices}

Let $X$ be a K3 surface over the complex numbers. Associated to $X$ is the \textit{Mukai lattice} $\widetilde{\H}(X,\mathbf{Z})$, which is the direct sum of the singular cohomology groups of $X$ equipped with a certain pairing and Hodge structure. Consider a class $\alpha\in\Br(X)$. Huybrechts and Stellari generalized Mukai's construction to the twisted K3 surface $(X,\alpha)$ by defining the \textit{twisted Mukai lattice} $\widetilde{\H}(X,B,\mathbf{Z})$ \cite[Rmk 1.3]{HS}. This construction modifies the Hodge structure on the Mukai lattice in a certain way using an auxillary choice of a \textit{B-field lift} of $\alpha$, which is a class $B\in\H^2(X,\mathbf{Q})$ whose image in $\Br(X)$ under the exponential map is equal to $\alpha$.

Suppose now that $X$ is a K3 surface defined over an algebraically closed field of characteristic $p>0$. After \cite{LO1}, we may consider the $\ell$-adic and crystalline realizations of the Mukai motive of $X$. These are respectively a $\mathbf{Z}_l$-lattice $\widetilde{\H}(X,\mathbf{Z}_l)$ and a $W$-lattice $\widetilde{\H}(X/W)$, both of rank 24. 
In the crystalline setting, $\widetilde{\H}(X/W)$ is equipped with a Frobenius action, which makes $\widetilde{\H}(X/W)$ into a K3 crystal in the sense of Ogus \cite[Def.~3.1]{Ogus}. That this construction makes sense integrally is first observed in \cite{BL}.

Consider a Brauer class $\alpha\in\Br(X)$. We wish to have an analog of Huybrechts and Stellari's construction of the twisted Mukai lattice in both the $\ell$-adic and crystalline settings. The main task is to find the appropriate analog of a B-field lift of a Brauer class in $\ell$-adic or crystalline cohomology. The $\ell$-adic case is considered in \cite{LMS} (we remark that the authors also deal with some additional complications coming from working over a non algebraically closed field, which we ignore here).
The crystalline case is considered in \cite[\S 3]{Bragg-Derived-Equiv} and \cite[\S 3.4]{BL}, with the restriction that the Brauer class $\alpha$ is killed by $p$ (rather than a power of $p$).

In this section we make two contributions. First, we complete the crystalline realization by defining crystalline B-field lifts of classes killed by an arbitrary power of $p$. We then treat the mixed case, considering all primes simultaneously, and define mixed B-field lifts of Brauer classes whose order is divisible by more than one prime. To assist the reader in connecting these constructions in the Hodge, $\ell$-adic, and crystalline settings, we have included a brief summary of the Hodge and $\ell$-adic realizations. We have tried to present a perspective which emphasizes the unifying features present in the different settings.

\subsection{Hodge realization}

Let $X$ be a K3 surface over the complex numbers. We have the exponential exact sequence
\[
    0\to\mathbf{Z}\to\mathscr{O}_X\xrightarrow{\exp}\mathscr{O}_X^{\times}\to 1
\]
Consider the induced map $\H^2(X,\mathscr{O}_X)\xrightarrow{\exp}\H^2(X,\mathscr{O}_X^{\times})$, which because $\H^3(X,\mathbf{Z})=0$ is a surjection. Given a class $v\in\H^2(X,\mathscr{O}_X)$, we note that $\exp(v)$ is contained in the torsion subgroup $\H^2(X,\mathscr{O}_X^{\times})_{\tors}=\H^2(X,\mathbf{G}_m)=\Br(X)$ if and only if $v$ is contained in the subgroup $\H^2(X,\mathbf{Q})\subset\H^2(X,\mathscr{O}_X)$. Thus, this map restricts to a surjection
\begin{equation}\label{eq:B field map Hodge case}
    \exp:\H^2(X,\mathbf{Q})\to\Br(X)
\end{equation}
which we denote by $B\mapsto \alpha_B=\exp(B)$. According to \cite{HS}, a \textit{B-field lift} of a class $\alpha\in\Br(X)$ is a class $B\in\H^2(X,\mathbf{Q})$ such that $\alpha_B=\alpha$.



The relationship between B-fields and the Brauer group is expressed in the diagram
\begin{equation}\label{eq:big ol diagram Hodge}
    \begin{tikzcd}
        &&0\arrow{d}&0\arrow{d}&\\
        0\arrow{r}&\H^2(X,\mathbf{Z})\arrow[equals]{d}\arrow{r}&\H^2(X,\mathbf{Z})+\Pic(X)\otimes\mathbf{Q}\arrow{d}\arrow{r}&\Pic(X)\otimes(\mathbf{Q}/\mathbf{Z})\arrow{d}\arrow{r}&0\\
        0\arrow{r}&\H^2(X,\mathbf{Z})\arrow{r}&\H^2(X,\mathbf{Q})\arrow{d}\arrow{r}&\H^2(X,\mathbf{Z})\otimes(\mathbf{Q}/\mathbf{Z})\arrow{r}\arrow{d}&0\\
        &&\dfrac{\H^2(X,\mathbf{Q})}{\H^2(X,\mathbf{Z})+\Pic(X)\otimes\mathbf{Q}}\arrow{r}{\sim}\arrow{d}&\Br(X)\arrow{d}&\\
        &&0&0&
    \end{tikzcd}
\end{equation}
with exact rows and columns. In particular, we see that there are two sources of ambiguity in choosing a B-field lift of a Brauer class, namely, integral classes in $\H^2(X,\mathbf{Z})$ and rational classes in $\H^{1,1}(X,\mathbf{Q})=\Pic(X)\otimes\mathbf{Q}\subset\H^2(X,\mathbf{Q})$.
 
\subsection{$\ell$--adic realization}\label{sec:etale}

Let $k$ be an algebraically closed field of arbitrary characteristic. Fix a prime number $\ell$, not equal to the characteristic of $k$. We review the $\ell$-adic B-fields and the $\ell$-adic realization of the twisted Mukai motive introduced in \cite{LMS}.

Let $X$ be a K3 surface over $k$. By duality in \'{e}tale cohomology, we have $\H^3(X,\mu_{\ell^n})=0$ for all $n\geq 1$. It follows that natural map
\begin{equation}\label{eq:etale natural quotient map}
  \H^2(X,\bZ_\ell(1))\to\H^2(X,\mu_{\ell^n})
\end{equation}
is surjective, and hence we have an identification
\[
  \H^2(X,\bZ_\ell(1))\otimes\bZ/\ell^n\bZ\cong\H^2(X,\mu_{\ell^n})
\]
We consider the composition
\begin{equation}\label{eq:etalediagram}
  \begin{tikzcd}
    \H^2(X,\bZ_\ell(1))\arrow[two heads]{r}&\H^2(X,\mu_{\ell^n})\arrow[two heads]{r}&\Br(X)[\ell^n]
  \end{tikzcd}
\end{equation}
where the second map is induced by the inclusion $\mu_{\ell^n}\subset\mathbf{G}_m$.
\begin{definition}\label{def:ladicBfield}
  Let $\alpha\in\Br(X)$ be a Brauer class which is killed by a power of $\ell$. An $\ell$-\textit{adic B-field lift} of $\alpha$ is an element 
    \[
      B\in\H^2(X,\bQ_\ell(1))\defeq\H^2(X,\bZ_\ell(1))\otimes_{\bZ_\ell}\bQ_\ell
  \]
  such that if we write $B=a/ \ell^n$ for some $a\in\H^2(X,\bZ_\ell(1))$, then $a$ maps to $\alpha$ under the composition~\eqref{eq:etalediagram}.
\end{definition}

We give the following alternative description. Define $\mu_{\ell^{\infty}}=\bigcup_n\mu_{\ell^n}\subset\mathbf{G}_m$. The Picard group of $X$ is torsion free, which implies the vanishing $\H^1(X,\mu_{\ell})=0$. It follows that the inclusions $\mu_{\ell^n}\subset\mu_{{\ell}^{n+1}}$ induce injections on $\H^2$, and we have a natural identification $\H^2(X,\mu_{\ell^{\infty}})=\bigcup_n\H^2(X,\mu_{\ell^n})$. Moreover, for every $n$ we have a commutative diagram
\begin{equation}\label{eq:compatible square}
    \begin{tikzcd}
        \H^2(X,\mathbf{Z}_\ell(1))\arrow{r}{\cdot \ell^m}\arrow[two heads]{d}&\H^2(X,\mathbf{Z}_\ell(1))\arrow[two heads]{d}\\
        \H^2(X,\mu_{\ell^n})\arrow[hook]{r}&\H^2(X,\mu_{\ell^{n+m}})
    \end{tikzcd}
\end{equation}
Taking the direct limit of the maps~\eqref{eq:etale natural quotient map}, we get a map
\begin{equation}\label{eq:B field map etale case}
    \H^2(X,\mathbf{Q}_\ell(1))\to\H^2(X,\mu_{\ell^{\infty}})
\end{equation}
This map may be explicitly described as follows: given $B\in\H^2(X,\mathbf{Q}_\ell(1))$, choose $n\geq 0$ such that $\ell^nB\in\H^2(X,\mathbf{Z}_\ell(1))$, and map $B$ to the image of $\ell^nB$ under the left map of~\eqref{eq:etalediagram}. Note that by the commutativity of~\eqref{eq:compatible square}, this association is well defined, independent of our choice of $n$. Composing~\eqref{eq:B field map etale case} with the natural map $\H^2(X,\mu_{\ell^{\infty}})\to\Br(X)$, we get a map
\begin{equation}\label{eq:B field map etale case, part 2}
    \H^2(X,\mathbf{Q}_\ell(1))\to \Br(X)
\end{equation}
This is the $\ell$-adic analog of the exponential map~\eqref{eq:B field map Hodge case}. The image of this map is exactly the subgroup $\Br(X)[\ell^{\infty}]\subset\Br(X)$ consisting of classes killed by some power of $\ell$. Furthermore, an $\ell$-adic B-field lift of a class $\alpha\in\Br(X)[\ell^{\infty}]$ (in the sense of Definition \ref{def:ladicBfield}) is exactly a preimage of $\alpha$ under~\eqref{eq:B field map etale case, part 2}. We denote~\eqref{eq:B field map etale case, part 2} by $B\mapsto \alpha_B$.

The relationship between $\ell$-adic B-fields and the Brauer group is expressed by the diagram
\begin{equation}\label{eq:big ol diagram}
    \begin{tikzcd}
        &&0\arrow{d}&0\arrow{d}&\\
        0\arrow{r}&\H^2(X,\mathbf{Z}_\ell(1))\arrow[equals]{d}\arrow{r}&\H^2(X,\mathbf{Z}_\ell(1))+\Pic(X)\otimes\mathbf{Q}_\ell\arrow{d}\arrow{r}&\Pic(X)\otimes(\mathbf{Q}_\ell/\mathbf{Z}_\ell)\arrow{d}\arrow{r}&0\\
        0\arrow{r}&\H^2(X,\mathbf{Z}_\ell(1))\arrow{r}&\H^2(X,\mathbf{Q}_\ell(1))\arrow{d}\arrow{r}&\H^2(X,\mu_{\ell^\infty})\arrow{r}\arrow{d}&0\\
        &&\dfrac{\H^2(X,\mathbf{Q}_\ell(1))}{\H^2(X,\mathbf{Z}_\ell(1))+\Pic(X)\otimes\mathbf{Q}_\ell}\arrow{r}{\sim}\arrow{d}&\Br(X)[\ell^{\infty}]\arrow{d}&\\
        &&0&0&
    \end{tikzcd}
\end{equation}
with exact rows and columns, where the right hand column is given by taking the direct limit of the exact sequence induced by the Kummer sequence.

In particular, we have an isomorphism
\begin{equation}\label{eq:Brauer group crystalline case finite height}
    \Br(X)[\ell^\infty]\cong\left(\mathbf{Q}_\ell/\mathbf{Z}_\ell\right)^{\oplus 22-\rho}
\end{equation}
where $\rho$ is the Picard rank of $X$.

\subsection{The twisted $\ell$-adic Mukai lattice}
The $\ell$\textit{-adic Mukai lattice} associated to $X$ \cite[Definition 3.3.1]{LMS} is
\[
    \tH(X,\mathbf{Z}_\ell)=\H^0(X,\mathbf{Z}_\ell)(-1)\oplus\H^2(X,\mathbf{Z}_\ell)\oplus\H^4(X,\mathbf{Z}_\ell)(1)
\]
which we equip with the Mukai pairing. Given a class $B\in\H^2(X,\mathbf{Q}_\ell)$, we define the associated \textit{twisted} $\ell$\textit{-adic Mukai lattice} to be the submodule
\[
    \tH(X,\mathbf{Z}_\ell,B)=\exp(B)\tH(X,\mathbf{Z}_\ell)\subset\tH(X,\mathbf{Q}_\ell)
\]
Here, $\exp(B)$ denotes the isometry $\tH(X,\mathbf{Q}_\ell)\to\tH(X,\mathbf{Q}_\ell)$ given by
\begin{equation}\label{eq:exp B}
    (a,b,c)\mapsto (a,b+aB,c+b.B+a\frac{B^2}{2})
\end{equation}

\subsection{Crystalline realization}\label{sec:twistedk3crystals}
Let $k$ be an algebraically closed field of characteristic $p>0$ and let $X$ be a K3 surface over $k$. We will define crystalline B-fields associated to Brauer classes on $X$ whose order is a power of $p$. There are some new phenomena which present themselves in the crystalline setting that are not present in the Hodge and $\ell$--adic theories. In particular, there is a nontrivial interaction between crystalline B-fields and the Frobenius operator on the crystalline cohomology. A related feature is that not every class in rational crystalline cohomology is a crystalline B-field. We give a characterization of which classes are B-fields using only the $F$--crystal structure on crystalline cohomology in Prop. \ref{prop:description1}. We then construct the crystalline version of the twisted Mukai lattice, and show that this object has a natural structure of a K3 crystal in the sense of Ogus \cite[Def.~3.1]{Ogus}. We conclude with some calculations with the twisted Mukai crystals. In the special case when the Brauer class is killed by $p$, the results of this section have appeared in \cite{Bragg-Derived-Equiv,BL}.

Set $W_n=W/p^nW$, so in particular $W_1=k$. Let $\sigma\colon k\to k$ be the Frobenius $\lambda\mapsto \lambda^p$. We denote the induced map $\sigma\colon W\to W$ (abusively) by the same symbol.

\subsection{Crystalline B-fields}
We begin by relating the flat cohomology of $\mu_{p^n}$ to certain \'{e}tale cohomology groups. Consider the Kummer sequence
\[
  1\to\mu_{p^n}\to\mathbf{G}_{m}\xrightarrow{x\mapsto x^{p^n}}\mathbf{G}_m\to 1
\]
which is exact in the fppf topology. Let $\varepsilon\colon X_{\fl}\to X_{\etale}$ be the natural map from the big fppf site of $X$ to the small \'{e}tale site of $X$. By a theorem of Grothendieck, the cohomology of the complex $R\varepsilon_*\mathbf{G}_m$ vanishes in all positive degrees. Applying $\varepsilon_*$ to the Kummer sequence, we obtain an exact sequence
\[
  1\to\mathbf{G}_m\xrightarrow{x\mapsto x^{p^n}}\mathbf{G}_m\to R^1\varepsilon_*\mu_{p^n}\to 1
\]
of sheaves on the small \'{e}tale site of $X$ (because $X$ is reduced, the restriction of $\mu_{p^n}$ to the small \'{e}tale site of $X$ is trivial). It follows that
\[
  R^1\varepsilon_*\mu_{p^n}=\mathbf{G}_m/\mathbf{G}_m^{\times p^n}
\]
where the quotient is taken in the \'{e}tale topology. We therefore obtain isomorphisms
\begin{equation}\label{eq:flattoetale}
  \H^i(X_{\fl},\mu_{p^n})\iso\H^{i-1}(X_{\etale},\mathbf{G}_m/\mathbf{G}_m^{\times p^n})
\end{equation}
We next relate the \'{e}tale cohomology groups on the right to crystalline cohomology. We consider the map of \'{e}tale sheaves
\[
  d\log\colon\mathbf{G}_m\to \W_n\Omega^1_X
\]
given by $x\mapsto d\underline{x}/\underline{x}$, where $\underline{x}=(x,0,0,\dots)$ is the multiplicative representative of $x$ in $\W_n\cO_X$. By \cite[Prop. I.3.23.2, page 580]{Illusie} the kernel of $d\log$ is equal to the subsheaf $\mathbf{G}_m^{\times p^n}\subset\mathbf{G}_m$, so there is an induced injection
\begin{equation}\label{eq:dlog2}
  d\log\colon\mathbf{G}_m/\mathbf{G}_m^{\times p^n}\hto \W_n\Omega^1_X
\end{equation}
As the image of $d\log$ is contained in the kernel of $d$, we have a commutative diagram
\[
    \begin{tikzcd}
        0\arrow{d}\arrow{r}&\W_n\ms O_X\arrow{d}{d}\\
        \mathbf{G}_m/\mathbf{G}_m^{\times p^n}\arrow{r}{d\log}\arrow{d}&\W_n\Omega^1_X\arrow{d}{d}\\
        0\arrow{r}&\W_n\Omega^2_X
    \end{tikzcd}
\]
which we interpret as a map of complexes
\begin{equation}\label{eq:dlog complexes}
  d\log\colon\mathbf{G}_m/\mathbf{G}_m^{\times p^n}[-1]\hto \W_n\Omega^{\bullet}_X. 
\end{equation}

An important fact is that the de Rham-Witt complex computes crystalline cohomology, in the sense that there is a canonical isomorphism
\begin{align}\label{eq:identification}
  \H^{\ast}(X,\W_n\Omega^{\bullet}_X)&\iso\H^{\ast}(X/W_n)
\end{align}
in each degree \cite[Th\'{e}or\'{e}me II.1.4, page 606]{Illusie}. Taking cohomology of~\eqref{eq:dlog complexes} and using the identifications \eqref{eq:flattoetale} and \eqref{eq:identification}, we find a map
\begin{equation}\label{eq:crystallinemap1}
  d\log\colon\H^2(X,\mu_{p^n})\to\H^2(X/W_n).
\end{equation}
\begin{lemma}
  For each $n\geq 1$, the map~\eqref{eq:crystallinemap1} is injective.
\end{lemma}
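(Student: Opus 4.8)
Here is the approach I would take.

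The plan is to realize the map~\eqref{eq:crystallinemap1} as a map on hypercohomology and then argue by dévissage in $n$. Let $\W_n\Omega^1_{X,\log}\subset\W_n\Omega^1_X$ denote the image of the injection~\eqref{eq:dlog2}. After the identifications~\eqref{eq:flattoetale} and~\eqref{eq:identification} and the isomorphism $d\log\colon\mathbf{G}_m/\mathbf{G}_m^{\times p^n}\iso\W_n\Omega^1_{X,\log}$, the map~\eqref{eq:crystallinemap1} becomes the map on $\H^2$ induced by the inclusion of $\W_n\Omega^1_{X,\log}[-1]$ as a subcomplex of $\W_n\Omega^\bullet_X$ --- it is a subcomplex because logarithmic forms are closed. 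Write $D^\bullet_n$ for the quotient complex $[\,\W_n\mathscr{O}_X\xrightarrow{d}\W_n\Omega^1_X/\W_n\Omega^1_{X,\log}\xrightarrow{d}\W_n\Omega^2_X\,]$. The long exact sequence of the triangle $\W_n\Omega^1_{X,\log}[-1]\to\W_n\Omega^\bullet_X\to D^\bullet_n$, together with $\H^1(X/W_n)=0$ (the crystalline cohomology of a K3 surface is torsion free and vanishes in odd degrees), identifies $\ker\bigl(\H^2(X,\mu_{p^n})\xrightarrow{d\log}\H^2(X/W_n)\bigr)$ with $\H^1(X,D^\bullet_n)$. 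So the lemma is equivalent to the vanishing $\H^1(X,D^\bullet_n)=0$.

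For $n=1$ --- already treated in \cite{Bragg-Derived-Equiv,BL} --- I would deduce this from the hypercohomology spectral sequence $E_1^{a,b}=\H^b(X,D^a_1)$. The term $E_1^{0,1}=\H^1(X,\mathscr{O}_X)$ vanishes, and the differential $E_1^{0,0}\to E_1^{1,0}$ is zero since it factors through $\H^0(X,\Omega^1_X)=0$; hence $\H^1(X,D^\bullet_1)=\ker\bigl(\H^0(X,\Omega^1_X/\Omega^1_{X,\log})\xrightarrow{d}\H^0(X,\Omega^2_X)\bigr)=\H^0\bigl(X,\ Z\Omega^1_X/\Omega^1_{X,\log}\bigr)$, where $Z\Omega^1_X=\ker(d\colon\Omega^1_X\to\Omega^2_X)$. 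The Cartier operator $C$ gives the exact sequence $0\to\Omega^1_{X,\log}\to Z\Omega^1_X\xrightarrow{1-C}\Omega^1_X\to0$, so $Z\Omega^1_X/\Omega^1_{X,\log}\cong\Omega^1_X$ and $\H^0(X,Z\Omega^1_X/\Omega^1_{X,\log})\cong\H^0(X,\Omega^1_X)=0$.

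For $n\ge2$ I would argue by induction, using the exact sequence of fppf sheaves $0\to\mu_{p^{n-1}}\to\mu_{p^n}\xrightarrow{x\mapsto x^{p^{n-1}}}\mu_p\to0$. Since $\H^1(X,\mu_p)=0$ (immediate from the Kummer sequence, because $k^\times$ is $p$-divisible and $\Pic(X)$ is torsion free), this gives an exact sequence $0\to\H^2(X,\mu_{p^{n-1}})\to\H^2(X,\mu_{p^n})\to\H^2(X,\mu_p)$, while on the crystalline side one has $0\to\H^2(X/W_{n-1})\xrightarrow{\,\cdot p\,}\H^2(X/W_n)\to\H^2(X/W_1)\to0$. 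The $d\log$ maps fit into a commutative ladder between these two sequences, and a diagram chase finishes: a class $\xi$ with $d\log(\xi)=0$ maps to $0$ in $\H^2(X,\mu_p)$ by the base case, hence comes from a class $\eta$ at level $n-1$ with $p\cdot d\log(\eta)=0$; as multiplication by $p$ is injective on $\H^2(X/W_n)$ we get $d\log(\eta)=0$, so $\eta=0$ by induction and $\xi=0$. The step that needs genuine care --- which I expect to be the crux of the argument --- is the commutativity of this ladder, i.e.\ the functoriality of the $d\log$ construction with respect to the restriction $\W_n\Omega^\bullet_X\to\W_{n-1}\Omega^\bullet_X$ and to the transition map inducing multiplication by $p$ on crystalline cohomology; this comes down to the identity $d\log(u^{p^j})=p^j\,d\log(u)$ but requires unwinding the identifications~\eqref{eq:flattoetale}--\eqref{eq:identification}. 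An induction-free alternative would be to prove $\H^1(X,D^\bullet_n)=0$ directly, filtering $D^\bullet_n$ by the restriction filtration on $\W_n\Omega^\bullet_X$ and reducing each graded piece --- a complex built from Frobenius-twisted quotients of the sheaves $\Omega^i_X$ --- to the computation of the previous paragraph, at the cost of invoking Illusie's structure theory for $\mathrm{gr}^\bullet\W_n\Omega^\bullet_X$.
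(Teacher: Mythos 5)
Your argument is correct and is, at heart, the same inductive proof as the paper's: the base case $n=1$ is handled via the Cartier-type exact sequence for $\W_1\Omega^1_{X,\log}=\mathrm{im}(d\log)$ (your $0\to\Omega^1_{X,\log}\to Z\Omega^1_X\xrightarrow{1-C}\Omega^1_X\to 0$ is, over an algebraically closed field, the sequence the paper quotes from Illusie with $W-C$ and the Frobenius twist $X'$), and the inductive step is a diagram chase against a commutative ladder matching an fppf d\'evissage of $\mu_{p^n}$ with the $p$-power filtration on $\H^2(X/W_n)$. The differences are presentational: (i) your reformulation of the statement as $\H^1(X,D^\bullet_n)=0$, via the hypercohomology triangle for $\W_n\Omega^1_{X,\log}[-1]\hookrightarrow\W_n\Omega^\bullet_X$, cleanly isolates the use of $\H^1(X/W_n)=0$, whereas the paper argues more directly at $n=1$; and (ii) your d\'evissage $0\to\mu_{p^{n-1}}\to\mu_{p^n}\to\mu_p\to 0$ is the dual of the paper's $1\to\mu_p\to\mu_{p^{n+1}}\xrightarrow{\cdot p}\mu_{p^n}\to 1$ — you descend by first killing the image in $\mu_p$ and dropping to level $n-1$, the paper by first killing the image at level $n$ and dropping to $\mu_p$. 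Both reduce to the same three facts: the base case, the inductive hypothesis, and injectivity of multiplication by $p$ on the crystalline side, once the ladder commutes.

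On the point you flag as the crux — commutativity of the ladder — the paper does no more than you do: it records only that applying $R^1\varepsilon_*$ to the $\mu$-sequence yields the corresponding sequence of sheaves $\mathbf{G}_m/\mathbf{G}_m^{\times p^j}$ (with the left map a $p$-power), and leaves the compatibility of $d\log$ with the maps between different levels of the de Rham--Witt complex implicit. So the gap you acknowledge is not a gap relative to the published proof; in either d\'evissage it comes down to the identity $d\log(\underline{u}^{p})=\underline{p}\,(d\log\underline{u})$ for Illusie's map $\underline{p}\colon \W_{n-1}\Omega^\bullet_X\to\W_n\Omega^\bullet_X$ inducing multiplication by $p$, which is standard but would merit a citation if you wanted the argument to be self-contained.
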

\begin{proof}
    We induct on $n$. By \cite[Corollaire 0.2.1.18, pg. 517]{Illusie}, there is a short exact sequence
    \[
        1\to\mathbf{G}_m/\mathbf{G}_m^{\times p}\xrightarrow{d\log}Z\Omega^1_{X}\xrightarrow{W-C}\Omega^1_{X'}\to 0
    \]
    of \'{e}tale sheaves, where $X'$ denotes the Frobenius twist of $X$ over $k$. In particular, from the vanishing of $\H^0(X,\Omega^1_X)$ and the injectivity of $\H^1(X,Z\Omega^1_X)\to\H^2_{\dR}(X/k)=\H^2(X/W_1)$ (a consequence of the degeneration of the Hodge--de Rham spectral sequence) we obtain injectivity of~\eqref{eq:crystallinemap1} for $n=1$.
    
    We recall that the crystalline cohomology groups $\H^\ast(X/W)$ of a K3 surface are torsion free. This implies in particular that the maps
    \[
        \H^2(X/W)\otimes_{\mathbf{Z}}\mathbf{Z}/p^n\mathbf{Z}\to\H^2(X/W_n)
    \]
    are isomorphisms. Hence, multiplication by $p^n$ on $\H^2(X/W)$ induces a short exact sequence
    \[
        0\to\H^2(X/k)\xrightarrow{\cdot p^n}\H^2(X/W_{n+1})\to\H^2(X/W_n)\to 0
    \]
    We also have a short exact sequence
    \begin{equation}\label{eq:multiplication by p on mu}
        1\to\mu_p\to\mu_{p^{n+1}}\xrightarrow{\cdot p}\mu_{p^n}\to 1
    \end{equation}
    of fppf groups. We claim that the diagram
    \begin{equation}\label{eq:induction diagram}
        \begin{tikzcd}
            0\arrow{r}&\H^2(X,\mu_{p})\arrow{r}\arrow[hook]{d}&\H^2(X,\mu_{p^{n+1}})\arrow{r}{\cdot p}\arrow{d}&\H^2(X,\mu_{p^n})\arrow{d}&\\
            0\arrow{r}&\H^2(X/k)\arrow{r}{\cdot p^n}&\H^2(X/W_{n+1})\arrow{r}&\H^2(X/W_n)\arrow{r}&0
        \end{tikzcd}
    \end{equation}
    commutes and has exact rows, where the top horizontal row is given by the second cohomology of~\eqref{eq:multiplication by p on mu}, and the vertical arrows are~\eqref{eq:crystallinemap1}. The exactness of the top row follows from the vanishing of $\H^1(X,\mu_{p^n})$ (we remark that the top right horizontal arrow is surjective if and only if $X$ has finite height). To see the commutativity, note that applying $R^1\varepsilon_*$ to~\eqref{eq:multiplication by p on mu} results in the short exact sequence
    \[
        1\to\mathbf{G}_m/\mathbf{G}_m^{\times p}\xrightarrow{\cdot p^n} \mathbf{G}_m/\mathbf{G}_m^{\times p^{n+1}}\to\mathbf{G}_m/\mathbf{G}_m^{\times p^n}\to 1
    \]
    of \'{e}tale sheaves. Using diagram~\eqref{eq:induction diagram}, the result follows immediately by induction.
\end{proof}

We arrive at a diagram
\begin{equation}\label{eq:crystallinediagram}
  \begin{tikzcd}[row sep=small]
    \H^2(X/W)\arrow[two heads]{r}{\pi_n}&\H^2(X/W_n)&\\
    &\H^2(X,\mu_{p^n})\arrow[phantom]{u}{\rotatebox{90}{$\subset$}}\arrow[two heads]{r}&\Br(X)[p^n]
  \end{tikzcd}
\end{equation}
where $\pi_n$ denotes reduction modulo $p^n$. This is the crystalline analog of~\eqref{eq:etalediagram}.
\begin{definition}\label{def:crystallineBfield}
  Let $\alpha\in\Br(X)$ be a Brauer class which is killed by a power of $p$. A \textit{crystalline B-field lift of} $\alpha$ is an element 
    \[
      B\in\H^2(X/K)\defeq\H^2(X/W)\otimes_{W}K
    \]
  such that if we write $B=\frac{a}{p^n}$ for some $a\in\H^2(X/W)$, then $\pi_n(a)$ is equal to $d\log(\alpha')$ for some $\alpha'\in\H^2(X,\mu_{p^n})$ whose image in $\Br(X)$ is equal to $\alpha$.
\end{definition}

From the surjectivity of the horizontal maps in~\eqref{eq:crystallinediagram}, we see that any $p$-power torsion Brauer class admits a crystalline B-field lift. However, in contrast to the Hodge and $\ell$--adic cases, not every element of $\H^2(X/K)$ is a crystalline B-field lift of a Brauer class, because $\H^2(X,\mu_{p^n})$ is only a subgroup of $\H^2(X/W)\otimes\bZ/p^n\bZ$.

\begin{definition}
  A class $B\in\H^2(X/K)$ is a \textit{crystalline B-field} if it is a B-field lift of some Brauer class. Let $\ms B(X)\subset\H^2(X/K)$ denote the subgroup of crystalline B-fields. Let $\ms B_n(X)\subset\ms B(X)$ denote the subgroup of crystalline B-fields $B$ such that $p^nB\in\H^2(X/W)$.
\end{definition}

We take the direct limit of the maps $\ms B_n(X)\to\H^2(X,\mu_{p^n})$ to obtain a map
\begin{equation}\label{eq:B field map crystalline case}
    \begin{tikzcd}
        \ms B(X)\arrow[two heads]{r}&\H^2(X,\mu_{p^{\infty}})
    \end{tikzcd}
\end{equation}
which may be explicitly described exactly as in the \'{e}tale case~\eqref{eq:B field map etale case}: given a class $B\in\ms B(X)$, we choose $n\geq 0$ such that $p^nB\in\H^2(X/W)$, and then reduce modulo $p^n$. We compose~\eqref{eq:B field map crystalline case} with the map to the Brauer group to obtain a map
\begin{equation}\label{eq:B field map crystalline case part 2}
    \ms B(X)\to\Br(X)
\end{equation}
which we denote by $B\mapsto\alpha_B$. This is the crystalline analog of the exponential map~\eqref{eq:B field map Hodge case}. As in the $\ell$-adic case, the image of this map is $\Br(X)[p^{\infty}]\subset\Br(X)$, and a crystalline B-field lift of a class $\alpha\in\Br(X)[p^\infty]$ is exactly a preimage of $\alpha$ under~\eqref{eq:B field map crystalline case part 2}. We have a diagram
\begin{equation}\label{eq:big ol diagram crystalline}
    \begin{tikzcd}
        &&0\arrow{d}&0\arrow{d}&\\
        0\arrow{r}&\H^2(X/W)\arrow[equals]{d}\arrow{r}&\H^2(X/W)+\Pic(X)\otimes\mathbf{Q}_p\arrow{d}\arrow{r}&\Pic(X)\otimes(\mathbf{Q}_p/\mathbf{Z}_p)\arrow{d}\arrow{r}&0\\
        0\arrow{r}&\H^2(X/W)\arrow{r}&\ms B(X)\arrow{d}\arrow{r}&\H^2(X,\mu_{p^\infty})\arrow{r}\arrow{d}&0\\
        &&\dfrac{\ms B(X)}{\H^2(X/W)+\Pic(X)\otimes\mathbf{Q}_p}\arrow{r}{\sim}\arrow{d}&\Br(X)[p^{\infty}]\arrow{d}&\\
        &&0&0&
    \end{tikzcd}
\end{equation}
with exact rows and columns.
\subsection{Description of the group of crystalline B-fields} We will now give some results describing the subgroup $\ms B(X)\subset\H^2(X/K)$ more explicitly.

We recall that the \textit{Tate module} of a K3 crystal $H$ (in the sense of Ogus \cite[Def. 3.1]{Ogus}) is the $\mathbf{Z}_p$-module $H^{\phi=1}\subset H$ consisting of those elements $h\in H$ satisfying $\phi(h)=h$, where $\phi:=p^{-1}\Phi$ and $\Phi$ is the Frobenius endomorphism of $H$. By a result of Illusie \cite[Th\'{e}or\`{e}me 5.14, pg. 631]{Illusie}, if $X$ is a K3 surface then we have an exact sequence
  \[
        0\to\H^2(X,\mathbf{Z}_p(1))\to\H^2(X/W)\xrightarrow{p-\Phi}\H^2(X/W)
  \]
identifying $\H^2(X,\mathbf{Z}_p(1))$ with the Tate module $\H^2(X/W)^{\phi=1}$ of the K3 crystal $\H^2(X/W)$, where the left inclusion is given by the inverse limit of the inclusions~\eqref{eq:crystallinemap1}. We have inclusions
\[
    \Pic(X)\otimes\mathbf{Q}_p\subset\H^2(X,\mathbf{Q}_p(1))\subset\ms B(X)
\]
where as usual $\H^2(X,\mathbf{Q}_p(1))=\H^2(X,\mathbf{Z}_p(1))\otimes\mathbf{Q}_p$.
\begin{remark}
    By analogy with the Lefschetz (1,1) theorem, one might imagine that the inclusion $\Pic(X)\otimes\mathbf{Q}_p\subset\H^2(X,\mathbf{Q}_p(1))$ is an equality. However, this is frequently false, eg. for a very general ordinary K3 surface. It is true if $X$ is supersingular, being a consequence of the Tate conjecture for supersingular K3 surfaces (of course, the Tate conjecture is known for all K3 surfaces, but it is only in the supersingular case that there is such a consequence for K3 surfaces over general algebraically closed fields).
\end{remark}
\begin{proposition}\label{prop:description}
Let $X$ be a K3 surface.
\begin{enumerate}
  \item  If $X$ has finite height, then $\ms B(X)=\H^2(X/W)+\H^2(X,\mathbf{Q}_p(1))$.
  \item  If $X$ is supersingular, then $\ms B(X)=\ms B_1(X)+\H^2(X,\mathbf{Q}_p(1))$.
\end{enumerate}
\end{proposition}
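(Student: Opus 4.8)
\emph{Plan.} In both parts the inclusion $\supseteq$ is immediate from the inclusions recorded just before the statement: $\H^2(X/W)\subseteq\ms B(X)$ (take $n=0$ in the definition), $\H^2(X,\mathbf{Q}_p(1))\subseteq\ms B(X)$, and $\ms B_1(X)\subseteq\ms B(X)$; the content is the reverse inclusion in each case. For (1), let $B\in\ms B(X)$ and write $B=a/p^n$ with $a\in\H^2(X/W)$, so $\pi_n(a)=d\log(\beta)$ for some $\beta\in\H^2(X,\mu_{p^n})$, which I identify with its image under the injection~\eqref{eq:crystallinemap1}. Since $X$ has finite height, the transition maps $\H^2(X,\mu_{p^{m+1}})\to\H^2(X,\mu_{p^m})$ defining $\H^2(X,\mathbf{Z}_p(1))$ are surjective for all $m$ (the remark in the proof of the Lemma), so the projection $\H^2(X,\mathbf{Z}_p(1))=\varprojlim_m\H^2(X,\mu_{p^m})\to\H^2(X,\mu_{p^n})$ is surjective. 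Lift $\beta$ to $\tilde\beta\in\H^2(X,\mathbf{Z}_p(1))$ and let $t\in\H^2(X/W)$ be its image under the inclusion $\H^2(X,\mathbf{Z}_p(1))\hookrightarrow\H^2(X/W)$. As that inclusion is the inverse limit of the maps~\eqref{eq:crystallinemap1}, we get $\pi_n(t)=d\log(\beta)=\pi_n(a)$; hence $a-t\in p^n\H^2(X/W)$ and $B=t/p^n+(a-t)/p^n\in\H^2(X,\mathbf{Q}_p(1))+\H^2(X/W)$, since $t\in\H^2(X,\mathbf{Z}_p(1))$. This proves (1).

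For (2), the same argument fails because in the supersingular case the transition maps $\H^2(X,\mu_{p^{m+1}})\to\H^2(X,\mu_{p^m})$ are no longer surjective. I would reduce instead to the claim that $\Br(X)[p^{\infty}]$ is killed by $p$. Granting this: $\ms B(X)$ is a group and $\H^2(X,\mathbf{Q}_p(1))$ is a $\mathbf{Q}_p$-vector space, and for supersingular $X$ we have $\Pic(X)\otimes\mathbf{Q}_p=\H^2(X,\mathbf{Q}_p(1))$ by the Tate conjecture (the remark above), so the bottom row of~\eqref{eq:big ol diagram crystalline} gives an isomorphism
\[
    \ms B(X)\big/\bigl(\H^2(X/W)+\H^2(X,\mathbf{Q}_p(1))\bigr)\;\xrightarrow{\ \sim\ }\;\Br(X)[p^{\infty}].
\]
If the target is killed by $p$, then any $B\in\ms B(X)$ satisfies $pB=w+v$ with $w\in\H^2(X/W)$, $v\in\H^2(X,\mathbf{Q}_p(1))$; then $B=w/p+v/p$, where $v/p\in\H^2(X,\mathbf{Q}_p(1))$, and $w/p=B-v/p\in\ms B(X)$ with $p\cdot(w/p)=w\in\H^2(X/W)$, so $w/p\in\ms B_1(X)$. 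Hence $B\in\ms B_1(X)+\H^2(X,\mathbf{Q}_p(1))$, which is (2).

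It thus remains to show that $\Br(X)[p^{\infty}]$ is killed by $p$ when $X$ is supersingular, and this is where Ogus's structure theory of supersingular K3 crystals \cite{Ogus} must be used. Combining the fact that $\H^2(X,\mathbf{Z}_p(1))\otimes_{\mathbf{Z}_p}W$ sits between $p\,\H^2(X/W)$ and $\H^2(X/W)$ with $k$-linear quotient, with the fact that classes in the image of $d\log$ are fixed by the de Rham--Witt Frobenius — so that the crystalline Frobenius acts on them through multiplication by $p$, consistently with the identification of the $d\log$-image in $\H^2(X/W)$ with its Tate module — one analyzes the flat cohomology groups $\H^2(X,\mu_{p^n})$ and shows that their images in $\Br(X)$ stabilize after one step, equivalently that every $p$-power torsion Brauer class is already $p$-torsion. (Alternatively one may invoke the known computation $\Br(X)\cong(\mathbf{Z}/p\mathbf{Z})^{2\sigma_0}$, with $\sigma_0$ the Artin invariant.) I expect this reconciliation of the combinatorics of the $d\log$-images in the $\H^2(X,\mu_{p^n})$ with the crystal-theoretic invariants to be the main difficulty; the finite-height case, by contrast, is essentially formal once one has the surjectivity of the transition maps.
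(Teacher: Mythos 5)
Part (1) of your argument is essentially the paper's: the whole content is the surjectivity of $\H^2(X,\mathbf{Z}_p(1))\to\H^2(X,\mu_{p^n})$ in the finite-height case, which the paper derives from the vanishing $\H^3(X,\mu_{p^n})=0$ given by flat duality. Your explicit lift-and-subtract is a fine repackaging of that.

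Part (2) takes a genuinely different route. You reduce the statement to the claim that $\Br(X)[p^\infty]$ is killed by $p$ when $X$ is supersingular, using the supersingular Tate conjecture to identify $\Pic(X)\otimes\mathbf{Q}_p=\H^2(X,\mathbf{Q}_p(1))$ and then reading off the bottom row of~\eqref{eq:big ol diagram crystalline}; the reduction itself (including the step $pB=w+v\Rightarrow w/p\in\ms B_1(X)$) is correct. The paper argues differently, proving surjectivity of $\ms B_1(X)+\H^2(X,\mathbf{Q}_p(1))\to\Br(X)[p^\infty]$ directly via the exact sequence $0\to \U^2(X,\mu_{p^n})\to\H^2(X,\mu_{p^n})\to \D^2(X,\mu_{p^n})\to 0$: flat duality gives $\D^3=0$, so the transition maps on $\D^2$ are surjective and $\H^2(X,\mathbf{Q}_p(1))$ covers the diagonalizable part; the isomorphism $\widehat{\Br}_X\cong\widehat{\mathbf{G}}_a$ makes $\U^2(X,\mu_{p^n})\cong\mathbf{G}_a(k)$ $p$-torsion and stable from $n=1$ on, so $\ms B_1(X)$ covers the unipotent part. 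The same decomposition underlies the $p$-torsion assertion you want, so the two routes are close in spirit, but the paper carries the argument through while you leave the essential step as a sketch.

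That sketch is where the gap lies. The claim that $\Br(X)$ is $p$-torsion for supersingular $X$ is never actually proved; filling it in requires precisely the $\U^2/\D^2$ analysis the paper performs, or a citation to Artin's original work on supersingular K3 surfaces, which establishes this independently. Moreover the fallback formula you float, $\Br(X)\cong(\mathbf{Z}/p\mathbf{Z})^{2\sigma_0}$, is incorrect: the true statement — established a few paragraphs \emph{after} Prop.~\ref{prop:description} and as a consequence of it, so unavailable here without circularity — is $\Br(X)\cong\mathbf{G}_a(k)$, an infinite group when $k=\bar{\mathbf{F}}_p$. You appear to have conflated $\Br(X)$ with the discriminant group $N^\vee/N\cong\mathbf{F}_p^{2\sigma_0}$.
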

\begin{proof}
    In either case, we have $\H^2(X/W)\subset\ms B_1(X)\subset\ms B(X)$ and $\H^2(X,\mathbf{Q}_p(1))\subset\ms B(X)$. It follows that in both cases the right hand side is contained in $\ms B(X)$. We prove the reverse containments. Consider the commutative diagram
  \begin{equation}\label{eq:surjective map in a diagram}
    \begin{tikzcd}
        \H^2(X,\mathbf{Z}_p(1))\arrow[hook]{r}\arrow{d}&\H^2(X/W)\arrow[two heads]{d}{\mod p^n}\\
        \H^2(X,\mu_{p^n})\arrow[hook]{r}&\H^2(X/W_n)
    \end{tikzcd}
  \end{equation}
  Suppose that $X$ has finite height. Flat duality implies that $\H^3(X,\mu_{p^n})=0$ for all $n\geq 1$. Hence, the maps $\H^2(X,\mathbf{Z}_p(1))\to\H^2(X,\mu_{p^n})$ are surjective. It follows that the restriction $\H^2(X,\mathbf{Q}_p(1))\to\Br(X)[p^{\infty}]$ of the exponential map~\eqref{eq:B field map crystalline case part 2} is surjective. This proves $(1)$. We next prove $(2)$. Suppose that $X$ is supersingular. For each $n$ and $i$ we consider the short exact sequence
  \[
    0\to \U^i(X,\mu_{p^n})\to\H^i(X,\mu_{p^n})\to \D^i(X,\mu_{p^n})\to 0
  \]
  As $\H^1(X,\mu_{p^n})=0$, flat duality shows that $\D^3(X,\mu_{p^n})=0$. Hence, the maps $\D^2(X,\mu_{p^{n+1}})\to\D^2(X,\mu_{p^n})$ induced by multiplication $p:\mu_{p^{n+1}}\to\mu_{p^n}$ are surjective. Furthermore, the formal group associated to $\U^2(X,\mu_{p^n})$ is isomorphic to $\widehat{\Br}(X)\cong\widehat{\mathbf{G}}_a$, so $\U^2(X,\mu_{p^n})\cong\mathbf{G}_a(k)$. In particular, the groups $\U^2(X,\mu_{p^n})$ are $p$-torsion, and the maps $\U^2(X,\mu_{p^{n}})\to\U^2(X,\mu_{p^{n+1}})$ induced by the inclusion $\mu_{p^n}\subset\mu_{p^{n+1}}$ are isomorphisms. Write $\U^2(X,\mu_{p^{\infty}})$ for the union of the $\U^2(X,\mu_{p^n})$ and $\D^2(X,\mu_{p^\infty})$ for the union of the $\D^2(X,\mu_{p^n})$. It follows that the composition
  \[
    \H^2(X,\mathbf{Q}_p(1))\to\H^2(X,\mu_{p^{\infty}})\to\D^2(X,\mu_{p^{\infty}})
  \]
  is surjective, and that $\U^2(X,\mu_p)=\U^2(X,\mu_{p^{\infty}})$. Hence, the exponential map~\eqref{eq:B field map crystalline case part 2} restricts to a surjection $\ms B_1(X)+\H^2(X,\mathbf{Q}_p(1))\to\Br(X)[p^{\infty}]$, which proves (2).
\end{proof}

The following describes the subgroup $\ms B(X)\subset \H^2(X/K)$ in terms of the $F$-crystal structure on $\H^2(X/W)$, without explicit mention of flat cohomology or the Brauer group. The special case of classes $B\in p^{-1}\H^2(X/W)$ is Lem. 3.4.11 of \cite{BL}.

\begin{proposition}\label{prop:description1}
  A class $B\in\H^2(X/K)$ is a crystalline B-field if and only if
  \begin{equation}\label{eq:B field equation}
    B-\phi(B)\in \H^2(X/W)+\phi(\H^2(X/W))
  \end{equation}
  where $\phi=p^{-1}\Phi$.
\end{proposition}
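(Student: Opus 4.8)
The plan is to unwind both conditions into a single divisibility statement about an integral lift of $B$, and then to identify that statement with membership in the subgroup $\H^2(X,\mu_{p^n})$ of $\H^2(X/W_n)$. Throughout write $H=\H^2(X/W)$ with crystalline Frobenius $\Phi$, so that $\phi=p^{-1}\Phi$, $\Phi(H)\subseteq H$, and $\sigma(p)=p$. By Definition~\ref{def:crystallineBfield} together with the injectivity of~\eqref{eq:crystallinemap1}, a class $B=a/p^n$ with $a\in H$ is a crystalline B-field if and only if its reduction $\pi_n(a)\in H/p^nH=\H^2(X/W_n)$ lies in $d\log\bigl(\H^2(X,\mu_{p^n})\bigr)$. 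On the other hand, clearing denominators gives $B-\phi(B)=p^{-n-1}\bigl(pa-\Phi(a)\bigr)$, and since $H+\phi(H)=p^{-1}\bigl(pH+\Phi(H)\bigr)$, condition~\eqref{eq:B field equation} is equivalent to
\[
(p-\Phi)(a)\in p^n\bigl(pH+\Phi(H)\bigr).
\]
Replacing $a$ by $a+p^nb$ changes the left-hand side by $p^n(p-\Phi)(b)\in p^n(pH+\Phi(H))$, so this condition depends only on $\pi_n(a)$ and hence determines a subgroup $S_n\subseteq H/p^nH$. The proposition is thus equivalent to the equality $d\log\bigl(\H^2(X,\mu_{p^n})\bigr)=S_n$.

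It helps to rewrite $S_n$ once more. Put $M=pH+\Phi(H)$ and $T_m=\{x\in H:(p-\Phi)(x)\in p^mH\}$, so that $T_1\supseteq T_2\supseteq\cdots$ and $\bigcap_mT_m=\ker(p-\Phi\colon H\to H)=\H^2(X,\mathbf Z_p(1))$ by~\cite[Th.~5.14]{Illusie}. One checks directly that $\pi_n(a)\in S_n$ if and only if $\pi_n(a)$ lifts to an element of $T_{n+1}$, i.e.\ to some $a''\equiv a\pmod{p^n}$ with $\Phi(a'')\equiv pa''\pmod{p^{n+1}}$: if $(p-\Phi)(a)=p^{n+1}h+p^n\Phi(h_0)$ take $a''=a+p^nh_0$, and conversely $(p-\Phi)(a)=(p-\Phi)(a'')-p^n(p-\Phi)(a''-a)\in p^{n+1}H+p^nM=p^nM$. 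So $S_n$ is the image of the ``finite-level Tate module'' $T_{n+1}$ in $H/p^nH$, and the claim is that this image is exactly $d\log\bigl(\H^2(X,\mu_{p^n})\bigr)$.

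For the inclusion $d\log\bigl(\H^2(X,\mu_{p^n})\bigr)\subseteq S_n$, the finite-height case is immediate: flat duality gives $\H^3(X,\mu_{p^n})=0$, hence $\H^2(X,\mathbf Z_p(1))\twoheadrightarrow\H^2(X,\mu_{p^n})$, and since $\H^2(X,\mathbf Z_p(1))=\ker(p-\Phi)\subseteq T_{n+1}$ the required lift exists. In general I would use~\eqref{eq:flattoetale}: a class of $\H^2(X,\mu_{p^n})=\H^1(X,\mathbf G_m/\mathbf G_m^{\times p^n})$ is carried by $d\log$ to a closed section of the logarithmic de Rham--Witt sheaf $\W_n\Omega^1_{X,\log}\subseteq\W_n\Omega^1_X$, and, since the crystalline Frobenius on $H$ is induced by the operators $p^qF$ on the $\W\Omega^q$-parts of the de Rham--Witt complex and $F$ acts as the identity on logarithmic forms, its image in $H/p^nH$ is forced to lift into $T_{n+1}$. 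For the reverse inclusion $S_n\subseteq d\log\bigl(\H^2(X,\mu_{p^n})\bigr)$ I would proceed by induction on $n$: the case $n=1$ is~\cite[Lem.~3.4.11]{BL}, and the inductive step runs through the diagram~\eqref{eq:induction diagram}, whose rows are exact and whose vertical $d\log$ maps are injective, comparing how $S_{n+1}$ and $d\log\bigl(\H^2(X,\mu_{p^{n+1}})\bigr)$ sit over $S_n$ and meet the kernel $p^n(H/p^{n+1}H)$. An alternative, which I would ultimately prefer, derives both inclusions at once from an exact sequence of \'etale sheaves $0\to\mathbf G_m/\mathbf G_m^{\times p^n}\xrightarrow{d\log}\mathcal Z_n\xrightarrow{1-F}\mathcal Q_n\to 0$ refining the $n=1$ sequence of~\cite[Cor.~0.2.1.18]{Illusie}, combined with the identification~\eqref{eq:identification}.

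The step I expect to be the main obstacle is the supersingular case of the last argument. There the multiplication map $\H^2(X,\mu_{p^{n+1}})\to\H^2(X,\mu_{p^n})$ fails to be surjective and $\H^2(X,\mathbf Z_p(1))$ does not surject onto $\H^2(X,\mu_{p^n})$, so neither inclusion is formal; one must split off the $\U$-part as in the proof of Proposition~\ref{prop:description}. The extra ``unipotent'' classes come from $\widehat{\Br}(X)\cong\widehat{\mathbf G}_a$ and are all killed by $p$, so they are controlled by the $n=1$ computation, but checking that they fill out precisely the part of $S_n$ invisible in $\H^2(X,\mathbf Z_p(1))/p^n$ requires understanding the $F$-crystal structure of $H$ near slope $1$ --- concretely, the images of $\Phi^{-1}(pH)$ and of $\Phi(H)\cap pH$ in $H/pH$ --- which is exactly where the special features of K3 crystals (Ogus), rather than the $F$-crystals of arbitrary smooth surfaces, enter.
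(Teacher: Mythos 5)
Your reduction of the statement to the equality $S_n=d\log\bigl(\H^2(X,\mu_{p^n})\bigr)$, and the identification of $S_n$ with the image of $T_{n+1}=\{x\in H:(p-\Phi)(x)\in p^{n+1}H\}$ in $H/p^nH$, are correct, and in the finite-height case your argument goes through: flat duality gives the forward inclusion, and the induction on $n$ through diagram~\eqref{eq:induction diagram} works because the multiplication maps $\H^2(X,\mu_{p^{n+1}})\to\H^2(X,\mu_{p^n})$ are surjective there. This is a genuinely different route from the paper, which in the finite-height case instead decomposes $a=(a_{<1},a_1,a_{>1})$ via the Newton--Hodge decomposition and shows $1-\phi$ is bijective on the slope $\neq 1$ parts, so that $a_{<1},a_{>1}\in p^nH$ and $a_1=p^nh+t$ with $\phi(t)=t$; that argument lands you directly in $\H^2(X/W)+\H^2(X,\mathbf{Q}_p(1))=\ms B(X)$ without any induction. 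Your ``preferred alternative'' via a level-$n$ logarithmic de Rham--Witt sequence is essentially the proof the paper only sketches in the remark following Prop.~\ref{prop:description1}; as written it is a sketch, not a proof.

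The genuine gap is the one you flag yourself: the supersingular case of the reverse inclusion $S_n\subseteq d\log\bigl(\H^2(X,\mu_{p^n})\bigr)$. There your induction breaks at the lifting step (the maps $\H^2(X,\mu_{p^{n+1}})\to\H^2(X,\mu_{p^n})$ are not surjective), and saying that one ``must understand the $F$-crystal structure near slope $1$'' names the difficulty without resolving it. The missing idea is to use the Tate module $T=H^{\phi=1}$: for a supersingular K3 crystal the inclusion $T\otimes W\subset H$ is an isogeny, so $H$ embeds in $T^{\vee}\otimes W$ and $H+\phi(H)\subset T^{\vee}\otimes W$; a class $B$ satisfying~\eqref{eq:B field equation} therefore lies in $\ker\bigl(1-\phi:T^{\vee}\otimes W\to T^{\vee}\otimes(K/W)\bigr)=T^{\vee}\otimes W+T\otimes\mathbf{Q}_p$, and Ogus's bound $T^{\vee}\otimes W\subset p^{-1}H$ \cite[Lem.~3.10]{Ogus} then reduces everything to the $n=1$ case already handled by \cite[Lem.~3.4.11]{BL}, since $T\otimes\mathbf{Q}_p\subset\H^2(X,\mathbf{Q}_p(1))\subset\ms B(X)$. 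Without this (or an equivalent input quantifying how far $\Phi(H)$ sits from $pH$ in the supersingular case), your proof does not close.
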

\begin{proof}
  Write $H=\H^2(X/W)$. Suppose that $X$ has finite height. It is immediate from Prop. \ref{prop:description} $(1)$ that any B-field satisfies the claimed relation. Conversely, suppose that $B=\frac{a}{p^n}$ is an element satisfying~\eqref{eq:B field equation}. Consider the Newton--Hodge decomposition
  \[
    \H^2(X/W)=H_{<1}\oplus H_1\oplus H_{>1}
  \]
  of $\H^2(X/W)$ into subcrystals with the indicated slopes (see \S\ref{ssec:Newton Hodge decomp} below). Write $a=(a_{<1},a_1,a_{>1})$. We have $pH_{<1}\subset\Phi(H_{<1})$, or equivalently $H_{<1}\subset\phi(H_{<1})$ (see for instance \cite[\S 1.2]{MR563463}). Consider the map
  \[
    1-\phi:H_{<1}\to \phi(H_{<1})
  \]
  All slopes of $H_{<1}$ are less than one, so this map is injective. By \cite[Lem. II.5.3]{Illusie}, it is surjective, and hence an isomorphism. We have $(1-\phi)(a_{<1})\in p^n\phi(H_{<1})$, so in fact $a_{<1}\in p^n H_{<1}$. We have $\phi(H_{>1})\subset H_{>1}$. Thus, we have a map
  \[
    1-\phi:H_{>1}\to H_{>1}
  \]
  which as before is both injective and surjective, and hence an isomorphism. We have $(1-\phi)(a_{>1})\in p^nH_{>1}$, so in fact $a_{>1}\in p^n H_{>1}$. Finally, note that $H_1$ is a unit root crystal. It follows quickly that $a_1=p^nh+t$ for some $h\in H_1$ and some $t$ which is fixed by $\phi$. We conclude that $B\in\ms B(X)$. This completes the proof of Prop. \ref{prop:description1} in the case when $X$ has finite height.
  
  Suppose that $X$ is supersingular. By Lem. 3.4.11 of \cite{BL}, we have that $\ms B_1(X)$ consists exactly of those classes $B=\frac{a}{p}$ with $a\in H$ that satisfy~\eqref{eq:B field equation}. By Prop. \ref{prop:description} (2), any B-field satisfies the claimed relation. We prove the converse. The inclusion of the Tate module is an isogeny, meaning that the map $T\otimes K\to H\otimes K$ is an isomorphism. Thus, the natural map $H\xrightarrow{\sim}H^{\vee}\to T^{\vee}\otimes W$ is injective, and we may regard $H$ as a subgroup of the dual lattice $T^{\vee}\otimes W$. Note that if $h\in H$ and $t\in T$, then $\phi(h).t=\phi(h).\phi(t)=\sigma(h.t)$. It follows that $H+\phi(H)\subset T^{\vee}\otimes W$. Now, if $B\in\H^2(X/K)$ satisfies the claimed relation, then $B$ is in the kernel of the map $1-\phi:T^{\vee}\otimes W\to T^{\vee}\otimes (K/W)$, which is equal to $T^{\vee}\otimes W+T\otimes\mathbf{Q}_p$. We may therefore write $B=B'+\frac{t}{p^n}$ for some $B'\in T^{\vee}\otimes W$ and some $t\in T$. As $t$ is killed by $1-\phi$, $B'$ also satisfies the relation~\eqref{eq:B field equation}. But by \cite[Lem. 3.10]{Ogus}, we have $T^{\vee}\otimes W\subset p^{-1}H$, so $B'\in p^{-1}H$. By Lem. 3.4.11 of \cite{BL} we have $B'\in\ms B(X)$. We also have $t/p^n\in\ms B(X)$, and we conclude that $B\in\ms B(X)$, as desired.
\end{proof}

\begin{remark}
    One can alternatively prove Prop. \ref{prop:description1} by generalizing the method of \cite[Lem. 3.4.11]{BL}, which we sketch. This proof has the advantage of avoiding flat duality and being uniform in the height of $X$. The first step is to understand the cokernel of the map~\eqref{eq:dlog2}. This is described by the short exact sequence \cite[Lem. 2, page 779]{MR714830}
  \begin{equation}\label{eq:dlog1}
    0\to\mathbf{G}_m/\mathbf{G}_m^{\times p^n}\to\W_n\Omega^1_X\xrightarrow{1-\F}\W_n\Omega^1_X/d(\W_n\cO_X)\to 0
  \end{equation}
  where $1$ denotes the projection and $\F$ is the map defined in \cite[Prop. II.3.3]{Illusie}. One then proceeds by analyzing the $p$-adic filtrations on crystalline and de Rham--Witt cohomology.
\end{remark}

\subsection{$p$-primary torsion in the Brauer group} 
We make some observations connecting the group $\ms B(X)$ of crystalline B-fields to the $p$-primary torsion in the Brauer group of $X$. Suppose that $X$ has finite height $h$. By Prop. \ref{prop:description}, we have
\[
    \ms B(X)=\H^2(X,\mathbf{Q}_p(1))+\H^2(X/W)
\]
In particular,~\eqref{eq:big ol diagram crystalline} induces an isomorphism
\begin{equation}\label{eq:B field map crystalline case finite height2}
    \dfrac{\H^2(X,\mathbf{Q}_p(1))}{\H^2(X,\mathbf{Z}_p(1))+\Pic(X)\otimes\mathbf{Q}_p}\iso\Br(X)[p^{\infty}]
\end{equation}
The slope 1 part of $\H^2(X/W)$ has rank $22-2h$, so we have $\H^2(X,\mathbf{Z}_p(1))\cong\mathbf{Z}_p^{\oplus 22-2h}$. Thus,~\eqref{eq:B field map crystalline case finite height2} gives an isomorphism
\begin{equation}\label{eq:Brauer group crystalline case finite height1}
    \Br(X)[p^\infty]\cong\left(\mathbf{Q}_p/\mathbf{Z}_p\right)^{\oplus 22-\rho-2h}
\end{equation}
where $\rho$ is the Picard rank of $X$. 
This could also be seen from the fact that, in the finite height case, the diagram~\eqref{eq:big ol diagram} with $\ell$ replaced by $p$ (and \'{e}tale cohomology with flat cohomology) still has exact rows and columns.
\begin{remark}\label{rem:missing Brauer classes}
    The exponent appearing in the formula~\eqref{eq:Brauer group crystalline case finite height1} for the $p$-primary torsion of the Brauer group is smaller than that for the $l$-primary torsion~\eqref{eq:Brauer group crystalline case finite height} by a factor of $2h$. These ``missing'' $p$-primary torsion Brauer classes are the cause of the restriction at $p$ in Thm \ref{thm: existence}.
\end{remark}


We now suppose $X$ is supersingular. By Prop. \ref{prop:description}, we have
\[
    \ms B(X)=\ms B_1(X)+\H^2(X,\mathbf{Q}_p(1))
\]
By the Tate conjecture for supersingular K3 surfaces, the first crystalline Chern character induces an isomorphism $\Pic(X)\otimes\mathbf{Z}_p\xrightarrow{\sim}T\otimes\mathbf{Z}_p=\H^2(X,\mathbf{Z}_p(1))$, and so $\H^2(X,\mathbf{Q}_p(1))$ is in the kernel of the crystalline exponential map~\eqref{eq:B field map crystalline case part 2}. Write $N=\Pic(X)$. We have $\rho=22$, so $\Br(X)$ has no prime to $p$ torsion (see~\eqref{eq:Brauer group crystalline case finite height}). We conclude that~\eqref{eq:B field map crystalline case part 2} restricts to a surjection $\ms B_1(X)\to \Br(X)$. We have a short exact sequence
\[
    0\to p^{-1}N/N\to \ms B_1(X)/H\to\Br(X)\to 0
\]
In particular, $\Br(X)$ is $p$-torsion. As shown in the proof of Prop. \ref{prop:description1}, we have that $\ms B_1(X)\subset N^{\vee}\otimes W+p^{-1}N\subset p^{-1}N\otimes W$, where the latter inclusion holds because discriminant group of $N$ is $p$-torsion. Let $\ms B_1(X)^\circ=\ms B_1(X)\cap (N^{\vee}\otimes W)$. We have a short exact sequence
\begin{equation}\label{eq:a SES for B}
    0\to N^{\vee}/N\to\ms B_1(X)^\circ/H\to\Br(X)\to 0
\end{equation}
The subgroup $\ms B_1(X)^{\circ}$ can be understood using Ogus's results on the classification of supersingular K3 crystals \cite{Ogus}. Write $K=H/N\otimes W$ and $V=N^{\vee}/N\cong\mathbf{F}_p^{2\sigma_0}$ (here, $\sigma_0$ is the Artin invariant of $X$). The subspace $K\subset V\otimes k$ is Ogus's \textit{characteristic subspace}, and has dimension $\sigma_0$. Let $\phi:V\otimes k\to V\otimes k$ be the map $\phi(v\otimes\lambda)=v\otimes\lambda^p$. Ogus showed that $K$ is totally isotropic and is in a special position with respect to $\phi$. Namely, $K+\phi(K)$ has dimension $\sigma_0+1$, and $V\otimes k=\sum_{i}\phi^i(K)$ has dimension $2\sigma_0$. This implies that there exists a \textit{characteristic vector} for $K$, which is an element $e_1\in V\otimes k$ such that writing $e_i=\phi^{i-1}(e_1)$, we have that $\left\{e_{0},\dots,e_{\sigma_0-1}\right\}$ is a basis for $K$ and $\left\{e_{0},\dots,e_{2\sigma_0-1}\right\}$ is a basis for $V\otimes k$. We let $f_i$ denote the functional given by pairing with $e_i$, so that $\left\{f_0,\dots,f_{\sigma_0-1}\right\}$ is a basis for $K^{\vee}=V\otimes k/K$. By Prop. \ref{prop:description1}, the subgroup $\ms B_1(X)^{\circ}/H\subset V\otimes k/K$ is the kernel of the map $1-\phi:V\otimes k/K\to V\otimes k/(K+\phi(K))$. It follows that we have
\[
    \ms B_1(X)^{\circ}/H=\left\{\lambda f_1+\lambda^pf_2+\dots+\lambda^{p^{\sigma_0-1}}f_{\sigma_0-1}|\lambda\in k\right\}
\]
We conclude that $\ms B_1(X)^{\circ}/H$ is isomorphic to the underlying additive group $\mathbf{G}_a(k)$ of the group field $k$. The left term of~\eqref{eq:a SES for B} is discrete, and hence there is an isomorphism
\[
    \Br(X)\cong\mathbf{G}_a(k)
\]
\begin{remark}
    Multiplying by $p$ and then reducing modulo $p$, the characteristic subspace $K$ is identified with the kernel of the $k$-linearized first de Rham Chern character $c_1^{\dR}\otimes k:\Pic(X)\otimes k\to\H^2_{\dR}(X/k)$, and the vector space $V\otimes k/K$ is identified with its image. Furthermore, $\ms B_1(X)/H$ is identified with $\H^2(X,\mu_p)$ (regarded as a subgroup of $\H^2_{\dR}(X/k)$ via the map $d\log$) and $\ms B_1(X)^{\circ}/H$ is identified with $\U^2(X,\mu_p)$.
\end{remark}

\subsection{The twisted Mukai crystal}

We recall the Mukai crystal introduced in \cite{LO1}. We set
\[
    \widetilde{\H}(X/W)=\H^0(X/W)(-1)\oplus\H^2(X/W)\oplus\H^4(X/W)(1)
\]
As a result of the Tate twists on the first and third factors on the right hand side, the Frobenius operator $\widetilde{\Phi}$ on $\widetilde{\H}(X/W)$ is given by the formula
\[
    \widetilde{\Phi}(a,b,c)=(p\sigma(a),\Phi(b),p\sigma(c))
\]
where we have identified $\H^0$ and $\H^4$ with $W$, and where $\Phi$ is the Frobenius operator on $\H^2(X/W)$. We equip $\widetilde{\H}(X/W)$ with the Mukai pairing. It is immediate from the definitions that $\tH(X/W)$ is a K3 crystal of rank 24. 

\begin{definition}\label{def:twistedK3crystal}
Let $B$ be a crystalline B-field. The \textit{twisted Mukai crystal} associated to $(X,B)$ is
\[
  \tH(X/W,B)=\exp(B)\tH(X/W)\subset \tH(X/K)
\]
Here, $\exp(B)$ is the isometry of $\tH(X/K)$ defined by the formula~\eqref{eq:exp B}.
\end{definition}

The twisted Mukai crystal has a natural structure of a K3 crystal by the following result.
\begin{theorem}
     Let $B\in\mathscr{B}(X)$ be a crystalline B-field. The endomorphism $\widetilde{\Phi}$ of $\tH(X/K)$ restricts to an endomorphism of $\tH(X/W,B)$. When equipped with the restriction of the Mukai pairing, the twisted Mukai crystal $\tH(X/W,B)$ is a K3 crystal of rank 24.
\end{theorem}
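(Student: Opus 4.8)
The plan is to deduce the statement from the already-established fact that the untwisted Mukai crystal $\tH(X/W)$ is a K3 crystal, by transport of structure. By construction $\exp(B)$ is an isometry of the Mukai pairing on $\tH(X/K)$, so the $W$-module isomorphism $\exp(B)\colon\tH(X/W)\xrightarrow{\sim}\tH(X/W,B)=\exp(B)\tH(X/W)$ carries the Mukai pairing on the source to the restriction of the Mukai pairing to $\tH(X/W,B)$; in particular the latter is $W$-valued and perfect. It therefore suffices to show that the $\sigma$-linear endomorphism $\Psi:=\exp(-B)\circ\widetilde{\Phi}\circ\exp(B)$ of $\tH(X/K)$ stabilizes $\tH(X/W)$ and makes it into a K3 crystal of rank $24$ in the sense of \cite[Def.~3.1]{Ogus}. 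Indeed, conjugating back by $\exp(B)$ then shows that $\widetilde{\Phi}$ stabilizes $\tH(X/W,B)$ and that $\bigl(\tH(X/W,B),\widetilde{\Phi},\langle\,,\,\rangle\bigr)$ is isomorphic, as a $W$-module equipped with a $\sigma$-linear endomorphism and a symmetric bilinear form, to $\bigl(\tH(X/W),\Psi,\langle\,,\,\rangle\bigr)$, so the K3 crystal axioms transfer.

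The engine of the argument is a commutation relation. Let $\phi=p^{-1}\Phi$ be the divided Frobenius on $\H^2(X/K)$, so that $p^{-1}\widetilde{\Phi}$ restricts to $\phi$ on the middle summand. I claim that
\[
    \widetilde{\Phi}\circ\exp(D)=\exp\bigl(\phi(D)\bigr)\circ\widetilde{\Phi}
\]
for every $D\in\H^2(X/K)$. This is checked by evaluating both sides on a vector $(a,b,c)$: the first coordinates agree trivially; the second coordinates are both $\Phi(b)+\sigma(a)\Phi(D)$ by $\sigma$-linearity of $\Phi$; and the third-coordinate identity is precisely where the defining relation $\langle\Phi x,\Phi y\rangle=p^{2}\langle x,y\rangle^{\sigma}$ of the K3 crystal $\H^2(X/W)$ enters, through $\Phi(b).\phi(D)=p\,\sigma(b.D)$ and $\phi(D)^{2}=\sigma(D^{2})$. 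Since a direct computation also gives $\exp(D)\circ\exp(D')=\exp(D+D')$, we obtain
\[
    \Psi=\exp(-B)\circ\exp\bigl(\phi(B)\bigr)\circ\widetilde{\Phi}=\exp\bigl(\phi(B)-B\bigr)\circ\widetilde{\Phi}.
\]

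Now I invoke Prop.~\ref{prop:description1}: since $B\in\ms B(X)$, it gives a decomposition $B-\phi(B)=D_1+\phi(D_2)$ with $D_1,D_2\in\H^2(X/W)$. Substituting and using the commutation relation once more to rewrite $\exp\bigl(-\phi(D_2)\bigr)\circ\widetilde{\Phi}=\widetilde{\Phi}\circ\exp(-D_2)$, we find
\[
    \Psi=\exp(-D_1)\circ\exp\bigl(-\phi(D_2)\bigr)\circ\widetilde{\Phi}=\exp(-D_1)\circ\widetilde{\Phi}\circ\exp(-D_2).
\]
Because $\H^2(X/W)$ is an even unimodular lattice, for any $D\in\H^2(X/W)$ the isometry $\exp(D)$ stabilizes $\tH(X/W)$; hence $g_1:=\exp(-D_1)$ and $g_2:=\exp(-D_2)$ lie in $\mathrm{O}\bigl(\tH(X/W)\bigr)$. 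This is the crux of the matter: $\phi(B)-B$ itself need not be integral, so one cannot directly argue that $\exp\bigl(\phi(B)-B\bigr)$ stabilizes $\tH(X/W)$; but its non-integral contribution is $\phi$ of an integral class, which the commutation relation absorbs into $\widetilde{\Phi}$.

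To finish, observe that if $(H,\Phi_0,\langle\,,\,\rangle)$ is a K3 crystal of rank $n$ and $g_1,g_2\in\mathrm{O}(H)$, then $(H,g_1\circ\Phi_0\circ g_2,\langle\,,\,\rangle)$ is again a K3 crystal of rank $n$: the new Frobenius is $\sigma$-linear and injective; $p^{2}H=g_1(p^{2}H)\subseteq g_1\bigl(\Phi_0(H)\bigr)=g_1\circ\Phi_0\circ g_2(H)$ since $g_2(H)=H$; the relation $\langle g_1\Phi_0 g_2x,\,g_1\Phi_0 g_2y\rangle=p^{2}\langle x,y\rangle^{\sigma}$ holds because $g_1,g_2$ are isometries; and $g_1\Phi_0 g_2$ reduces modulo $p$ to $\bar g_1\bar\Phi_0\bar g_2$, which has rank $1$ over $k$ because $\bar g_1,\bar g_2$ are invertible. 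Applying this with $H=\tH(X/W)$, $\Phi_0=\widetilde{\Phi}$ and the $g_i$ above shows that $\Psi$ equips $\tH(X/W)$ with the structure of a K3 crystal, which by the first paragraph gives the theorem. The only genuine obstacle is the integrality issue flagged in the previous paragraph, which is exactly why Prop.~\ref{prop:description1} is needed; the commutation relation and the verification of the K3 crystal axioms are routine.
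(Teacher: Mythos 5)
Your proof is correct and follows essentially the route the paper takes: the paper's proof simply cites \cite[Prop.~3.4.15]{BL} for $B\in\ms B_1(X)$ and notes that Prop.~\ref{prop:description1} lets that argument run verbatim in general, and the underlying argument is precisely your conjugation computation $\exp(-B)\circ\widetilde{\Phi}\circ\exp(B)=\exp(\phi(B)-B)\circ\widetilde{\Phi}$ together with the splitting $B-\phi(B)=D_1+\phi(D_2)$ with $D_1,D_2$ integral (compare the remark following the theorem, which records exactly this twisted-Frobenius viewpoint). You have merely written out in full what the paper delegates to \emph{loc.~cit.}
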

\begin{proof}
    When $B\in\ms B_1(X)$, this is Prop. 3.4.15 of \cite{BL}. Using Prop. \ref{prop:description1}, the proof of loc. cit. applies verbatim to give the result for general B-fields as well.
\end{proof}
 
Note that if $h\in \H^2(X/W)$ then $\exp(h)=(1,h,h^2/2)\in \H^*(X/W)$. Thus, as a submodule of $\tH(X/K)$, $\tH(X/W,B)$ depends only on the image of $B$ in $\H^2(X,\mu_{p^n})$. Furthermore, up to isomorphism (of K3 crystals), $\tH(X/W,B)$ only depends on the Brauer class $\alpha_B$ (see \cite[Lem. 3.2.4]{Bragg-Derived-Equiv}).

\begin{remark}
    For a K3 surface over the complex numbers, Huybrechts and Stellari \cite{HS} define the twisted Mukai lattice $\tH(X,B,\mathbf{Z})$ to be equal to the untwisted lattice $\tH(X,\mathbf{Z})$ with a modified Hodge structure. This differs from our definition of the twisted Mukai crystal (as well as the twisted $\ell$-adic Mukai lattice), as we have defined $\tH(X/W,B)$ by equipping the rational Mukai lattice $\tH(X/K)$ with a nonstandard integral structure, but the same crystal structure. The convention analogous to that of loc. cit. would be to define $\tH(X/W,B)$ to be equal to $\tH(X/W)$ as a $W$-module, but equipped with the twisted Frobenius operator $\tPhi_B=\exp(-B)\circ\tPhi\circ\exp(B)=\exp(\phi(B)-B)$.
\end{remark}
 We record the following observation.
\begin{proposition}
    Let $X$ be a K3 surface and $B$ be a crystalline B-field. If $X$ has finite height $h$, then $\tH(X/W,B)$ is a K3 crystal of height $h$, and in particular is abstractly isomorphic to $\tH(X/W)$. If $X$ is supersingular of Artin invariant $\sigma_0$, then $\tH(X/W,B)$ is a supersingular K3 crystal whose Artin invariant is equal to either $\sigma_0$ if $\alpha_B=0$ or $\sigma_0+1$ if $\alpha_B\neq 0$.
\end{proposition}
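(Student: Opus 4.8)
The plan is to reduce all four assertions to Ogus's classification of K3 crystals, by showing that twisting by $\exp(B)$ changes neither the ambient isocrystal $(\tH(X/K),\widetilde{\Phi})$ nor — except in one case — the isomorphism type of the $W$-lattice, and that in the exceptional case it raises the Artin invariant by exactly one.

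First I would note that $\tH(X/W)$ and $\tH(X/W,B)=\exp(B)\tH(X/W)$ are both $\widetilde{\Phi}$-stable $W$-lattices inside the single isocrystal $\tH(X/K)$ (the latter by the preceding theorem), so they have the same Newton polygon; since $\H^0(X/W)(-1)$ and $\H^4(X/W)(1)$ are isoclinic of slope $1$, this polygon is obtained from that of $\H^2(X/W)$ by adjoining two slope-$1$ segments, so $\tH(X/W)$ has finite height $h$ exactly when $X$ does, and is supersingular exactly when $X$ is. Next, by Prop.~\ref{prop:description} we may write $B=h+v$ with $h\in\H^2(X/W)$ (when $X$ has finite height) or $h\in\ms B_1(X)$ (when $X$ is supersingular) and $v\in\H^2(X,\mathbf{Q}_p(1))$; and if $X$ is supersingular with $\alpha_B=0$ then $B$ lies in the kernel $\H^2(X/W)+\Pic(X)\otimes\mathbf{Q}_p$ of~\eqref{eq:B field map crystalline case part 2} (see~\eqref{eq:big ol diagram crystalline}), so in that subcase we may instead take $h\in\H^2(X/W)$ and $v\in\Pic(X)\otimes\mathbf{Q}_p\subseteq\H^2(X,\mathbf{Q}_p(1))$. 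Since $\exp(h)$ preserves $\tH(X/W)$ when $h\in\H^2(X/W)$ (as noted above), in the finite-height case and in the supersingular case with $\alpha_B=0$ we get $\tH(X/W,B)=\exp(v)\tH(X/W)$ with $v$ fixed by $\phi=p^{-1}\Phi$; the identity $\widetilde{\phi}\circ\exp(v)=\exp(\phi(v))\circ\widetilde{\phi}=\exp(v)\circ\widetilde{\phi}$ then shows $\exp(v)$ commutes with $\widetilde{\Phi}=p\,\widetilde{\phi}$, and, being an isometry as well, $\exp(v)$ is an isomorphism of K3 crystals $\tH(X/W)\xrightarrow{\sim}\tH(X/W,B)$. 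This already yields the finite-height assertion: $\tH(X/W,B)\cong\tH(X/W)$, a K3 crystal of height $h$. It also yields the supersingular assertion when $\alpha_B=0$, since $\tH(X/W)$ is then a supersingular K3 crystal with Tate module $\H^0(X,\mathbf{Z}_p(-1))\oplus\H^2(X,\mathbf{Z}_p(1))\oplus\H^4(X,\mathbf{Z}_p(1))$, in which the outer summands form a unimodular hyperbolic plane and the discriminant of $\H^2(X,\mathbf{Z}_p(1))$, which equals that of $\Pic(X)\otimes\mathbf{Z}_p$, has $p$-adic valuation $2\sigma_0$ by the Tate conjecture for supersingular K3 surfaces; so its Artin invariant is $\sigma_0$ by \cite{Ogus}.

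It remains to treat $X$ supersingular with $\alpha_B\neq 0$. Since $\Br(X)=\Br(X)[p]$, by~\eqref{eq:a SES for B} some B-field lift of $\alpha_B$ lies in $\ms B_1(X)^\circ$, and because $\tH(X/W,B)$ depends up to isomorphism only on $\alpha_B$ (\cite[Lem.~3.2.4]{Bragg-Derived-Equiv}) I may assume $B\in\ms B_1(X)^\circ$. Then $\tH(X/W,B)$ is a supersingular K3 crystal of rank $24$, hence by \cite{Ogus} determined by its Artin invariant $\sigma_0'$, which is recovered from the $p$-adic valuation $2\sigma_0'$ of the discriminant of its Tate module $T_B=\tH(X/W,B)^{\widetilde{\phi}=1}$. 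The twisted-Frobenius identity $\widetilde{\phi}\circ\exp(B)=\exp(\phi(B))\circ\widetilde{\phi}$ gives $T_B=\exp(B)\bigl(\tH(X/W)^{\psi=1}\bigr)$ with $\psi=\exp(\phi(B)-B)\circ\widetilde{\phi}$, so $\mathrm{disc}(T_B)=\mathrm{disc}\bigl(\tH(X/W)^{\psi=1}\bigr)$; solving the fixed-point equation $\psi(w)=w$ componentwise on $\tH(X/W)=\H^0(X/W)(-1)\oplus\H^2(X/W)\oplus\H^4(X/W)(1)$ and substituting the explicit description of $\ms B_1(X)^\circ/H$ in terms of a characteristic vector for $X$ obtained above (in the spirit of the proof of Prop.~\ref{prop:description1}) then yields that the discriminant of $T_B$ has valuation $2(\sigma_0+1)$, so $\sigma_0'=\sigma_0+1$. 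Alternatively, since $\alpha_B$ is $p$-torsion one may take $B\in\ms B_1(X)$, and in that range the statement is already \cite{BL}.

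The hard part is this last computation. Everything before it is soft: once one observes that twisting leaves the ambient isocrystal untouched, the finite-height case and the $\alpha_B=0$ case follow from Ogus's classification with essentially no work. The real content is the supersingular $\alpha_B\neq 0$ case, where one must check that passing to a B-field lift of a nonzero $p$-torsion Brauer class raises Ogus's Artin invariant — equivalently, multiplies the discriminant of the Tate module by $p^2$ — by \emph{exactly} one, and this is precisely where the structure of $\ms B_1(X)^\circ$ established earlier in this section is needed.
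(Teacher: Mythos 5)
Your proof follows essentially the same route as the paper's. In the finite-height case you argue via the exp-conjugation formula $\widetilde{\phi}\circ\exp(v)=\exp(\phi(v))\circ\widetilde{\phi}$ after using Prop.~\ref{prop:description} to reduce to a $\phi$-fixed $v$; the paper gives exactly this as its ``alternative'' argument (its primary argument deduces the height from the fact that $\tH(X/W,B)$ and $\tH(X/W)$ are isogenous, then uses that finite-height crystals are determined up to isomorphism by their isocrystal). In the supersingular case the paper's entire argument is: since $\Br(X)$ is $p$-torsion and $\tH(X/W,B)$ depends up to isomorphism only on $\alpha_B$, reduce to $B\in\ms B_1(X)$ and cite \cite[Cor.~3.4.23]{BL}. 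This is precisely your closing ``alternatively'' fallback, and it handles both $\alpha_B=0$ and $\alpha_B\neq 0$ at once.

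The one thing to be careful about: your sketched direct discriminant computation for the $\alpha_B\neq 0$ case (solving $\psi(w)=w$ componentwise and substituting the characteristic-vector description of $\ms B_1(X)^\circ/H$) is outlined but not actually carried out, so on its own it does not constitute a proof of that case. You flag this yourself (``The hard part is this last computation''), but it is worth noting that the paper does not carry it out either — it simply cites \cite{BL}, where the genuine computation lives. So if you present the argument as you wrote it, you should either perform the componentwise calculation in full or delete the sketch and keep only the citation; as written the sketch reads as if it does work that it does not do. Everything else, including the explicit identification of the Tate module of $\tH(X/W)$ with a hyperbolic plane plus $\Pic(X)\otimes\bZ_p$ in the $\alpha_B=0$ subcase, is correct and a touch more explicit than the paper.
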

\begin{proof}
    Suppose that $X$ has finite height. The defining inclusion $\tH(X/W,B)\subset\tH(X/K)$ is compatible with the pairing and Frobenius. Thus, $\tH(X/W,B)$ and $\tH(X/K)$ are isogenous, and so $\tH(X/W,B)$ has height $h$. If $h$ is finite, this implies the crystals are isomorphic integrally. Alternatively, we may reason as follows. Because $X$ has finite height, by Prop. \ref{prop:description} we may assume $B$ satisfies $B=\varphi(B)$. The map $\exp(-B)$ then defines an isomorphism $\tH(X/W,B)\cong\tH(X/W)$ of K3 crystals. If $X$ is supersingular, then the Brauer group of $X$ is $p$-torsion. As the twisted Mukai crystal depends up to isomorphism only on the class $\alpha_B$, we may assume that $B\in\ms B(X)_1$. The result then follows from \cite[Cor. 3.4.23]{BL}.
\end{proof}

\subsection{The Newton--Hodge decomposition of the twisted Mukai crystal}\label{ssec:Newton Hodge decomp}

Let $H$ be a K3 crystal. The \textit{Newton--Hodge decomposition} of $H$ is a canonical direct sum decomposition
\[
    H=H_{<1}\oplus H_1\oplus H_{>1}
\]
with the following properties. If $H$ has finite height $h$ and rank $r$, then $H_{<1}$ has slope $1-1/h$ and rank $h$, $H_1$ has slope $1$ and rank $r-2h$, and $H_{>1}$ has slope $1+1/h$ and rank $h$. Furthermore, $H_1$ is orthogonal to $H_{<1}\oplus H_{>1}$, and under the pairing $H_{<1}$ and $H_{>1}$ are dual. If $H$ is supersingular, then $H_1=H$ and $H_{<1}=H_{>1}=0$.

Let $X$ be a K3 surface and let $B$ be a crystalline B-field. We will relate the Newton--Hodge decompositions of $\tH(X/W,B)$ and $\H^2(X/W)$.

\begin{proposition}\label{prop:Newton Hodge decomp of twisted Mukai crystal}
    As submodules of $\tH(X/K)$, we have equalities
        \begin{align*}
            \tH(X/W,B)_{<1}&=\H^2(X/W)_{<1}\\
            \tH(X/W,B)_{1}&=\exp(B)\left(\H^0(X/W)\oplus \H^2(X/W)_{1}\oplus\H^4(X/W)\right)\\
            \tH(X/W,B)_{>1}&=\H^2(X/W)_{>1}
        \end{align*}
\end{proposition}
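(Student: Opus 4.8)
The plan is to treat the supersingular and finite-height cases separately. In the supersingular case there is nothing to prove: $\H^2(X/W)$ is then a supersingular K3 crystal, so $\H^2(X/W)_{<1}=\H^2(X/W)_{>1}=0$ and $\H^2(X/W)_1=\H^2(X/W)$, and the same holds for $\tH(X/W)$. By the preceding proposition $\tH(X/W,B)$ is again a supersingular K3 crystal, so its Newton--Hodge decomposition is $0\oplus\tH(X/W,B)\oplus 0$; since $\tH(X/W,B)=\exp(B)\tH(X/W)=\exp(B)\bigl(\H^0(X/W)(-1)\oplus\H^2(X/W)_1\oplus\H^4(X/W)(1)\bigr)$, all three identities hold.

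For the finite-height case the key point I would use is that, although $\exp(B)\colon\tH(X/W)\to\tH(X/W,B)$ is always an isomorphism of $W$-modules, it intertwines the Frobenius operators, and hence is an isomorphism of K3 crystals, precisely when $\phi(B)=B$: one has $\widetilde{\Phi}\circ\exp(B)=\exp(\phi(B))\circ\widetilde{\Phi}$ (this is essentially the identity $\widetilde{\Phi}_B=\exp(\phi(B)-B)\circ\widetilde{\Phi}$ recorded after Definition~\ref{def:twistedK3crystal}, and is checked directly from the formulas for $\widetilde{\Phi}$ and $\exp(B)$ using $\langle\Phi x,\Phi y\rangle=p^2\sigma\langle x,y\rangle$ and $\Phi(B)=p\phi(B)$), and this equals $\exp(B)\circ\widetilde{\Phi}$ exactly when $\phi(B)=B$. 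So the first step is to reduce to $\phi(B)=B$: as observed after Definition~\ref{def:twistedK3crystal}, $\tH(X/W,B)$ as a submodule of $\tH(X/K)$, and hence as a K3 crystal, depends only on $B$ modulo $\H^2(X/W)$, so by Proposition~\ref{prop:description}(1) I may replace $B$ by a representative lying in $\H^2(X,\mathbf{Q}_p(1))$, i.e.\ with $\phi(B)=B$. (The right-hand sides of the first and third identities do not involve $B$, and that of the middle one is to be read for this representative.)

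Assuming now $\phi(B)=B$, the argument is short. The map $\exp(B)$ is an isomorphism of K3 crystals, so it carries the Newton--Hodge decomposition of $\tH(X/W)$ onto that of $\tH(X/W,B)$. Since $\widetilde{\Phi}$ preserves each of the three summands $\H^0(X/W)(-1)$, $\H^2(X/W)$, $\H^4(X/W)(1)$ of $\tH(X/W)$, the first and third being of pure slope $1$ and the second decomposing as $\H^2(X/W)_{<1}\oplus\H^2(X/W)_1\oplus\H^2(X/W)_{>1}$, uniqueness of the Newton--Hodge decomposition forces the decomposition of $\tH(X/W)$ to be $\tH(X/W)_{<1}=\H^2(X/W)_{<1}$, $\tH(X/W)_1=\H^0(X/W)(-1)\oplus\H^2(X/W)_1\oplus\H^4(X/W)(1)$, and $\tH(X/W)_{>1}=\H^2(X/W)_{>1}$. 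Applying $\exp(B)$ gives the middle identity at once. For the outer two, I would note that $\H^2(X,\mathbf{Q}_p(1))$ lies in $\H^2(X/W)_1\otimes\mathbf{Q}_p$, which by the defining properties of a K3 crystal is orthogonal to $\H^2(X/W)_{<1}\oplus\H^2(X/W)_{>1}$; hence $\langle x,B\rangle=0$ for every $x$ in $\H^2(X/W)_{<1}$ or $\H^2(X/W)_{>1}$, and formula~\eqref{eq:exp B} then shows that $\exp(B)$ fixes both $\H^2(X/W)_{<1}$ and $\H^2(X/W)_{>1}$ pointwise. Therefore $\tH(X/W,B)_{<1}=\exp(B)\bigl(\H^2(X/W)_{<1}\bigr)=\H^2(X/W)_{<1}$, and likewise for the slope-$>1$ part.

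The one delicate point — and what I regard as the main obstacle — is precisely that $\exp(B)$ is a morphism of crystals only after normalizing $B$ so that $\phi(B)=B$; for a general B-field it is merely an isometry of $W$-modules, and the submodule $\exp(B)\bigl(\H^0(X/W)(-1)\oplus\H^2(X/W)_1\oplus\H^4(X/W)(1)\bigr)$ need not even be stable under $\widetilde{\Phi}$. The normalization is legitimate because the twisted Mukai crystal depends only on the image of $B$ in $\H^2(X,\mu_{p^\infty})$; getting this reduction right is the substance of the proof, and everything else is bookkeeping with the explicit description of $\exp(B)$ and the slope decomposition of $\H^2(X/W)$.
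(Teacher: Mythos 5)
Your proof is correct and follows essentially the same route as the paper's: trivial supersingular case, reduction to $\phi(B)=B$ via Prop.~\ref{prop:description}(1) and the fact that $\tH(X/W,B)$ depends only on $B$ modulo $\H^2(X/W)$, identification of the Newton--Hodge decomposition of the untwisted $\tH(X/W)$, and the orthogonality of $B$ to the slope $<1$ and $>1$ parts to see that $\exp(B)$ fixes them. Your explicit verification of $\widetilde{\Phi}\circ\exp(B)=\exp(\phi(B))\circ\widetilde{\Phi}$ is a nice touch that the paper leaves implicit, but the substance is identical.
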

\begin{proof}
    If $X$ is supersingular, then $\tH(X/W,B)$ is also supersingular, and the result is trivial. Suppose $X$ has finite height.  Write $\H^i=\H^i(X/W)$. By Prop. \ref{prop:description}, $B$ is congruent modulo $\H^2$ to a B-field $B'$ satisfying $\varphi(B')=B'$. As $\tH(X/W,B)=\tH(X/W,B')$, we may assume without loss of generality that $B$ is fixed by $\varphi$, and in particular $B\in(\H^2)_1\otimes K$. We then have that $\tH(X/W,B)_{<1}=\exp(B)\tH(X/W)_{<1}$, and similarly for the slope $1$ and $>1$ parts. It is immediate that the Newton--Hodge decomposition of $\tH(X/W)$ is given by  $\tH(X/W)_{<1}=(\H^2)_{<1}$, $\tH(X/W)_{1}=\H^0\oplus(\H^2)_1\oplus \H^4$, and $\tH(X/W)_{>1}=(\H^2)_{>1}$. The result follows upon noting that $\H_{1}\otimes K$ is orthogonal to $(\H^2)_{<1}$ and $(\H^2)_{>1}$, and so $\exp (B)(\H^2)_{<1}=(\H^2)_{<1}$ and $\exp(B)(\H^2)_{>1}=(\H^2)_{>1}$.
\end{proof}

Note that if $B$ is a general B-field, the direct sum decomposition of $\tH(X/W,B)_1$ described in the statement of Prop. \ref{prop:Newton Hodge decomp of twisted Mukai crystal} may \textit{not} be preserved by $\tPhi$.

\subsection{Mixed realization}\label{ssec:mixed}

We define B-fields for Brauer classes whose order is not necessarily a prime power. For simplicity we give the definitions only when $\Char k=p>0$.

\begin{definition}\label{def:mixed B field}
    Let $\alpha\in\Br(X)$ be a class of exact order $m$.
    Fix a prime $q$. Let $q^n$ be the largest power of $q$ dividing $m$, and set $t=m/q^n$. If $q=\ell\neq p$, then an $\ell$\textit{-adic B-field lift} of $\alpha$ is an $\ell$-adic B-field lift (in the sense of Def. \ref{def:ladicBfield}) of $t\alpha$. Similarly, if $q=p$, then a \textit{crystalline B-field lift} of $\alpha$ is a crystalline B-field lift (in the sense of Def. \ref{def:crystallineBfield}) of $t\alpha$.
\end{definition}

\begin{definition}\label{def:mixed B field 2}
  Let $\alpha\in\Br(X)$ be a Brauer class. A \textit{mixed B-field lift of} $\alpha$ is a set $\mathbf{B}=\left\{B_{\ell}\right\}_{\ell\neq p}\cup\left\{B_p\right\}$ consisting of a choice of an $\ell$-adic B-field lift $B_{\ell}$ of $\alpha$ for each prime $\ell\neq p$ and a crystalline B-field lift $B_{p}$ of $\alpha$ (in both cases in the sense of Def. \ref{def:mixed B field}).
  
  Given a mixed B-field $\mathbf{B}$, we write $\mathbf{B}^p$ for the component in $\H^2(X,\mathbf{A}_f^p)$, and $\mathbf{B}_p=B_p$ for the component in $\ms B(X)\subset\H^2(X/K)$.
\end{definition}

We say a few words to explain this definition. Let $\mu_{\ast}=\bigcup_{m}\mu_m$ be the subsheaf of torsion sections of $\mathbf{G}_m$. Let $p_0=p$ and let $p_1,p_2,\dots$ be an enumeration of the remaining primes. We have a canonical isomorphism
\[
    \mu_{p_0^{\infty}}\oplus\mu_{p_1^{\infty}}\oplus\mu_{p_2^{\infty}}\dots\cong\mu_{\ast}
\]
given by multiplication. As described in the introduction, we have
\[
    \H^2(X,\mathbf{A}_f^p)={\prod_{i\geq 1}}'\H^2(X,\mathbf{Q}_{p_i}(1))
\]
where the restricted product on the right hand side consists of tuples $\left\{B_i\right\}$ such that for all but finitely many $i$ we have $B_i\in\H^2(X,\mathbf{Z}_{p_i}(1))$. A mixed B-field lift of a class $\alpha$ is a preimage of $\alpha$ under the composition.
\begin{equation}\label{eq:b field diagram, mixed case}
    \begin{tikzcd}
        \ms B(X)\times \H^2(X,\mathbf{A}_f^p)\arrow[two heads]{r}&\bigoplus_i\H^2(X,\mu_{p_i^{\infty}})\arrow{r}{\sim}&\H^2(X,\mu_{\ast})\arrow[two heads]{r}&\Br(X)
    \end{tikzcd}
\end{equation}
which we denote by $\mathbf{B}\mapsto\alpha_{\mathbf{B}}$. Here, the right horizontal map is induced by the inclusion $\mu_{\ast}\subset\mathbf{G}_m$.

\section{Twisted Chern characters and action on cohomology}\label{sec:twisted Chern characters and action on cohomology}

Let $X$ be a smooth projective variety over a field $k$ and let $\alpha\in\Br(X)$ be a torsion Brauer class. In this section we will define a certain twisted Chern character for $\alpha$-twisted sheaves on $X$. This will be a map from the Grothendieck group of coherent $\alpha$-twisted sheaves on $X$ to the rational Chow group $A^*(X)_{\mathbf{Q}}$ of $X$. There are multiple inequivalent definitions of twisted Chern characters appearing in the literature, several of which are reviewed and compared in \cite[\S3]{HSApp}. These all seem to be essentially equivalent in practice. We will the notion appearing in \cite{LMS,Bragg-Derived-Equiv,BL}, which is also used in \cite[\S2]{Huy}. This formulation seems to us to be the most flexible, and has a uniform interaction with B-fields in each of the contexts we have considered. We remark that our definition below is described in terms of cocycles in \cite[App. A.1]{Bragg-Derived-Equiv} and is compared to the twisted Chern characters of Huybrechts and Stellari \cite{HS} in \cite[App. A.2]{Bragg-Derived-Equiv}.



Suppose that $n\alpha=0$ for some positive integer $n$. To define our twisted Chern character we will make an auxillary choice of a preimage $\alpha'\in\H^2(X,\mu_n)$ of $\alpha$ under the surjection
\[
    \H^2(X,\mu_n)\twoheadrightarrow\Br(X)[n]
\]
induced by the inclusion $\mu_n\subset\mathbf{G}_m$.

We choose a $\mathbf{G}_m$--gerbe $\pi:\mathscr{X}\to X$ with cohomology class $\alpha$, and identify the category of $\alpha$-twisted sheaves on $X$ with the category of coherent sheaves on $\mathscr{X}$ of weight 1. We also choose a $\mu_n$-gerbe $\mathscr{X}'\to X$ with cohomology class $\alpha'$, and an isomorphism $\mathscr{X}'\wedge_{\mu_n}\mathbf{G}_m\cong\mathscr{X}$ (see \cite[Chapter 12.3]{MR3495343}). There is then a canonical $n$-fold twisted invertible sheaf $\mathscr{L}$ on $\mathscr{X}$. Given a locally free $\alpha$-twisted sheaf $\ms E$ of finite rank, we note that $\ms E^{\otimes n}\otimes\mathscr{L}^{\vee}$ is a 0-twisted sheaf on $\mathscr{X}$. We define
\[
    \ch^{\alpha'}(\ms E)=\sqrt[n]{\ch(\pi_*(\ms E^{\otimes n}\otimes\mathscr{L}^{\vee}))}
\]
where the $n$-th root is chosen so that $\rk$ is positive. One can check that $\ch^{\alpha'}$ depends only on $\alpha'$, and not on the choice of gerbes or on $\ms L$. We note that $\ch_0$ and $\ch_1$ are given by
\begin{equation}\label{eq:formula for twisted Chern characters}
    \ch^{\alpha'}(\ms E)=(\rk(\ms E),\pi_*(\det(\ms E)\otimes\ms L^{\vee}),\dots)
\end{equation}
Assume that $\mathscr{X}$ has the resolution property, so that every $\alpha$-twisted sheaf admits a finite resolution by locally free $\alpha$-twisted sheaves. We then obtain by additivity a map
\[
    \ch^{\alpha'}:K(X,\alpha)\to A^*(X)_{\mathbf{Q}}
\]
where $K(X,\alpha)$ denotes the Grothendieck group of the category of $\alpha$-twisted sheaves. We note that this definition is purely algebraic, and hence makes sense in any characteristic. Furthermore, we did not need $\alpha$ to be topologically trivial, only torsion.

Suppose that $X$ is a K3 surface. We explain the relationship between the choice of $\alpha'$ and the choice of a B-field lift of $\alpha$. We first observe that, in any of the contexts we have considered, a choice of B-field lift for $\alpha$ determines in particular a choice of preimage of $\alpha$ in $\H^2(X,\mu_n)$. More precisely, a choice of singular B-field lift (if the ground field is the complex numbers) or of a mixed B-field lift determines a preimage in $\H^2(X,\mu_n)$. If $\alpha$ is killed by $\ell^n$, then a choice of $\ell$-adic B-field lift determines a preimage in $\H^2(X,\mu_{\ell^n})$, and if $\alpha$ is killed by $p^n$ a choice of crystalline B-field lift determines a preimage in $\H^2(X,\mu_{p^n})$. In any of these situations, we write
\[
    \ch^{B}(\ms E)=\ch^{\alpha'}(\ms E)
\]
where $\alpha'$ is the induced preimage. We also set
\[
    v^B(\ms E)=\ch^B(\ms E).\sqrt{\td(X)}
\]


\subsection{Twisted Chern characters on twisted K3 surfaces}
\label{sec: twisted Chern character}
\begin{definition}
  We assume now that $k$ is an algebraically closed field of characteristic $p>0$. If $X$ is a K3 surface over $k$, we define the \textit{extended N\'{e}ron-Severi group} of $X$ by
  \[
    \tN(X)=\langle (1,0,0)\rangle\oplus N(X)\oplus\langle (0,0,1)\rangle =A^*(X)\subset A^*(X)_{\mathbf{Q}}
  \]
\end{definition}
As the Chern characters of a coherent sheaf on a K3 surface are integral, the extended N\'{e}ron-Severi group is a natural recipient for the Chern class map, and in fact the Chern class map
\[
  \ch\colon K(X)\to\tN(X)
\]
is an isomorphism. Let $\alpha\in\Br(X)$ be a Brauer class.
For two $\alpha$-twisted sheaves $\mathscr{E},\mathscr{F}$, we have the Riemann--Roch formula
\[
    \chi(\mathscr{E},\mathscr{F})=-\langle v^B(\mathscr{E}),v^B(\mathscr{F})\rangle
\]
We will identify a subgroup of $\tN(X)\otimes\bQ$ which contains the image of the twisted Chern class map
\[
  \ch^B\colon K(X,\alpha)\to\tN(X)\otimes\bQ
\]
\begin{definition}\label{def:twisted NS lattice}
  If $B$ is an $\ell$-adic B-field, we define the $\ell$\textit{-adic twisted N\'{e}ron--Severi group} by
  \[
   \tN(X,B_\ell)=\left(\tN(X)\otimes\mathbf{Z}\left[\ell^{-1}\right]\right)\cap \tH(X,\bZ_{\ell},B_{\ell})
  \]
  If $\mathrm{char\,} k = p$ and $B$ is a crystalline B-field, we define the \textit{crystalline twisted N\'{e}ron--Severi group} by
  \[
    \tN(X,B_p)=\left(\tN(X)\otimes\mathbf{Z}\left[p^{-1}\right]\right)\cap \tH(X/W,B_p)
  \]
  and if $\mathbf{B}=\left\{B_\ell\right\}_{\ell\neq p}\cup\left\{B_p\right\}$ is a mixed B-field, we define the \textit{mixed twisted N\'{e}ron-Severi group} by
  \[
    \tN(X,\mathbf{B})=\left(\bigcap_{\ell \neq p}\tN(X,B_{\ell})\right)\cap \tN(X,B_p)
  \]
  where the intersection is taken inside of $\tN(X)\otimes\bQ$. Note that for all but finitely many primes $q$ the B-field $B_q$ is integral. Hence, the intersection defining $\tN(X,\mathbf{B})$ is finite.
\end{definition}

The restriction of the Mukai pairing on $\tN(X)\otimes\mathbf{Q}$ to the $\ell$-adic twisted N\'{e}ron-Severi group takes values in $\mathbf{Z}[\ell^{-1}]\cap\mathbf{Z}_\ell=\mathbf{Z}$. Similarly, the Mukai pairing restricts to an integral pairing on the crystalline twisted N\'{e}ron--Severi group.
The following is the crucial integrality result for twisted Chern characters, generalizing the fact that the Chern characters of usual sheaves on K3 surfaces are integral.

\begin{proposition}\label{prop:twistedcherncharacter}
  Let $X$ be a K3 surface and $\mathbf{B}$ a mixed (resp. $\ell$-adic, resp. crystalline) B-field lift of a Brauer class $\alpha\in\Br(X)$. For any twisted sheaf $\sE\in\Coh^{(1)}(X,\alpha)$, the twisted Chern character $\ch^{\mathbf{B}}(\sE)$ lies in the mixed (resp. $\ell$-adic, resp. crystalline) twisted N\'{e}ron-Severi group $\tN(X,\mathbf{B})$.
\end{proposition}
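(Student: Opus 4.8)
The plan is to reduce the mixed statement to the two single-prime statements and then prove each of those by a localization-and-reduction argument. For the reduction: by definition $\tN(X,\mathbf{B}) = \bigl(\bigcap_{\ell\neq p}\tN(X,B_\ell)\bigr)\cap \tN(X,B_p)$ inside $\tN(X)\otimes\bQ$, and $\ch^{\mathbf{B}}(\sE)$ is an element of $\tN(X)\otimes\bQ$ (it lies in $A^*(X)_\bQ$, and its components are constrained to the N\'eron--Severi part). So it suffices to show that for each prime $q$, the image of $\ch^{\mathbf{B}}(\sE)$ in $\tN(X)\otimes\bQ$ lies in $\tN(X,B_q)$, i.e.\ lies in $\bigl(\tN(X)\otimes\bZ[q^{-1}]\bigr)\cap \tH(X,\bZ_q,B_q)$ (with the crystalline Mukai lattice when $q=p$). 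Since $\ch^{\mathbf{B}}(\sE)$ has only $q$-power denominators when viewed $q$-adically — because $n\alpha=0$ forces the twisted Chern character, which involves an $n$-th root, to have denominators dividing (a power of) $n$, and the mixed B-field is built prime-by-prime — the content is the statement that $\exp(-B_q)\ch^{B_q}(\sE)$ is integral in $q$-adic (resp.\ crystalline) cohomology.

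For the $\ell$-adic case ($q=\ell\neq p$): choose $\mu_{\ell^m}$-data $\sX'\to X$ with class $\alpha'$ determined by $B_\ell$, the associated $m$-fold twisted sheaf $\sL$, and a locally free $\alpha$-twisted $\sE$. Then $\sE^{\otimes \ell^m}\otimes\sL^\vee$ is an honest sheaf on $\sX$, so its Chern character is integral in $A^*(X)$, hence in $\tN(X)$, hence $\ell$-adically integral in $\tH(X,\bZ_\ell)$. The key computation is that $\ch(\pi_*(\sE^{\otimes \ell^m}\otimes\sL^\vee)) = \exp(\ell^m B_\ell)\cdot\bigl(\ch^{\alpha'}(\sE)\bigr)^{\ell^m}$ up to the usual Mukai-vector bookkeeping — this is exactly the compatibility of the twisted Chern character with the B-field isometry $\exp(B)$ of~\eqref{eq:exp B}, and is where one uses that $B_\ell$ was chosen to lift $\alpha'$ via the map~\eqref{eq:etalediagram}. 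Taking $\ell^m$-th roots (with positive rank) and rearranging gives $\ch^{\alpha'}(\sE)=\ch^{B_\ell}(\sE)\in \exp(B_\ell)\bigl(\tH(X,\bZ_\ell)\bigr)=\tH(X,\bZ_\ell,B_\ell)$, and combined with integrality away from $\ell$ (the untwisted part of the Chern character is genuinely integral) we land in $\tN(X,B_\ell)$. Additivity via the resolution property extends this from locally free twisted sheaves to all of $K(X,\alpha)$.

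For the crystalline case ($q=p$) the argument is formally identical, using the $\mu_{p^n}$-gerbe, the $d\log$ map into $\H^2(X/W_n)$, and Definition~\ref{def:crystallineBfield}: the identity $\ch(\pi_*(\sE^{\otimes p^n}\otimes\sL^\vee))=\exp(p^nB_p)\cdot(\ch^{\alpha'}(\sE))^{p^n}$ holds because $\pi_n(p^nB_p)=d\log(\alpha')$, so that after extracting $p^n$-th roots one finds $\ch^{B_p}(\sE)\in\exp(B_p)\tH(X/W)=\tH(X/W,B_p)$; intersecting with $\tN(X)\otimes\bZ[p^{-1}]$ gives membership in $\tN(X,B_p)$. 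The main obstacle, and the step deserving care, is precisely the identity expressing $\ch(\pi_*(\sE^{\otimes N}\otimes\sL^\vee))$ in terms of $\exp(NB)$ and the $N$-th power of the twisted Chern character: one must check that the projection formula, the multiplicativity of $\ch$ under tensor products of honest sheaves, and the normalization of $\ch$ of the line-bundle-like class $\det(\sL)$ (whose first Chern class is $N$ times the B-field representative modulo the integral ambiguity) all fit together so that the B-field appearing is exactly the chosen lift and not some other representative — equivalently, that the construction is independent of the choices of gerbe and $\sL$, which is asserted in the text and which we may invoke. Once that identity is in hand, extracting roots and matching integral structures is routine, as is the passage to the mixed case and to $K(X,\alpha)$ by additivity.
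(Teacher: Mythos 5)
Your high-level structure — reduce to a single prime $q$, and show that $\exp(-B_q)\ch^{B_q}(\sE)$ is $q$-adically integral — is the right one, and the reduction itself is fine since denominators in $\ch^{\alpha'}(\sE)$ can only involve primes dividing $n$. But the central identity you invoke,
\[
  \ch\bigl(\pi_*(\sE^{\otimes \ell^m}\otimes\sL^\vee)\bigr) \;=\; \exp(\ell^m B_\ell)\cdot\bigl(\ch^{\alpha'}(\sE)\bigr)^{\ell^m},
\]
contradicts the definition. With $n=\ell^m$, the twisted Chern character is \emph{defined} by $\ch^{\alpha'}(\sE)=\sqrt[n]{\ch(\pi_*(\sE^{\otimes n}\otimes\sL^\vee))}$, so the left-hand side equals $(\ch^{\alpha'}(\sE))^n$ identically. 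Your identity would then force $\exp(nB_\ell)=1$, i.e.\ $B_\ell=0$. The subsequent step — ``extracting $\ell^m$-th roots'' to conclude $\ch^{\alpha'}(\sE)\in\exp(B_\ell)\tH(X,\bZ_\ell)$ — therefore has no content, and it also smuggles in the false premise that $\ch^{\alpha'}(\sE)$ is itself $\ell$-integral.

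There is a second, independent gap even if the identity is replaced by the true statement. One \emph{can} show directly that $\bigl(\exp(-B)\ch^{\alpha'}(\sE)\bigr)^n = \ch(\pi_*(\sE^{\otimes n}\otimes\sL^\vee))\cdot e^{-nB}$ is integral (both factors are: the first is an honest Chern character, and $e^{-nB}=(1,-nB,(nB)^2/2)$ is integral because $nB$ is and the K3 intersection form is even). But integrality of the $n$-th power does not imply integrality of the element itself, even when the rank is a positive integer: for $w=(1,d,t)$ one has $w^2=(1,2d,2t+d^2)$, and integrality of $2d$ and $2t+d^2$ does not force integrality of $d,t$. So the passage from the $n$-th power back to $\exp(-B)\ch^{\alpha'}(\sE)$ is exactly where the real work lives, and your argument does not address it. The actual proof (given in \cite[App.~A]{Bragg-Derived-Equiv}, which the paper cites verbatim) is a cocycle-level computation with the gerbe data that controls the individual graded pieces of $\ch^{\alpha'}(\sE)$ rather than only its $n$-th power; this is the step your sketch replaces with the incorrect identity.
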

\begin{proof}
  This is proved in Appendix A of \cite{Bragg-Derived-Equiv} (the quoted statement is written for a crystalline B-field of the form $B=\frac{a}{p}$, but the proof applies essentially unchanged).
\end{proof}

\begin{remark}
    The analog of Prop. \ref{prop:twistedcherncharacter} for the Hodge realization follows immediately from the existence of an invertible twisted sheaf in the differentiable category (in fact, this existence is used to define twisted Chern characters in \cite{HS}). The $\ell$-adic case is proved in \cite[Lem. 3.3.7]{LMS} by lifting to characteristic 0.
\end{remark}

\begin{proposition}\label{prop:twisted Chern character is surjective}
    For any mixed B-field lift of $\alpha$, the twisted Chern character
    \[
        \ch^{\mathbf{B}}:K(X,\alpha)\to\tN(X,\mathbf{B})
    \]
    is surjective.
\end{proposition}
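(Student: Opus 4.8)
The plan is to reduce the surjectivity of $\ch^{\mathbf{B}}$ to a statement about moduli of twisted sheaves, exactly as in the untwisted case one uses that every element of $\tN(X)$ with positive rank and appropriate discriminant is realized by an actual sheaf. First I would analyze the target lattice $\tN(X,\mathbf{B})$ prime by prime: by Definition \ref{def:twisted NS lattice} it is the intersection over all primes $q$ of the local lattices $\tN(X,B_q)$ inside $\tN(X)\otimes\bQ$, and for all but finitely many $q$ one has $B_q$ integral so $\tN(X,B_q)=\tN(X)\otimes\bZ_q$ (more precisely contributes no condition beyond integrality at $q$). Thus an element $v\in\tN(X,\mathbf{B})$ is determined by finitely many local conditions, one at each bad prime $\ell\neq p$ where $v\in\exp(B_\ell)\tH(X,\bZ_\ell)$, and one at $p$ where $v\in\tH(X/W,B_p)$. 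The key point is that $\ch^{\mathbf{B}}$ and $\ch^{B_q}$ differ only by the corresponding $\exp$-twist, which is an isometry, so surjectivity onto $\tN(X,\mathbf{B})$ will follow from controlling the image rank-by-rank.

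Next I would treat the rank-zero part separately: elements of the form $(0,D,s)$ with $D\in N(X)$. For $D$ the class of a line bundle $L$ and suitable $s$ one can produce such classes directly from pushforwards of twisted line bundles along the chosen $\mu_n$-gerbe, so the rank-zero sublattice (or enough of it) is already in the image; this is essentially the content of formula \eqref{eq:formula for twisted Chern characters}. The substantive case is positive rank. Here I would fix $v=(r,\xi,s)\in\tN(X,\mathbf{B})$ with $r>0$ and, after possibly modifying $v$ by a rank-zero class already known to be in the image (i.e.\ tensoring with line bundles, using the $\exp$-action of $N(X)$ on $\tN(X)$), reduce to the case where $v$ is a primitive Mukai vector with $\langle v,v\rangle\geq -2$ and with, say, $\xi$ the class of an ample line bundle up to the twist. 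Then the moduli space $M(v)$ of $\mathbf{B}$-twisted stable sheaves with Mukai vector $v$ is nonempty — this is where one invokes the existence results for moduli of twisted sheaves (Riemann--Roch $\chi(\sE,\sE)=-\langle v^B(\sE),v^B(\sE)\rangle$ already appears in the excerpt and governs the expected dimension) — and any point of $M(v)$ is a twisted sheaf $\sE$ with $v^{\mathbf{B}}(\sE)=v$, hence $\ch^{\mathbf{B}}(\sE)$ the prescribed class after dividing by $\sqrt{\td(X)}$.

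The remaining work is a strong-approximation/local-global argument to show that every $v\in\tN(X,\mathbf{B})$ of positive rank can indeed be brought into such a "good" form by the Grothendieck group's rank-zero elements together with spherical twists or tensoring by line bundles, simultaneously at all the finitely many bad primes. Concretely: the group generated by the rank-zero classes in the image acts on $\tN(X,\mathbf{B})$, and one wants to show that each positive-rank coset contains a class to which the moduli-space existence theorem applies (primitive, $\langle v,v\rangle\geq -2$, polarizable). I expect this coordination across the bad primes to be the main obstacle, since, as the introduction emphasizes, a Mukai vector is not an adelic object and cannot be prescribed locally; one must use that $\tN(X,\mathbf{B})$ is a genuine finite-rank $\bZ$-lattice and argue globally. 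I would handle this by choosing an auxiliary ample class, using the Witt-cancellation/strong-approximation input alluded to in \S\ref{sec:twisted Mukai lattices} to adjust $v$ modulo the image lattice, and then quoting the nonemptiness of moduli of twisted sheaves in positive characteristic. Once a single $v$ is realized, additivity of $\ch^{\mathbf{B}}$ on $K(X,\alpha)$ propagates surjectivity to the subgroup generated, which by the lattice analysis is all of $\tN(X,\mathbf{B})$.
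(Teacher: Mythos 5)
Your proposal takes a genuinely different and much heavier route than the paper: the paper's proof is a one-line citation to \cite[Prop.~1.4]{HS}, whose argument is an elementary computation in the Grothendieck group and the twisted N\'{e}ron--Severi lattice. One observes that the image of $\ch^{\mathbf{B}}$ is automatically a subgroup of $\tN(X,\mathbf{B})$ which contains $(0,0,1)$ (from the structure sheaf of a point), the class of a locally free $\alpha$-twisted sheaf of minimal rank, and is closed under tensoring by line bundles; combining these with twisted sheaves supported on curves one generates the whole lattice directly. No moduli spaces, stability conditions, or adelic approximation enter.

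Beyond being overkill, the sketch has concrete gaps. First, you aim to realize positive-rank classes by actual stable twisted sheaves, but the assertion is only about the image of a group homomorphism out of $K(X,\alpha)$, which is a subgroup; differences and cones are available for free, so invoking nonemptiness of moduli of stable twisted sheaves (itself a nontrivial input in positive characteristic) is both unnecessary and insufficient as a proof of group-level surjectivity, as you yourself acknowledge at the end without reconciling the tension. Second, the reduction to primitive Mukai vectors with $\langle v,v\rangle\geq -2$ by adding known rank-zero classes is not justified and does not obviously succeed. Third, the rank-zero step is not ``essentially the content of~\eqref{eq:formula for twisted Chern characters}'': that formula just records $\ch_0,\ch_1$ of a twisted sheaf. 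The operations you list ($\exp(L)$ acting on a single positive-rank class and adding $(0,0,1)$) only produce classes $(0,r_0L,*)$ with $r_0$ the minimal twisted rank, not the full rank-zero sublattice $(0,L,*)$ with $L\in N(X)$ arbitrary; filling this in is precisely where the HS-style argument constructs twisted sheaves supported on divisors. Finally, strong approximation is irrelevant here: $\tN(X,\mathbf{B})$ is a concrete $\bZ$-lattice of finite rank, and one is showing a subgroup equals the whole lattice by explicit global constructions, not by a local-to-global patching problem for Mukai vectors (that issue arises elsewhere in the paper, in the proof of Thm~\ref{thm: existence}, not here).
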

\begin{proof}
    The analogous result over the complex numbers is \cite[Prop. 1.4]{HS}. The proof in our case is identical, up to our differences in convention.
\end{proof}

\subsection{Action on cohomology}
\label{sec: action on cohomology}
Let $(X,\alpha)$ and $(Y,\beta)$ be twisted K3 surfaces over $k$. Choose mixed B-field lifts $\mathbf{B}$ of $\alpha$ and $\mathbf{B}'$ of $\beta$. As above, we define the twisted Chern character map
\[
    \ch^{-\mathbf{B}\boxplus\mathbf{B}'}\colon K(X\times Y,-\alpha\boxplus\beta)\to\tN(X\times Y)\otimes\bQ
\]
and set
\[
   v^{-\mathbf{B}\boxplus\mathbf{B}'}(\_)=\ch^{-\mathbf{B}\boxplus\mathbf{B}'}(\_).\sqrt{\td(X\times Y)}
\]
Let $\Phi_P:D^b(X,\alpha)\to D^b(Y,\beta)$ be a Fourier--Mukai equivalence. We consider the map
\begin{equation}\label{eq:map induced by FM transform}
    \Phi_{v^{-\mathbf{B}\boxplus\mathbf{B}'}(P)}:=\pi_{2*}(\pi_1^*(\_)\cup v^{-\mathbf{B}\boxplus\mathbf{B}'}(P)):\H^*(X)_{\mathbf{Q}}\to\H^*(Y)_{\mathbf{Q}}
\end{equation}
where $\pi_1:X\times Y\to X$ and $\pi_2:X\times Y\to Y$ are the respective projections. Using the same formula, we define maps $\Phi^{\ell}_{v^{-B_\ell\boxplus B'_\ell}(P)}$ on the rational $\ell$--adic cohomologies and $\Phi^{\cris}_{v^{-B_p\boxplus B'_p}(P)}$ on rational crystalline cohomology. By definition, these maps are equal to the maps given by restricting~\eqref{eq:map induced by FM transform} to the $\ell$--adic and crystalline components of $\H^*_{\mathbf{Q}}$.
\begin{theorem}
\label{thm:action on cohomology}
    Let $\Phi_P:D^b(X,\alpha)\to D^b(Y,\beta)$ be a Fourier--Mukai equivalence. The map~\eqref{eq:map induced by FM transform} restricts to an isomorphism
    \begin{equation}\label{eq:map induced by FM transform at ell}
            \Phi^{\ell}_{v^{-B_\ell\boxplus B'_\ell}(P)}:\tH(X,\mathbf{Z}_\ell,B_\ell)\to\tH(Y,\mathbf{Z}_\ell,B'_\ell)
    \end{equation}
    which is compatible with the Mukai pairings for each $\ell\neq p$, to an isomorphism
    \begin{equation}\label{eq:map induced by FM transform at p}
            \Phi^{\cris}_{v^{-B_p\boxplus B'_p}(P)}:\tH(X/W,B_p)\to\tH(Y/W,B'_p)
    \end{equation}
    of K3 crystals (that is, an isomorphism of $W$-modules which is compatible with the pairing and Frobenius operators), and to an isometry
    \begin{equation}\label{eq:map induced by FM transform on NS}
        \Phi_{v^{-\mathbf{B}\boxplus\mathbf{B}'}(P)}:\tN(X,\mathbf{B})\xrightarrow{\sim}\tN(Y,\mathbf{B}')
    \end{equation}
\end{theorem}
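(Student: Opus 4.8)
The plan is to deduce the theorem by reducing all three assertions to known statements about untwisted Fourier--Mukai equivalences, namely the corresponding results of \cite{LO1} and the integrality of (untwisted) Chern characters. The bridge is the standard device, going back to \cite{HS} and used in \cite{Bragg-Derived-Equiv,BL}: a twisted kernel $P\in D^b(X\times Y,-\alpha\boxplus\beta)$ together with a choice of lift $(-\alpha\boxplus\beta)'\in\H^2(X\times Y,\mu_n)$ produces, via the $\mu_n$-gerbe construction recalled in \S\ref{sec:twisted Chern characters and action on cohomology}, an \emph{honest} object $\widetilde P$ on an appropriate gerbe whose (untwisted) Chern character recovers $v^{-\mathbf B\boxplus\mathbf B'}(P)$ after the rescaling by the $n$-th root. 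Concretely, the twisted Chern character satisfies the \emph{multiplicativity} $\ch^{-\mathbf B\boxplus\mathbf B'}=\exp(-\mathbf B\boxplus\mathbf B')\cdot\ch$ applied to a genuine class, so that conjugating by $\exp(B)$, $\exp(B')$ converts statements about $\widetilde\H(X,\mathbf Z_\ell,B_\ell)=\exp(B_\ell)\widetilde\H(X,\mathbf Z_\ell)$ into statements about the untwisted lattice $\widetilde\H(X,\mathbf Z_\ell)$.

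\textbf{Step 1: the $\ell$-adic statement.} Fix $\ell\neq p$. The key point is that the twisted sheaf $P$, base-changed along a suitable \'etale cover trivializing the $\mu_{\ell^n}$-part of the gerbe, differs from an actual complex by a Brauer twist whose $\ell$-adic Chern character is exactly $\exp(B_\ell\boxplus(-B'_\ell))$ (up to signs fixed by our conventions). Since the untwisted $\ell$-adic Mukai realization of an (ordinary) Fourier--Mukai equivalence is an isometry $\widetilde\H(X,\mathbf Z_\ell)\iso\widetilde\H(Y,\mathbf Z_\ell)$ compatible with Mukai pairings --- this is the $\ell$-adic incarnation of the main theorem of \cite{LO1}, or can be obtained by lifting to characteristic $0$ and invoking Orlov --- conjugating by $\exp(\pm B_\ell)$, $\exp(\pm B'_\ell)$ yields~\eqref{eq:map induced by FM transform at ell}. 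Compatibility with the Mukai pairing is automatic because $\exp(B)$ is an isometry by~\eqref{eq:exp B} and the untwisted map is an isometry. One must check that the resulting map is independent of the auxiliary choices, which follows from the fact (noted in \S\ref{sec:twisted Chern characters and action on cohomology}) that $\ch^{\alpha'}$ depends only on the chosen preimage in $\H^2(-,\mu_n)$, i.e. only on the mixed B-field data.

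\textbf{Step 2: the crystalline statement.} The argument is the same at $p$, with the difference that one must additionally track the Frobenius. The untwisted crystalline Mukai realization of a Fourier--Mukai equivalence is an isomorphism of K3 crystals $\widetilde\H(X/W)\iso\widetilde\H(Y/W)$ by \cite{LO1} (this is how the Mukai crystal is shown to be a K3 crystal in the first place), so it commutes with $\widetilde\Phi$. Conjugating by $\exp(\pm B_p)$, $\exp(\pm B'_p)$ carries $\widetilde\H(X/W)$ to $\widetilde\H(X/W,B_p)$; that this conjugate \emph{still} commutes with $\widetilde\Phi$ is precisely the content of the theorem (stated just before) that $\widetilde\H(X/W,B_p)$ is Frobenius-stable inside $\widetilde\H(X/K)$, combined with the observation that the rational map~\eqref{eq:map induced by FM transform at p} is visibly $\widetilde\Phi$-equivariant (being induced by a cohomological correspondence with algebraic kernel, whose crystalline cycle class is Frobenius-equivariant up to the appropriate twist). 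So the restriction to the integral twisted Mukai crystals is an isomorphism of K3 crystals. Integrality --- that the map lands in $\widetilde\H(Y/W,B'_p)$ rather than merely $\widetilde\H(Y/K)$ --- comes from Prop.~\ref{prop:twistedcherncharacter} applied to the kernel on $X\times Y$, exactly as in \cite{LO1} for the untwisted case: one expresses $\Phi^{\cris}_{v(P)}(x)=\pi_{2*}(\pi_1^*x\cup v^{-B_p\boxplus B'_p}(P))$ and uses that $v^{-B_p\boxplus B'_p}(P)\in\widetilde N(X\times Y,-B_p\boxplus B'_p)$ together with the compatibility of the twisted Mukai lattice of a product with those of the factors.

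\textbf{Step 3: the N\'eron--Severi statement and the main obstacle.} The isometry~\eqref{eq:map induced by FM transform on NS} is obtained by intersecting: $\widetilde N(X,\mathbf B)=\widetilde N(X)_{\mathbf Q}\cap\bigl(\bigcap_{\ell}\widetilde\H(X,\mathbf Z_\ell,B_\ell)\bigr)\cap\widetilde\H(X/W,B_p)$ inside $\widetilde N(X)_{\mathbf Q}$, and the rational map~\eqref{eq:map induced by FM transform} preserves $\widetilde N(-)_{\mathbf Q}$ because it is induced by an algebraic correspondence (it sends algebraic classes to algebraic classes, and $\ch^{\mathbf B}$ of a twisted sheaf is rational-algebraic by construction). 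Since by Steps 1--2 it carries each $\ell$-adic and the crystalline twisted lattice isomorphically to its counterpart on $Y$, it carries the intersection to the intersection, giving~\eqref{eq:map induced by FM transform on NS}; it is an isometry because the Mukai pairing is preserved on each factor. The part I expect to require the most care is \textbf{Step 2's Frobenius compatibility}: one has to be scrupulous about the Tate twists in the definition of $\widetilde\Phi(a,b,c)=(p\sigma a,\Phi b,p\sigma c)$ and about the fact that for a \emph{general} (not $\varphi$-fixed) B-field the decomposition of $\widetilde\H(X/W,B_p)_1$ in Prop.~\ref{prop:Newton Hodge decomp of twisted Mukai crystal} is not $\widetilde\Phi$-stable, so one cannot naively reduce to the case $\varphi(B)=B$ unless $X$ has finite height; in the supersingular case one instead leans on the description of $\mathscr B_1(X)$ via Prop.~\ref{prop:description1} and the fact, recalled above, that up to isomorphism of K3 crystals $\widetilde\H(X/W,B_p)$ depends only on $\alpha_{B_p}$, so one may choose a convenient representative. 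A secondary subtlety, common to all three parts, is bookkeeping the sign conventions so that the kernel $P$ on $X\times Y$ with B-field $-\mathbf B\boxplus\mathbf B'$ really does induce the map sending $\widetilde\H(X,B)\to\widetilde\H(Y,B')$ and not its inverse or dual; this is handled exactly as in \cite[\S2]{Huy} and \cite{LO1}.
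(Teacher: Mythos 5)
Your Steps 2 and 3 follow essentially the route the paper takes (which it delegates to \cite[\S 3.4 and App.~A]{Bragg-Derived-Equiv}): compatibility of the rational map with the pairing and, crystallinely, with Frobenius comes from its being induced by an algebraic correspondence; integrality comes from Prop.~\ref{prop:twistedcherncharacter} applied to the kernel on $X\times Y$ together with a convolution computation, with the inverse kernel giving bijectivity; and the N\'eron--Severi statement follows by intersecting, since the correspondence preserves algebraic classes. The paper additionally covers small $p$ by lifting the equivalence to characteristic $0$ via \S\ref{sec:lifting isogenies}, a point you do not address.

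The genuine gap is in Step 1. There is no untwisted Fourier--Mukai equivalence to which one can apply \cite{LO1} and then conjugate by $\exp(\pm B_\ell)$: when $\alpha$ or $\beta$ is nonzero, $X$ and $Y$ need not be untwisted derived partners at all, and the twisted kernel $P$ has no untwisted Chern character on $X\times Y$. Your proposed mechanism --- pass to an \'etale cover trivializing the gerbe so that $P$ ``differs from an actual complex by a Brauer twist with Chern character $\exp(B_\ell\boxplus(-B'_\ell))$'' --- is the Hodge-theoretic argument of \cite{HS}, which depends on the existence of an invertible twisted sheaf in the differentiable category; algebraically an invertible $\alpha$-twisted sheaf exists only when $\alpha=0$, and after an \'etale base change trivializing the gerbe you are working on the cover, not on $X\times Y$, so no honest complex with the stated Chern character is produced. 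This is precisely why the $\ell$-adic integrality of twisted Chern characters is proved in \cite[Lem.~3.3.7]{LMS} by lifting to characteristic $0$ rather than by a direct trivialization. The $\ell$-adic part of the theorem should therefore be handled exactly as you handle the crystalline part --- integrality of $v^{-B_\ell\boxplus B'_\ell}(P)$ in the twisted N\'eron--Severi lattice of the product plus the convolution identity --- or by lifting the whole twisted equivalence to characteristic $0$.
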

\begin{proof}
    By definition, the map~\eqref{eq:map induced by FM transform at ell} is equal to the correspondence induced by the cycle $v^{-B_\ell\boxplus B'_\ell}(P)$, and the map~\eqref{eq:map induced by FM transform at p} is equal to the correspondence induced by the cycle $v^{-B_p\boxplus B'_p}(P)$. The compatibility with the pairing, and in the crystalline case, with the Frobenius, is proved exactly as in \cite[\S 3.4]{Bragg-Derived-Equiv}. It remains to show that the correspondences preserve the integral structures. Under the assumption that $p\geq 5$, this is shown in \cite[App. A]{Bragg-Derived-Equiv}. The result in general can be shown by lifting to characteristic 0, using the techniques of the following section. We omit further details. This proves the claims regarding~\eqref{eq:map induced by FM transform at ell} and~\eqref{eq:map induced by FM transform at p}. To prove the claimed properties of~\eqref{eq:map induced by FM transform on NS}, note that the indicated correspondence preserves the subgroups of algebraic cycles, and so restricts to an isomorphism $\tN(X)\otimes\mathbf{Q}\xrightarrow{\sim}\tN(Y)\otimes\mathbf{Q}$. The result then follows from the previous claims.
\end{proof}

\section{Rational Chow motives and isogenies}\label{ssec:rational chow motives}

Given a smooth proper variety $X$ over and algebraically closed field $k$, we let $\fh(X)$ denote its rational Chow motive.

\begin{definition}\label{def:isogeny}\emph{\upshape{(cf. \cite[Def.~1.1]{Yang})}}
    Let $X, X'$ be K3 surfaces over $k$. An \textit{isogeny} from $X$ to $X'$ is an isomorphism of motives $f : \fh^2(X') \sto \fh^2(X)$ whose cohomological realization $\H^2(X')_\bQ \to \H^2(X)_\bQ$ preserves the Poincar\'e pairing. Two isogenies are said to be \textit{equivalent} if they induce the same map $\H^2(X')_\bQ \sto \H^2(X)_\bQ$ (see \ref{ssec:notation}).
\end{definition} 

Recall \cite[14.2.2]{MR2187153} that if $X$ is a K3 surface over an algebraically closed field $k$ then there is a canonical decomposition
\[
    \fh^2(X) = \fh^2_\algebr(X) \oplus \fh^2_{\tr}(X)
\]
of the Chow motive in degree two into an algebraic part and a transcendental part. The algebraic part $\fh^2_\algebr(X)$ is isomorphic to $\bL \tensor \NS(X)$, where $\bL$ stands for the Lefschetz motive. Similarly, $\fh(X)$ decomposes as $\fh_\algebr(X) \oplus \fh_{\tr}^2(X)$, where $\fh_\algebr = \bL^0 \oplus \fh^2_\algebr \oplus \bL^2$.

Now suppose $(X, \alpha)$ and $(Y, \beta)$ are twisted K3 surfaces with mixed B-field lifts $\bB$ of $\alpha$ and $\bB'$ of $\beta$. Let $\Phi_P : D^b(X, \alpha) \to D^b(Y, \beta)$ be a Fourier--Mukai equivalence. Following Huybrechts \cite[Theorem 2.1]{Huy}, we have that the correspondence $v^{-\bB \boxplus \bB'}(P)$ induces an isomorphism $\fh(X)\sto\fh(Y)$, which restricts to isomorphisms $\fh^2_{\tr}(X) \sto \fh^2_{\tr}(Y)$ and $\fh_\algebr(X) \sto \fh_\algebr(Y)$. By Witt's cancellation theorem, we can always find some isomorphism $\fh^2_\algebr(X) \sto \fh^2_\algebr(Y)$ which preserves the Poincar\'e pairing. Adding this and the isomorphism $\fh^2_{\tr}(X) \sto \fh^2_{\tr}(Y)$ induced by $v^{-\bB \boxplus \bB'}(P)$, we obtain an isogeny $\fh^2(X) \sto \fh^2(Y)$. We make the following definition: 
\begin{definition}
\label{def: primitive derived isogeny}
Let $X, Y$ be K3 surfaces. An isogeny $f : \fh^2(X) \sto \fh^2(Y)$ is a \textit{primitive derived isogeny} if its restriction $\fh^2_{\tr}(X) \sto \fh^2_{\tr}(Y)$ agrees with the one induced by $v^{-\bB \boxplus \bB'}(P)$ for some choices of $\alpha, \beta, \bB, \bB'$ and $\Phi_P$ as above. A \textit{derived isogeny} is a composition of finitely many primitive derived isogenies.
\end{definition}
In particular, note that if there exists a primitive derived isogeny between $X$ and $Y$, then $X$ and $Y$ are twisted derived equivalent. Twisted derived equivalent K3 surfaces clearly have the same rational Chow motive. In a recent paper, Fu and Vials proved that their motives are moreover isomorphic as Frobenius algebra objects, and over $\bC$ they also give a motivic characterization of twisted derived equivalent K3's (\cite[Thm~1, Cor.~2]{FV}).

\section{Lifting derived isogenies to characteristic 0}\label{sec:lifting isogenies}

The goal of this section is to give some lifting results for primitive derived isogenies. This requires understanding deformations of twisted K3 surfaces and of twisted Fourier--Mukai equivalences in mixed characteristic. Deformations of a twisted K3 surface $(X,\alpha)$ over the complex numbers can be profitably understood in terms of deformations of a pair $(X,B)$, where $B\in\H^2(X,\mathbf{Q})$ is a Hodge B-field lift of $\alpha$ (see for instance \cite{MR3946279}). Considering deformations of $(X,B)$ serves two purposes: first, the B-field $B$ allows one to algebraize formal deformations of the Brauer class, and second, $B$ gives a notion of twisted Chern characters in the deformation family.
Suppose now that $(X,\alpha)$ is a twisted K3 surface in positive characteristic. To similarly understand deformations of $(X,\alpha)$ over a base of mixed characteristic, we would need a notion of mixed characteristic B-field lift. The $\ell$-adic theory works essentially unchanged in this setting, but the analog of the crystalline theory seems more complicated. We will avoid this issue by using instead of a B-field a simpler object, namely a preimage $\alpha'\in\H^2(X,\mu_n)$ of $\alpha$ under the map $\H^2(X,\mu_n)\to\Br(X)$. The deformation theory of such pairs $(X,\alpha')$ has been considered by the first author \cite{Bragg}: the flat cohomology groups $\H^2(X,\mu_n)$ can be defined relatively in families, and their tangent spaces can be understood in terms of de Rham cohomology. Moreover, it turns out that formal projective deformations of such pairs $(X,\alpha')$ algebraize, and furthermore the class $\alpha'$ can be used to define twisted Chern characters in families. Our approach to the deformation theory of twisted Fourier--Mukai equivalences is based on the techniques of Lieblich and Olsson \cite{LO1}, which we in particular extend to the twisted setting. 


Let $(X,\alpha)$ and $(Y,\beta)$ be twisted K3 surfaces over an algebraically closed field $k$ of characteristic $p>0$. Let $\Phi_P:D^b(X,\alpha)\cong D^b(Y,\beta)$ be a Fourier--Mukai equivalence induced by a complex $P\in D^b(X\times Y,-\alpha\boxplus\beta)$.

\begin{definition}
    The equivalence $\Phi_P:D^b(X,\alpha)\xrightarrow{\sim}D^b(Y,\beta)$ is \textit{filtered} if there exist preimages $\alpha'\in \H^2(X,\mu_n)$ of $\alpha$ and $\beta'\in \H^2(Y,\mu_m)$ of $\beta$ such that the cohomological transform
    \[
        \Phi_{v^{-\alpha'\boxplus\beta'}(P)}:\tN(X)_{\mathbf{Q}}\xrightarrow{\sim}\tN(Y)_{\mathbf{Q}}
    \]
    sends $(0,0,1)$ to $(0,0,1)$.
    
\end{definition}
Note that the condition to be filtered does not depend on the choices of $\alpha'$ and $\beta'$, and thus is an intrinsic property of $\Phi_P$. We consider the deformation functor $\Def_{(X,\alpha')}$, whose objects over an Artinian local $W$-algebra $A$ are isomorphism classes of pairs $(X_A,\alpha'_A)$ where $X_A$ is a flat scheme over $\Spec A$ such that $X_A\otimes k\cong X$, and $\alpha'_A\in\H^2(X_A,\mu_n)$ is a cohomology class such that $\alpha'_A|_X=\alpha'$ (see \cite[Definition 1.1]{Bragg}).


\begin{proposition}\label{prop:definition of delta}
    Suppose that $\Phi_P$ is filtered. Given a preimage $\alpha'\in \H^2(X,\mu_n)$ of $\alpha$, there is a canonically induced preimage $\beta'\in \H^2(Y,\mu_n)$ of $\beta$ and a morphism
    \[
        \delta_P:\Def_{(Y,\beta')}\to\Def_{(X,\alpha')}
    \]
    of deformation functors over $W$ (depending on $P$ and $\alpha'$).
\end{proposition}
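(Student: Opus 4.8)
The plan is to mirror Lieblich–Olsson's construction of the map on deformation functors \cite[\S6]{LO1}, adapted to the twisted setting via the flat-cohomology classes $\alpha'$, $\beta'$. The key point is that a Fourier–Mukai kernel can itself be deformed along a compatible deformation of the source and target, and that the filtered hypothesis guarantees the deformed kernel still induces an equivalence preserving the relevant weight filtration, so that one really does get a bijection on deformations. First I would fix a preimage $\alpha' \in \H^2(X, \mu_n)$ of $\alpha$. Using Thm~\ref{thm:action on cohomology}, the cohomological transform $\Phi_{v^{-\alpha' \boxplus \beta'}(P)}$ sends the twisted Mukai lattice of $(X, \alpha')$ isometrically onto that of $(Y, \beta')$ for \emph{any} choice of preimage $\beta'$ of $\beta$; the filtered condition says (for one, hence every, such choice) that $(0,0,1) \mapsto (0,0,1)$. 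I would then pin down the canonical $\beta'$: since $\Phi_P$ is filtered, the transform also sends $(1,0,0)$ into $\tN(Y)_\bQ$ with positive rank, and more precisely one checks (as in \cite[\S2]{Huy}, using that the kernel has rank one on the relevant fibers) that there is a unique choice of $\beta' \in \H^2(Y,\mu_n)$ making the induced isometry of twisted Mukai lattices defined \emph{integrally at $n$} and compatible with the $(0,0,1)$-normalization; this is the $\beta'$ in the statement. (Concretely, $\beta'$ is determined by the image of $(1,0,0)$ under $\Phi_{v^{-\alpha'\boxplus\beta'}(P)}$, which records the twisting.)

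Next I would construct $\delta_P$ on points. Given an Artinian local $W$-algebra $A$ and a deformation $(Y_A, \beta'_A) \in \Def_{(Y,\beta')}(A)$, I want to produce a deformation $(X_A, \alpha'_A)$ of $(X, \alpha')$ over $A$ together with a deformation $P_A$ of $P$. The strategy is the standard one: the obstruction to deforming the object $P \in D^b(X \times Y, -\alpha \boxplus \beta)$ along a square-zero extension lives in $\operatorname{Ext}^2$ groups on $X \times Y$ which, by the Fourier–Mukai equivalence together with the fact that $P$ is a ``kernel of an equivalence'', are controlled by $\H^2(\mathscr{O})$ of the factors — the same Hochschild-cohomology computation as in \cite[\S6]{LO1} but now with the $\mu_n$-gerbe twists $\alpha', \beta'$ built in. One shows that a deformation $(Y_A, \beta'_A)$ forces (uniquely, up to the usual ambiguity) a compatible deformation $(X_A, \alpha'_A)$ and a deformation $P_A \in D^b(X_A \times Y_A, -\alpha'_A \boxplus \beta'_A)$ of $P$ such that $\Phi_{P_A}$ is again an equivalence; the filtered condition is used to check that the deformed transform still sends $(0,0,1) \mapsto (0,0,1)$, so that the process is reversible and functorial. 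The key technical input here is the algebraization and relative theory of the groups $\H^2(-,\mu_n)$ and of twisted sheaves from \cite{Bragg}, which lets us treat $\alpha'_A$ as genuine data in a family rather than merely formally; this is what replaces the Hodge-theoretic B-field bookkeeping used over $\bC$.

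Finally I would check functoriality in $A$ and that $\delta_P$ is a morphism of deformation functors over $W$ — i.e.\ compatibility with restriction along $A' \to A$ — which is immediate from the uniqueness in the construction, together with base-change for crystalline/de Rham realizations showing that the twisted Chern character (hence the normalization used to pin down $(X_A,\alpha'_A)$ and $P_A$) behaves well in families. The main obstacle I expect is precisely the deformation-theoretic heart: verifying that deforming the kernel $P$ as a twisted complex is \emph{unobstructed relative to} the choice of deformation of $(Y, \beta')$, and that the induced deformation of $(X, \alpha')$ is unique. In the untwisted case this is \cite[Prop.~6.2 ff.]{LO1}, and the twisted case requires redoing the Hochschild-cohomology / trace-map computation for $\mu_n$-twisted sheaves on $X \times Y$, checking that the twisting contributes nothing to the obstruction groups beyond what is already tracked by $\alpha', \beta'$. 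Once that is in place, the normalization via $(0,0,1)$ and the integrality of twisted Chern characters (Prop.~\ref{prop:twistedcherncharacter}) make the bookkeeping of the canonical $\beta'$ and the functoriality formal.
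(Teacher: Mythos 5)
Your high-level plan (mirror Lieblich--Olsson \cite{LO1}, adapt to $\mu_n$-gerbes using the relative theory of $\H^2(-,\mu_n)$ from \cite{Bragg}) is the right orientation, but the core mechanism you describe is not what the paper (or \cite{LO1}) does, and as written it has a circularity. You propose to "deform the object $P$" via an obstruction class in $\operatorname{Ext}^2$ on $X\times Y$, "controlled by $\H^2(\mathscr{O})$ of the factors", and say the deformation $(Y_A,\beta'_A)$ then "forces" a compatible $(X_A,\alpha'_A)$. But deforming $P$ as a twisted complex already presupposes a deformation of the ambient $X\times Y$; you cannot start from obstruction theory for $P$ and derive $X_A$ from it. The step that is actually missing --- how a deformation of $Y$ produces a deformation of $X$ --- is exactly where the construction must be anchored, and your sketch does not supply it. (Also, the attribution to \cite[\S6]{LO1} of a Hochschild-cohomology computation is not accurate: the relevant tool there is the moduli stack of simple perfect complexes, not a trace-map argument.)

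The paper's construction, following \cite[\S5--6]{LO1}, resolves this as follows. Choose $\mu_n$-gerbes $\mathscr{X}'\to X$, $\mathscr{Y}'\to Y$ representing $\alpha',\beta'$. Given a deformation $(\mathscr{Y}'_A,\varphi)$ of $(Y,\beta')$, consider the stack $\mathscr{D}_{\mathscr{Y}'_A/A}$ of relatively perfect, universally glueable, simple $\mathscr{Y}'_A$-twisted complexes with twisted Mukai vector $(0,0,1)$. The filtered hypothesis guarantees that the restricted kernel $P'$ on $\mathscr{X}'\times\mathscr{Y}'$ defines a map $\mathscr{X}'\to\mathscr{D}_{\mathscr{Y}'_A/A}\otimes k$ which (by the argument of \cite[Lem.~5.5]{LO1}) is an open immersion landing in the smooth locus of $\mathscr{D}_{\mathscr{Y}'_A/A}\to\Spec A$; one then extracts the unique flat proper open substack $\ms X'_A$ restricting to $\mathscr{X}'$, and this \emph{is} the induced deformation of $(X,\alpha')$. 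This makes the production of $X_A$ from $Y_A$ canonical and functorial, with no separate obstruction-vanishing argument needed; the deformation of the kernel is simply the restriction of the universal complex to $\ms X'_A\times\mathscr{Y}'_A$ (as the paper records in Prop.~\ref{prop:delta is an iso}(2)).

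A smaller but real gap is your definition of $\beta'$. You say it is "determined by the image of $(1,0,0)$", but this does not by itself define a class in $\H^2(Y,\mu_n)$. The paper's definition is concrete: $\alpha'$ corresponds to an $n$-fold twisted invertible sheaf $\mathscr{L}$ on a $\mathbf{G}_m$-gerbe $\mathscr{X}$; by Prop.~\ref{prop:twisted Chern character is surjective} there is a twisted complex $\mathscr{E}$ of rank $n$ with $\det\mathscr{E}\cong\mathscr{L}$; the filtered hypothesis forces $\Phi_P(\mathscr{E})$ to again have rank $n$, so $\mathscr{N}:=\det\Phi_P(\mathscr{E})$ is an $n$-fold twisted invertible sheaf on the gerbe $\mathscr{Y}$, and $\beta'$ is the corresponding class (independent of the choice of $\mathscr{E}$). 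Your characterization of $\beta'$ is a \emph{consequence} of this (cf.\ Lem.~\ref{lem:strongly filtered}, where one shows $\Phi(1,0,0)=(1,0,0)$ for this $\beta'$), not a definition.
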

\begin{proof}
    Let $\mathscr{X}\to X$ and $\mathscr{Y}\to Y$ be $\mathbf{G}_m$-gerbes representing $\alpha$ and $\beta$. The chosen preimage $\alpha'$ corresponds to an $n$-twisted invertible sheaf $\mathscr{L}$ on $\mathscr{X}$. Using Prop. \ref{prop:twisted Chern character is surjective}, we find a complex of twisted sheaves $\ms E$ on $\mathscr{X}$ with rank $n$ and $\det\ms E\cong\ms L$.
    Using the assumption that $\Phi_P$ is filtered, we see that $\Phi_{P}(\ms E)$ is a complex of twisted sheaves on $\mathscr{Y}$ of rank $n$. Thus, its determinant $\mathscr{N}=\det(\Phi_{P}(\ms E))$ is an invertible $n$-twisted sheaf on $\mathscr{Y}$. Note that this implies $n\beta=0$. We let $\beta'\in \H^2(Y,\mu_n)$ be the preimage of $\beta$ corresponding to $\ms N$. Note that the class $\beta'$ does not depend on our choice of $\ms E$.
    
    Let $\mathscr{X}'\to X$ and $\mathscr{Y}'\to Y$ be $\mu_n$-gerbes corresponding to $\alpha'$ and $\beta'$. Suppose given an Artinian local $W$-algebra $A$ and a deformation of $(Y,\beta')$ over $A$. Up to isomorphism, this is the same as giving a pair $(\ms Y'_A,\varphi)$, where $\ms Y'_A$ is a $\mu_n$-gerbe equipped with a flat proper map to $\Spec A$ and $\varphi:\ms Y'_A\otimes k\cong\ms Y'$ is an isomorphism of gerbes. We let $\mathscr{D}_{\mathscr{Y}'_A/A}$ be the stack of relatively perfect universally glueable simple $\mathscr{Y}'_A$-twisted complexes over $\Spec A$ with twisted Mukai vector $(0,0,1)$ (see \cite[Section 5]{LO1}).
    We let $P'$ be the pullback of $P$ along the product of the maps $\mathscr{X}'\subset\mathscr{X}$ and $\mathscr{Y}'\subset\mathscr{Y}$. As $\Phi_P$ is filtered, the complex $P'$ induces a map $\mathscr{X}'\to \mathscr{D}_{\mathscr{Y}'_A/A}\otimes k$. By reasoning identical to \cite[Lem. 5.5]{LO1}, this map is an open immersion. The image of $\mathscr{X}'$ is contained in the smooth locus of the morphism $\pi\otimes k$, so there is a unique open substack $\ms X'_A\subset\mathscr{D}_{\mathscr{Y}'_A/A}$ which is flat and proper over $\Spec A$ whose restriction to the closed fiber is isomorphic to $\mathscr{X}'$. Via this isomorphism, the stack $\ms X'_A$ has a canonical structure of $\mu_n$-gerbe. Thus, given a deformation of $(Y,\beta')$ over $A$, we have produced (using the complex $P$) a deformation of $(X,\alpha')$ over $A$. This defines a morphism $\Def_{(Y,\beta')}\to\Def_{(X,\alpha')}$.
\end{proof}

We now assume that $\Phi_P$ is a filtered Fourier--Mukai equivalence. We fix a preimage $\alpha'\in\H^2(X,\mu_n)$ of $\alpha$. Let $\beta'\in\H^2(Y,\mu_n)$ and
\begin{equation}\label{eq:delta}
        \delta_P:\Def_{(Y,\beta')}\to\Def_{(X,\alpha')}
\end{equation}
be the preimage and morphism produced by Prop. \ref{prop:definition of delta}. We continue the notation introduced above, so that $\pi_{\ms X}:\ms X\to X$ and $\pi_{\ms Y}:\ms Y\to Y$ are $\mathbf{G}_m$-gerbes corresponding to $\alpha$ and $\beta$, $\ms X'$ and $\ms Y'$ are $\mu_n$-gerbes corresponding to $\alpha'$ and $\beta'$, $\ms L$ and $\ms N$ are the corresponding $n$-fold twisted invertible sheaves on $\ms X'$ and $\ms Y'$, and $P'$ is the restriction of $P$ to $\ms X'\times\ms Y'$. Let $\mathbf{B}_{\alpha}$ and $\mathbf{B}_{\beta}$ be mixed B-field lifts of $\alpha$ and $\beta$ such that $n\mathbf{B}_{\alpha}$ and $n\mathbf{B}_{\beta}$ are integral and such that $n\mathbf{B}_{\alpha}\pmod{n}$ equals $\alpha'$ and $n\mathbf{B}_{\beta}\pmod{n}$ equals $\beta'$.
Write $\Phi$ for the cohomological transform
\[
    \Phi=\Phi_{v^{-\alpha'\boxplus\beta'}(P)}:\tN(X)_{\mathbf{Q}}\to\tN(Y)_{\mathbf{Q}}
\]
\begin{lemma}\label{lem:strongly filtered}
    The transform $\Phi$ satisfies $\Phi(0,0,1)=(0,0,1)$ and $\Phi(1,0,0)=(1,0,0)$, and restricts to an isometry $N(X)\xrightarrow{\sim}N(Y)$ of integral N\'{e}ron--Severi lattices.
\end{lemma}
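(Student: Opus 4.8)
The plan is to verify the three assertions in turn. The first, $\Phi(0,0,1)=(0,0,1)$, is essentially the hypothesis: $\Phi_P$ is filtered, and the remark following the definition of a filtered equivalence records that the cohomological transform sends $(0,0,1)$ to $(0,0,1)$ for \emph{every} choice of preimages of $\alpha$ and $\beta$ --- in particular for our fixed $\alpha'$ and the preimage $\beta'$ produced by Prop.~\ref{prop:definition of delta}. Throughout I will also use, from Thm.~\ref{thm:action on cohomology}, that $\Phi=\Phi_{v^{-\alpha'\boxplus\beta'}(P)}$ is a $\mathbf{Q}$-linear isometry $\tN(X)_{\mathbf{Q}}\xrightarrow{\sim}\tN(Y)_{\mathbf{Q}}$, and that for the chosen mixed B-fields $\mathbf{B}_{\alpha},\mathbf{B}_{\beta}$ --- which induce the preimages $\alpha',\beta'$, so that $v^{-\mathbf{B}_{\alpha}\boxplus\mathbf{B}_{\beta}}(P)=v^{-\alpha'\boxplus\beta'}(P)$ --- it restricts to an isometry $\tN(X,\mathbf{B}_{\alpha})\xrightarrow{\sim}\tN(Y,\mathbf{B}_{\beta})$ of the integral twisted N\'eron--Severi lattices.

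For $\Phi(1,0,0)=(1,0,0)$ I use the auxiliary object from the proof of Prop.~\ref{prop:definition of delta}: on a $\mathbf{G}_m$-gerbe $\pi_{\ms X}\colon\ms X\to X$ representing $\alpha$, the class $\alpha'$ corresponds to an $n$-fold twisted invertible sheaf $\ms L$, one chooses (using Prop.~\ref{prop:twisted Chern character is surjective}) a complex $\ms E$ of twisted sheaves on $\ms X$ with $\rk\ms E=n$ and $\det\ms E\cong\ms L$, and $\Phi_P(\ms E)$ is then a complex of twisted sheaves of rank $n$ on the $\mathbf{G}_m$-gerbe $\ms Y$ whose determinant is, by definition, the $n$-fold twisted invertible sheaf $\ms N$ attached to $\beta'$. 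By the formula~\eqref{eq:formula for twisted Chern characters}, $\ch^{\alpha'}(\ms E)$ has rank $n$ and vanishing degree-one part (as $\det\ms E\cong\ms L$), so $v^{\alpha'}(\ms E)=(n,0,s)$ for some $s$, and likewise $v^{\beta'}(\Phi_P(\ms E))=(n,0,s')$ for some $s'$. Since the twisted Chern character is compatible with Fourier--Mukai kernels (\cite[App.~A]{Bragg-Derived-Equiv}; cf.~\cite[\S2]{Huy}), $\Phi(n,0,s)=v^{\beta'}(\Phi_P(\ms E))=(n,0,s')$. Writing $\Phi(1,0,0)=(a,D,e)$: pairing against $\Phi(0,0,1)=(0,0,1)$ forces $a=1$; then $\Phi(n,0,s)=(n,nD,ne+s)=(n,0,s')$ forces $D=0$; and the isotropy of $(1,0,0)$ for the Mukai pairing gives $0=\langle(1,0,e),(1,0,e)\rangle=-2e$, so $e=0$. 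I expect this middle assertion to be the main obstacle: filteredness by itself only gives $\Phi(1,0,0)=\exp(D)(1,0,0)$ for some $D\in N(Y)_{\mathbf{Q}}$, and forcing $D=0$ hinges on the determinant bookkeeping built into the construction of $\delta_P$ in Prop.~\ref{prop:definition of delta}; making it precise needs care with the conventions for twisted Chern characters on gerbes and with the identification of $\Phi$ with the correspondence attached to $v^{-\alpha'\boxplus\beta'}(P)$.

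For the N\'eron--Severi statement, the isometry $\Phi\colon\tN(X,\mathbf{B}_{\alpha})\xrightarrow{\sim}\tN(Y,\mathbf{B}_{\beta})$ fixes the isotropic vector $(0,0,1)$ (which lies in $\tN(X,\mathbf{B}_{\alpha})$ because $\exp(-\mathbf{B}_{\alpha})(0,0,1)=(0,0,1)$ is everywhere integral), hence preserves $(0,0,1)^{\perp}$ and descends to an isometry of the quotients $(0,0,1)^{\perp}/\langle(0,0,1)\rangle$. The assignment $(0,b,c)\mapsto b$ identifies this quotient, for both $X$ and $Y$, with the integral N\'eron--Severi lattice: it is a pairing-preserving injection into $N(X)$ (by primitivity of $N(X)$ in $\H^2(X)$), and it is surjective because for every $b\in N(X)$ one can find $c\in\mathbf{Q}$ with $(0,b,c)\in\tN(X,\mathbf{B}_{\alpha})$, i.e.\ with $c\equiv b.(\mathbf{B}_{\alpha})_\ell\pmod{\mathbf{Z}_\ell}$ for all $\ell\neq p$ (only finitely many nontrivial, by the Chinese remainder theorem) and $c\equiv b.(\mathbf{B}_{\alpha})_p\pmod W$; the latter is solvable since $b$ lies in the Tate module $\H^2(X,\mathbf{Z}_p(1))\subset\H^2(X/W)$ and is thus $\phi$-fixed, so a short computation with Prop.~\ref{prop:description1} and the semilinearity $b.\phi(y)=\sigma(b.y)$ of the crystalline pairing gives $b.(\mathbf{B}_{\alpha})_p-\sigma\big(b.(\mathbf{B}_{\alpha})_p\big)\in W$, whence $b.(\mathbf{B}_{\alpha})_p\in\mathbf{Q}_p+W\subset\mathbf{Q}+W$. (This identification of $(0,0,1)^{\perp}/\langle(0,0,1)\rangle$ with the N\'eron--Severi lattice is the integral analogue of the description of twisted Mukai lattices in \cite{HS,LMS}.) Hence $\Phi$ induces an isometry $N(X)\xrightarrow{\sim}N(Y)$; and by the first two assertions $\Phi$ also preserves $N(X)_{\mathbf{Q}}$, the orthogonal complement of $(1,0,0)$ and $(0,0,1)$ in $\tN(X)_{\mathbf{Q}}$, on which this induced map is exactly the restriction of $\Phi$. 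Therefore $\Phi$ carries $N(X)$ isometrically onto $N(Y)$.
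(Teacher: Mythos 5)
Your proof is correct. The first two assertions are handled essentially as in the paper: the same auxiliary complex $\ms E$ with $\rk\ms E=n$ and $\det\ms E\cong\ms L$, the observation that $v^{\alpha'}(\ms E)=(n,0,s)$ and $v^{\beta'}(\Phi_P(\ms E))=(n,0,s')$, and the isometry property to pin down $\Phi(1,0,0)$; your explicit bookkeeping with $(a,D,e)$ is just a more spelled-out version of the paper's "as $\Phi$ is an isometry, $\Phi(n,0,s)=(n,0,s)$." The third assertion is where you genuinely diverge. The paper stays geometric: for $L\in\Pic(X)$ it takes the determinant $\ms M$ of $\Phi_P(\ms E\otimes\pi^*L)$ and reads off from the formula~\eqref{eq:formula for twisted Chern characters} that $\pi_*(\ms M\otimes\ms N^{\vee})$ is an honest invertible sheaf on $Y$ with class $\Phi(L)$, so integrality comes for free from the existence of an actual line bundle. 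You instead invoke the integral statement of Thm~\ref{thm:action on cohomology} (that $\Phi$ carries $\tN(X,\mathbf{B}_{\alpha})$ onto $\tN(Y,\mathbf{B}_{\beta})$), pass to $(0,0,1)^{\perp}/\langle(0,0,1)\rangle$, and identify that quotient with $N(X)$; the surjectivity step, finding $c\in\mathbf{Q}$ with $(0,b,c)$ in the twisted N\'eron--Severi lattice, requires your Prop.~\ref{prop:description1} computation at $p$ plus weak approximation. Both arguments work, but note two costs of your route: the integrality part of Thm~\ref{thm:action on cohomology} is proved in the paper directly only for $p\geq 5$, with the general case deferred to "lifting to characteristic 0 using the techniques of the following section" --- i.e.\ the section containing this very lemma --- so for small $p$ your argument risks a circularity that the paper's determinant argument avoids entirely; and your identification of the quotient with $N(X)$ silently uses that $N(X)$ is primitively embedded in $\H^2(X)$, including saturation of $\Pic(X)\otimes\mathbf{Z}_p$ in $\H^2(X/W)$, which is true but worth flagging. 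What your approach buys is a purely lattice-theoretic statement that does not reference the specific complex $\ms E$ once the integral isometry of twisted Mukai lattices is granted.
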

\begin{proof}
    We are assuming that $\Phi_P$ is filtered, so we have $\Phi(0,0,1)=(0,0,1)$. Consider a complex $\ms E$ of twisted sheaves on $\ms X$ with rank $n$ and $\det\ms E\cong\ms L$. It follows immediately from the definition of the twisted Chern character that $v^{\alpha'}(\ms E)=(n,0,s)$ for some integer $s$. Moreover, we see that the vector
    \[
        \Phi(n,0,s)=\Phi(v^{\alpha'}(\ms E))=v^{\beta'}(\Phi_P(\mathscr{E}))
    \]
    has trivial second component. As $\Phi$ is an isometry, we conclude that $\Phi(n,0,s)=(n,0,s)$. It follows that $\Phi(1,0,0)=(1,0,0)$, and that $\Phi$ restricts to an isometry on the rational N\'{e}ron--Severi lattices. 
    
    We now prove that $\Phi$ in fact restricts to an isometry between the integral N\'{e}ron--Severi lattices. Consider an invertible sheaf $L$ on $X$. The complex $\Phi_P(\ms E\otimes\pi^*L)$ of twisted sheaves on $\ms Y$ has rank $n$. Let $\ms M$ be its determinant. Using the formula~\eqref{eq:formula for twisted Chern characters}, we see that the pushforward of the (0-twisted) invertible sheaf $\ms M\otimes\ms N^{\vee}$ to $Y$ has cohomology class $\Phi(L)$. In particular, $\Phi(L)$ is in $N(Y)$.
\end{proof}

The following result is our twisted analog of \cite[Prop. 6.3]{LO1}.

\begin{proposition}\label{prop:delta is an iso}
The morphism $\delta_P$~\eqref{eq:delta} is an isomorphism, and furthermore has the following properties.
\begin{enumerate}
    \item For any class $L\in \Pic(X)$, the map $\delta$ restricts to an isomorphism 
    \[
        \Def_{(Y,\beta',\Phi(L))}\cong\Def_{(X,\alpha',L)}
    \]
    \item For any augmented Artinian $W$-algebra $A$ and any lift $(X_A,\alpha_A')$ of $(X,\alpha')$ over $A$, there exists a perfect complex $P_A\in D^b(X_A\times_A Y_A,-\alpha_A\boxplus\beta_A)$ lifting $P$, where $(Y_A,\beta_A')=\delta^{-1}(X_A,\alpha_A')$ and $\alpha_A$ and $\beta_A$ are the Brauer classes associated to $\alpha'_A$ and $\beta'_A$.
\end{enumerate}
\end{proposition}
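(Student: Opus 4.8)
The plan is to follow the strategy of Lieblich--Olsson \cite[Prop. 6.3]{LO1}, carrying the extra gerbe data along. The statement has three parts, and they are not independent: part (2), the existence of a lift of the kernel $P$, is really the engine that drives everything, so I would establish it first and then deduce that $\delta_P$ is an isomorphism.

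\textbf{Step 1: Deformation-theoretic comparison of obstruction theories.} The morphism $\delta_P$ was constructed in Prop. \ref{prop:definition of delta} by transporting a deformation $(\ms Y'_A,\varphi)$ to the open substack $\ms X'_A\subset\mathscr{D}_{\mathscr{Y}'_A/A}$ cut out by the open immersion $\mathscr{X}'\hookrightarrow\mathscr{D}_{\mathscr{Y}'_A/A}\otimes k$ induced by $P'$. Since $\mathscr{D}_{\mathscr{Y}'_A/A}$ is (by the twisted version of \cite[\S5]{LO1}) a smooth algebraic stack over $\Spec A$ with the expected tangent/obstruction theory, and $\ms X'_A$ is an open substack flat and proper over $A$, the deformation theory of the pair $(Y,\beta')$ (controlled by $\H^i_{\dR}$ of $\ms Y'$ together with the class $\beta'$, as in \cite{Bragg}) is matched term-by-term with that of $(X,\alpha')$ under $\delta_P$. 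Concretely: the cohomological transform $\Phi=\Phi_{v^{-\alpha'\boxplus\beta'}(P)}$ is, by Thm \ref{thm:action on cohomology} in the de Rham/crystalline realization, an isomorphism of the (twisted) Mukai structures preserving the filtration by Lem. \ref{lem:strongly filtered}, hence induces a compatible isomorphism on the period data governing the two deformation functors. This shows $\delta_P$ is formally \'etale; since both functors are pro-representable by smooth formal schemes of the same dimension (both being $\mathrm{Def}(X)$-torsors over $\mathrm{Def}$ of the respective $\mu_n$-gerbe classes, which have isomorphic tangent spaces via $\Phi$), $\delta_P$ is an isomorphism. For part (1), the same argument applied to the sub-functors where one additionally deforms a line bundle $L$ resp. $\Phi(L)$ works verbatim, since $\Phi$ sends $\Pic(X)$ isomorphically to $\Pic(Y)$ by Lem. \ref{lem:strongly filtered} and is compatible with the first Chern class maps into de Rham cohomology.

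\textbf{Step 2: Lifting the kernel $P$.} Given $(X_A,\alpha_A')$ and its image $(Y_A,\beta_A')=\delta_P^{-1}(X_A,\alpha_A')$, by construction of $\delta_P$ there is a tautological family of $\ms Y'_A$-twisted complexes over $\ms X'_A$ with Mukai vector $(0,0,1)$ — this is precisely the universal object of $\mathscr{D}_{\mathscr{Y}'_A/A}$ restricted along $\ms X'_A\hookrightarrow\mathscr{D}_{\mathscr{Y}'_A/A}$. Unwinding, this universal family \emph{is} a perfect complex on $\ms X'_A\times_A\ms Y'_A$, i.e. (after descending the gerbe structures to Brauer classes) a perfect complex $P_A\in D^b(X_A\times_A Y_A,-\alpha_A\boxplus\beta_A)$, and its restriction to the closed fiber recovers $P'$, hence $P$ after the $\mu_n$-to-$\mathbf{G}_m$ comparison. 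The only thing to check is that this $P_A$ indeed lifts $P$ on the nose: this follows because the open immersion $\mathscr{X}'\hookrightarrow\mathscr{D}_{\mathscr{Y}'_A/A}\otimes k$ was \emph{defined} by $P'$, so pulling back the universal object along it returns $P'$ by construction.

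\textbf{Main obstacle.} The real content, and where I expect the most care to be needed, is in Step 1: verifying that the twisted stack $\mathscr{D}_{\mathscr{Y}'_A/A}$ of universally glueable, relatively perfect, simple twisted complexes with prescribed Mukai vector is well-behaved — algebraic, smooth over $A$ of the right dimension, with $\mathscr{X}'$ sitting inside its smooth locus — exactly as in \cite[\S5, Lem. 5.5]{LO1} but now in the presence of a nontrivial $\mu_n$-gerbe and over a possibly mixed-characteristic base $A$. One must also confirm that the openness of $\mathscr{X}'\hookrightarrow\mathscr{D}_{\mathscr{Y}'_A/A}\otimes k$ (invoked already in Prop. \ref{prop:definition of delta}) propagates to give the term-by-term comparison of obstruction classes, rather than merely a bijection on tangent spaces; this is where the hypothesis that $\Phi_P$ is \emph{filtered} (so that $(0,0,1)\mapsto(0,0,1)$, and by Lem. \ref{lem:strongly filtered} also $(1,0,0)\mapsto(1,0,0)$) is essential, as it guarantees the Mukai vector of the kernel is preserved and the moduli problems on the two sides literally coincide. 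The deformation theory of $\mu_n$-gerbes and their twisted sheaves is the input supplied by \cite{Bragg}, and marrying it with \cite{LO1} is the heart of the argument; once that infrastructure is in place, the proof is a formal consequence as sketched above.
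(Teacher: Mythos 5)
Your treatment of part (2) matches the paper's: both identify the lift $P_A$ as the restriction of the universal complex on $\ms D_{\ms Y'_A/A}\times_A\ms Y'_A$ along the open immersion $\ms X'_A\times_A\ms Y'_A\hookrightarrow \ms D_{\ms Y'_A/A}\times_A\ms Y'_A$.

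However, for the main claim that $\delta_P$ is an isomorphism, you take a genuinely different route, and yours has a gap. The paper's argument is short and robust: it runs the \emph{same} construction (Prop.\ \ref{prop:definition of delta}) starting from the kernel $Q=P^{\vee}$ of the inverse Fourier--Mukai transform and the class $\beta'$, producing a morphism $\mu_Q:\Def_{(X,\alpha')}\to\Def_{(Y,\beta')}$, and then checks that $\mu_P$ and $\mu_Q$ are mutually inverse exactly as in \cite[Prop.\ 6.3]{LO1}. This requires no smoothness, pro-representability, or dimension-counting input whatsoever. Your alternative --- argue $\delta_P$ is formally \'etale by ``matching obstruction theories term-by-term,'' then conclude by a dimension count --- is asserted rather than proved. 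The cohomological transform $\Phi$ being an isomorphism of twisted Mukai structures (Thm.\ \ref{thm:action on cohomology}) gives you an identification of tangent spaces at the level of the closed fiber, but lifting that to a compatibility of obstruction classes along an arbitrary small extension of Artinian $W$-algebras is not automatic and is not supplied by anything you cite. Moreover, the assertion that both functors are ``$\mathrm{Def}(X)$-torsors over $\mathrm{Def}$ of the respective $\mu_n$-gerbe classes'' is not accurate: $\Def_{(X,\alpha')}$ fibers over $\Def_X$ but is not a torsor under it, and the identification of the two functors' dimensions needs justification rather than invocation. The inverse-kernel trick is precisely what lets the paper avoid all of this.

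Your proof of part (1) is also too thin. You write that ``the same argument applied to the sub-functors $\Def_{(Y,\beta',\Phi(L))}$ resp.\ $\Def_{(X,\alpha',L)}$ works verbatim,'' but the issue is prior: one must first show that $\delta_P$ actually \emph{restricts} to a map between these sub-functors, i.e.\ that if $L$ deforms across a given deformation $(X_A,\alpha'_A)$, then $\Phi(L)$ deforms across $(Y_A,\beta'_A)=\delta_P^{-1}(X_A,\alpha'_A)$. This is not a formal consequence of $\Phi$ preserving $\Pic$ integrally (Lem.\ \ref{lem:strongly filtered}); it requires producing the deformed line bundle on $Y_A$. The paper does so explicitly, by taking a rank-$n$ twisted complex $\ms E_A$ with trivial determinant on $\ms X_A$, applying $\Phi_{P_A}$ to $\ms E_A\otimes\pi_A^*L_A$, and pushing the resulting $0$-twisted determinant line bundle down to $Y_A$; one then checks its class lifts $\Phi(L)$. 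That construction (which uses the lifted kernel $P_A$ from part (2)) is the missing content in your sketch.
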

\begin{proof}
    To see that $\mu_P$ is an isomorphism, consider the same construction applied to the kernel $Q=P^{\vee}$ of the inverse Fourier--Mukai transform and the preimage $\beta'$ of $\beta$, which yields a map
    \[
        \mu_Q:\Def_{(X,\alpha')}\xrightarrow{\sim}\Def_{(Y,\beta')}
    \]
    We claim that $\mu_P$ and $\mu_Q$ are inverses. This may be verified exactly as in \cite[Prop. 6.3]{LO1}. To see claim (2), note that the restriction along the open immersion
    \[
        \ms X'_A\times \ms Y'_A\subset\ms D_{\ms Y'_A/A}\times\ms Y'_A
    \]
    of the universal complex lifts $P'$. To see (1), suppose that the deformation $(X_A,\alpha'_A)$ is contained in the subfunctor $\Def_{(X,\alpha',L)}$. There is then an invertible sheaf $L_A$ on $X_A$ deforming $L$. Let $\ms E_A$ be a relatively perfect complex of $\alpha_A$-twisted sheaves on $X_A$ with rank $n$ and trivial determinant. Let $\pi_A:\ms X_A\to X_A$ be the coarse space map. The determinant of the complex $\Phi_{P_A}(\ms E_A\otimes \pi_A^{*}L_A)$ is a 0-fold twisted sheaf on $\ms Y_{A}$, hence its pushforward to $Y_A$ is an invertible sheaf. Moreover, this sheaf has class lifting $\Phi(L)$.
\end{proof}

\begin{definition}
    We say that a filtered Fourier--Mukai equivalence $\Phi_P$ is \textit{polarized} if there exists B-field lifts $\mathbf{B},\mathbf{B}'$ of $\alpha$ and $\beta$ such that the isometry $\Phi:\Pic(X)\to\Pic(Y)$ (see Lem. \ref{lem:strongly filtered}) sends the ample cone $C_X$ of $X$ to the ample cone $C_Y$ of $Y$.
    
\end{definition}

One checks that the condition to be polarized is independent of the choice of B-field lifts, in the sense that it is verified for one choice of lifts if and only if it is verified for all choices of lifts.

We now prove our main lifting results. By results of the first author \cite{Bragg}, the twisted K3 surface $(X,\alpha)$ can be lifted to characteristic 0. Moreover, we can also compatibly lift the preimage $\alpha'$ of $\alpha$. As a consequence, Prop. \ref{prop:delta is an iso} shows that given such a lift there is an induced formal lift of $(Y,\beta)$, together with a lift of $\beta'$ and of the complex $P$ inducing the equivalence. Under the assumption that $\Phi_P$ is polarized, we can even produce a (non-formal) lift. We make this precise in the following result.

\begin{theorem}\label{thm:lifting in filtered situation}
    Suppose that $\Phi_P$ is a filtered polarized Fourier--Mukai equivalence. Let $L$ be an ample line bundle on $X$. Suppose we are given a complete DVR $V$ with residue field $k$ and a lift $(X_V,\alpha'_V,L_V)$ of $(X,\alpha',L)$ over $V$. There exists an ample line bundle $M$ on $Y$, a lift $(Y_V,\beta'_V,M_V)$ of $(Y,\beta',M)$ over $V$, and a perfect complex $P_V\in D^b(X_V\times_VY_V,-\alpha_V\boxplus\beta_V)$ (where $\beta_V$ is the image of $\beta'_V$ in the Brauer group) which induces a Fourier--Mukai equivalence and whose restriction to $D^b(X\times Y,-\alpha\boxplus\beta)$ is quasiisomorphic to $P$.
\end{theorem}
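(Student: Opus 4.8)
The plan is to combine the formal lifting statement of Proposition~\ref{prop:delta is an iso} with an algebraization argument powered by the polarization, following the strategy of \cite[Thm~7.1]{LO1}. First I would apply the isomorphism $\delta_P$ of deformation functors (Prop.~\ref{prop:delta is an iso}): given the lift $(X_V,\alpha'_V,L_V)$ over $V$, its restrictions to the Artinian quotients $V/\mathfrak{m}_V^{j}$ give a compatible system of objects of $\Def_{(X,\alpha')}$, each lying in the subfunctor $\Def_{(X,\alpha',L)}$ because $L_V$ deforms $L$. Applying $\delta_P^{-1}$ and part (1) of Prop.~\ref{prop:delta is an iso}, we obtain a compatible system of deformations $(Y_j,\beta'_j,M_j)$ of $(Y,\beta',M)$ over $V/\mathfrak{m}_V^{j}$, where $M$ is the line bundle on $Y$ with class $\Phi(L)\in N(Y)$; here $M$ is ample precisely because $\Phi_P$ is polarized, so $\Phi$ carries the ample cone $C_X$ to $C_Y$ and hence sends the ample class $L$ to an ample class. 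By part (2) of Prop.~\ref{prop:delta is an iso} (applied over each Artinian base), together with the compatibility of the $\delta_P$ construction with base change, we also obtain a compatible system of perfect complexes $P_j\in D^b(X_j\times Y_j,-\alpha_j\boxplus\beta_j)$ lifting $P$, where $X_j=X_V\otimes V/\mathfrak{m}_V^j$.

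The next step is algebraization. The formal scheme $\widehat{Y}=\varinjlim Y_j$ over $\Spf V$ carries the compatible system of line bundles $M_j$, which glue to a formal line bundle $\widehat{M}$ that is ample on the special fiber; by Grothendieck's existence theorem (EGA~III) the pair $(\widehat{Y},\widehat{M})$ algebraizes to a genuine projective scheme $Y_V$ over $V$ with an ample line bundle $M_V$. Since the Brauer class can be encoded via the $\mu_n$-gerbe $\mathscr{Y}'_j$, which is of finite presentation over $Y_j$, and the ampleness of $M_V$ also furnishes a polarization on the relevant moduli-of-complexes problem, the same formal GAGA argument algebraizes the $\mu_n$-gerbe: the $\mathscr{Y}'_j$ glue and algebraize to a $\mu_n$-gerbe $\mathscr{Y}'_V\to Y_V$, giving a class $\beta'_V\in\H^2(Y_V,\mu_n)$ restricting to $\beta'$. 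Finally, to algebraize the kernel $P$ one observes, exactly as in \cite{LO1}, that $P_V$ may be recovered as an object of the moduli stack $\mathscr{D}_{\mathscr{Y}'_V/V}$ of relatively perfect, universally gluable, simple twisted complexes with Mukai vector $(0,0,1)$: the formal objects $P_j$ define a point of the formal completion of this stack, which is algebraic and locally of finite presentation, hence effective; alternatively one invokes directly the formal GAGA / Artin-approximation packaging of \cite[\S5--6]{LO1} for twisted complexes. The resulting $P_V\in D^b(X_V\times_V Y_V,-\alpha_V\boxplus\beta_V)$ restricts to $P$ on the special fiber.

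It remains to check that $P_V$ induces a Fourier--Mukai \emph{equivalence} $D^b(X_V,\alpha_V)\xrightarrow{\sim} D^b(Y_V,\beta_V)$. For this I would use the standard deformation-invariance of being an equivalence: the property of $P_V$ inducing an equivalence is open on $\Spec V$ (the locus where the adjunction units/counits are isomorphisms is open, since $X_V$ and $Y_V$ are proper and flat over $V$), and it holds on the closed point because $\Phi_P$ is an equivalence by hypothesis; hence it holds over all of $V$. Equivalently, one shows the complex $P_V^\vee$ obtained from the inverse transform provides a two-sided inverse on the closed fiber and then spreads out. This gives all the asserted properties: $M$ ample, $(Y_V,\beta'_V,M_V)$ a lift of $(Y,\beta',M)$, and $P_V$ a Fourier--Mukai equivalence restricting to $P$.

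The main obstacle I anticipate is the algebraization of the twisted complex $P$ together with the gerbe $\mathscr{Y}'_V$ over a base which is only a complete DVR rather than something finite type: one must ensure that the moduli stack $\mathscr{D}_{\mathscr{Y}'_V/V}$ is well-behaved (algebraic, locally of finite presentation, with the open substack $\mathscr{X}'_V$ flat and proper over $V$) in this relative setting, so that Grothendieck's existence theorem applies. This is exactly the point where the polarization hypothesis is essential --- it is needed to get a projective (rather than merely proper) lift $Y_V$ and hence to run formal GAGA --- and where the care taken in \cite{LO1} and in \cite{Bragg} on moduli of twisted sheaves and on the deformation theory of $(X,\alpha')$ must be imported wholesale. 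Once algebraization is in hand, the equivalence statement is a routine open-ness-plus-special-fiber argument.
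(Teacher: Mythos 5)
Your proposal is correct and follows essentially the same route as the paper: apply Prop.~\ref{prop:delta is an iso} over the Artinian quotients of $V$ to get compatible formal deformations of $(Y,\beta',M)$ with $M=\Phi(L)$ ample by polarizedness, algebraize $(Y_V,M_V)$ by Grothendieck existence, algebraize $\beta'_V$ (the paper cites \cite[Prop. 1.4]{Bragg}) and $P_V$ (the paper cites the Grothendieck existence theorem for perfect complexes \cite[Prop. 3.6.1]{MR2177199}, in place of your moduli-stack packaging), and conclude the equivalence property as in \cite[Thm 6.1]{LO1}.
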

\begin{proof}
    Let $M$ be the line bundle on $Y$ corresponding to $\Phi(L)$. By Prop. \ref{prop:delta is an iso}, we find compatible deformations $(Y_{V_n},\beta'_{V_n},M_{V_n})$ of $(Y,\beta',M)$ over $V_n=V/\mathfrak{m}^{n+1}$ for each $n\geq 0$, together with compatible perfect complexes $P_{V_n}\in D^b(X_{V_n}\times_{V_n} Y_{V_n},-\alpha_{V_n}\boxplus\beta_{V_n})$ deforming $P$, where $\beta_{V_n}$ is the image of $\beta'_{V_n}$ in the Brauer group. As $\Phi_P$ is polarized, $M$ is ample, so by the Grothendieck existence theorem, there exists a scheme $(Y_V,M_V)$ over $V$ restricting to the $(Y_{V_n},\beta'_{V_n})$. By \cite[Prop. 1.4]{Bragg} there exists a class $\beta'_V\in\H^2(Y_{V},\mu_n)$ restricting to the $\beta'_{V_n}$. Finally, by the Grothendieck existence theorem for perfect complexes \cite[Prop. 3.6.1]{MR2177199}, there is a perfect complex $P_V\in D^b(X_V\times_VY_V,-\alpha_V\boxplus\beta_V)$ whose restriction to $V_n$ is quasi-isomorphic to $P_{V_n}$ for each $n$. Moreover, arguing as in the proof of Theorem 6.1 of \cite{LO1}, we see that the complex $P_V$ induces a Fourier--Mukai equivalence.
\end{proof}

\begin{definition}
\label{def: perfect lifting}
Let $\cX$ be a K3 surface over a local ring and $X$ be its special fiber. We say that $\cX$ is a \textit{perfect lifting} of $X$ if the restriction map $\Pic(\cX) \to \Pic(X)$ is an isomorphism. 
\end{definition}
We remark that if $\cX$ as above is over a DVR, the ample and the big and nef cones of the generic fiber are canonically identified with those of the special fiber.

\begin{theorem}\label{thm:lifting isogenies}
    Let $(X,\alpha)$ and $(Y,\beta)$ be twisted K3 surfaces over $k$. Let $\Phi_P:D^b(X,\alpha)\xrightarrow{\sim}D^b(Y,\beta)$ be a Fourier--Mukai equivalence. There exists
    \begin{enumerate}[label=\upshape{(\alph*)}]
        \item an autoequivalence $\Phi'$ of $D^b(Y,\beta)$ which is a composition of spherical twists about $(-2)$-curves,
        \item a DVR $V$ whose fraction field has characteristic 0 and with residue field $k$,
        \item projective lifts $(X_V,\alpha_V)$ and $(Y_V,\beta_V)$ of $(X,\alpha)$ and $(Y,\beta)$ over $V$, and
        \item a perfect complex $R_V\in D^b(X_V \times_V Y_V,-\alpha_V\boxplus\beta_V)$ which induces a Fourier--Mukai equivalence and whose restriction to $X\times Y$ is quasiisomorphic to the kernel $R$ of the equivalence $\Phi'\circ\Phi_P$.
    \end{enumerate}
    Moreover, if $X$ and $Y$ have finite height, we may choose the above data so that $\Phi'$ is the identity and $X_V$ and $Y_V$ are perfect liftings.
\end{theorem}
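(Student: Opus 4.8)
The strategy is to reduce to the filtered, polarized situation of Theorem~\ref{thm:lifting in filtered situation}, and then to keep careful track of the autoequivalences and of the auxiliary lifts. Fix $n$ with $n\alpha=0$ and a preimage $\alpha'\in\H^2(X,\mu_n)$ of $\alpha$. The first step is to compose $\Phi_P$ on the target side with autoequivalences of $D^b(Y,\beta)$ so as to reach those hypotheses. Following the complex case in \cite{Huy} and the untwisted positive-characteristic case in \cite[\S 6]{LO1}, the autoequivalences one uses are: shift functors; twists by line bundles on $Y$; spherical twists about line bundles (these adjust the rank, which is necessary since $\Phi_P$ need not be filtered); and spherical twists about $(-2)$-curves (which act on $\tN(Y)_\bQ$ through the Weyl group). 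By Theorem~\ref{thm:action on cohomology} the cohomological transform of $\Phi_P$ is compatible with the Mukai pairings, with the $F$-crystal structure on the crystalline component, and with the integral twisted N\'eron--Severi lattices; so, running the lattice-theoretic argument of loc.\ cit.\ for the twisted Mukai lattices $\tN(X,\mathbf{B})$ and $\tN(Y,\mathbf{B}')$ (keeping track of the Frobenius on the crystalline factor and of the adelic coefficients), there is a composition $\Psi$ of such autoequivalences for which $\Phi_R:=\Psi\circ\Phi_P$ is filtered. By Lemma~\ref{lem:strongly filtered}, $\Phi_R$ is then strongly filtered and restricts to an isometry $\Phi:N(X)\xrightarrow{\sim}N(Y)$; composing with further spherical twists about $(-2)$-curves, which act on $N(Y)$ by Weyl reflections and hence simply transitively on the chambers of the positive cone (the orientation being the correct one by the orientation-preserving property of Fourier--Mukai equivalences), we may assume in addition that $\Phi_R$ is polarized.

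Now choose an ample line bundle $L$ on $X$. By \cite{Bragg} there is a complete DVR $V$ with residue field $k$ and fraction field of characteristic $0$, together with a projective lift $(X_V,\alpha'_V,L_V)$ of $(X,\alpha',L)$ over $V$. Applying Theorem~\ref{thm:lifting in filtered situation} to the filtered polarized equivalence $\Phi_R$ produces an ample line bundle $M$ on $Y$, a projective lift $(Y_V,\beta'_V,M_V)$ of $(Y,\beta',M)$ over $V$, and a perfect complex $R_V\in D^b(X_V\times_V Y_V,-\alpha_V\boxplus\beta_V)$ lifting the kernel of $\Phi_R$ and inducing a Fourier--Mukai equivalence, where $\alpha_V,\beta_V$ are the Brauer classes of $\alpha'_V,\beta'_V$. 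Among the factors of $\Psi$, the shifts, the line bundle twists, and the spherical twists about line bundles deform canonically over $V$ once lifts of the finitely many line bundles involved are fixed --- which we may arrange using Proposition~\ref{prop:delta is an iso}(1) --- and their lifts can be incorporated into $R_V$. Thus the residual autoequivalence $\Phi'$ may be taken to be a composition of spherical twists about $(-2)$-curves, and setting $(X_V,\alpha_V)$, $(Y_V,\beta_V)$ and $R_V$ as above establishes (a)--(d).

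For the last assertion, suppose $X$ and $Y$ are of finite height. By the deformation theory of finite-height K3 crystals (equivalently, the surjectivity of the period morphism onto the relevant sub-period-domain, cf.\ \S\ref{sec: Existence}), we may choose the lift $(X_V,\alpha'_V)$ above to be a \emph{perfect} lifting, over which all of $N(X)$ lifts. Since the lift $(Y_V,\beta'_V)$ is produced from $(X_V,\alpha'_V)$ via the isomorphism $\delta$ of Proposition~\ref{prop:delta is an iso}, which by part (1) is compatible with the line bundle rigidifications $\Def_{(X,\alpha',L')}$ for every $L'\in N(X)$, the lift $Y_V$ is then a perfect lifting as well. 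For a perfect lifting the ample cones of the generic and special fibres agree (remark following Definition~\ref{def: perfect lifting}), so the Grothendieck existence step in the proof of Theorem~\ref{thm:lifting in filtered situation} may be carried out with any ample class on $Y$: the ``polarized'' hypothesis becomes vacuous, and moreover every factor of an appropriate $\Psi$ --- including the spherical twists about $(-2)$-curves, whose associated curves now lift since the ample cone is preserved --- deforms over $V$ and may be absorbed into $R_V$. Hence $\Phi'$ may be taken to be the identity and $X_V,Y_V$ are perfect liftings.

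The main obstacle is the reduction in the first paragraph: one must extend the structure theory of autoequivalences acting on the Mukai lattice (as in \cite{Huy,LO1}) to the twisted, positive-characteristic setting, where $\H^2(X)_\bQ$ carries only adelic ($\bA^p_f\times K$) coefficients rather than $\bQ$-coefficients and the crystalline factor carries a Frobenius which every reduction step must respect, and one must check that the part of $\Psi$ which fails to lift always reduces to spherical twists about $(-2)$-curves. Granting this, the deformation-theoretic inputs (Theorem~\ref{thm:lifting in filtered situation} and Propositions~\ref{prop:definition of delta} and \ref{prop:delta is an iso}) are applied essentially verbatim.
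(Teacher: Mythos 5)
Your overall architecture (reduce to the filtered polarized case, apply Theorem~\ref{thm:lifting in filtered situation}, then account for the non-lifting autoequivalences) matches the paper's, and your treatment of the ``polarized'' step via spherical twists about $(-2)$-curves and of the finite-height addendum is essentially right. But the step you yourself flag as ``the main obstacle'' --- arranging that some $\Psi\circ\Phi_P$ is \emph{filtered} by post-composing with autoequivalences of $D^b(Y,\beta)$ --- is a genuine gap, not a routine extension of \cite{LO1}. The untwisted argument adjusts the image of $(0,0,1)$ using spherical twists about line bundles (in particular $T_{\mathscr{O}_Y}$, acting by reflection in $(1,0,1)$). In the twisted category $D^b(Y,\beta)$ a line bundle is a $0$-twisted object, not a $\beta$-twisted one; invertible $\beta$-twisted sheaves exist only when $\beta=0$, so these spherical twists are simply not available as autoequivalences of $D^b(Y,\beta)$, and no replacement supply of spherical objects is produced by your argument. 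The same objection applies to your later claim that these twists ``deform canonically over $V$.''

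The paper sidesteps this entirely by working on the source side rather than the target side: it sets $v=\Phi^{-1}(0,0,1)\in\tN(X)_{\mathbf{Q}}$, lifts $(X,\alpha')$ to $V$ so that $v$ extends, and forms the relative moduli space $\mathscr{M}_{(X_V,\alpha_V)}(v)$ of stable twisted sheaves, whose universal complex gives an equivalence $\Phi_{Q_V}:D^b(M_V,\gamma_V)\xrightarrow{\sim}D^b(X_V,\alpha_V)$. The composite $\Phi_P\circ\Phi_Q$ over $k$ is filtered \emph{by construction} (it sends $(0,0,1)\mapsto v\mapsto(0,0,1)$), only the $(-2)$-spherical twists are needed to make it polarized, and the final kernel on $X_V\times_V Y_V$ is recovered as the kernel of $\Phi_{R_V}\circ\Phi_{Q_V}^{-1}$ using uniqueness of Fourier--Mukai kernels. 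If you want to salvage your target-side reduction you would need to develop the structure theory of autoequivalences of genuinely twisted derived categories in positive characteristic, which is precisely what the moduli-space trick avoids.
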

\begin{proof}
    Choose a preimage $\alpha'\in\H^2(X,\mu_n)$ of $\alpha$. Given a choice of preimage of $\beta$, we obtain an isometry
    \[
        \Phi:\tN(X)_{\mathbf{Q}}\xrightarrow{\sim}\tN(Y)_{\mathbf{Q}}
    \]
    Consider the class $v=(\Phi)^{-1}(0,0,1)\in \widetilde{\NS}(X)_{\mathbf{Q}}$ 
    Note that this class does not depend on the choice of preimage of $\beta$. By \cite[Thm 7.3]{Bragg}, we may find a DVR $V$ of characteristic 0 and residue field $k$ and a polarized lift $(X_V,\alpha'_V)$ of $(X,\alpha')$ over $V$ over which the class $v$ extends. Let $\alpha_V$ be the image of $\alpha'_V$ in the Brauer group of $X_V$. Let $\mathscr{M}_V=\mathscr{M}_{(X_V,\alpha_V)}(v)$ be the relative moduli space of $H$-stable $\alpha_V$-twisted sheaves on $X_V\to\Spec V$ with twisted Mukai vector $v^{\alpha_V'}=v$, where $H$ is a $v$-generic polarization. Let $M_V$ be the coarse space of $\mathscr{M}_V$. The morphism $M_V\to\Spec V$ is a projective family of K3 surfaces, and there is a class $\gamma_V\in\Br(M_V)$ such that the universal complex $Q_V$ induces an equivalence
    \[
        \Phi_{Q_V}:D^b(M_V,\gamma_V)\xrightarrow{\sim} D^b(X_V,\alpha_V)
    \]
    Let $\gamma\in\Br(M)$ be the restriction of $\gamma_V$ to $M$, and let $Q$ be the restriction of $Q_V$. The Fourier--Mukai equivalence
    \[
        \Phi_P\circ\Phi_Q:D^b(M,\gamma)\xrightarrow{\sim} D^b(Y,\beta)
    \]
    is filtered. As in \cite[Lem. 6.2]{LO1}, we may find an autoequivalence $\Phi'$ as in the statement of the theorem so that $\Phi'\circ\Phi_P\circ\Phi_Q$ is both filtered and polarized. Let $R$ denote its kernel. Choose a preimage $\gamma'_V\in\H^2(M_V,\mu_m)$ of $\gamma_V$, and write $\gamma'$ for the restriction of $\gamma'_V$ to $M$. Let $\beta'$ be the corresponding lift of $\beta$ produced by Prop. \ref{prop:definition of delta}. By Theorem \ref{thm:lifting in filtered situation}, there is a lift $(Y_V,\beta'_V)$ of $(Y,\beta')$ and $R_V$ of $R$ over $V$, corresponding to the lift $(M_V,\gamma_V')$ of $(M,\gamma')$. Consider the Fourier--Mukai equivalence
    \[
        \Phi_{R_V}\circ\Phi_{Q_V}^{-1}:D^b(X_V,\alpha_V)\to D^b(Y_V,\beta_V)
    \]
    This equivalence restricts over $k$ to $\Phi'\circ\Phi_P$. By the uniqueness of the kernel, we conclude that $R_V$ restricts to the kernel of the equivalence $\Phi'\circ\Phi_P$, as claimed.
    
    Suppose that $X$ and $Y$ have finite height. We modify the above as follows. Choose $\alpha'$ so that $p$ does not divide $n/\ord(\alpha)$. By \cite[Thm 7.3]{Bragg}, we may choose the lift $X_V$ so that the restriction map $\Pic(X_V)\to\Pic(X)$ is an isomorphism. It follows that $\Pic(Y_V)\to\Pic(Y)$ is also an isomorphism. In particular, every $(-2)$-class in $\Pic(Y)$ extends to $Y_V$. We now compose $\Phi_{R_V}$ with an autoequivalence of $D^b(Y,\beta)$ lifting the inverse of $\Phi'$. The kernel of the resulting equivalence then restricts to $P$, as desired.
\end{proof}



\section{Existence Theorems}
\label{sec: Existence}

The goal of this section is to construct isogenies with prescribed action on cohomology. In particular, we will prove Thm~\ref{thm: existence} and Thm~\ref{thm: crys-isog}.

\subsection{Construction of Derived Isogenies}

We begin with Thm~\ref{thm: existence}.

Let $R$ be an integral domain whose field of fractions is of characteristic 0 (we have in mind $R=\mathbf{Z}_{\ell}$ or $R=W$). Set $R_\bQ := R \tensor_\bZ \bQ$. Let $M$ be a quadratic lattice such that $2^{-1} m^2 \in R$ for every $m \in M$.

Given an element $b\in M$ such that $\langle b,b\rangle\neq 0$, the reflection in $b$ is the isometry $s_b:M_{\mathbf{Q}}\to M_{\mathbf{Q}}$ defined by
\[
    s_b(x)=x-\frac{2\langle x,b\rangle}{\langle b,b\rangle}b
\]

Let $\wH$ be a lattice of the form $R \oplus M \oplus R$ equipped with the Mukai pairing, i.e., 
$$\< ( r, m, s ), (r', m', s')\> = \<m, m'\> - rs' - r's, $$
and a multiplicative structure given by 
$$ ( r, m, s ) \cdot (r', m', s') = (rr', rm' + r'm, rs' + r's + \< m, m'\>). $$

\begin{lemma}\label{lem:a lattice lemma}
Let $b \in M$ be a primitive element such that $\langle b,b\rangle\neq 0$. Set $n := b^2/2$ and $B: = b/n \in M_\bQ := M \tensor_\bZ \bQ$. Let $B' \in M_\bQ$ be another element. If $\Phi : \wH_\bQ \sto \wH_\bQ$ satisfies
\begin{enumerate}[label=\upshape{(\alph*)}]
    \item $\Phi(1, 0, 0) = (0, 0, 1/n)$ and $\Phi(0, 0, 1) = (n, 0, 0)$, and
    \item $e^{B} \Phi e^{-B'}$ is $R$-integral (i.e., restricts to an isometry $\wH \sto \wH$),
\end{enumerate}
then $\varphi(M) = s_b(M)$, where $s_b \in \Aut(M_\bQ)$ is the reflection in $b$ and $\varphi$ is the restriction of $\Phi$ to $M_\bQ$. 
\end{lemma}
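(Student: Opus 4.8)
The plan is to first make $\Phi$ completely explicit, then convert the integrality in hypothesis (b) (applied to $e^{B}\Phi e^{-B'}$ and to its inverse) into a handful of congruences relating $\varphi$, $b$, and $B'$, and finally read off $\varphi(M)=s_b(M)$ from those congruences.

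First I would observe that, since $\Phi$ is an isometry and hypothesis (a) says that $\Phi$ carries the hyperbolic plane $U=R(1,0,0)\oplus R(0,0,1)$ isomorphically onto itself, $\Phi$ preserves $M_{\bQ}=U^{\perp}$, so that $\Phi(a,m,c)=(nc,\varphi(m),a/n)$. Since $\langle b,b\rangle=2n$ one has $B^{2}=2/n$, so the conjugate $\Psi:=e^{B}\circ\Phi\circ e^{-B'}$ is easy to evaluate on the standard generators of $\wH$; the outputs I will actually use are
\[
    \Psi(0,m,0)=\Bigl(-n(m.B'),\ \varphi(m)-(m.B')\,b,\ \tfrac1n\varphi(m).b-m.B'\Bigr)\qquad(m\in M)
\]
and, applying the same recipe to $\Psi^{-1}=e^{B'}\circ\Phi^{-1}\circ e^{-B}$ (which has the same shape, with $\varphi$ replaced by $\varphi^{-1}$ and the two exponentials swapped),
\[
    \Psi^{-1}(0,m,0)=\bigl(-(m.b),\ \varphi^{-1}(m)-(m.b)\,B',\ \ast\bigr),\qquad
    \Psi^{-1}(1,0,0)=\bigl(1,\ B'-\tfrac1n\varphi^{-1}(b),\ \ast\bigr).
\]
Since $e^{B}\Phi e^{-B'}$ is $R$-integral, both $\Psi$ and $\Psi^{-1}$ send $\wH$ into $\wH$; extracting the middle and last coordinates of these displays yields, for every $m\in M$: (i) $\varphi(m)-(m.B')b\in M$; (ii) $\tfrac1n\varphi(m).b-m.B'\in R$; (iii) $\varphi^{-1}(m)-(m.b)B'\in M$; and (iv) $\tfrac1n\varphi^{-1}(b)=B'-m_{0}$ for some $m_{0}\in M$.

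Then I would conclude as follows. Writing $s_b(x)=x-\tfrac1n\langle x,b\rangle b$, for $m\in M$ we have the identity
\[
    s_b\varphi(m)=\bigl(\varphi(m)-(m.B')\,b\bigr)+\Bigl(m.B'-\tfrac1n\varphi(m).b\Bigr)b .
\]
The first bracket lies in $M$ by (i), and the coefficient of $b$ in the second term lies in $R$ by (ii), so that term lies in $Rb\subset M$; hence $s_b\varphi(M)\subseteq M$. In the other direction, using (iv),
\[
    \varphi^{-1}\bigl(s_b(m)\bigr)=\varphi^{-1}(m)-\tfrac{m.b}{n}\,\varphi^{-1}(b)
    =\bigl(\varphi^{-1}(m)-(m.b)\,B'\bigr)+(m.b)\,m_{0},
\]
which lies in $M$ by (iii) together with $m.b\in R$ and $m_{0}\in M$. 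Thus $(s_b\varphi)^{-1}(M)=\varphi^{-1}s_b(M)\subseteq M$ as well, and since $s_b\varphi$ is a $\bQ$-linear automorphism of $M_{\bQ}$, the two inclusions force $s_b\varphi(M)=M$, i.e.\ $\varphi(M)=s_b(M)$.

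The only real subtlety is the bookkeeping involved in conjugating $\Phi$ by the non-integral exponentials $e^{\pm B}$ and $e^{\pm B'}$ and in tracking which rational multiples of $b$ land in $M$; the observation that makes everything collapse is that congruence (ii) is precisely the discrepancy between the ``reflection in $b$'' formula for $s_b\varphi$ and the ``twist by $B'$'' formula for the middle coordinate of $\Psi(0,m,0)$. Everything else is a routine computation with the Mukai pairing and formula~\eqref{eq:exp B}, which I would carry out explicitly but briefly. (Only the explicit form of $\Phi$ uses that $\Phi$ is an isometry; the remaining steps use just that $\Phi$ is invertible and restricts to $\varphi$ on $M_{\bQ}$.)
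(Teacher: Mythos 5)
Your proof is correct, but it is organized differently from the paper's. The paper's key move is to construct an explicit auxiliary isometry $\Psi$ of $\wH_\bQ$ extending $r_b := -s_b$ (with the same action on the hyperbolic summand as $\Phi$), verify by hand that $e^B\Psi e^{-B}$ is $R$-integral, and then deduce from the $R$-integrality of the composite $e^B(\Psi^{-1}\Phi)e^{-B'}$ (evaluated on $(0,m,0)$) that $r_b^{-1}\circ\varphi$ is $R$-integral. You never introduce this comparison isometry; instead you evaluate $e^B\Phi e^{-B'}$ and its inverse on the standard generators, harvest the four congruences (i)–(iv) directly from the hypothesis, and then check the two inclusions $s_b\varphi(M)\subseteq M$ and $\varphi^{-1}s_b(M)\subseteq M$ by hand. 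The two routes overlap in the underlying computations, but the architecture differs: the paper reduces to a single clean integrality claim about $r_b^{-1}\circ\varphi$, while you work with the raw congruences. One small advantage of your version is that it makes both directions of the inclusion explicit; the paper is slightly terse on why $r_b^{-1}\circ\varphi$ is surjective onto $M$ (it follows because $e^B(\Psi^{-1}\Phi)e^{-B'}$ is an automorphism preserving the flag $R(0,0,1)\subset R(0,0,1)\oplus M\subset\wH$, but this is left to the reader). Conversely, the paper's auxiliary $\Psi$ packages the bookkeeping more tidily and would be the more reusable idea if one wanted variants of this lemma. Both are valid; yours is a genuine, more elementary alternative.
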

\begin{proof}
We extend $r_b := -s_b$ to an isometry $\Psi : \wH_\bQ \sto \wH_\bQ$ by requiring that $\Psi$ satisfies (a). It is straightforward to verify that $e^B \Psi e^{-B}$ is $R$-integral: 
\begin{align*}
    e^B \Psi e^{-B} (0, 0, 1) &= e^B \Psi(0, 0, 1) = e^B(n, 0, 0) = (n, b, 1)\\
    e^B \Psi e^{-B} (1, 0, 0) &= e^B \Psi (1, -B, 1/n) = e^B(1, -B, 1/n) = (1, 0, 0) \\
    e^B \Psi e^{-B} (0, m, 0) &= e^B \Psi (0, m, - \< B, m \>) = e^B(n\< -B, m\>, r_b(m), 0) = ( \< -b, m \>, -m, 0 ) 
\end{align*}

We now consider the composition 
$ (e^B \Psi e^{-B})^{-1} \circ (e^B \Phi e^{-B'}) = e^B (\Psi^{-1} \circ \Phi) e^{-B'}$, which has to be $R$-integral. Direct computation shows
\begin{align*}
    e^B (\Psi^{-1} \circ \Phi) e^{-B'} (0, m, 0) &= e^B  (\Psi^{-1} \circ \Phi) (0, m, -\< B', m\>) \\
    &= e^B (0, r_b^{-1} (\varphi(m)), -\<B', m\>) \\
    &= (0, r_b^{-1} (\varphi(m)), \<B, \varphi(m)\> - \< B', m\>).
\end{align*}

As $e^B (\Psi^{-1} \circ \Phi) e^{-B'}$ is $R$-integral, we deduce that $r_b^{-1} \circ \varphi$ is $R$-integral, and so $r_b(M) = \varphi(M)$. 
\end{proof}


The following result is the key geometric input for the proof of Thm \ref{thm: existence}. Let $k$ be an algebraically closed field of characteristic $p$. Given a K3 surface $X$ over $k$ and a class $b\in\H^2(X)$, we let $s_b:\H^2(X)_{\mathbf{Q}}\to\H^2(X)_{\mathbf{Q}}$ denote the isometry $s_{b_{p_1}}\times s_{b_{p_2}}\times\dots$. We say that a class $b\in\H^2(X)$ is \textit{primitive} if $nb'=b$ for an integer $n$ and $b'\in\H^2(X)$ implies $n=\pm 1$.

\begin{proposition}\label{prop: existence of primitive derived isogenies for reflections}
Let $X$ be a K3 surface over $k$. Let $b\in \H^2(X)$ be a primitive class such that $n:=b^2/2$ is an integer\footnote{that is, $n$ is in the image of the diagonal embedding $\mathbf{Z}\hookrightarrow W\times\widehat{\mathbf{Z}}^p$} and such that $b/n$ is a mixed B-field. There exists a K3 surface $X'$ together with a primitive derived isogeny $f: \fh^2(X') \to \fh^2(X)$ such that $f_*(\H^2(X')) = s_b(\H^2(X))$ in $\H^2(X)_{\mathbf{Q}}$.
\end{proposition}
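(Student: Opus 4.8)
The plan is to realize $s_b$ by a moduli space of twisted sheaves, in the style of Huybrechts's proof \cite{Huy}, and to read off the statement about $\H^2$ from Lemma~\ref{lem:a lattice lemma}. Since $b/n$ is a mixed B-field, set $B:=b/n$, put $\alpha:=\alpha_B\in\Br(X)$, and fix a mixed B-field lift $\bB$ of $\alpha$ with $\bB=B$, giving a twisted K3 surface $(X,\alpha)$. We may assume $n>0$: the case $n<0$ follows by running the argument below with $v$ replaced by $-v$ and $\Phi_P$ post-composed with a shift, which multiplies the resulting cohomological realization by $-1$ and hence does not change the image lattice, as $-L=L$ for any lattice $L$.

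\textbf{The Mukai vector.} Consider $v:=(n,0,0)\in\tN(X)_\bQ$. A direct computation gives $e^{-B}v=(n,-b,1)$, which lies in $\tH(X)$ because $b\in\H^2(X)$ is integral and $\langle b,b\rangle=2n$; hence $v$ lies in the mixed twisted N\'eron--Severi group $\tN(X,\bB)$. The $\H^4$-component of $e^{-B}v$ is the unit $1$, so $v$ is primitive in $\tN(X,\bB)$, and $v$ is isotropic since $(e^{-B}v)^2=\langle b,b\rangle-2n=0$.

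\textbf{The moduli space and its normalization.} Fix a $v$-generic polarization $H$ on $X$ and let $X':=M_{(X,\alpha),H}(v)$ be the moduli space of $H$-stable $\alpha$-twisted sheaves with twisted Mukai vector $v$. As $v$ is primitive, isotropic and of positive rank, $X'$ is a smooth projective K3 surface over $k$, carrying a Brauer class $\beta\in\Br(X')$ and a universal twisted complex $P$ on $X'\times X$ which induces a Fourier--Mukai equivalence $\Phi_P:D^b(X',\beta)\xrightarrow{\sim}D^b(X,\alpha)$ (the moduli-of-twisted-sheaves equivalence; cf.\ the constructions of \S\ref{sec:lifting isogenies} and \cite{Bragg-Derived-Equiv,BL}). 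Fix a mixed B-field lift $\bB'$ of $\beta$. By Theorem~\ref{thm:action on cohomology} the cohomological transform $\Phi:=\Phi_{v^{-\bB'\boxplus\bB}(P)}$ restricts to an isometry $\tH(X',\bB')\xrightarrow{\sim}\tH(X,\bB)$ of mixed twisted Mukai lattices (and to isometries of the $\ell$-adic twisted Mukai lattices and an isomorphism of crystalline twisted Mukai K3 crystals), and $\Phi(0,0,1)_{X'}=v^{\bB}(P_m)=v=(n,0,0)_X$ by construction of $X'$. Composing $\Phi_P$ with further Fourier--Mukai (auto)equivalences -- e.g.\ from other moduli of twisted sheaves or from spherical twists -- and adjusting $\bB'$ within its ambiguity (the analogue of the ``filtered''/``polarized'' normalizations of \S\ref{sec:lifting isogenies}; note that composing with further Fourier--Mukai equivalences keeps $f$ a \emph{primitive} derived isogeny), we arrange in addition that $\Phi(1,0,0)_{X'}=(0,0,1/n)_X$; concretely this amounts to trivializing the $\H^2$-component (an algebraic class) of $\Phi(1,0,0)_{X'}$. \emph{This normalization is the crux of the proof}, and the surrounding lattice bookkeeping is further complicated in characteristic $p$ by the fact that $\H^2(X)$ only carries adelic-plus-crystalline coefficients, so it must be run one prime at a time, with the Frobenius tracked at $p$.

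\textbf{Conclusion via Lemma~\ref{lem:a lattice lemma}.} With $\Phi$ so normalized, hypothesis (a) of Lemma~\ref{lem:a lattice lemma} holds, and hypothesis (b) -- the $\bQ$-integrality of $e^{B}\Phi e^{-B'}$ -- is exactly the integrality supplied by Theorem~\ref{thm:action on cohomology} (after replacing $b$ by $-b$, if one wishes to match signs). Applying the lemma with $R=\bZ_\ell$ for each $\ell\neq p$ and with $R=W$ shows that the restriction $\varphi$ of $\Phi$ to $\H^2(X')_\bQ$ -- which is well defined and an isometry onto $\H^2(X)_\bQ$, since (a) forces $\Phi$ to preserve the hyperbolic plane $\langle(1,0,0),(0,0,1)\rangle$ -- satisfies $\varphi(\H^2(X'))=s_b(\H^2(X))$. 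Since $\Phi$ preserves both this hyperbolic plane and the algebraic summand $\H^0\oplus\NS\oplus\H^4$, the isometry $\varphi$ respects the decomposition $\H^2_\bQ=\NS_\bQ\oplus T_\bQ$; write $\varphi=\varphi_\algebr\oplus\varphi_\tr$. By \cite[Thm~2.1]{Huy} (recalled in \S\ref{ssec:rational chow motives}), $v^{-\bB'\boxplus\bB}(P)$ induces an isomorphism $\fh^2_\tr(X')\xrightarrow{\sim}\fh^2_\tr(X)$ realizing $\varphi_\tr$; taking $\varphi_\algebr$ (an isometry $\NS(X')_\bQ\to\NS(X)_\bQ$) as the algebraic part -- a permissible choice by Witt cancellation -- produces a primitive derived isogeny $f:\fh^2(X')\to\fh^2(X)$ whose cohomological realization is $\varphi$. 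Then $f_*(\H^2(X'))=\varphi(\H^2(X'))=s_b(\H^2(X))$, as desired.
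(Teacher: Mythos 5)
Your proof is correct and follows essentially the same route as the paper's: take the moduli space of $\alpha_{b/n}$-twisted sheaves with twisted Mukai vector $(n,0,0)$, normalize the B-field on the moduli side so that the cohomological transform also sends $(1,0,0)$ to $(0,0,1/n)$, and apply Lemma~\ref{lem:a lattice lemma} prime by prime. The one step you compress --- the normalization --- is carried out in the paper by noting that $u=\Phi^{-1}(0,0,1/n)$ is isotropic and pairs to $-1$ with $(0,0,1)$, hence equals $e^{\delta}$ for some $\delta\in\NS(X')_{\bQ}$, so that replacing $\bB'$ by $\bB'+\delta$ suffices; your additional mention of composing with spherical twists is unnecessary (and such a composition could destroy condition (a) of the lemma).
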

\begin{proof}
Set $\mathbf{B} := b/n$ and let $\alpha=\alpha_{\mathbf{B}}$ be the Brauer class defined by $\mathbf{B}$. Let $X'$ be the moduli space of stable $\alpha$-twisted sheaves with Mukai vector $v^{\mathbf{B}} = (n, 0, 0)$ (where stability is taken with respect to a sufficiently generic polarization). As $b$ is primitive, the class $(n,0,0)$ is primitive in $\widetilde{N}(X,\mathbf{B})$. Thus, $X'$ is a K3 surface, and there exists a Brauer class $\alpha' \in \Br(X')$ together with an equivalence $\Phi_\sE: D^b(X', \alpha') \sto D^b(X, \alpha)$. Choose a mixed B-field lift $\mathbf{B}'$ of $\alpha'$. Then the cohomological action $\Phi : \H(X')_{\mathbf{Q}} \sto \H(X)_{\mathbf{Q}}$ of the algebraic cycle $v^{-\mathbf{B}' \boxplus \mathbf{B}}(\sE)$ sends $(0, 0, 1)$ to $(n, 0, 0)$.

Since $\Phi$ is an isometry, the vector $u = (\Phi)^{-1}(0, 0, 1/n)$ satisfies $u^2 = 0$ and $\< u, (0, 0, 1) \> = -1$. Therefore, $u$ is necessarily of the form $e^\delta = (1, \delta, \delta^2 /2)$ for some $\delta \in \H^2_\et(X')_{\mathbf{Q}}$. 
As $(0, 0, 1)$ is an algebraic class, and $\Phi$ is induced by an algebraic cycle, we have $\delta \in \NS(X')_\bQ$. After replacing $\mathbf{B}'$ by $\mathbf{B}' + \delta$, we may assume that $\Phi$ sends $(1, 0, 0)$ to $(0, 0, 1/n)$.
Now we may apply Lem. \ref{lem:a lattice lemma} to the $\ell$-adic part for each $\ell\neq p$ and to the crystalline part. We conclude that the degree $0$ part of the correspondence $v^{-\mathbf{B}' \boxplus \mathbf{B}}(\sE)$ sends $\H^2(X')$ to $s_b(\H^2(X))$.
\end{proof}



\subsection{Cartan--Dieudonn\'{e} theorems and strong approximation}

To apply Prop. \ref{prop: existence of primitive derived isogenies for reflections} towards the proof of Thm \ref{thm: existence}, we need to show that the reflections $s_b$ about classes $b\in\H^2(X)$ satisfying the conditions of Prop. \ref{prop: existence of primitive derived isogenies for reflections} generate a sufficiently large subgroup of isometries of $\H^2(X)_{\mathbf{Q}}$. We need two lattice theoretic inputs. The first is the following generalized Cartan--Dieudonn\'{e} theorem (\cite[Thm~2]{Kl}).

\begin{theorem}\label{thm:generalized Cartan--Dieudonne}
    Let $R$ be a local ring with residue characteristic $\neq 2$ and let $L$ be a unimodular quadratic lattice over $R$. The group $\O(L)$ is generated by the set of reflections $s_b$, where $b$ ranges over the elements of $L$ such that $b^2\in R^{\times}$.
\end{theorem}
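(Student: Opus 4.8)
The plan is to induct on $\operatorname{rk}L$, the inductive step being the classical device of correcting an isometry by a bounded number of reflections so that it fixes a vector of unit norm, and then splitting off the corresponding rank-one summand; the only genuinely new content compared with the field case is ring-theoretic bookkeeping, and I will use repeatedly that $R$ local with residue characteristic $\neq 2$ forces $2\in R^{\times}$ and $R^{\times}=R\setminus\mathfrak m$. \textbf{Step 1 (splitting off a unit-norm vector).} First I would reduce the unimodular form on $L$ modulo $\mathfrak m$ to a nondegenerate symmetric bilinear form on $L\otimes\kappa$ over the residue field $\kappa$ (characteristic $\neq 2$); such a form is anisotropic on some vector, since otherwise polarization would force it to vanish identically, contradicting nondegeneracy once $\operatorname{rk}L\geq 1$. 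Lifting such a vector to $b\in L$ gives $\langle b,b\rangle\notin\mathfrak m$, hence $\langle b,b\rangle\in R^{\times}$, so $s_b$ is defined over $R$ and $x\mapsto x-\tfrac{\langle x,b\rangle}{\langle b,b\rangle}b$ is an $R$-linear projection exhibiting $L=Rb\oplus b^{\perp}$. The Gram matrix is then block diagonal with blocks $\langle b,b\rangle$ and the Gram matrix of $b^{\perp}$, so $b^{\perp}$ is again unimodular, and it is free of rank $\operatorname{rk}L-1$ as a direct summand of a free module over a local ring. The same reasoning shows that every reflection produced below preserves $L$, not merely $L_{\mathbf Q}$.

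\textbf{Step 2 (correcting $f$ to fix $b$).} Given $f\in\O(L)$, set $b'=f(b)$, so $\langle b',b'\rangle=\langle b,b\rangle\in R^{\times}$. Since $\langle b-b',b-b'\rangle+\langle b+b',b+b'\rangle=4\langle b,b\rangle\in R^{\times}$ and $2\in R^{\times}$, at least one of $b-b'$, $b+b'$ has unit norm. If $\langle b-b',b-b'\rangle\in R^{\times}$, the identity $\langle b-b',b-b'\rangle=-2\langle b',b-b'\rangle$ yields $s_{b-b'}(b')=b$; if instead $\langle b+b',b+b'\rangle\in R^{\times}$, then $\langle b+b',b+b'\rangle=2\langle b',b+b'\rangle$ yields $s_{b+b'}(b')=-b$, and $s_b(-b)=b$. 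Either way I obtain $g\in\O(L)$, a product of one or two reflections in unit-norm vectors, with $g(b')=b$; then $gf$ fixes $b$, and $g^{-1}$ is again such a product since reflections are involutions.

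\textbf{Step 3 (descent and base case).} Since $gf$ fixes $b$ it preserves $b^{\perp}$, so $gf=\operatorname{id}_{Rb}\oplus(gf|_{b^{\perp}})$ with $gf|_{b^{\perp}}\in\O(b^{\perp})$; by the inductive hypothesis this is a product of reflections $s_{c_i}$ with $c_i\in b^{\perp}$, $\langle c_i,c_i\rangle\in R^{\times}$, each of which extends to the reflection $s_{c_i}$ on $L$ (fixing $b$, as $b\perp c_i$). Hence $f=g^{-1}\cdot(gf)$ is a product of reflections on $L$. The base case $\operatorname{rk}L\leq 1$ is clear: an isometry of $Rb$ is multiplication by some $\lambda$ with $\lambda^2=1$, and since $(\lambda-1)(\lambda+1)=0$ while $\lambda-1$ and $\lambda+1$ cannot both lie in $\mathfrak m$ (their difference $2$ is a unit), $\lambda=\pm 1$, so the isometry is $\operatorname{id}$ or $s_b$.

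\textbf{Main obstacle.} I do not anticipate a serious difficulty; the delicate points are all ring-theoretic—verifying that $b^{\perp}$ remains free and unimodular so the induction is well posed over an arbitrary, possibly non-Noetherian and non-domain, local ring; disposing of $\lambda^2=1$ in the base case; and the small but essential observation that $b-b'$ and $b+b'$ cannot both fail to have unit norm. Everything else reduces to the two reflection computations of Step 2.
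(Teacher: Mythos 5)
Your proof is correct. Note that the paper does not prove this statement at all: it is quoted directly from Klingenberg (the citation \cite[Thm~2]{Kl}), so there is no in-paper argument to compare against. Your argument is the standard Cartan--Dieudonn\'e induction adapted to local rings --- split off a unit-norm vector $b$ (which exists because the residue form is nondegenerate and $2\in R^{\times}$), use the Wall/Eichler observation that $\langle b-b',b-b'\rangle+\langle b+b',b+b'\rangle=4\langle b,b\rangle$ forces one of $b\pm b'$ to have unit norm in a local ring, and descend to $b^{\perp}$ --- and all the ring-theoretic points you flag (freeness and unimodularity of $b^{\perp}$, the resolution of $\lambda^2=1$ in the rank-one case) are handled correctly. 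This is in substance the proof in the cited reference.
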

We also will use the following consequence of the strong approximation theorem. Recall that $\mathbf{U}$ denotes the hyperbolic plane, which is a $\mathbf{Z}$--lattice of rank 2.

\begin{lemma}\label{lem:strong approx}
    Let $L$ be a non-degenerate indefinite quadratic lattice over $\mathbf{Z}$ of rank $\geq 3$. If $q$ is a prime such that $L\otimes\mathbf{Z}_q$ contains a copy of $\mathbf{U}\otimes\mathbf{Z}_q$ as an orthogonal direct summand, then the double quotient 
    \[
            \O(L\otimes\mathbf{Q})\backslash \O(L\otimes\mathbf{Q}_q)/\O(L\otimes\mathbf{Z}_q)
    \]
    is a singleton.
\end{lemma}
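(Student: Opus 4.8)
\emph{Plan.} The statement is equivalent to the assertion that $\O(L\otimes\mathbf{Q})\cdot\O(L\otimes\mathbf{Z}_q)=\O(L\otimes\mathbf{Q}_q)$, and I would deduce this from strong approximation for the spin group together with a spinor-norm correction at $q$. Set $V=L\otimes\mathbf{Q}$; it is a non-degenerate quadratic space over $\mathbf{Q}$ of dimension $\geq 3$ which is isotropic over $\mathbf{R}$ (being indefinite) and over $\mathbf{Q}_q$ (since $L\otimes\mathbf{Z}_q\supseteq\mathbf{U}\otimes\mathbf{Z}_q$), and an isotropic quadratic form of dimension $\geq 3$ is universal. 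Fix an arbitrary $g\in\O(L\otimes\mathbf{Q}_q)$; the goal is to write $g=\gamma u$ with $\gamma\in\O(V)$ and $u\in\O(L\otimes\mathbf{Z}_q)$, as this shows the double coset of $g$ coincides with that of the identity.

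First reduce to $\mathrm{SO}$: if $e,f$ is a hyperbolic pair spanning an orthogonal summand $\mathbf{U}\otimes\mathbf{Z}_q\subset L\otimes\mathbf{Z}_q$, the reflection $s_{e+f}$ lies in $\O(L\otimes\mathbf{Z}_q)$ and has determinant $-1$, so after right multiplication by an element of $\O(L\otimes\mathbf{Z}_q)$ we may assume $g\in\mathrm{SO}(L\otimes\mathbf{Q}_q)$. Next I correct the spinor norm $\theta(g)\in\mathbf{Q}_q^\times/(\mathbf{Q}_q^\times)^2$. On the one hand, for $t\in\mathbf{Z}_q^\times$ the rotation of $\mathbf{U}\otimes\mathbf{Z}_q$ acting by $e\mapsto te$, $f\mapsto t^{-1}f$ lies in $\mathrm{SO}(L\otimes\mathbf{Z}_q)$ and has spinor norm $t$, so $\theta(\mathrm{SO}(L\otimes\mathbf{Z}_q))\supseteq\mathbf{Z}_q^\times\pmod{(\mathbf{Q}_q^\times)^2}$. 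On the other hand, $\mathbf{U}\otimes\mathbf{Z}_q$ contains a vector of norm $q$, hence (using that $V$ is $q$-adically dense in $V\otimes\mathbf{Q}_q$ and that square classes are open) the form $x\mapsto\langle x,x\rangle$ represents over $\mathbf{Q}$ a value of odd $q$-adic valuation; multiplying the reflection in such a vector by a reflection in a vector of unit $q$-norm (which also exists, by universality at $q$) gives an element of $\mathrm{SO}(V)$ whose spinor norm has odd $q$-valuation. Together these show that $\theta(\mathrm{SO}(V))$ maps onto $\mathbf{Q}_q^\times/(\mathbf{Q}_q^\times)^2$ modulo $\theta(\mathrm{SO}(L\otimes\mathbf{Z}_q))$, so there are $\gamma_1\in\mathrm{SO}(V)$ and $u_1\in\mathrm{SO}(L\otimes\mathbf{Z}_q)$ with $g_0:=\gamma_1^{-1}gu_1^{-1}$ of trivial spinor norm, i.e.\ lying in the image of $\mathrm{Spin}(L\otimes\mathbf{Q}_q)\to\mathrm{SO}(L\otimes\mathbf{Q}_q)$.

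Finally I invoke strong approximation. Since $V$ has dimension $\geq 3$ and is isotropic over $\mathbf{R}$, the group $\mathrm{Spin}(V)$ is semisimple, simply connected, and noncompact at the archimedean place, so $\mathrm{Spin}(V)(\mathbf{Q})$ is dense in $\mathrm{Spin}(V)(\mathbf{A}_f)$; equivalently $\mathrm{Spin}(V)(\mathbf{Q})\cdot K=\mathrm{Spin}(V)(\mathbf{A}_f)$ for the open compact subgroup $K=\prod_v\mathrm{Spin}(L\otimes\mathbf{Z}_v)$ (hyperspecial for all but finitely many $v$). Lift $g_0$ to $\widetilde g_0\in\mathrm{Spin}(L\otimes\mathbf{Q}_q)$ and apply this to the adèle equal to $\widetilde g_0$ at $q$ and to $1$ elsewhere: one obtains $\widetilde\gamma\in\mathrm{Spin}(V)(\mathbf{Q})$ and $\widetilde u\in K$ with $\widetilde\gamma\,\widetilde u$ equal to that adèle. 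Projecting to $\mathrm{SO}$ and comparing $q$-components yields $\gamma_2\in\mathrm{SO}(V)$ and $u_2\in\mathrm{SO}(L\otimes\mathbf{Z}_q)$ with $g_0=\gamma_2u_2$. Unwinding all the substitutions gives $g\in\O(V)\cdot\O(L\otimes\mathbf{Z}_q)$, as required.

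I expect the spinor-norm step to be the crux: one must check that the hyperbolic plane at $q$ supplies exactly what is needed to reconcile the spinor norm of a purely local isometry with that of a global one, and this is also where all three hypotheses enter — indefiniteness gives noncompactness at $\infty$ (hence strong approximation for $\mathrm{Spin}$), rank $\geq 3$ makes $\mathrm{Spin}$ simply connected and an isotropic local form universal, and the hyperbolic summand at $q$ gives both the unit spinor norms inside $\O(L\otimes\mathbf{Z}_q)$ and the isotropy of $V$ at $q$. Alternatively one could phrase everything through spinor genera: Eichler's theorem reduces the number of proper classes in the proper genus of an indefinite lattice of rank $\geq 3$ to the number of proper spinor genera, and the latter is $1$ by precisely the same spinor-norm computation; I find the direct argument above cleaner since it handles the full orthogonal group uniformly.
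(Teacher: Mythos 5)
Your proof is correct and follows essentially the same route as the paper's: the paper reduces to the fact that $\Spin(L\otimes\mathbf{Q})\backslash\Spin(L\otimes\mathbf{Q}_q)/\mathsf{K}$ is a singleton by strong approximation and shows that the maps from this double quotient through the $\SO$- and $\O$-versions are surjective, which is exactly your determinant correction (the reflection $s_{e+f}$) and spinor-norm correction (hyperbolic rotations with unit spinor norm plus a global product of two reflections of odd $q$-valuation) written out explicitly. The only cosmetic imprecision is that for $\dim V=4$ with square discriminant $\Spin(V)$ is not almost $\mathbf{Q}$-simple, so one should say that each simple factor is noncompact at infinity (which indefiniteness does guarantee here), but this does not affect the argument.
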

\begin{proof}
This is a slight variant of \cite[Lem.~2.1.11]{Yang2}, whose proof follows from that of \cite[Lem.~7.7]{Ogus}. We briefly summarize the argument: Let $\mathsf{K} \subseteq \Spin(L \tensor \bQ_q)$ be the preimage of the $\SO(L \tensor \bZ_q)$ under the natural map $\mathrm{ad} : \Spin \to \SO$. Using the fact that $L \tensor \bZ_q$ contains $\bU \tensor \bZ_q$ as an orthogonal direct summand, we show that the following two maps are both surjections
$$ \Spin(L \tensor \bQ) \backslash \Spin(L \tensor \bQ_q) / \mathsf{K} \to \SO(L \tensor \bQ) \backslash \SO(L \tensor \bQ_q) / \SO(L \tensor \bZ_q) \to \O(L \tensor \bQ) \backslash \O(L \tensor \bQ_q) / \O(L \tensor \bZ_q). $$
Now we conclude using the fact that the first double quotient is a singleton by the strong approximation theorem. 
\end{proof}

We now return to the setting of a K3 surface $X$ over an algebraically closed field $k$ of characteristic $p$.

\begin{lemma}\label{lem:lattices in the Tate module}
    Let $X$ be a K3 surface over $k$, and assume that $p\geq 5$. There exists a $\mathbf{Z}$--lattice $L$ of rank 22 and a primitive indefinite sublattice $L'\subset L$ such that
    \begin{enumerate}[label=\upshape{(\alph*)}]
        \item for each $\ell\neq p$, there exists an isometry $L\otimes\mathbf{Z}_{\ell}\cong\H^2(X,\mathbf{Z}_{\ell})$,
        \item there exists an isometry $L'\otimes\mathbf{Z}_p\cong T(X):=\H^2(X/W)^{\varphi=1}$, and
        \item The double quotients
        \[
            \O(L\otimes\mathbf{Z}_{(p)})\backslash \O(L\otimes\mathbf{A}_f^p)/\O(L\otimes\widehat{\mathbf{Z}}^p)\hspace{.5cm}\mbox{and}\hspace{.5cm} \O(L'\otimes\mathbf{Q})\backslash \O(L'\otimes\mathbf{Q}_p)/\O(L'\otimes\mathbf{Z}_p)
        \]
        are both singletons.
    \end{enumerate}
\end{lemma}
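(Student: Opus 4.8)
The plan is to pin down the relevant local quadratic lattices, realize them by a single global pair $(L,L')$ using the theory of $\mathbf{Z}$-lattices, and then read off the two class-number-one statements from strong approximation. For the local inputs: for each $\ell\neq p$ one has $\H^2(X,\mathbf{Z}_\ell)\cong\Lambda\otimes\mathbf{Z}_\ell$; this is standard, by lifting the (automatically projective) surface $X$ to characteristic $0$ and applying smooth and proper base change together with the corresponding statement for complex K3 surfaces. At $p$: if $X$ has finite height $h$, then in the Newton--Hodge decomposition $\H^2(X/W)=H_{<1}\oplus H_1\oplus H_{>1}$ of \S\ref{ssec:Newton Hodge decomp} the outer summands $H_{<1},H_{>1}$ are totally isotropic and mutually dual while $H_1$ is orthogonal to both, so unimodularity of $\H^2(X/W)$ forces $H_1$ to be a unimodular $W$-lattice of rank $22-2h$ with $\det H_1\equiv(-1)^h$ modulo squares; since the crystalline pairing satisfies $\langle\varphi x,\varphi y\rangle=\sigma\langle x,y\rangle$, the lattice $T(X)=H_1^{\varphi=1}$ carries a $\mathbf{Z}_p$-valued pairing with $T(X)\otimes_{\mathbf{Z}_p}W=H_1$, hence is unimodular of rank $22-2h$ with the same discriminant. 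If instead $X$ is supersingular with Artin invariant $\sigma_0$, the Tate conjecture identifies $T(X)$ with $\NS(X)\otimes\mathbf{Z}_p$, which by the classification of supersingular K3 lattices (Rudakov--Shafarevich, \cite{Ogus}) is the $p$-adic completion of an even lattice $\Lambda_{\sigma_0}$ of signature $(1,21)$, with discriminant group $(\mathbf{Z}/p)^{2\sigma_0}$ and $\Lambda_{\sigma_0}\otimes\mathbf{Z}_\ell\cong\Lambda\otimes\mathbf{Z}_\ell$ for all $\ell\neq p$.

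Next I would construct $L$ and $L'$. In the supersingular case take $L=L'=\Lambda_{\sigma_0}$: (a) and (b) hold by the preceding step, $L$ is indefinite of rank $22$, and $L'\subseteq L$ is trivially primitive. In the finite height case, set $r=22-2h$. Using the Hasse--Minkowski theorem together with the existence of $\mathbf{Z}$-lattices in a prescribed genus, I would first produce an even indefinite $\mathbf{Z}$-lattice $L'$ of rank $r$, of signature $(1,r-1)$, with $L'\otimes\mathbf{Z}_p\cong T(X)$ and discriminant group of small length: one is free to choose the completions of $L'$ away from $p$, and the constraints from Hilbert reciprocity and the signature-mod-$8$ condition are easily accommodated. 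Since $22-r=2h\geq 2$ leaves ample room, Nikulin's primitive embedding criterion then produces an even \emph{unimodular} $\mathbf{Z}$-lattice $L$ of rank $22$ containing $L'$ primitively. As even unimodular lattices of a fixed rank become isometric over every $\mathbf{Z}_\ell$, we obtain $L\otimes\mathbf{Z}_\ell\cong\Lambda\otimes\mathbf{Z}_\ell$ for all $\ell$, giving (a); (b) holds by construction.

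Finally I would verify the double quotients. For the first: $L$ is indefinite of rank $22$ and $L\otimes\mathbf{Z}_\ell\cong\Lambda\otimes\mathbf{Z}_\ell$ contains $\mathbf{U}\otimes\mathbf{Z}_\ell$ for every $\ell\neq p$, so running the argument of Lemma~\ref{lem:strong approx} at all $\ell\neq p$ at once --- strong approximation for $\Spin(L\otimes\mathbf{Q})$ (the archimedean place being noncompact), combined with the surjectivity of the $\Spin\to\SO\to\O$ maps supplied by the hyperbolic summands --- shows $\O(L\otimes\mathbf{Z}_{(p)})\backslash\O(L\otimes\mathbf{A}_f^p)/\O(L\otimes\widehat{\mathbf{Z}}^p)$ is a singleton. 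For the second: when $\mathrm{rk}(L')\geq 3$ and $T(X)=L'\otimes\mathbf{Z}_p$ contains $\mathbf{U}\otimes\mathbf{Z}_p$, Lemma~\ref{lem:strong approx} with $q=p$ applies --- in finite height with $r\geq 3$ because a unimodular $\mathbf{Z}_p$-lattice of rank $\geq 3$ is isotropic and hence splits off $\mathbf{U}\otimes\mathbf{Z}_p$, and in the supersingular case by Ogus's structure theory (or invoke \cite[Lem.~7.7]{Ogus} directly). In the remaining small-rank cases (notably $h=10$) one instead arranges $L'\otimes\mathbf{Q}$ to be anisotropic at $p$, so that $\SO(L'\otimes\mathbf{Q}_p)$ is compact and $\O(L'\otimes\mathbf{Q}_p)=\O(L'\otimes\mathbf{Z}_p)$ (using that $L'\otimes\mathbf{Z}_p$ still carries a vector of unit norm, hence a reflection of determinant $-1$), and the second quotient is a singleton trivially. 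The main obstacle is the lattice-theoretic step of the second paragraph --- producing at once a primitive embedding $L'\hookrightarrow L$ with $L$ locally isometric to $\Lambda$ away from $p$, $L'$ indefinite with $L'\otimes\mathbf{Z}_p\cong T(X)$, and the hyperbolic-summand hypotheses needed to invoke Lemma~\ref{lem:strong approx} --- together with the separate but routine treatment of the degenerate small-rank cases.
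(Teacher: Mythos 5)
Your plan for $h\le 9$ takes a genuinely different route from the paper's. The paper realizes $L'$ geometrically: by Ito's theorem (the one place where $p\ge 5$ enters) there exists a K3 surface $Y$ over $\bar{\mathbf{F}}_p$ of height $h$ and Picard rank $22-2h$, and one takes $L'=\Pic(Y)$, with the primitive embedding $L'\hookrightarrow\Lambda=L$ supplied by a perfect lifting of $Y$ to characteristic $0$. You instead manufacture $L'$ abstractly by genus theory and then embed it by Nikulin's criterion. This is an attractive alternative, and if carried out carefully it could even remove the restriction to $p\ge 5$; but the bookkeeping you declare ``easily accommodated'' is exactly where the work lies: the $p$-adic discriminant of $L'$ is pinned to $(-1)^h$ by unimodularity of the slope-$1$ part of $\H^2(X/W)$, and reconciling this simultaneously with evenness at $2$, an integral global determinant, Hilbert reciprocity, and Nikulin's length condition requires a genuine argument, not a wave of the hand. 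The supersingular case and the strong-approximation verification of (c) are essentially the paper's.

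The concrete error is your treatment of $h=10$. You propose to arrange $L'\otimes\mathbf{Q}$ to be anisotropic at $p$, so that $\O(L'\otimes\mathbf{Q}_p)$ is compact and equals $\O(L'\otimes\mathbf{Z}_p)$. This is impossible. For $h=10$ the slope-$1$ part $\H_1$ of $\H^2(X/W)$ has rank $2$ and discriminant $-1$ (Ogus \cite[Rem.~1.5]{Ogus2}), and since $T(X)\otimes_{\mathbf{Z}_p}W\cong\H_1$ the $\mathbf{Z}_p$-lattice $T(X)$ is unimodular of rank $2$ and discriminant $-1$, hence isomorphic to $\mathbf{U}\otimes\mathbf{Z}_p$ — the split form. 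Condition (b) forces $L'\otimes\mathbf{Z}_p\cong T(X)$, so $L'\otimes\mathbf{Q}_p$ is hyperbolic, not anisotropic, and the fallback you built the small-rank argument on does not exist. The way to close this — and what the paper does — is to take $L'=\mathbf{U}$ and verify the second double quotient in (c) directly: $\O(\mathbf{U}\otimes\mathbf{Q}_p)$ is generated by $\SO(\mathbf{U}\otimes\mathbf{Q}_p)\cong\mathbf{Q}_p^\times$ together with the swap of the two isotropic lines, and $\mathbf{Q}^\times\backslash\mathbf{Q}_p^\times/\mathbf{Z}_p^\times$ is trivial. This is a short computation, but it has to be done by hand, since Lemma~\ref{lem:strong approx} is stated only for rank $\ge 3$.
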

\begin{proof}
    Suppose that $X$ has finite height $h$. We take $L=\Lambda$ to be the K3 lattice. As $L$ contains a copy of $\mathbf{U}$ as an orthogonal direct summand, we may apply \cite[Lem.~2.1.11]{Yang2} to conclude that the indicated double quotient is a singleton. We will now produce $L'$. Suppose that $h\leq 9$. By \cite[Thm 6.4]{Ito-LFunction} (which requires $p\geq 5$), there exists a K3 surface $Y$ over $\overline{\mathbf{F}}_p$ such that $h(Y)=h$ and $\rho(Y)=22-2h$. Set $L'=\Pic(Y)$. The existence of a perfect lifting of $Y$ to characteristic zero shows that $L'$ admits a primitive embedding into $L=\Lambda$. Condition (a) is immediate. The embedding $L'\to\H^2(Y/W)$ induces an isomorphism $L'\otimes\mathbf{Z}_p\cong T(Y)=T(X)$, giving (b). It remains to check that the double quotient involving $L'$ is a singleton. The pairing on $\H_1$ is perfect, and the inclusion $T(X)\subset\H_1$ induces an isomorphism $T(X)\otimes_{\mathbf{Z}_p}W\cong\H_1$, so the discriminant of the pairing on $T(X)\cong L'\otimes\mathbf{Z}_p$ is a $p$-adic unit. As $L'$ has rank $\geq 4$, the classification of $p$--adic lattices \cite[Lem. 7.5]{Ogus} implies that $L'\otimes\mathbf{Z}_p$ contains a copy of $\mathbf{U}\otimes\mathbf{Z}_p$ as an orthogonal direct summand. By the Hodge index theorem, $L'$ is indefinite. We conclude using Lem. \ref{lem:strong approx}. Suppose $h=10$. We take $L'=\mathbf{U}$. This is certainly a primitive sublattice of $L=\Lambda$, and the double quotient involving $L'$ is a singleton. It remains to check that $\mathbf{U}\otimes\mathbf{Z}_p\cong T(X)$. As explained by Ogus \cite[Rem. 1.5]{Ogus2}, the discriminant of the pairing on $\H_1$ is $-1$. The same is then true for $T(X)$, because $T(X)\otimes_{\mathbf{Z}_p}W\cong\H_1$. By the classification of quadratic lattices over $\bZ_p$, we conclude that $\mathbf{U}\otimes\mathbf{Z}_p\cong T(X)$.
    
    Suppose that $X$ is supersingular. Let $L'=L=\Lambda_{\sigma_0}$ be the supersingular K3 lattice of Artin invariant $\sigma_0=\sigma_0(X)$. The discriminant of the pairing on $\Lambda_{\sigma_0}$ is equal to $-p^{2\sigma_0}$, which is an $\ell$--adic unit for all $\ell\neq p$, and so (a) holds. Condition (b) is immediate. Finally, by \cite[Lem. 7.7]{Ogus}, condition (c) holds.
\end{proof}

The following results could be phrased purely in terms of (semi)linear algebra, but for clarity we will maintain the geometric notation.

We recall that $\O(\H^2(X,\mathbf{A}^p_f))$ is the subgroup of $\prod_{\ell\neq p}\O(\H^2(X,\mathbf{Z}_{\ell}))$ consisting of those tuples $\Theta$ such that $\Theta_{\ell}$ is $\ell$--integral for all but finitely many $\ell$ (here, we say that $\Theta_{\ell}$ is $\ell$-integral if $\Theta_{\ell}(\H^2(X,\mathbf{Z}_{\ell}))=\H^2(X,\mathbf{Z}_{\ell})$). We let $\O_{\Phi}(\H^2(X/K))$ be the group of automorphisms of $\H^2(X/K)$ which are isometries with respect to the pairing and which commute with $\Phi$. We set $\O_{\Phi}(\H^2(X))=\O_{\Phi}(\H^2(X/K))\times\O(\H^2(X,\mathbf{A}^p_f))$.

\begin{remark}\label{rem:isometries are...}
    Giving an isometric embedding $\iota$ as in the statement of Thm \ref{thm: existence} is equivalent to giving an isometry $\iota_p:\Lambda\otimes W\hookrightarrow\H^2(X/K)$ of $W$-modules and for each prime $\ell\neq p$ an isometry $\iota_{\ell}:\Lambda\otimes\mathbf{Z}_{\ell}\hookrightarrow\H^2(X,\mathbf{Q}_{\ell})$ of $\mathbf{Q}_{\ell}$-modules such that for all but finitely many $\ell$ we have $\mathrm{im}(\iota_{\ell})=\H^2(X,\mathbf{Z}_{\ell})$. A similar description holds for the isometric embedding in the statement of Thm \ref{thm: existence, char 0}.
\end{remark}

\begin{lemma}\label{lem:Cartan--Dieudonne, prime to p part}
    Suppose that $p\geq 5$. If $\Theta^p\in \O(\H^2(X,\mathbf{A}_f^p))$ is an isometry, then there exists a sequence $b_1,\dots,b_r$ of primitive elements of $\H^2(X)$ such that
    \begin{enumerate}
        \item\label{item:CD prime to p, 1} for each $i$, $n_i:=b_i^2/2$ is an integer which is not divisible by $p$, and
        \item\label{item:CD prime to p, 2} the isometry $s:=s_{b_1}\circ\dots\circ s_{b_r}$ satisfies $s(\H^2(X,\widehat{\mathbf{Z}}^p))=\Theta^p(\H^2(X,\widehat{\mathbf{Z}}^p))$.
    \end{enumerate}
\end{lemma}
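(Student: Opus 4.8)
The plan is to pass to the standard $\mathbf{Z}$-lattice $\Lambda$, use the class-number-one statement of Lemma~\ref{lem:lattices in the Tate module}(c) to reduce $\Theta^p$ to a single rational isometry $\gamma$, and then express $\gamma$ as a product of reflections by invoking the generalized Cartan--Dieudonn\'e theorem \emph{over the local ring $\mathbf{Z}_{(p)}$} rather than over $\mathbf{Q}$; this last point is what forces the reflection norms to be prime to $p$.

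First I would take $L=\Lambda$. Since $\H^2(X,\mathbf{Z}_\ell)$ is isometric to $\Lambda\otimes\mathbf{Z}_\ell$ for every $\ell\neq p$ (as one sees by lifting $X$ to characteristic $0$, or from Lemma~\ref{lem:lattices in the Tate module}(a) together with the isometry $\Lambda_{\sigma_0}\otimes\mathbf{Z}_\ell\cong\Lambda\otimes\mathbf{Z}_\ell$), we may fix such isometries; they assemble to identifications $\Lambda\otimes\widehat{\mathbf{Z}}^p\cong\H^2(X,\widehat{\mathbf{Z}}^p)$ and $\Lambda\otimes\mathbf{A}_f^p\cong\H^2(X,\mathbf{A}_f^p)$, and accordingly we regard $\Theta^p\in\O(\Lambda\otimes\mathbf{A}_f^p)$. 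As $\Lambda$ contains a copy of $\mathbf{U}$, the double coset $\O(\Lambda\otimes\mathbf{Z}_{(p)})\backslash\O(\Lambda\otimes\mathbf{A}_f^p)/\O(\Lambda\otimes\widehat{\mathbf{Z}}^p)$ is trivial (Lemma~\ref{lem:lattices in the Tate module}(c), i.e. \cite[Lem.~2.1.11]{Yang2}). Hence $\Theta^p=\gamma\kappa$ with $\gamma\in\O(\Lambda\otimes\mathbf{Z}_{(p)})$ and $\kappa\in\O(\Lambda\otimes\widehat{\mathbf{Z}}^p)$, and since $\kappa$ preserves $\H^2(X,\widehat{\mathbf{Z}}^p)$ we obtain $\Theta^p(\H^2(X,\widehat{\mathbf{Z}}^p))=\gamma(\H^2(X,\widehat{\mathbf{Z}}^p))$. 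It therefore suffices to produce $b_1,\dots,b_r$ as in the statement whose associated product of reflections restricts, on the prime-to-$p$ part, to $\gamma$.

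Now $\Lambda\otimes\mathbf{Z}_{(p)}$ is unimodular and $\mathbf{Z}_{(p)}$ is local with residue characteristic $p\geq 5\neq 2$, so Theorem~\ref{thm:generalized Cartan--Dieudonne} gives $\gamma=s_{c_1}\circ\cdots\circ s_{c_r}$ with $c_j\in\Lambda\otimes\mathbf{Z}_{(p)}$ and $c_j^2\in\mathbf{Z}_{(p)}^\times$. Write $c_j=\lambda_j v_j$ with $\lambda_j\in\mathbf{Q}^\times$ and $v_j\in\Lambda$ primitive, so that $s_{c_j}=s_{v_j}$. Because $v_j$ extends to a $\mathbf{Z}$-basis of $\Lambda$, membership $c_j\in\Lambda\otimes\mathbf{Z}_{(p)}$ forces $\lambda_j\in\mathbf{Z}_{(p)}$; then $v_j^2=\lambda_j^{-2}c_j^2\in\mathbf{Z}$ combined with $c_j^2\in\mathbf{Z}_{(p)}^\times$ shows $v_j^2$ is a nonzero integer prime to $p$, and since $\Lambda$ is even and $p\neq 2$ the integer $n_j:=v_j^2/2$ is nonzero and prime to $p$. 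To realize $v_j$ inside $\H^2(X)=\H^2(X,\widehat{\mathbf{Z}}^p)\times\H^2(X/W)$ I would use that $2n_j$ is a unit in $W$ and that the unimodular $W$-lattice $\H^2(X/W)$ (of rank $22\geq 3$) represents every unit, choosing $w_j\in\H^2(X/W)$ with $w_j^2=2n_j$ and setting $b_j:=(v_j,w_j)$. Then $b_j^2/2=n_j$ is prime to $p$, and $b_j$ is primitive: its $\ell$-component $v_j$ is primitive for each $\ell\neq p$, and its crystalline component $w_j$ is not $p$-divisible because $w_j^2$ is a unit. Finally, with $s_{b_j}$ acting on $\H^2(X)_{\mathbf{Q}}$ componentwise (the convention preceding Prop.~\ref{prop: existence of primitive derived isogenies for reflections}), the restriction of $s:=s_{b_1}\circ\cdots\circ s_{b_r}$ to the prime-to-$p$ part is $s_{v_1}\circ\cdots\circ s_{v_r}=\gamma$ in $\O(\H^2(X,\mathbf{A}_f^p))$, whence $s(\H^2(X,\widehat{\mathbf{Z}}^p))=\gamma(\H^2(X,\widehat{\mathbf{Z}}^p))=\Theta^p(\H^2(X,\widehat{\mathbf{Z}}^p))$, as required.

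The crux, and the reason for working over $\mathbf{Z}_{(p)}$ rather than $\mathbf{Q}$, is the divisibility condition $p\nmid n_i$: plain Cartan--Dieudonn\'e over $\mathbf{Q}$ would produce reflection vectors $v_j\in\Lambda$ whose norms might be divisible by $p$, and the generalized theorem over the local ring $\mathbf{Z}_{(p)}$ is exactly what excludes this. This step uses that $\gamma$ is $\mathbf{Z}_{(p)}$-integral (automatic from the class-number-one decomposition) and that $\Lambda\otimes\mathbf{Z}_{(p)}$ — unlike the supersingular K3 lattice $\Lambda_{\sigma_0}\otimes\mathbf{Z}_{(p)}$ — is unimodular. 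The one bookkeeping point to check is that choosing $L=\Lambda$ is legitimate even when $X$ is supersingular (where Lemma~\ref{lem:lattices in the Tate module} instead used $\Lambda_{\sigma_0}$), which amounts to the isometry $\Lambda\otimes\mathbf{Z}_\ell\cong\Lambda_{\sigma_0}\otimes\mathbf{Z}_\ell$ for $\ell\neq p$ and the presence of $\mathbf{U}$ in $\Lambda$.
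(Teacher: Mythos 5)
Your proof follows the same route as the paper's: use the class-number-one statement (Lemma~\ref{lem:lattices in the Tate module}(c)) to replace $\Theta^p$ by a single $\mathbf{Z}_{(p)}$-integral isometry, apply the generalized Cartan--Dieudonn\'e theorem (Theorem~\ref{thm:generalized Cartan--Dieudonne}) over $\mathbf{Z}_{(p)}$ to factor it into reflections of unit norm, and rescale the reflection vectors to primitive lattice elements whose squares are prime to $p$. You differ in two small ways, both to your credit. First, you work with $\Lambda$ uniformly rather than switching to $\Lambda_{\sigma_0}$ in the supersingular case as the paper does; the isometry $\Lambda\otimes\mathbf{Z}_\ell\cong\Lambda_{\sigma_0}\otimes\mathbf{Z}_\ell$ for $\ell\neq p$ (both unimodular with discriminant a square times $-1$) makes this a legitimate simplification. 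Second, and more substantively, you explicitly construct the crystalline component $w_j\in\H^2(X/W)$ with $w_j^2=2n_j$, invoking universality of a unimodular $W$-lattice of rank $\geq 2$, so that $b_j=(v_j,w_j)$ is honestly a primitive class in $\H^2(X)=\H^2(X,\widehat{\mathbf{Z}}^p)\times\H^2(X/W)$ with a well-defined integer $n_j=b_j^2/2$. The paper's proof stops at a primitive $v\in L$ and is silent about how $v$ is realized in $\H^2(X)$; since condition (1) and the downstream use in Lemma~\ref{lem:Cartan--Dieudonne} and Proposition~\ref{prop: existence of primitive derived isogenies for reflections} require an actual class in $\H^2(X)$, your filling-in is exactly the right step.
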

\begin{proof}
    Let $L$ be a lattice as in Lem. \ref{lem:lattices in the Tate module}, and choose an identification $L\otimes\widehat{\mathbf{Z}}^p= \H^2(X,\widehat{\mathbf{Z}}^p)$. By Lemma \ref{lem:strong approx},
    \[
            \O(L\otimes\mathbf{Z}_{(p)})\backslash \O(L\otimes\mathbf{A}^p_f)/\O(L\otimes\widehat{\mathbf{Z}}^p)
    \]
    is a singleton. Hence, there exists an isometry $\Psi\in \O(L\otimes\mathbf{Z}_{(p)})$ such that $\Psi(L)\otimes \widehat{\mathbf{Z}}^p=\Theta^p(L\otimes\widehat{\mathbf{Z}}^p)$. We apply Thm \ref{thm:generalized Cartan--Dieudonne} with $R=\mathbf{Z}_{(p)}$ to produce a sequence $b_1,\dots,b_r$ of elements of $L\otimes\mathbf{Z}_{(p)}$ such that $b_i^2\in\mathbf{Z}_{(p)}^{\times}$ for each $i$ and $\Psi=s_{b_1}\circ\dots\circ s_{b_r}$. For each $i$, we may write $b_i=v/m$ for some primitive $v\in L$ and an integer $m$ which is coprime to $p$. Note that the integer $v^2/2=m^2b_i^2/2$ is in $\mathbf{Z}_{(p)}^{\times}$, and hence is not divisible by $p$. Moreover, we have $s_{b_i}=s_{v}$. So, by replacing each $b_i$ with the corresponding $v$, we may arrange so that the $b_i$ satisfy~\eqref{item:CD prime to p, 1}. Condition~\eqref{item:CD prime to p, 2} holds by construction.
\end{proof}

\begin{lemma}\label{lem:Cartan--Dieudonne, p part}
    Suppose that $p\geq 5$. Let $\Theta_p\in\O_{\Phi}(\H^2(X/K))$ be an isometry which restricts to the identity on $\H^2(X/W)_{<1}$. There exists a sequence $b_1,\dots,b_r$ of primitive elements of $\H^2(X)$ such that
    \begin{enumerate}
        \item for each $i$, $n_i:=b_i^2/2$ is an integer and $\varphi(b_i)=b_i$, and
        \item the isometry $s:=s_{b_1}\circ\dots\circ s_{b_r}$ satisfies $s(\H^2(X/W))=\Theta_p(\H^2(X/W))$.
    \end{enumerate}
\end{lemma}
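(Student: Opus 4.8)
The plan is to mimic the proof of Lem.~\ref{lem:Cartan--Dieudonne, prime to p part}, with the lattice $L$ there replaced by the sublattice $L'\subset L$ of Lem.~\ref{lem:lattices in the Tate module} and with $T(X)=\H^2(X/W)^{\varphi=1}$ playing the role of $\H^2(X,\widehat{\mathbf{Z}}^p)$. First I would isolate the ``interesting part'' of $\Theta_p$. Since $\Theta_p$ commutes with $\Phi$, it preserves the Newton--Hodge slope decomposition of the isocrystal $\H^2(X/K)$ (see \S\ref{ssec:Newton Hodge decomp}); by hypothesis it is the identity on the slope $<1$ summand, and since $\Theta_p$ is an isometry and $\H^2(X/W)_{<1}$ and $\H^2(X/W)_{>1}$ are dual under the pairing, a short computation shows it is the identity on the slope $>1$ summand as well. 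Hence $\Theta_p$ is determined by its restriction to $\H^2(X/W)_1\otimes K$; after dividing Frobenius by $p$ this is a unit--root isocrystal with $\varphi$-invariants $T(X)\otimes\mathbf{Q}_p$, so that restriction is $\theta\otimes\mathrm{id}_K$ for a unique $\theta\in\O(T(X)\otimes\mathbf{Q}_p)$. In particular $\Theta_p(\H^2(X/W))$ agrees with $\H^2(X/W)$ off the slope $1$ part, where it becomes $(\theta\otimes\mathrm{id}_K)(\H^2(X/W)_1)$.

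Next I would invoke Lem.~\ref{lem:lattices in the Tate module} (this is where $p\ge 5$ enters) to obtain $L'\subset L$ together with isometries $L\otimes\mathbf{Z}_\ell\cong\H^2(X,\mathbf{Z}_\ell)$ for $\ell\ne p$ and $L'\otimes\mathbf{Z}_p\cong T(X)$, both double quotients of \emph{loc.~cit.}~(c) being singletons. Fixing $L'\otimes\mathbf{Z}_p=T(X)$, regard $\theta$ as an element of $\O(L'\otimes\mathbf{Q}_p)$. As $\O(L'\otimes\mathbf{Q})\backslash\O(L'\otimes\mathbf{Q}_p)/\O(L'\otimes\mathbf{Z}_p)$ is a singleton, write $\theta=\Psi u$ with $\Psi\in\O(L'\otimes\mathbf{Q})$ and $u\in\O(L'\otimes\mathbf{Z}_p)$; then $\Psi(L'\otimes\mathbf{Z}_p)=\theta(L'\otimes\mathbf{Z}_p)$ because $u$ preserves the lattice $T(X)$. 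By the Cartan--Dieudonn\'e theorem over $\mathbf{Q}$ (Thm.~\ref{thm:generalized Cartan--Dieudonne} with $R=\mathbf{Q}$), write $\Psi=s_{c_1}\circ\cdots\circ s_{c_r}$; rescaling and using $s_{\lambda c}=s_c$, I may take each $c_j$ to be a primitive vector of $L'$.

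Transporting via $L'\subset L$ and the chosen isometries, each $c_j$ determines a class $b_j\in\H^2(X)$ whose crystalline component $b_{j,p}$ is the image of $c_j$ in $T(X)\subset\H^2(X/W)$. Then $n_j:=b_j^2/2=c_j^2/2$ is an integer (the lattices involved are even) and $\varphi(b_{j,p})=b_{j,p}$ since $b_{j,p}\in T(X)$. Moreover $b_j$ is primitive in $\H^2(X)$: at each $\ell\ne p$ this holds because $c_j$ is primitive in $L'$ and $L'$ is primitive in $L$, hence in $L\otimes\mathbf{Z}_\ell\cong\H^2(X,\mathbf{Z}_\ell)$; and $b_{j,p}$ is primitive in $\H^2(X/W)$ because if it were divisible by $p$ there, the quotient, being again $\varphi$-fixed, would lie in $T(X)$, contradicting primitivity of $c_j$ in $L'\otimes\mathbf{Z}_p=T(X)$. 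Put $s:=s_{b_1}\circ\cdots\circ s_{b_r}$.

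It remains to check $s(\H^2(X/W))=\Theta_p(\H^2(X/W))$. Each $b_{j,p}$ lies in $T(X)\subset\H^2(X/W)_1$, which is orthogonal to the slope $<1$ and $>1$ parts, so the crystalline component of $s$ is the identity off the slope $1$ summand and is $\Psi\otimes\mathrm{id}_K$ on it. If $X$ has finite height, the unit--root property gives $\H^2(X/W)_1=T(X)\otimes_{\mathbf{Z}_p}W$, whence $s(\H^2(X/W))$ is obtained from $\H^2(X/W)$ by replacing its slope $1$ part with $\Psi(T(X))\otimes_{\mathbf{Z}_p}W=\theta(T(X))\otimes_{\mathbf{Z}_p}W$, which by the first paragraph equals $\Theta_p(\H^2(X/W))$; this case runs exactly parallel to Lem.~\ref{lem:Cartan--Dieudonne, prime to p part}. \emph{The main obstacle will be the supersingular case.} There $\H^2(X/W)=\H^2(X/W)_1$ is not unit--root, the identity $\H^2(X/W)_1=T(X)\otimes_{\mathbf{Z}_p}W$ fails, and the computation only yields that $s(\H^2(X/W))=(\Psi\otimes\mathrm{id}_K)(\H^2(X/W))$ and $\Theta_p(\H^2(X/W))=(\theta\otimes\mathrm{id}_K)(\H^2(X/W))$ are supersingular K3 crystals with the same Tate module $\theta(T(X))$. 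To force equality one must also match Ogus's characteristic subspace; I would do this by tracking the induced action of the $s_{c_j}$ on the discriminant form $T(X)^\vee/T(X)$ and appealing to the classification of supersingular K3 crystals of \cite{Ogus}, which supplies the stronger input that the factorization $\theta=\Psi u$ may be chosen with $u$ preserving $\H^2(X/W)$ (not merely $T(X)$). Finally, the condition $\varphi(b_{j,p})=b_{j,p}$ also ensures, by Prop.~\ref{prop:description1}, that $b_{j,p}/n_j\in\ms B(X)$, so the $b_j$ are legitimate inputs for Prop.~\ref{prop: existence of primitive derived isogenies for reflections} when the lemma is applied.
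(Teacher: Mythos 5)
Your proposal follows exactly the paper's route: Newton--Hodge decomposition, reduction to the slope-$1$ summand and $\O(T(X)\otimes\bQ_p)$, then Lemma~\ref{lem:lattices in the Tate module}, the double coset singleton of Lemma~\ref{lem:strong approx}, and the classical Cartan--Dieudonn\'e theorem. Your explicit verification that each $b_j$ stays primitive in $\H^2(X/W)$ (a $p$-th root would again be $\varphi$-fixed and hence lie in $T(X)$) is correct and is left implicit in the paper.

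Your diagnosis of the supersingular case is on point and identifies a step that the paper's written argument leaves to the reader. The concluding sentence of the paper's proof, ``It follows that $s$ satisfies condition (2),'' is immediate only in finite height, where $\H^2(X/W)_1 = T(X)\otimes_{\bZ_p}W$ by the unit-root property. For $X$ supersingular one has $T(X)\otimes W\subsetneq \H\subsetneq T(X)^\vee\otimes W$ with $\H:=\H^2(X/W)$, and $\Psi(T(X))=\Theta_p(T(X))$ does not by itself give $\Psi(\H)=\Theta_p(\H)$: the element $u:=\Psi^{-1}\Theta_p\in\O(T(X))$ need not stabilize Ogus's characteristic subspace $\H/(T(X)\otimes W)\subset (T(X)^\vee/T(X))\otimes k$. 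The repair is essentially what you gesture at, but one should be concrete about the input. One uses the non-uniqueness of the factorization $\Theta_p|_{T(X)\otimes\bQ_p}=\Psi u$, replacing $(\Psi,u)$ by $(\Psi g,\,g^{-1}u)$ for $g\in\O(\Lambda_{\sigma_0})$. Since $\O(\Lambda_{\sigma_0})\to\O(\disc(\Lambda_{\sigma_0}))$ is surjective (the supersingular K3 lattice is even, indefinite, of rank $22$ with $p$-elementary discriminant; this is standard lattice theory and is implicit in \cite[Lem.~7.7]{Ogus}), one may choose $g$ so that $g^{-1}u$ acts trivially on $T(X)^\vee/T(X)$, hence stabilizes the characteristic subspace and therefore $\H$. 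Your citation of ``the classification of supersingular K3 crystals'' as supplying the refined factorization is too vague: the classification only identifies the obstruction, and the actual input is this surjectivity on the discriminant group (equivalently, a strengthening of Lemma~\ref{lem:strong approx} in which $\O(L'\otimes\bZ_p)$ is replaced by the stabilizer of $\H$ inside it).
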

\begin{proof}
    Write $\H=\H^2(X/W)$, and consider the Newton--Hodge decomposition $\H=\H_{<1}\oplus \H_1\oplus \H_{>1}$ of $\H$. The first and third factors are dual, and orthogonal to $\H_1$. As $\Theta_p$ restricts to the identity on $\H_{<1}$, it must also restrict to the identity on $\H_{>1}$, and hence $\Theta_p$ restricts to an element of $\O_{\Phi}(\H_1)=\O(T(X))$. We fix lattices $L,L'$ as in Lem. \ref{lem:lattices in the Tate module} and an identification $L'\otimes\mathbf{Z}_p=T(X)$. By Lemma \ref{lem:strong approx}, we may find $\Psi\in\O(L'\otimes\mathbf{Q})$ such that $\Psi(L')\otimes\mathbf{Z}_p=\Theta_p|_{T(X)}(L'\otimes\mathbf{Z}_p)$. 
    By the classical Cartan--Dieudonn\'{e} theorem, we may find a sequence $b_1,\dots,b_r$ of elements of $L'\otimes\mathbf{Q}$ such that $\Psi=s_p=s_{b_1}\circ\dots\circ s_{b_r}$. By scaling, we may assume that each $b_i$ is in $L'$ and is primitive. Note that, as $\H_{<1}$ and $\H_{>1}$ are orthogonal to $\H_1$, the reflections $s_{b_i}$ are the identity on $\H_{<1}\oplus\H_{>1}$. If follows that $s$ satisfies condition (2).
\end{proof}

\begin{lemma}\label{lem:Cartan--Dieudonne}
    Suppose that $p\geq 5$. Let $\Theta\in \O_{\Phi}(\H^2(X)_{\mathbf{Q}})$ be an isometry such that $\Theta_p$ restricts to the identity on $\H^2(X/W)_{<1}$. There exists a sequence $b_1,\dots,b_m$ of primitive elements of $\H^2(X)$ such that
    \begin{enumerate}
        \item for each $i$, $n_i:=b_i^2/2$ is an integer and $b_i/n_i$ is a mixed B--field, and
        \item the isometry $s:=s_{b_1}\circ\dots\circ s_{b_m}$ satisfies $s(\H^2(X))=\Theta(\H^2(X))$.
    \end{enumerate}
\end{lemma}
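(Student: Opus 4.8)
The plan is to split $\Theta$ into its crystalline and prime-to-$p$ components and treat them in turn, invoking Lemma~\ref{lem:Cartan--Dieudonne, p part} for the former and Lemma~\ref{lem:Cartan--Dieudonne, prime to p part} for the latter, while being careful about the order of composition. Write $\Theta=\Theta_p\times\Theta^p$ with $\Theta_p\in\O_{\Phi}(\H^2(X/K))$ and $\Theta^p\in\O(\H^2(X,\mathbf{A}_f^p))$.

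First I would apply Lemma~\ref{lem:Cartan--Dieudonne, p part} to $\Theta_p$ (legitimate because $\Theta_p$ is the identity on $\H^2(X/W)_{<1}$ by hypothesis), obtaining primitive classes $b_1,\dots,b_r\in\H^2(X)$ with each $n_i=b_i^2/2$ an integer, $\varphi(b_i)=b_i$, and $t:=s_{b_1}\circ\dots\circ s_{b_r}$ satisfying $t_p(\H^2(X/W))=\Theta_p(\H^2(X/W))$ on the crystalline part. Each $b_i/n_i$ is a mixed B-field: its $\ell$-adic components lie in $\H^2(X,\mathbf{Q}_\ell(1))$ and hence are $\ell$-adic B-fields (the map~\eqref{eq:B field map etale case, part 2} is defined on all of $\H^2(X,\mathbf{Q}_\ell(1))$), and are integral for all but finitely many $\ell$; its crystalline component $b_{i,p}/n_i$ lies in $\H^2(X,\mathbf{Q}_p(1))\subset\ms B(X)$ because $\varphi(b_{i,p})=b_{i,p}$ (equivalently $b_{i,p}-\varphi(b_{i,p})=0$, so Prop.~\ref{prop:description1} applies). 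Since $\varphi(b_{i,p})=b_{i,p}$, each $s_{b_i}$ commutes with $\Phi$ on crystalline cohomology; in particular the crystalline component $t_p^{-1}\Theta_p$ of $t^{-1}\Theta$ carries $\H^2(X/W)$ to $t_p^{-1}\bigl(\Theta_p(\H^2(X/W))\bigr)=\H^2(X/W)$.

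Next I would apply Lemma~\ref{lem:Cartan--Dieudonne, prime to p part} to the prime-to-$p$ component of $t^{-1}\Theta$, which lies in $\O(\H^2(X,\mathbf{A}_f^p))$, producing primitive $b_{r+1},\dots,b_m\in\H^2(X)$ with each $n_i=b_i^2/2$ an integer coprime to $p$, such that the prime-to-$p$ component of $t':=s_{b_{r+1}}\circ\dots\circ s_{b_m}$ sends $\H^2(X,\widehat{\mathbf{Z}}^p)$ to the prime-to-$p$ component of $t^{-1}\Theta$ applied to $\H^2(X,\widehat{\mathbf{Z}}^p)$. For $r<i\le m$ the class $b_i/n_i$ is again a mixed B-field, this time because $p\nmid n_i$ forces $b_{i,p}/n_i\in\H^2(X/W)\subset\ms B(X)$; and for the same reason $n_i\in W^{\times}$, so $s_{b_i}$ is $\mathbf{Z}_p$-integral, hence the crystalline component of $t'$ preserves $\H^2(X/W)$.

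Finally, set $s:=t\circ t'=s_{b_1}\circ\dots\circ s_{b_m}$; then $b_1,\dots,b_m$ satisfies (1). To check (2), write $s^{-1}\Theta=t'^{-1}(t^{-1}\Theta)$: its prime-to-$p$ component preserves $\H^2(X,\widehat{\mathbf{Z}}^p)$ by the defining property of $t'$, while its crystalline component is the composite of the $\mathbf{Z}_p$-integral isometry $t'^{-1}_p$ with $t_p^{-1}\Theta_p$, which preserves $\H^2(X/W)$ by the first step; hence $s^{-1}\Theta$ preserves $\H^2(X)=\H^2(X,\widehat{\mathbf{Z}}^p)\times\H^2(X/W)$, i.e.\ $s(\H^2(X))=\Theta(\H^2(X))$. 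I do not expect a serious obstacle beyond this bookkeeping; the one point that genuinely matters is the order of composition — the reflections from Lemma~\ref{lem:Cartan--Dieudonne, p part} must come first, since they are the ones commuting with $\Phi$, and the subsequent $\ell$-adic reflections are chosen with $p\nmid n_i$ precisely so that they remain $\mathbf{Z}_p$-integral and leave the crystalline lattice untouched.
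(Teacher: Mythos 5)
Your proposal is correct and follows essentially the same route as the paper's proof: first apply Lemma~\ref{lem:Cartan--Dieudonne, p part} to $\Theta_p$, then Lemma~\ref{lem:Cartan--Dieudonne, prime to p part} to the prime-to-$p$ component of the corrected isometry, verify the mixed B-field condition via Prop.~\ref{prop:description1} for the $\varphi$-fixed classes and via $p\nmid n_i$ for the rest, and conclude by noting the latter reflections are $p$-integral. The only difference is that you spell out a few intermediate verifications (e.g., that $s_{b_i}$ commutes with $\Phi$ when $\varphi(b_i)=b_i$) that the paper leaves implicit.
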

\begin{proof}
We first choose elements $b_1,\dots,b_r\in\H^2(X)$ by applying Lem. \ref{lem:Cartan--Dieudonne, p part} to $\Theta_p$. We set $s=s_{b_1}\circ\dots\circ s_{b_r}$. We apply Lem. \ref{lem:Cartan--Dieudonne, prime to p part} to $(s^{-1}\circ\Theta)^p$ to obtain elements $b_1',\dots,b_t'\in\H^2(X)$. Set $s'=s_{b_1'}\circ\dots\circ s_{b_{t}'}$. We claim that the sequence $b_1,\dots,b_r,b'_1,\dots,b'_t\in\H^2(X)$ satisfies the desired conditions. We check (1). We have that $n_i:=b_i^2/2$ and $n'_i:=(b'_i)^{2}/2$ are integers. We have that $\varphi((b_i)_p)=(b_i)_p$, so by Prop. \ref{prop:description1} each $(b_i)_p/n_i$ is a crystalline B-field. It follows that $b_i/n_i$ is a mixed B-field. As $n_i'$ is not divisible by $p$, $(b'_i)_p/n'_i$ is in $\H^2(X/W)$, so $(b'_i)_p/n'_i$ is a crystalline B-field, and $b'_i/n'_i$ is a mixed B-field. We have shown that (1) holds. To check (2), note that by construction, we have
\[
    (s\circ s')^p(\H^2(X,\widehat{\mathbf{Z}}^p))=\Theta^p(\H^2(X,\widehat{\mathbf{Z}}^p))
\]
Furthermore, as $p$ does not divide $(b_i')^2/2$, we have $s'_p(\H^2(X/W))=\H^2(X/W)$, and so
\[
    (s\circ s')_p(\H^2(X/W))=s_p(\H^2(X/W))=\Theta_p(\H^2(X/W))
\]
\end{proof}

\noindent \textit{Proof of Theorem~\ref{thm: existence}.}
We prove the ``only if'' direction first. Suppose that $f:\fh^2(X')\xrightarrow{\sim}\fh^2(X)$ is a primitive derived isogeny. We may choose Brauer classes $\alpha\in\Br(X)$ and $\alpha'\in\Br(X')$, a Fourier--Mukai equivalence $\Phi_P:D^b(X',\alpha')\xrightarrow{\sim}D^b(X,\alpha)$, and crystalline B-field lifts $B,B'$ of $\alpha$ and $\alpha'$ such that the cohomological transform $\Phi_{v^{-B'\boxplus B}}(P):\tH(X'/K)\to\tH(X/K)$ and the cohomological realization $\H^2(X'/K)\xrightarrow{\sim}\H^2(X/K)$ of $f$ restrict to the same map $T^2(X'/K)\xrightarrow{\sim}T^2(X/K)$, where $T^2(X/K)$ denotes the orthogonal complement to $\NS(X)\otimes K$ in $\H^2(X/K)$ (not to be confused with the Tate module of $\H^2(X/W)$). By Thm \ref{thm:action on cohomology}, $\Phi_{v^{-B'\boxplus B}}(P)$ restricts to an isomorphism $\tH(X'/W,B')\xrightarrow{\sim}\tH(X/W,B)$ of crystals. Thus, by Prop. \ref{prop:Newton Hodge decomp of twisted Mukai crystal}, it induces an isomorphism
\[
    \H^2(X'/W)_{<1}=\tH(X'/W,B')_{<1}\xrightarrow{\sim}\tH(X/W,B)_{<1}=\H^2(X/W)_{<1}
\]
The transcendental part $T^2(X/K)$ contains $\H^2(X/W)_{<1}$, so the cohomological realization of $f$ also maps the slope $<1$ part to the slope $<1$ part. This gives the result. 

We now prove the ``if'' direction. For each $\ell\neq p$ fix an isometry $\H^2(X,\mathbf{Z}_\ell)\cong \Lambda\otimes\mathbf{Z}_\ell$. Assume first that the K3 crystals $H_p$ and $\H^2(X/W)$ are abstractly isomorphic. This is the case, for instance, if $X$ has finite height. We fix an isomorphism $\H^2(X/W)\cong H_p$ of K3 crystals. Composing with the given embedding $\iota$ and tensoring with $\mathbf{Q}$, we find an isometry $\Theta \in \O_{\Phi}(\H^2(X/K))\times\O(\H^2(X,\widehat{\mathbf{Z}}^p))$ which maps $\H^2_\et(X, \bZ_\ell)$ to $\iota_l(\Lambda \tensor \bZ_\ell)$ and $\H^2(X/W)$ to $\iota_p(\Lambda\otimes W)$. By Lem. \ref{lem:Cartan--Dieudonne}, we may find a sequence $b_1,\dots,b_m\in\H^2(X)$ of primitive elements such that for every $i$, $n_i:=b_i^2/2$ is an integer and $b_i/n_i$ is a mixed B-field, and furthermore the isometry $s:=s_{b_1}\circ\dots\circ s_{b_m}$ satisfies $s(\H^2(X))=\Theta(\H^2(X))$. The result follows by repeatedly applying Prop. \ref{prop: existence of primitive derived isogenies for reflections}.

We now consider the case when $X$ is supersingular and $H_p$ and $\H^2(X/W)$ are not isomorphic. This can certainly occur: any two supersingular K3 crystals over $k$ of the same rank and discriminant are isogenous, but by results of Ogus \cite{Ogus} supersingular K3 crystals themselves have nontrivial moduli. We argue as follows. By the global crystalline Torelli theorem \cite{Ogus2}, there exists a supersingular K3 surface $X'$ such that $\H^2(X'/W)$ is isomorphic as a K3 crystal to $H_p$. By Thm \ref{thm:supersingular K3 surfaces are derived isogenous} below, there exists a derived isogeny $\fh^2(X')\sto\fh^2(X)$, which induces an isometry $\H^2(X'/K)\cong\H^2(X/K)$. We are now reduced to the previous case, and we conclude the result. \qed

\begin{remark}
\label{rmk: relax assumption}
The only place where the assumption $p \ge 5$ is used in the above proof is the usage of Ito's result \cite[Thm~6.4]{Ito-LFunction}. If in Theorem~\ref{thm: existence}, $H_p = \H^2_\cris(X/W)$, i.e., $\Theta_p$ as above can be taken to be the identity, then the assumption $p > 2$ suffices. In this case, in producing $X'$ we only need to iteratively take moduli of sheaves twisted by Brauer classes of prime-to-$p$ order.
\end{remark}

\subsection{Existence in the supersingular case}
We make a few remarks specific to the supersingular case. Here, very strong cohomological results are available: there is a global Torelli theorem \cite{Ogus,Ogus2,BL}, as well as a derived Torelli theorem \cite{Bragg-Derived-Equiv}. Together, these give a picture which closely parallels the case of complex K3 surfaces. We will show that any two supersingular K3 surfaces are derived isogenous. More refined results (along the lines of \cite[Thm 0.1]{Huy}) are possible, but we will omit this discussion here.

\begin{theorem}\label{thm:supersingular K3 surfaces are derived isogenous} 
    Suppose that $p\geq 3$. Let $X$ and $Y$ be two supersingular K3 surfaces over $k$. There exists a derived isogeny $\fh^2(X) \sto \fh^2(Y)$.
\end{theorem}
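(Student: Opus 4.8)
The plan is to reduce the statement to the case where $X$ and $Y$ have the \emph{same} supersingular K3 crystal in degree $2$, and then to apply Theorem~\ref{thm: existence} (more precisely, the ``if'' direction in the form available without the $p\ge 5$ restriction once the crystals agree, cf. Rmk~\ref{rmk: relax assumption}). First I would recall Ogus's classification: supersingular K3 crystals of a fixed Artin invariant $\sigma_0$ form an irreducible moduli space, and any two supersingular K3 crystals of the same rank and discriminant are \emph{isogenous} (the Tate module of each is an isogeny onto the crystal, and the rational crystal $\H^2(X/K)$ is determined up to isometry by the $F$-crystal structure, which in the supersingular case means all slopes are $1$). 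So the first step is: given $X$ and $Y$, produce a supersingular K3 surface $X'$ whose crystal $\H^2(X'/W)$ is isomorphic to $\H^2(Y/W)$ \emph{as a K3 crystal}, together with a derived isogeny $\fh^2(X')\sto\fh^2(X)$.

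To build $X'$ and the isogeny $\fh^2(X')\sto\fh^2(X)$: fix an abstract isometry $\Theta^p$ on $\H^2(X,\what{\bZ}^p)_\bQ$ and an isometry $\Theta_p$ on $\H^2(X/K)$ carrying the integral crystal $\H^2(X/W)$ to a $W$-lattice realizing the target supersingular K3 crystal structure (both exist: for the prime-to-$p$ part because the genus of $\Lambda_{\sigma_0}$ or $\Lambda_{\sigma_0+1}$ over $\what{\bZ}^p$ is determined by rank and discriminant, which are $\ell$-adic units; for the $p$-part because all supersingular K3 crystals of given rank and discriminant are rationally isometric and one can move the integral lattice by an isometry of the rational crystal). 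Now apply Lem.~\ref{lem:Cartan--Dieudonne} to $\Theta=\Theta_p\times\Theta^p$ — note that in the supersingular case $\H^2(X/W)_{<1}=0$, so the hypothesis ``$\Theta_p$ restricts to the identity on $\H^2(X/W)_{<1}$'' is automatic — to write $\Theta$'s action on lattices as a product of reflections $s_{b_i}$ in primitive classes $b_i\in\H^2(X)$ with $n_i=b_i^2/2\in\bZ$ and $b_i/n_i$ a mixed B-field. Applying Prop.~\ref{prop: existence of primitive derived isogenies for reflections} repeatedly yields a chain of primitive derived isogenies whose composite $f:\fh^2(X')\sto\fh^2(X)$ satisfies $f_*(\H^2(X'))=\Theta(\H^2(X))$; by the global crystalline Torelli theorem \cite{Ogus2}, such an $X'$ with the prescribed crystal exists (indeed one can take $X'$ to be \emph{any} supersingular K3 whose crystal is the target, then the derived isogeny realizing the lattice move is what we have just produced — here one uses that the supersingular K3 lattice $\Lambda_{\sigma_0}$ embeds its genus as a single class prime to $p$ and that Ogus's Torelli theorem makes the crystal the only invariant). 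Running the same construction with the roles of $X'$ and $Y$ gives a derived isogeny $\fh^2(X')\sto\fh^2(Y)$ once $\H^2(X'/W)\cong\H^2(Y/W)$; composing with the inverse of $\fh^2(X')\sto\fh^2(X)$ (derived isogenies are closed under composition and, being built from derived equivalences, under inversion) produces the desired $\fh^2(X)\sto\fh^2(Y)$.

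The main obstacle I anticipate is the \emph{$p$-adic integral bookkeeping}: unlike the finite-height case treated in the body of the proof of Thm~\ref{thm: existence}, here $H_p$ and $\H^2(X/W)$ genuinely need not be isomorphic, so one cannot simply fix an isomorphism of crystals at the outset. The content is to check that (i) one can choose the integral $W$-lattice representing the target supersingular K3 crystal inside the \emph{fixed} rational crystal $\H^2(X/K)$ — this is where one invokes that any two supersingular K3 crystals of the same rank and discriminant become isomorphic after $\otimes K$, together with Ogus's classification to guarantee the target crystal is actually realized by a surface via \cite{Ogus2}; and (ii) that the reflections $b_i$ supplied by the Cartan--Dieudonné argument have $b_i/n_i$ a crystalline B-field, which by Prop.~\ref{prop:description1} reduces to $\varphi(b_i)=b_i$ on the $p$-part, arranged exactly as in Lem.~\ref{lem:Cartan--Dieudonne, p part} using that the relevant reflections act on the slope-$1$ (hence entire) crystal through the Tate module $T(X)=\Pic(X)\otimes\bZ_p$ after tensoring. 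Once these are in place the argument is a formal concatenation; the reason the hypothesis can be relaxed to $p\ge 3$ (rather than $p\ge 5$) is that, as in Rmk~\ref{rmk: relax assumption}, Ito's existence result \cite[Thm~6.4]{Ito-LFunction} is not needed — supersingular K3 surfaces with arbitrary Artin invariant already exist by Ogus and Shioda — so the only constraint is $p>2$ coming from flat duality and the structure theory of supersingular K3 crystals.
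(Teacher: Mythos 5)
Your reduction step is where the argument breaks, and it breaks in a way that makes the whole plan circular. The reflection machinery of Lem.~\ref{lem:Cartan--Dieudonne} and Prop.~\ref{prop: existence of primitive derived isogenies for reflections} only produces lattices of the form $\Theta(\H^2(X))$ with $\Theta_p\in\O_{\Phi}(\H^2(X/K))$: the crystalline reflections are taken in classes $b_i$ with $\varphi(b_i)=b_i$ (or with $p\nmid n_i$, in which case they do not move the lattice at $p$ at all), so the resulting lattice at $p$ is the image of $\H^2(X/W)$ under an isometry commuting with $\Phi$. Any such isometry is by definition an isomorphism of K3 crystals onto its image. Consequently this machinery can only reach supersingular K3 surfaces whose crystal is abstractly isomorphic to that of $X$ --- and by Ogus, supersingular K3 crystals of a given Artin invariant $\sigma_0\geq 2$ have positive-dimensional moduli, so ``isogenous'' (same rank and discriminant) does not imply isomorphic. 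Your claim that one can ``move the integral lattice by an isometry of the rational crystal'' to realize the target crystal inside $\H^2(X/K)$ is exactly the false step: an isometry accomplishing that cannot commute with $\Phi$ (otherwise the two crystals would be isomorphic), and then Lem.~\ref{lem:Cartan--Dieudonne} does not apply. This is precisely why the paper's proof of Thm.~\ref{thm: existence} singles out the supersingular case with $H_p\not\cong\H^2(X/W)$ and cites Thm.~\ref{thm:supersingular K3 surfaces are derived isogenous} at that point; deriving the present theorem from Thm.~\ref{thm: existence} or its proof is therefore circular.

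The paper's actual proof is one line and rests on a different mechanism: by \cite[Prop.~5.2.5]{BL}, every supersingular K3 surface is connected by a finite chain of twisted derived equivalences $D^b(X_i,\alpha_i)\cong D^b(X_{i+1},\alpha_{i+1})$ to the unique supersingular K3 surface $Z$ of Artin invariant $1$; applying this to both $X$ and $Y$ and composing gives $\fh^2(X)\sto\fh^2(Z)\xleftarrow{\sim}\fh^2(Y)$. The step that changes the isomorphism class of the crystal is the moduli space of sheaves twisted by a \emph{nontrivial} Brauer class, which shifts the Artin invariant (cf. the proposition stating that $\tH(X/W,B)$ has Artin invariant $\sigma_0+1$ when $\alpha_B\neq 0$), combined with the supersingular crystalline Torelli theorem. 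If you want to avoid quoting \cite{BL}, you would need to reproduce that descent-to-Artin-invariant-one argument, not the Cartan--Dieudonn\'e calculus.
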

\begin{proof}
    We use a result of Lieblich and the first author \cite[Prop. 5.2.5]{BL}: if $X$ is a supersingular K3 surface, then there exists a sequence $X_0,X_1,\dots, X_n$ of supersingular K3 surfaces together with Brauer classes $\alpha_i\in\Br(X_i)$ such that $X_0=X$, $D^b(X_i,\alpha_i)\cong D^b(X_{i+1},\alpha_{i+1})$ for each $0\leq i\leq n-1$, and $X_n=Z$ is the unique supersingular K3 surface with Artin invariant 1. Applying this to both $X$ and $Y$, we find derived isogenies
    \[
        \fh^2(X)\iso\fh^2(Z)\xleftarrow{\sim}\fh^2(Y).
    \]
\end{proof}

\begin{remark}
    Shioda \cite[Theorem 1.1]{MR572983} showed that supersingular Kummer surfaces are unirational. By a result of Ogus \cite{Ogus} and the crystalline Torelli theorem these are exactly the supersingular K3 surfaces with Artin invariant $\sigma_0\leq 2$. The Chow motive of a unirational surface is of Tate type. Combining this with Theorem \ref{thm:supersingular K3 surfaces are derived isogenous} we deduce that for any supersingular K3 surface $X$ we have $\fh(X)=\fh_{\algebr}(X)=\bL^0\oplus\bL^{\oplus 22}\oplus\bL^2$ and $\fh^2_{\tr}(X)=0$. In particular, we have $\CH^2(X)=\mathbf{Z}$. This result was first proved by Fakhruddin \cite{MR1904084}, using a related method.
\end{remark}


\subsection{Existence in characteristic 0}\label{ssec: Huybrecht's theorem algebraic version in char 0}

It is possible to formulate a purely algebraic analog of Huybrecht's theorem~\ref{thm:Huybrechts theorem} along the lines of Thm \ref{thm: existence}, valid over any algebraically closed field of characteristic $0$.

\begin{theorem}
    \label{thm: existence, char 0}
        Let $X$ be a K3 surface over an algebraically closed field of characteristic $0$. Let $H=\Lambda \tensor \widehat{\mathbf{Z}}$. Let $\iota : H\into \H^2(X)_\bQ$ be an isometric embedding. There exists a K3 surface $X'$ and a derived isogeny $f : \fh^2(X') \sto \fh^2(X)$ such that $f_*(\H^2(X')) = \mathrm{im}(\iota)$. 
\end{theorem}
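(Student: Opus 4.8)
The plan is to run the ``if'' direction of the proof of Thm~\ref{thm: existence} in the simplified setting of an algebraically closed field $k$ of characteristic $0$, where there is no crystalline realization. This case is strictly easier: no slope obstruction arises, and, since in the $\ell$-adic setting every class in $\H^2(X,\mathbf{Q}_\ell(1))$ is an $\ell$-adic B-field lift of its image in $\Br(X)[\ell^\infty]$ (Def.~\ref{def:ladicBfield} and the discussion following it), every class in $\H^2(X)_\bQ$ of sufficiently good integrality is a B-field. First I would fix an isometry of quadratic lattices $\H^2(X) = \H^2_\et(X,\widehat{\mathbf{Z}}) \cong \Lambda \tensor \widehat{\mathbf{Z}}$; this exists by comparison with the complex case, after spreading $X$ out over a finitely generated subfield and choosing an embedding of it into $\bC$ (so that $\H^2_\et(X,\mathbf{Z}_\ell(1)) \cong \H^2(X_\bC,\mathbf{Z}) \tensor \mathbf{Z}_\ell \cong \Lambda \tensor \mathbf{Z}_\ell$). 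In particular $\H^2(X)_\bQ$ is identified with $\Lambda \tensor \mathbf{A}_f$. Since $\mathrm{im}(\iota)$ and $\H^2(X)$ are two $\widehat{\mathbf{Z}}$-lattices of the same rank inside $\H^2(X)_\bQ$, they agree at all but finitely many places, so $\iota$ extends $\mathbf{A}_f$-linearly to an element $\Theta \in \O(\H^2(X,\mathbf{A}_f))$ (the restricted product of the $\O(\H^2(X,\mathbf{Z}_\ell))$) with $\Theta(\H^2(X,\widehat{\mathbf{Z}})) = \mathrm{im}(\iota)$.

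Next I would invoke strong approximation. As $\Lambda$ is indefinite of rank $22$ and contains $\mathbf{U}$ as an orthogonal direct summand, the double quotient $\O(\Lambda \tensor \bQ) \backslash \O(\Lambda \tensor \mathbf{A}_f) / \O(\Lambda \tensor \widehat{\mathbf{Z}})$ is a singleton, by \cite[Lem.~2.1.11]{Yang2} (the ``all primes'' analog of the input to Lem.~\ref{lem:Cartan--Dieudonne, prime to p part}). Hence $\Theta \in \O(\H^2(X)_\bQ) \cdot \O(\H^2(X,\widehat{\mathbf{Z}}))$, so there is a rational isometry $\Psi \in \O(\H^2(X)_\bQ)$ with $\Psi(\H^2(X,\widehat{\mathbf{Z}})) = \Theta(\H^2(X,\widehat{\mathbf{Z}})) = \mathrm{im}(\iota)$. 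By the classical Cartan--Dieudonn\'{e} theorem, $\Psi = s_{b_1} \circ \dots \circ s_{b_m}$ for some $b_i \in \H^2(X)_\bQ$ with $\langle b_i, b_i \rangle \neq 0$. Rescaling each $b_i$ --- which does not change $s_{b_i}$ --- so that it becomes a primitive vector of $\Lambda \subset \H^2(X)$, and using that $\Lambda$ is even, I obtain that $n_i := b_i^2/2$ is a nonzero integer and that $b_i/n_i$, being integral away from the finitely many primes dividing $n_i$, is a mixed B-field. Thus $s := s_{b_1} \circ \dots \circ s_{b_m}$ is a composite of reflections of exactly the type handled by Prop.~\ref{prop: existence of primitive derived isogenies for reflections}, and it satisfies $s(\H^2(X)) = \mathrm{im}(\iota)$.

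To conclude, I would apply Prop.~\ref{prop: existence of primitive derived isogenies for reflections} once for each $b_i$ and compose the $m$ resulting primitive derived isogenies, obtaining a derived isogeny $f : \fh^2(X') \sto \fh^2(X)$ with $f_*(\H^2(X')) = s(\H^2(X)) = \mathrm{im}(\iota)$. I do not anticipate any serious obstacle: the one point not literally covered above is that Prop.~\ref{prop: existence of primitive derived isogenies for reflections} and its input Thm~\ref{thm:action on cohomology} were stated in characteristic $p$. Their proofs apply verbatim over an algebraically closed field of characteristic $0$ --- in fact more easily, there being no crystalline component and no B-field obstruction --- and, alternatively, this case may be deduced directly from Huybrechts' theorem over $\bC$ via the Lefschetz principle, as twisted Fourier--Mukai kernels and their twisted Chern characters are algebraic and descend to any field of definition. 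In short, Thm~\ref{thm: existence, char 0} is just the unobstructed instance of the argument proving Thm~\ref{thm: existence}.
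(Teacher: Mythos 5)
Your proposal is correct and is essentially the paper's intended argument: the paper disposes of this theorem in two sentences, saying it "can be proved purely algebraically along the same lines as our proof of Thm~\ref{thm: existence} (but avoiding the extra complications at $p$)" or alternatively deduced from Huybrechts' theorem, and you have simply written out the first route (strong approximation plus Cartan--Dieudonn\'e to reduce to reflections $s_{b}$ in primitive classes with $b^2/2\in\bZ$, then iterated application of Prop.~\ref{prop: existence of primitive derived isogenies for reflections}), correctly noting that in characteristic $0$ every class of $\H^2(X,\bQ_\ell(1))$ is a B-field so no slope condition intervenes. Your closing remark that the characteristic-$p$ statements of Prop.~\ref{prop: existence of primitive derived isogenies for reflections} and Thm~\ref{thm:action on cohomology} carry over verbatim (or via the Lefschetz principle) is exactly the point the paper leaves implicit.
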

\begin{proof}
    This can be proved purely algebraically along the same lines as our proof of Thm \ref{thm: existence} (but avoiding the extra complications at $p$). Alternatively, it can be deduced directly from Thm \ref{thm:Huybrechts theorem}. We omit further details.
\end{proof}

\subsection{Nygaard-Ogus Theory Revisited}
\label{sec: NO revisited}
We briefly recap Nygaard-Ogus' deformation theory of K3 crystals and K3 surfaces established in \cite[\S 5]{NO}, in preparation for the proof of Thm~\ref{thm: crys-isog}. For the rest of \S\ref{sec: Existence}, assume that $k$ is a perfect field with $\mathrm{char\,}k = p \ge 5$. We refer the reader to the paragraph below of the proof of Lem.~4.6 in \textit{loc. cit.} for this restriction on $p$. Let $R: =k[\varepsilon]/(\varepsilon^e)$ for some $e$. Recall that a K3 crystal over $R$ is an F-crystal $\bH$ on $\Cris(R/W)$ equipped with a pairing $\bH \times \bH \to \cO_{R/W}$ and an isotropic line $\Fil \subset \bH_R$ which satisfy some properties (see Def.~5.1 in \textit{loc. cit.} for details\footnote{In fact, \cite[Def.~5.1]{NO} defined K3 crystals over a more general base which satisfies a technical assumption \cite[(4.4.1)]{NO}. For our purposes it suffices to consider bases of the form $k[\varepsilon]/(\varepsilon^e)$.}).

\begin{definition}
\label{def: defK3crystal to V}
Suppose $V$ is a finite flat extension of $W$ such that $V/(p) = R$. A deformation of $\bH$ to $V$ is a pair $(\bH, \wt{\Fil})$ where $\wt{\Fil} \subset \bH_V$ is an isotropic direct summand which lifts $\Fil \subset \bH_R$. 
\end{definition}

\begin{theorem}
\label{thm: NO}
\emph{(Nygaard-Ogus)} Let $X$ be a K3 surface over $k$ and $R$ be as above. 
\begin{enumerate}[label=\upshape{(\alph*)}]
    \item The natural map $X_R \mapsto \H^2_\cris(X_R)$ defines a bijection between deformations $X_R$ of $X$ to $R$ to deformations of the K3 crystal $\H^2_\cris(X/W)$ to $R$, i.e., K3 crystals $\bH$ over $R$ with $\bH |_k = \H^2_\cris(X/W)$. 
    \item If $X_R$ is a deformation of $X$ to $R$, then the map $X_V \mapsto (\H^2_\cris(X_R), \Fil^2 \H^2_\dR(X_V/V))$ defines a bijection between deformations $X_V$ of $X_R$ to $V$ and deformations of the K3 crystal $\H^2_\cris(X_R)$ to $V$, in the sense of Def.~\ref{def: defK3crystal to V}. 
\end{enumerate}
\end{theorem}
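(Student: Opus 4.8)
The plan is to deduce both parts from the Grothendieck--Messing/crystalline period correspondence for the second cohomology of a K3 surface --- precisely the theory worked out in \cite[\S\S 4--5]{NO} --- so in effect this theorem is a transcription of \emph{loc.\ cit.}, and I will only indicate the shape of the argument.

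For part (a), I would first note that $R\twoheadrightarrow k$ is a nilpotent PD-thickening (this is where the hypothesis $p\ge 5$, via \cite[(4.4.1)]{NO}, is used, to make sense of the relevant divided powers). Consequently the $F$-crystal with perfect pairing underlying the K3 crystal $\H^2_\cris(X/W)$ has a \emph{unique} extension over $\Cris(R/W)$: crystals are rigid under nilpotent PD-thickenings, and Frobenius and the pairing, being morphisms of crystals, extend automatically. Hence a deformation of the K3 crystal $\H^2_\cris(X/W)$ to $R$ in the sense of \cite[Def.~5.1]{NO} amounts to the single datum of an isotropic rank-one direct summand $\wt{\Fil}\subset\bH_R$ of the de Rham realization of that extension, lifting the Hodge line $\Fil^2\H^2_\dR(X/k)$. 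On the other side, the deformation functor of $X$ over Artinian local $k$-algebras is prorepresentable and formally smooth of dimension $20$ (since $\H^2(X,T_X)\cong\H^0(X,\Omega^1_X)^\vee=0$), and by Grothendieck--Messing theory applied to the filtered $F$-crystal $\H^2_\cris$ a deformation $X_R$ of $X$ over $R$ is determined by, and may be prescribed by, the position of the Hodge line $\Fil^2\H^2_\dR(X_R/R)$ inside $\H^2_\dR(X_R/R)$ --- which is canonically the de Rham realization $\bH_R$ of the crystal extension above. This gives a bijection between deformations of $X$ over $R$ and the isotropic lifts $\wt{\Fil}$; I would check it first on tangent spaces, using the Kodaira--Spencer isomorphism $\H^1(X,T_X)\xrightarrow{\sim}\mathrm{Hom}\big(\Fil^2\H^2_\dR(X/k),\H^1(X,\Omega^1_X)\big)$, and then pass to the general case by formal smoothness of both sides. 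Unwinding the identifications shows the composite is the asserted map $X_R\mapsto\H^2_\cris(X_R)$.

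For part (b), I would run the same argument with the pair $k\leftarrow R$ replaced by $R\leftarrow V$. The ideal $pV\subset V$ carries its canonical divided powers, so by Grothendieck--Messing theory deformations of the K3 surface $X_R$ over $V$ are classified by the position of the Hodge line $\Fil^2\H^2_\dR(X_V/V)$ inside $\H^2_\dR(X_V/V)$, and the latter $V$-module is canonically the evaluation $\bH_V$ of the crystal $\bH:=\H^2_\cris(X_R)$ on the PD-thickening $V\to R$. Thus a deformation of $X_R$ to $V$ is the same as an isotropic rank-one direct summand of $\bH_V$ lifting $\Fil^2\H^2_\dR(X_R/R)$, i.e.\ a deformation of the K3 crystal $\bH$ to $V$ in the sense of Definition~\ref{def: defK3crystal to V}, and the bijection is $X_V\mapsto(\H^2_\cris(X_R),\Fil^2\H^2_\dR(X_V/V))$.

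The main obstacle is setting up Grothendieck--Messing theory for the filtered $F$-crystal $\H^2_\cris$ of a K3 surface, rather than for $p$-divisible groups: one needs that $\H^2_\cris$ of a K3 behaves like a Dieudonn\'e crystal for deformation purposes, that the de Rham--crystalline comparison holds over the PD-thickenings in play (the source of the restriction $p\ge 5$), and that every lift of an honest Hodge line automatically satisfies the K3-crystal axioms of \cite[Def.~5.1]{NO}. Since all of this is carried out in detail in \cite[\S\S 4--5]{NO}, the proof in the paper will mostly consist of matching our notation with theirs.
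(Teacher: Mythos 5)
Your proposal is correct and takes essentially the same route as the paper, whose entire proof is the citation ``This follows from [NO, Thm~5.3] and its proof''; your sketch is just an unpacking of that reference (rigidity of the $F$-crystal over the nilpotent PD-thickening $R\to k$, reduction to lifting the isotropic Hodge line, and the Grothendieck--Messing-type correspondence of \cite[\S\S4--5]{NO}). The only minor imprecision is the source of the restriction $p\ge 5$: the paper points to the paragraph following the proof of Lem.~4.6 of \emph{loc.\ cit.} rather than to condition (4.4.1), which is the hypothesis on the base.
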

\begin{proof}
This follows from \cite[Thm~5.3]{NO} and its proof.
\end{proof}

For the rest of \S\ref{sec: NO revisited}, $X$ denote a K3 surface of finite height over $k$. Recall that there is a canonical slope decomposition (cf. \cite[Prop.~5.4]{NO})
\begin{align}
\label{eqn: candecomp}
    \delta_\can : \H^2_\cris(X/W) = \ID(\BR_X^*) \oplus \ID(D^*) \oplus \ID(\BR_X)(-1).
\end{align}

We define a map $\IK$ which sends a deformation of $\BR_X$ to $R$ to a deformation of the K3 crystal $\H^2_\cris(X/W)$ to $R$ by setting $\IK(G_R) := \ID(G_R^*) \oplus \ID(D_R^*) \oplus \ID(G_R)(-1)$, where $D_R$ denote the canonical lift of $D$ to $R$. The K3 crystal structure on $\IK(G_R)$ is given as follows: Let $\sP_{G_R} : \ID(G_R^*) \times \ID(G_R) \to \cO_{R/W}(-1)$ be the canonical pairing and let $\sP_{D_R} : \ID(D_R^*) \times \ID(D^*_R) \to \cO_{R/W}(-2)$ be the pairing inherited from that on $\ID(D^*)$. The pairing on $\IK(G_R)$ is $\sP_{G_R}(-1) \oplus \sP_{D_R}$. Finally, the isotropic direct summand $\Fil$ in $\IK(G_R)_R$ is given by $[\Fil^1 \ID(G_R)_R](-1)$. We define a decreasing filtration on $\IK(G_R)_R$ by setting 
\begin{align}
\label{def: fil on IK}
    0 = \Fil^3 \subset \Fil^2 := \Fil \subset \Fil^1 := (\Fil^2)^\perp \subset \Fil^0 = \IK(G_R)_R.
\end{align}

If we further lift $G_R$ to a $p$-divisible group $G_V$ for a finite flat extension $V$ of $W$ with $V/(p) = R$, the we can attach a deformation of $\IK(G_R)$ to $V$ by setting $\wt{\Fil} = [\Fil^1 \ID(G_V)_V](-1)$, which we denote by $\IK(G_V)$. We define a filtration on $\IK(G_V)_V$ using (\ref{def: fil on IK}) with $\IK(G_R)_R$ replaced by $\IK(G_V)_V$.   

\begin{definition}
If $X_V$ be a formal scheme over $\mathrm{Spf\,} V$ which deforms $X$, we say $X_V$ is a Nygaard-Ogus lifting if it comes from $\IK(G_V)$ for some $p$-divisible group $G_V$ lifting $\BR_X$ to $V$ via Theorem~\ref{thm: NO}. That is, setting $R := V/(p)$, $G_R := (G_V) \tensor R$ and $X_R := (X_V) \tensor R$, we have an isomorphism
$$  (\H^2_\cris(X_R), \Fil^2 \H^2_\dR(X_V/V)) \iso \IK(G_V)$$
lifting $\delta_\can$ in the obvious sense. If $X_V$ is an algebraic space over $\mathrm{Spec\,} V$ which deforms $X$, then we say $X_V$ is a Nygaard-Ogus lifting if its formal completion at the special fiber is a Nygaard-Ogus lifting. 
\end{definition}

\begin{proposition}
If a formal scheme $X_V$ is a Nygaard-Ogus lifting of $X$, then the natural map $\Pic(X_V) \to \Pic(X)$ is an isomorphism. In particular, $X_V$ is algebraizable. 
\end{proposition}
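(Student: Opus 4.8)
The plan is to show that the restriction homomorphism $\Pic(X_V)\to\Pic(X)$ is both injective and surjective; injectivity is routine, and the substance is surjectivity, i.e. that every line bundle on $X$ extends to $X_V$, which I would deduce from the explicit shape of the Hodge filtration of the Nygaard--Ogus lift. Write $V_n=V/\mathfrak{m}_V^{n+1}$ and $X_{V_n}=X_V\otimes_V V_n$, so that $X_V$ is the colimit of the $X_{V_n}$. The ideal cutting out $X_{V_n}$ inside $X_{V_{n+1}}$ is square--zero and $\H^1(X,\mathscr{O}_X)=0$, so $\Pic(X_{V_{n+1}})\to\Pic(X_{V_n})$ is injective and hence so is $\Pic(X_{V_n})\to\Pic(X)$ for every $n$. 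Since $X_V$ is proper over $\mathrm{Spf\,}V$ and $\H^0(X,\mathscr{O}_X)=k$, one has $\Pic(X_V)=\varprojlim_n\Pic(X_{V_n})$ (the relevant $\varprojlim^1$ being a quotient of $\varprojlim^1 V_n^\times=0$), so $\Pic(X_V)\to\Pic(X)$ is injective.

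For surjectivity I would fix $L\in\Pic(X)$ and reduce to a Hodge--type condition. By the standard obstruction theory for deforming a line bundle along the square--zero extension $X_{V_{n+1}}\to X_{V_n}$, $L$ lifts from $X_{V_n}$ to $X_{V_{n+1}}$ precisely when its crystalline first Chern class $c_1^{\cris}(L)$, viewed in $\H^2_\dR(X_{V_{n+1}}/V_{n+1})\cong\H^2_\cris(X/W)\otimes_W V_{n+1}$, lies in $\Fil^1$; and when such a lift exists it is unique up to $\H^1(X,\mathscr{O}_X)=0$, so the lifts over the various $V_n$ automatically agree and glue to a line bundle on $X_V$ restricting to $L$. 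Thus it is enough to verify the Hodge condition.

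To verify it I would use that $X_V$ is a Nygaard--Ogus lifting. Identify $\H^2_\dR(X_V/V)$ with $\IK(G_V)_V=\ID(G_V^*)_V\oplus\ID(D_V^*)_V\oplus\ID(G_V)(-1)_V$ compatibly with $\delta_\can$. Then $\Fil^2=\wt{\Fil}=[\Fil^1\ID(G_V)_V](-1)$ lies in the third summand and $\Fil^1=(\Fil^2)^\perp$; since the pairing on $\IK(G_V)$ is $\sP_{G_V}(-1)\oplus\sP_{D_V}$, the middle summand $\ID(D_V^*)_V$ is orthogonal to the third summand, whence $\ID(D_V^*)_V\subseteq(\Fil^2)^\perp=\Fil^1$. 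Under $\delta_\can$, this middle summand is $\ID(D^*)\otimes_W V$, i.e. the slope $1$ part of $\H^2_\cris(X/W)$ base--changed to $V$. On the other hand $\Phi(c_1^{\cris}(L))=p\,c_1^{\cris}(L)$, so $\varphi(c_1^{\cris}(L))=c_1^{\cris}(L)$ and $c_1^{\cris}(L)$ lies in the Tate module $T(X)=\H^2_\cris(X/W)^{\varphi=1}$, which is contained in the slope $1$ part of $\H^2_\cris(X/W)$. Therefore $c_1^{\cris}(L)\otimes 1\in\Fil^1\H^2_\dR(X_V/V)$, and likewise modulo $\mathfrak{m}_V^{n+1}$ for every $n$; by the criterion above $L$ extends to $X_V$. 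This proves surjectivity, so $\Pic(X_V)\to\Pic(X)$ is an isomorphism.

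For algebraizability I would choose an ample line bundle $L$ on $X$, let $\mathscr{L}$ be its lift to $X_V$, and note that $(X_V,\mathscr{L})$ is a proper formal scheme over $\mathrm{Spf\,}V$ carrying a line bundle ample on the closed fiber, so by Grothendieck's existence theorem it is the formal completion of a (necessarily projective) K3 surface over $V$. The main obstacle is really the bookkeeping in the previous paragraph: one must check carefully that, under the isomorphism of Nygaard--Ogus data, the summand $\ID(D_V^*)_V$ is simultaneously contained in $\Fil^1$ and equal to the $V$--extension of the slope $1$ part of $\H^2_\cris(X/W)$, and one must remember the Frobenius--equivariance $\Phi c_1^{\cris}=p\,c_1^{\cris}$ that places algebraic classes in that slope $1$ summand. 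The only genuinely external input is the square--zero lifting criterion for line bundles in crystalline form, for which I would cite the deformation theory behind \cite{NO} (see also \cite{Ogus2}); no new estimates are required.
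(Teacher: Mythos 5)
Your proof is correct. The paper gives no argument of its own here, deferring entirely to Prop.~4.5 and Rmk~4.6 of \cite{Yang}, and your reasoning --- injectivity from $\H^1(X,\mathscr{O}_X)=0$ together with $\Pic(X_V)=\varprojlim\Pic(X_{V_n})$, and surjectivity from the observation that $c_1^{\cris}(L)$ lies in the Tate module, hence in the slope-$1$ summand $\ID(D^*)$, which is orthogonal to $\wt{\Fil}\subset\ID(G_V)(-1)_V$ and therefore contained in $\Fil^1$, combined with the crystalline lifting criterion for line bundles (\cite[Prop.~1.12]{Ogus} is the reference the paper itself uses for this elsewhere, rather than \cite{Ogus2}) and Grothendieck existence for algebraization --- is exactly the expected content of that citation.
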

\begin{proof}
See Prop.~4.5 and Rmk~4.6 of \cite{Yang}. 
\end{proof}

Using integral $p$-adic Hodge theory, we can characterize Nygaard-Ogus liftings: 

\begin{theorem}
\label{thm: char NO lifting}
Let $F$ be a finite extension of $K$ with $V := \cO_F$. Let $X_V$ be a formal scheme over $\mathrm{\Spf\,} V$ which lifts $X$ and let $X_F$ denote its rigid-analytic generic fiber. Then $X_V$ is a Nygaard-Ogus lifting if and only if there are ${\Gal_F}$-stable $\bZ_p$-sublattices $T^0, T^1, T^2$ in $\H^2_\et(X_{\bar{F}}, \bZ_p)$ of ranks $h, 22-2h, h$ respectively, such that as crystalline $\Gal_F$-representations 
\begin{enumerate}[label=\upshape{(\alph*)}]
    \item $T^1(1)$ is unramified,
    \item $T^0$ has Hodge-Tate weight $1$ with multiplicity $h - 1$ and $0$ with multiplicity $1$,
    \item $T^2(1)$ has Hodge-Tate weight $1$ with multiplicity $1$ and $0$ with multiplicity $h - 1$. 
\end{enumerate}
\end{theorem}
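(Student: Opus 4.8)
The plan is to reinterpret the statement as a comparison between the Nygaard--Ogus filtration data on the $p$-divisible group $\BR_X$ (and on $X$) and the $p$-adic Hodge theory of $X_F$, using the decomposition $\delta_{\can}$ of \eqref{eqn: candecomp} to split everything into the three factors $\ID(\BR_X^*)$, $\ID(D^*)$, $\ID(\BR_X)(-1)$. First I would recall that for a $p$-adic formal (hence, once we know Picard groups agree, algebraic) lift $X_V$ of $X$, the crystalline comparison theorem identifies $\H^2_{\cris}(X/W)\otimes_W V$ with $D_{\cris}$ of the Galois representation $\H^2_{\et}(X_{\bar F},\bZ_p)$, matching the Hodge filtration $\Fil^{\bullet}\H^2_{\dR}(X_V/V)$ with the de Rham filtration on the other side. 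So a lift $X_V$ is completely encoded by the isotropic line $\Fil^2 \subset \H^2_{\dR}(X_V/V) = \H^2_{\cris}(X/W)\otimes V$ lifting the Hodge line in characteristic $p$, exactly as in Theorem~\ref{thm: NO}(b). The content of the theorem is then: this line comes from a deformation of $\BR_X$ through the functor $\IK$ if and only if the étale cohomology carries the asserted Galois-stable filtration $T^0\supset\cdots$ with the stated crystallinity/Hodge--Tate weights.

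The key steps, in order: (1) Suppose $X_V$ is a Nygaard--Ogus lifting, so $\H^2_{\cris}(X_R) \cong \IK(G_V)$ for a $p$-divisible group $G_V/V$ lifting $\BR_X$, compatibly with $\delta_{\can}$. Apply Dieudonn\'e theory / the crystalline-\'etale comparison for $p$-divisible groups: the Tate module $T_pG_V$ and $T_pG_V^*$ are crystalline $\Gal_F$-representations whose filtered $\phi$-modules are $\ID(G_V)$ and $\ID(G_V^*)$ with their natural filtrations, giving Hodge--Tate weights concentrated in $\{0,1\}$ with the multiplicities dictated by $\dim\Fil^1\ID(G_V)=1$ (since $G_V$ has dimension $1$, being a lift of the formal Brauer group) and rank $h$. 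The unramified factor $T^1$ is $\ID(D_R^*)(1)$-side: $D$ is the unit-root ($\etale$) part, its canonical lift $D_R$ has $\Fil^1\ID(D_R)=0$, so $\ID(D^*)(1)$ is an unramified $\Gal_F$-representation of rank $22-2h$. Setting $T^0 = T_pG_V^*$, $T^1$ = the unramified lattice cut out by the middle factor, $T^2 = T_pG_V(-1)$ (up to Tate twists to normalize weights as in (a)--(c)) and matching these against the $\Gal_F$-stable sublattices of $\H^2_{\et}(X_{\bar F},\bZ_p)$ via the comparison isomorphism gives one direction. (2) Conversely, given Galois-stable $T^0,T^1,T^2$ with properties (a)--(c), I would run this backwards: $D_{\cris}(T^1(1))$ is a unit-root $F$-crystal, forced to be $\ID(D^*)$ by the slope decomposition and uniqueness of the canonical (unramified) lift $D_R$; $D_{\cris}(T^0)$ is a filtered $\phi$-module of rank $h$ with one-dimensional $\Fil^1$, which by Fontaine's equivalence (or the theory of \cite{BMS}, \cite{CaisLiu} for the integral statement) corresponds to a $p$-divisible group $G_V$ over $V$ of dimension $1$ lifting the reduction mod $p$ of the slope-$<1$ part, i.e.\ lifting $\BR_X$; similarly $T^2$ recovers $G_V$ on the dual side, consistently. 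Feeding $G_V$ into $\IK$ produces a deformation of the K3 crystal over $V$ whose filtration, by construction \eqref{def: fil on IK}, agrees with the Hodge filtration determined by the given lattices; by Theorem~\ref{thm: NO}(b) this filtration corresponds to a (formal, hence algebraic) lift $X_V'$, and one checks $X_V' \cong X_V$ because both induce the same isotropic line, whence $X_V$ is Nygaard--Ogus.

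The main obstacle I expect is the \emph{integral} matching in step (2): producing the $p$-divisible group $G_V$ over $V$ (not just a filtered $\phi$-module over $K$) from the lattice $T^0$, and checking that the isotropic line it produces through $\IK$ is the \emph{same} lattice-theoretic datum as the Hodge filtration attached to $T^0\oplus T^1\oplus T^2$, not merely rationally equivalent. This is precisely where the recent advances in integral $p$-adic Hodge theory enter — one needs that a crystalline $\bZ_p$-lattice with Hodge--Tate weights in $\{0,1\}$ comes from a $p$-divisible group over $\cO_F$ (a theorem going back to Breuil--Kisin in the relevant generality, and refined in \cite{BMS},\cite{CaisLiu}), together with the compatibility of the resulting Dieudonn\'e module with the crystalline comparison for $X$ itself. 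A secondary technical point is keeping track of the three Tate twists so that the weights in (a)--(c) come out exactly as stated; this is bookkeeping but must be done carefully because the two ``edge'' factors are Tate-twisted duals of each other. Everything else — the reduction to the filtration datum via Theorem~\ref{thm: NO}, the identification of the unit-root part with $\ID(D^*)$, and the algebraizability via the Picard-group statement — is either already in the excerpt or a direct consequence of Dieudonn\'e theory.
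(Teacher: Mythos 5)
Your overall strategy --- split along $\delta_\can$, compare with the $p$-divisible group side via the crystalline comparison for $T_pG_V$, $T_pD_V$, $T_pG_V^*$, and feed a lift $G_V$ of $\widehat{\mathrm{Br}}_X$ back through $\IK$ --- is the same as the paper's, and your ``only if'' direction is essentially correct modulo the integrality bookkeeping (which the paper settles by applying $T_\cris$ to the isomorphism of strongly divisible $S$-modules built into the definition of a Nygaard--Ogus lifting). The gap is in the converse. You assert that a lift $X_V$ is ``completely encoded by the isotropic line $\Fil^2\subset\H^2_{\dR}(X_V/V)=\H^2_{\cris}(X/W)\otimes V$,'' and accordingly you only check that the line produced by $\IK(G_V)$ matches the Hodge filtration. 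But the definition of a Nygaard--Ogus lifting demands an isomorphism of the \emph{pair} $(\H^2_\cris(X_R),\Fil^2\H^2_\dR(X_V/V))\cong\IK(G_V)$ in which the first component is an isomorphism of $F$-crystals on $\Cris(R/W)$ with $R=V/(p)$, lifting $\delta_\can$. When the ramification index of $V$ over $W$ exceeds $p-1$, the crystal over $\Cris(R/W)$ is \emph{not} determined by its restriction to $k$ together with the filtration over $V$, and the identification $\H^2_{\dR}(X_V/V)=\H^2_{\cris}(X/W)\otimes V$ you rely on fails integrally (see the remark in the appendix contrasting data (B) and (B$'$)). Theorem~\ref{thm: NO} is genuinely two-step --- first classify deformations over $R$ by K3 crystals over $R$, then deformations from $R$ to $V$ by filtrations --- and your argument collapses the first step, which is where the work is.

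The paper closes exactly this gap as follows: the given lattice isomorphism $\H^2_\et(X_{\bar F},\bZ_p)\cong\IT_p(G_V)(-1)$ yields, by Liu's theorem (Thm~\ref{thm: Liu}), a \emph{unique} filtration- and Frobenius-compatible isomorphism of strongly divisible $S$-modules $\ID(G_R^*)_S\oplus\ID(D_R^*)_S\oplus\ID(G_R)(-1)_S\cong\H^2_\cris(X_R)_S$ inducing it under $T_\cris$; one then descends this $S$-module isomorphism to an isomorphism of $F$-crystals on $\Cris(R/W)$ restricting to $\delta_\can$ by induction on the square-zero thickenings $R_j=R/(\varepsilon^j)$, using the deformation-theoretic rigidity statement \cite[Thm~5.2]{NO} at each stage, with the compatibility of filtrations on the $S$-module level supplying the hypothesis needed to extend from $R_j$ to $R_{j+1}$. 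Your ``main obstacle'' paragraph correctly flags that integral $p$-adic Hodge theory must enter, but it locates the difficulty in producing $G_V$ from $T^0$ and matching lattices, rather than in producing the isomorphism of crystals over the Artinian base $R$; without that step the argument does not verify the Nygaard--Ogus condition as defined.
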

\begin{proof}
We recap in the appendix the results from the integral $p$-adic Hodge theory to be used in this proof.

Let $S$ denote Breuil's $S$-ring. Using the data $\Fil^\bullet \H^2_\dR(X_F/F)$, we equip $\H^2_\cris(X/W) \tensor_W S_K$ with the structure of an object in $\mathcal{MF}^{\varphi, N}_{S_K}$. For any $G_V$ which lifts $\BR_X$ to $V$, we set $$\IT_p (G_V) := T_p G_V \oplus T_p D_V \oplus T_p G^*_V(-1). $$

Suppose first that $X_V$ is Nygaard-Ogus, so that it comes from some $G_V$ lifting $G := \BR_X$. Combining (\ref{eq: p-adic comp}) and (\ref{eq: p-div p-comp}), we obtain isomorphisms
$$ \H^2_\et(X_{\bar{F}}, \bZ_p) \tensor_{\bZ_p} B_\cris \cong \H^2(X/W) \tensor_W B_\cris = \IK(G) \tensor_W B_\cris \cong \IT_p(G_V)(-1) \tensor_{\bZ_p} B_\cris  $$
which gives rise to a rational isomorphism $\H^2_\et(X_{\bar{F}}, \bZ_p) \tensor_{\bZ_p} \bQ_p \iso \IT_p(G_V)(-1) \tensor_{\bZ_p} \bQ_p$. 
We now show that the this restricts to an integral isomorphism 
\begin{align}
    \label{eq: integral iso}
    \H^2_\et(X_{\bar{F}}, \bZ_p) \cong \IT_p(G_V)(-1)
\end{align}
It is easy to check that the object $(\IK(G)_K, \Fil^\bullet \IK(G_V)_F)$ in $\MF^{\varphi}_{F}$ admits a decomposition into 
$$ (\ID(\BR_X^*)_K, \Fil^\bullet \ID(G_V^*)_F) \oplus (\ID(D^*)_K, \Fil^\bullet \ID(D_V^*)_F) \oplus (\ID(\BR_X)_K, \Fil^\bullet \ID(G_V)_F)(-1). $$
By the construction of Nygaard-Ogus liftings, there is an isomorphism $
    \ID(G^*_R)_S \oplus \ID(D_R^*)_S \oplus \ID(G_R)(-1)_S \cong \H^2_\cris(X_R)_S$
of strongly divisible $S$-modules which is compatible with the isomorphism $\H^2(X/W) \tensor_W S_K \cong \IK(G) \tensor_W S_K$ induced by $\delta_\can$. By applying the functor $T_\cris$, we obtain (\ref{eq: integral iso}), which readily implies the ``only if'' part of the theorem. 

Now we show the ``if'' part. The proof is essentially a reincarnation of the proof of \cite[Prop.~5.5]{NO}. The hypothesis implies that there exists an isomorphism $\H^2_\et(X_{\bar{F}}, \bZ_p) \cong \IT_p(G_V)(-1)$ for some $G_V$ which lifts $\BR_X$ to $V$. By Thm~\ref{thm: Liu}, there exists a unique isomorphism \begin{align}
\label{eq: eqn S-modules}
    \ID(G^*_R)_S \oplus \ID(D_R^*)_S \oplus \ID(G_R)(-1)_S \cong \H^2_\cris(X_R)_S.
\end{align}
which gives this isomorphism $\H^2_\et(X_{\bar{F}}, \bZ_p) \cong \IT_p(G_V)(-1)$ under $T_\cris$. 

The only thing we need to check is that this isomorphism of $S$-modules comes from an isomorphism of F-crystals on $\Cris(R/W)$
\begin{align}
\label{eq: eqn F-crystals}
    \IK(G_R) = \ID(G^*_R) \oplus \ID(D_R^*) \oplus \ID(G_R)(-1) \cong \H^2_\cris(X_R)
\end{align}
which restricts to $\delta_\can$. 

Let $e$ be the ramification degree of $V$ over $W$ and $j$ be any positive number $\le e$. Set $R_j := R/(\varepsilon^j)$. We claim that there exists a sequence of isomorphisms $\delta_j : \IK(G_{R_j}) \cong \H^2_\cris(X_R)$ of F-crystals on $\Cris(R_j/W)$ such that $\delta_j$ is the restriction of $\delta_{j + 1}$ for each $j < e$, such that $\delta_1 = \delta_\can$, and $\delta_e$ gives the desired isomorphism (\ref{eq: eqn F-crystals}). Suppose we have constructed $\delta_j$ for some $j < e$. Note that $(\varepsilon^{j})$ is a square-zero ideal in $R_{j + 1}$ and we can view $R_{j + 1}$ as an object of $\Cris(R_j/W)$ by equipping $(\varepsilon^{j})$ with the trivial PD structure. By \cite[Thm~5.2]{NO}, to construct $\delta_{j + 1}$ it suffices to show that $[\Fil^1 \ID(G_{R_{j + 1}})_{R_{j + 1}}](-1)$ is sent to $\Fil^2 \H^2_\dR(X_{R_{j + 1}}/R_{j + 1})$ via the composition 
$$ \IK(G_{R_{j + 1}})_{R_{j + 1}} \cong \IK(G_{R_j})_{R_{j + 1}} \xrightarrow[(\delta_j)_{R_{j + 1}}]{\sim} \H^2_\cris(X_{R_j})_{R_{j + 1}} \cong  \H^2_\dR(X_{R_{j + 1}}/R_{j + 1}). $$
However, this follows directly from the fact that (\ref{eq: eqn S-modules}) respects the filtrations. Indeed, viewing $R_{j + 1}$ as an $S$-algebra via $S \to \cO_F \to R \to R_{j + 1}$, we get the above isomorphism by tensoring (\ref{eq: eqn S-modules}) with $R_{j + 1}$. 
\end{proof}

\begin{remark}
When $X$ is ordinary, $X_V$ is Nygaard-Ogus if and only if it is obtained via base change from the canonical lifting, as in this case deformations of $\BR_X$ are completely rigid. Therefore, the above theorem is a generalization \cite[Thm C]{TaelmanOrd} when $p \ge 5$. It also follows from (\ref{eq: integral iso}) in the above proof that when $X_V$ is a Nygaard-Ogus lifting, for the enlarged formal Brauer group $\Psi_{X_V}$ of $X_V$, there is a natural injective map of ${\Gal_F}$-modules 
$$ T_p \Psi_{X_V} \to \H^2_\et(X_{\bar{F}}, \bZ_p(1)), $$
which generalizes \cite[Thm~2.1]{TaelmanOrd}. Indeed, we have $\Psi_{X_V} = \BR_{X_V} \oplus D_V$ for a Nygaard-Ogus lifting.   
\end{remark}

\subsection{Construction of Liftable Isogenies} We now prove Thm~\ref{thm: crys-isog}. \\
\textit{Proof of Theorem~\ref{thm: crys-isog}.}
Again write $\iota^p$ and $\iota_p$ for the prime-to-$p$ and crystalline component of $\iota$.
If a Frobenius-preserving isometric embedding $\iota_p : H_p \into \H^2(X/W)$ as in the hypothesis exists, then the K3 crystal $H_p$ has to be abstractly isomorphic to $\H^2(X/W)$, and hence to $\IK(\BR_X)$. We choose an isomorphism $H_p \iso \IK(\BR_X)$ and consider $(\iota_p)_K := \iota_p \tensor K$ as an isometric automorphism of the F-isocrystal $\IK(\BR_X)_K$. Then $(\iota_p)_K$ determines, and is conversely determined by, a pair $(h, g)$ where $h \in \End(\BR_X)[1/p] $ and $g \in \End(D)[1/p]$. Our goal is to produce an isogeny $f : \fh^2(X') \to \fh^2(X)$ for some other K3 surface $X'$ over $k = \bar{\bF}_p$ such that $f_*(\H^2_\et(X', \what{\bZ}^p)) = \iota^p(\Lambda \tensor \what{\bZ}^p)$ and $f_*(\H^2(X'/W)) = (\iota_{p})_K(\IK(\BR_X))$. By Thm~\ref{thm: existence}, we first reduce to the case when $\iota^p(\Lambda \tensor \what{\bZ}^p) = \H^2_\et(X, \what{\bZ}^p)$ and $(\iota_p)_K$ sends the slope $1$ part, i.e., $\ID(D^*)$, isomorphically onto itself. 

By Lubin-Tate theory, for some finite flat extension $V$ of $W$, there exists a lift $G_V$ of $\BR_X$ to $V$ such that $h$ lifts to $\End(G_V)[1/p]$ (\cite[Lem.~4.8]{Yang}). Note that $\Fil^\bullet \IK(G_V)_F$ equips $\IK(\BR_X)_K$ with the structure of an object in $\MF^{\varphi}_F$ and $\sM := \IK(G_V)_S$ defines a strongly divisible $S$-lattice in the corresponding object $\shD := \IK(\BR_X) \tensor S_K$ in $\mathcal{MF}^{\varphi, N}_{S_K}$. It is clear that $\iota_K$ preserves $\Fil^\bullet \IK(G_V)_F$ and extends to an automorphism $\iota_{S_K}$ of $\shD$. 

Let $X_V$ be the Nygaard-Ogus lifting of $X$ which corresponds to $G_V$. We have $T_\cris(\sM) = \H^2_\et(X_{\bar{F}}, \bZ_p)$ inside $\H^2_\et(X_{\bar{F}}, \bQ_p)$ by the proof of Theorem~\ref{thm: char NO lifting}, and that $V_\cris((\iota_p)_K)$ is an automorphism of the ${\Gal_F}$-module $\H^2_\et(X_{\bar{F}}, \bQ_p)$ which preserves the Poincar\'e pairing. The image of $\H^2_\et(X_{\bar{F}}, \bZ_p)$ under $V_\cris((\iota_p)_K)$ can also be interpreted as $T_\cris(\iota_{S_K}(\sM))$. Denote this ${\Gal_F}$-stable $\bZ_p$-lattice by $\Lambda'_p$. By Thm~\ref{thm: existence, char 0}, up to replacing $F$ by a finite extension, we can find another K3 surface $X'$ over $F$ with a derived isogeny $f : \fh^2(X') \stackrel{\sim}{\to} \fh^2(X_F)$ such that $$f_*(\H^2_\et(X'_{\bar{F}}, \what{\bZ})) =  \Lambda'_p \times \prod_{\ell \neq p}(\Lambda'_\ell := \iota^p(\Lambda \tensor \bZ_\ell) = \H^2_\et(X_{\bar{F}}, \bZ_\ell)). $$ 

We argue that $f$ induces an integral isomorphism $\Pic(X'_F) \stackrel{\sim}{\to} \Pic(X_F)$. Indeed, $f$ induces an isomorphism
$$ \Pic(X'_F) \stackrel{\sim}{\to} \Pic(X_F)_\bQ \cap \prod_\ell \Lambda'_\ell(1). $$
However, we know that the image of $\Pic(X'_F)$ lies in the unramified part of $\H^2_\et(X_{\bar{F}}, \bZ_p(1))$, and the unramified part of $\Lambda'_p(1)$ coincide with that of $\H^2_\et(X_{\bar{F}}, \bZ_p(1))$. This implies that the target of the above isomorphism is just $\Pic(X_F)$. 

It follows that $\Pic(X'_F)$ also satisfies hypothesis (a), (b) or (c) if $\Pic(X_F) \cong \Pic(X)$ does. For (a) and (c) this is clear, for (b) this follows from \cite[Lem.~2.3.2]{LMS}. In any case, by \cite[Thm 1.1]{Matsumoto}, \cite[\S 2]{Ito-LFunction}, and Thm~\ref{thm: GR} to be proved below, $X'_F$ admits potentially good reduction. Up to replacing $F$ by a further extension, we can find a smooth proper algebraic space $X'_V$ over $V$ such that $X'_F$ is the generic fiber of $X'_V$. The map induced on crystalline cohomology of special fibers is $D_\cris(f)$, which sends $\H^2(X'/W)$ onto $(\iota_p)_K(\IK(\BR_X))$. \qed

\section{Uniqueness Theorems}
\label{sec: uniqueness}

In this section we prove Thm \ref{thm: Torelli} by lifting to characteristic 0 (as outlined in the introduction).

\subsection{Shimura Varieties}
\label{sec: spin}

Let $p > 2$ be a prime and $L$ be any self-dual quadratic lattice over $\bZ_{(p)}$ of rank $m \ge 5$ and signature $(2+, (m - 2)-)$. Set $\wt{G}:= \CSpin(L_{(p)})$, $G := \SO(L_{(p)})$, $\cK_p := \CSpin(L \tensor \bZ_p)$, $\sfK_p:=\SO(L \tensor \bZ_p)$ and $\Ohm := \{ \w \in \mathbf{P}(L \tensor \bC) : \<\w, \w\> = 0, \< \w, \bar{\w} \> > 0 \}$. Let ${\wt{\shS}}_{\cK_p}(L)$ (resp. $\shS_{\sfK_p}(L)$) denote the canonical integral model of $\Sh_{\cK_p}(\wt{G}, \Ohm)$ (resp. $\Sh_{\sfK_p}(G, \Ohm)$) over $\bZ_{(p)}$ given by \cite{int} (see also \cite[\S 4]{CSpin}). We choose a compact open subgroup $\cK^p$ of $\wt{G}(\bA^p_f)$ and set $\cK = \cK_p \cK^p$. Similarly, set $\sfK^p$ to be the image of $\cK^p$ and $\sfK := \sfK_p \sfK^p$. Denote by $\wt{\Sh}_{\cK}(L), \wt{\shS}_{\cK}(L), \Sh_{\sfK}(L)$, and $ \shS_{\sfK}(L)$ the stacky quotients $\wt{\Sh}_{\cK_p}(L)/\cK^p, \wt{\shS}_{\cK_p}(L)/\cK^p, \Sh_{\sfK_p}(L)/\sfK^p$, and $\shS_{\sfK_p}(L)/\sfK^p$ respectively.

The model $\wt{\shS}_{\cK}(L)$ is equipped with a universal abelian scheme $\shA$ up to prime-to-$p$ isogeny whose cohomology gives rise to sheaves $\bH_*$ ($* = B, \cris, \ell, \dR$) on suitable fibers of ${\wt{\shS}}(L)$. The abelian scheme $\shA$ is equipped with a $\Cl(L)$-action and $\bZ/2 \bZ$-grading, and the sheaves $\bH_*$ are equipped with tensors $\bpi_* \in \bH_*^{\tensor (2, 2)}$. We call the triple of $\bZ/2\bZ$-grading, $\Cl(L)$-action and various realizations of $\pi$ the \textit{CSpin structures} on $\shA$ or $\bH_{*}$. The dual of the images of $\bpi_*$ are denoted by $\bL_*$. We refer the reader to \cite[\S 4]{CSpin} for details of these constructions or \cite[(3.1.3)]{Yang} for a quick summary. 

Here is another way to view the sheaves $\bL_*$: On the double quotient $\Sh_\sfK(L)_\bC = G(\bQ) \backslash \Ohm \times G(\bA_f)/ \sfK$, the standard representation $\SO(L) \to \GL(L)$ produces a variation of $\bZ$-Hodge structures (\cite[\S3.3]{CSpin}), which is nothing but $(\bL_B, \bL_{\dR, \bC} := \bL_\dR|_{\Sh_\sfK(L)_\bC})$. The filtered vector bundle $\bL_{\dR, \bC}$ is commonly called the automorphic vector bundle associated to this representation, and by the general theory of automorphic vector bundles, we know that it admits a canonical descent to the canonical model $\Sh_{\sfK}(L)$ over the reflex field $\bQ$. This canonical descent is nothing but $\bL_\dR$ (when restricted to $\Sh_\sfK(L)$). In fact, the pair $(\bL_B, \bL_{\dR, \bC})$ is the variation of $\bZ$-Hodge structures associated to a family of $\bZ$-motives $\bL$ over $\Sh_\sfK(L)_\bC$ in the sense of \cite[\S1.4]{Keerthi}. This family of motives descend to the canonical model $\Sh_\sfK(L)$, whose $\ell$-adic realizations give $\bL_\ell|_{\Sh_\sfK(L)}$ and whose de Rham realization gives $\bL_\dR|_{\Sh_\sfK(L)}$. Once we extend $\Sh_\sfK(L)$ to $\sS_\sfK(L)$ over $\bZ_{(p)}$, these sheaves arising from cohomological realizations of motives over $\Sh_\sfK(L)$ also extend. This motivic point of view is discussed in more detail in \cite[\S4.7]{Keerthi}.

It is explained in \cite[(3.1.3)]{Yang} that the sheaves $\bL_{*}$ are equipped with an orientation tensor $\bd_* : \underline{\det{(L)}} \stackrel{\sim}{\to} \det(\bL_*)$ ($* = B, \ell \neq p$). Here $\underline{\det{(L)}}$ denotes the constant sheaf whose stalks are $\det(L)$ on ${\wt{\shS}}(L)$ in the appropriate Grothendieck topology. In short, $\bd_*$'s come up because the adjoint representation of $\wt{G}$ on $L_{(p)}$ factors through $\SO(L_{(p)})$, i.e., it preserves a choice of orientation $\delta$ on $L_{(p)}$. It is possible to discuss de Rham or crystalline realizations of $\delta$, but for our purposes it suffices to use the $2$-adic realization $\bd_2$. The sheaves $\bL_*$ and the tensors $\bpi_*$ and $\bd_*$ descend to $\shS(L)$. 

We will repeatedly make use of the following key fact about $\bL_\dR$ and $\bH_\dR$: 
\begin{proposition}
\label{prop: LH fil}
Let $s$ be any point on ${\wt{\shS}}_{\cK}(L)$. $\Fil^1 \bL_{\dR, s}$ is one-dimensional, and $\Fil^1 \bH_{\dR, s} = \ker(x)$ for any nonzero element $x \in \Fil^1 \bL_{\dR, s}$. 
\end{proposition}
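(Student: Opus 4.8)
The plan is to unwind the definitions of the CSpin structures and reduce the claim to a statement about the standard representation of $\mathrm{CSpin}$ acting on the spin (Clifford) representation, where it becomes a concrete computation with Clifford algebras. First I would recall that, by construction, $\bH_{\dR}$ is the de Rham realization of $\shA$, hence at a point $s$ it carries a Hodge filtration of weights $0$ and $1$, and $\bL_{\dR,s}$ sits inside $\bH_{\dR,s}^{\otimes(1,1)} = \mathrm{End}(\bH_{\dR,s})$ via the tensor $\bpi_{\dR}$, so that an element $x\in\bL_{\dR,s}$ acts as an endomorphism of $\bH_{\dR,s}$; the assertion $\Fil^1\bH_{\dR,s} = \ker(x)$ is then to be read inside this endomorphism action. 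The Hodge structure on $\bL_{\dR,s}$ is of K3 type (weights $-1,0,1$ with $\Fil^1$ a line), which is the orthogonal-Shimura-variety input, so the first real step is to pin down $\Fil^1\bL_{\dR,s}$ as a specific isotropic line.

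Next I would work at a single point $s$, over $\bC$ (or over any field, after base change), and use the explicit description of the CSpin Shimura datum. A point of $\Ohm$ is a maximal isotropic choice: concretely, $\Fil^1\bL_{\dR,s}$ is spanned by an isotropic vector $w\in L\otimes\bC$ with $\langle w,\bar w\rangle>0$, and the Hodge filtration on $\bH_{\dR,s}$ — which is the Clifford module $\mathrm{Cl}(L)\otimes\bC$ with its $\mathrm{CSpin}$-action — is the one determined by this $w$ via the standard recipe: $\Fil^1\bH_{\dR,s}$ is the annihilator $w\cdot(\mathrm{Cl}(L)\otimes\bC)$, i.e. the image of left multiplication by $w$, equivalently the kernel of left multiplication by $w$ (these coincide since $w^2=\langle w,w\rangle=0$, so $w\cdot\mathrm{Cl} = \ker(w\cdot\,)\cap$\,\textrm{(half the dimension)}, and a dimension count forces equality). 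So for this particular $x=w$ the identity $\Fil^1\bH_{\dR,s}=\ker(x)$ is essentially the definition of the Hodge filtration on the Kuga--Satake abelian variety. The content of the proposition is that the same holds for \emph{every} nonzero $x\in\Fil^1\bL_{\dR,s}$, not just the distinguished generator; but since $\Fil^1\bL_{\dR,s}$ is one-dimensional, every nonzero $x$ is a scalar multiple of $w$, and scaling does not change the kernel. Hence the one-dimensionality of $\Fil^1\bL_{\dR,s}$ is doing the crucial work, and I would prove that first (it is the weight/Hodge-number computation for the orthogonal Shimura variety, available from \cite{CSpin}).

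The remaining point is that these statements, established over $\bC$ at the level of Hodge/Clifford linear algebra, hold at an arbitrary point $s$ of the integral model $\wt{\shS}_{\cK}(L)$, including points of positive characteristic. For this I would invoke that the pair $(\bH_{\dR},\bL_{\dR})$ together with the filtration and the tensor $\bpi_{\dR}$ is constructed compatibly over the integral model (the de Rham sheaf of the universal abelian scheme, the $\mathrm{CSpin}$ tensors, and the Hodge filtration all being defined integrally in \cite{int,CSpin}), so the equality of two sub-bundles $\Fil^1\bH_{\dR}$ and $\ker(\bpi_{\dR}\text{-action of a local generator of }\Fil^1\bL_{\dR})$ is a closed condition that can be checked fiberwise, and on geometric fibers it reduces either to the complex computation (char $0$) or to its characteristic-$p$ analogue, which is identical linear algebra over the residue field once one knows the Hodge numbers are locally constant (flatness of the Hodge bundles). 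The main obstacle I anticipate is purely bookkeeping: making sure that the identification of $\bL_{\dR,s}$ as a sub-\emph{Lefschetz-twisted} piece of $\mathrm{End}(\bH_{\dR,s})$ is compatible with the Clifford-multiplication action used in the Kuga--Satake construction, so that "$x$ acts on $\bH_{\dR,s}$" really does mean "left Clifford multiplication by $x$"; once that compatibility is in hand, the kernel computation $w^2=0 \Rightarrow \ker(w\cdot\,)=\mathrm{im}(w\cdot\,)$ and the dimension count finish it.
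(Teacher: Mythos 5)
Your proposal follows the same plan as the paper's (very terse) proof: reduce to the complex case, where the claim is the explicit Kuga--Satake Clifford-algebra identity that left multiplication by the isotropic generator $w$ of $\Fil^1 L\otimes\bC$ has $\ker=\mathrm{im}=\Fil^1$ of the Clifford module, and then transfer to arbitrary points of the integral model by a lifting/flatness argument. This is exactly what the paper delegates to \cite[p.~8]{Yang} and \cite[\S4.9]{CSpin} together with the phrase ``lifting argument,'' so the two arguments match in substance; the only mild imprecision in your write-up is calling the positive-characteristic check ``identical linear algebra'' when it really proceeds by the containment $\mathrm{im}(x\cdot)\subseteq\Fil^1\bH_{\dR}$ plus equality of ranks by flatness rather than by reproducing the $\Ohm$-parametrization over the residue field.
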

\begin{proof}
If $\mathrm{char\,}k(s) = 0$, we can simply base change to $\bC$ and apply Hodge theory (see \cite[p.~8]{Yang}). If $\mathrm{char\,}k(s) = p$, we can check this by a lifting argument. One can also read off this fact from \cite[\S4.9]{CSpin}.
\end{proof}

We recall the definition of a CSpin-isogeny (\cite[Def.~3.2]{Yang}): 
\begin{definition}
Let $\kappa$ be a perfect field with algebraic closure $\bar{\kappa}$ and $s, s'$ be $\kappa$-points on ${\wt{\shS}}_{K_p}(L)$, we call a quasi-isogeny $\shA_s \to \shA_{s'}$ a \textit{CSpin-isogeny} if it commutes with the CSpin structures, i.e., it respects the $\bZ/2\bZ$-grading, $\Cl(L)$-action and sends $\bpi_{\ell, s \tensor \bar{\kappa}}$ to $\bpi_{\ell, s' \tensor \bar{\kappa}}$ for every $\ell \neq \mathrm{char\,}\kappa$ and in addition $\bpi_{\cris, s}$ to $\bpi_{\cris, s'}$ if $\mathrm{char\,} \kappa = p$.
\end{definition}
We remark that CSpin-isogenies are stable under liftings and reductions:

\begin{lemma}
\label{lem: CSpin-isog}
Let $\kappa$ be a perfect field of characteristic $p$ and $s, s'$ be two $k$-points on ${\wt{\shS}}_\cK(L)$. Let $K$ denote $W(\kappa)[1/p]$ and $F \subseteq \bar{K}$ be a finite extension of $K$ and $s_F, s'_F$ be $F$-valued points on ${\wt{\shS}}_\cK(L)$ which specialize to $s, s'$. Suppose $\psi_F : \shA_{s_F} \to \shA_{s'_F}$ is a quasi-isogeny which specializes to $\psi : \shA_s \to \shA_{s'}$. Then $\psi$ is a CSpin-isogeny if and only if $\psi_F$ is also a CSpin-isogeny. 
\end{lemma}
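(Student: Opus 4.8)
The plan is to prove this by combining the smooth-proper base change and specialization compatibility of the cohomological realizations of $\shA$ with the fact that being a CSpin-isogeny is detected after passing to $\bar\kappa$ (and, in the $p$-adic place, by the crystalline realization). First I would set up notation: write $V = \cO_F$, so $\kappa$ is the residue field of $V$. The quasi-isogeny $\psi_F : \shA_{s_F}\to\shA_{s'_F}$ induces maps on the various realizations of $\shA$, and since it specializes to $\psi$, on the $\ell$-adic side ($\ell\neq p$) the map $\H^1_\et(\shA_{s_F,\bar F},\bZ_\ell)\to\H^1_\et(\shA_{s'_F,\bar F},\bZ_\ell)$ induced by $\psi_F$ is identified with the map $\H^1_\et(\shA_{s,\bar\kappa},\bZ_\ell)\to\H^1_\et(\shA_{s',\bar\kappa},\bZ_\ell)$ induced by $\psi$ under the canonical smooth-proper base change isomorphisms $\H^1_\et(\shA_{s_F,\bar F},\bZ_\ell)\cong\H^1_\et(\shA_{s,\bar\kappa},\bZ_\ell)$ (and likewise for $s'$). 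Moreover these base change isomorphisms are compatible with the CSpin structures — the $\bZ/2\bZ$-grading and $\Cl(L)$-action are defined integrally on $\shA$ and so are respected by specialization, and the tensors $\bpi_{\ell}$ are compatible with smooth-proper base change by their construction in \cite[\S4]{CSpin}. Hence $\psi_F$ respects the $\ell$-adic CSpin data on the generic fiber if and only if $\psi$ does on the special fiber, for every $\ell\neq p$; and the grading and $\Cl(L)$-action statements are likewise equivalent. This already disposes of all the prime-to-$p$ conditions in the definition.

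The remaining point is the place $p$. Here the definition of CSpin-isogeny requires on the generic fiber that $\psi_F$ respect $\bpi_{\ell,s_F\tensor\bar\kappa}$ for $\ell\neq p$ only (there is no crystalline condition imposed on an $F$-point, since $F$ has characteristic $0$), whereas on the special fiber $\psi$ must additionally send $\bpi_{\cris,s}$ to $\bpi_{\cris,s'}$. So I must show that if $\psi_F$ is a CSpin-isogeny then $\psi$ automatically respects $\bpi_{\cris}$, and conversely. The key input is the $p$-adic comparison theorem: after possibly enlarging $F$ so that $\shA_{s_F}$ and $\shA_{s'_F}$ have good reduction with special fibers $\shA_s$, $\shA_{s'}$ (which we may do, as $s,s'$ are already the specializations), the crystalline cohomology $\H^1_\cris(\shA_{s/\kappa}/W)\tensor_W B_\cris$ is canonically identified with $\H^1_\et(\shA_{s_F,\bar F},\bZ_p)\tensor_{\bZ_p}B_\cris$ compatibly with Frobenius, Galois action, and — crucially — compatibly with the tensors, so that $\bpi_{\cris,s}\tensor 1$ is carried to $\bpi_{p,s_F\tensor\bar\kappa}\tensor 1$ under this identification (this compatibility of the CSpin tensors with the crystalline–étale comparison is part of the construction of the integral canonical model $\wt\shS_{\cK_p}(L)$, cf. \cite{int} and \cite[\S4]{CSpin}, and is exactly the statement used in the proof of Thm~\ref{thm: char NO lifting}). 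Granting this, the map on crystalline cohomology induced by $\psi$, after $\tensor B_\cris$, is identified with the map on $p$-adic étale cohomology induced by $\psi_F$, after $\tensor B_\cris$. Therefore $\psi$ sends $\bpi_{\cris,s}$ to $\bpi_{\cris,s'}$ if and only if the base-changed étale map sends $\bpi_{p,s_F\tensor\bar\kappa}\tensor1$ to $\bpi_{p,s'_F\tensor\bar\kappa}\tensor 1$, i.e. (since $B_\cris$ is faithfully flat over $\bQ_p$ and the $\bpi_p$ are $\bQ_p$-rational classes) if and only if $\psi_F$ sends $\bpi_{p,s_F\tensor\bar\kappa}$ to $\bpi_{p,s'_F\tensor\bar\kappa}$.

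So the proof reduces to the following elementary point: if $\psi_F$ is a CSpin-isogeny, does it automatically respect $\bpi_p$ even though the definition only demanded $\bpi_\ell$ for $\ell\neq p$? This is where one uses that $\psi_F$ lies in the group scheme $\wt G$ acting on the fibers: the condition of respecting all the $\bpi_{\ell}$ for $\ell\neq p$, together with the $\bZ/2\bZ$-grading and $\Cl(L)$-action, is equivalent to saying that the image of $\psi_F$ under the relevant map lands in $\wt G(\bA_f^p)$, and since $s_F, s'_F$ are points of the \emph{same} Shimura variety $\wt\shS_{\cK}(L)$ lifting $s,s'$, the isogeny $\psi_F$ — being a quasi-isogeny of the \emph{universal} abelian scheme compatible with all structures away from $p$ — must come from an element of $\wt G(\bA_f)$, hence respects $\bpi_p$ as well. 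Alternatively, and more robustly: the $p$-adic étale realization of $\psi_F$, viewed via the comparison isomorphism as an element acting on $\H^1_\cris(\shA_s/W)[1/p]$ commuting with Frobenius, is a quasi-isogeny of $p$-divisible groups, and any such quasi-isogeny respecting the grading, the $\Cl(L)$-action, and the tensors at all $\ell\neq p$ lies in $\wt G(\bQ_p)$ by the characterization of the Shimura datum, hence fixes $\bpi_p$; then run the comparison backwards to get the crystalline statement for $\psi$. I expect the main obstacle to be exactly this last point — pinning down precisely why compatibility with the away-from-$p$ structures forces compatibility at $p$ for an isogeny between two points of the same integral model — and in the write-up I would phrase it using the moduli interpretation of $\wt\shS_{\cK}(L)$ as parametrizing abelian varieties with CSpin structure, so that $\psi_F$ corresponds to a morphism in that category and hence respects \emph{all} of the structure, the crystalline tensor on the special fiber included, by functoriality of the comparison isomorphisms. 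The $\ell\neq p$ part and the grading/$\Cl(L)$ part are purely formal from specialization, so the only real content is transporting the $p$-adic information across the $B_\cris$-comparison, which is a routine application of the compatibilities already invoked in Section~\ref{sec: Existence}.
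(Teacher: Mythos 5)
Your first two paragraphs reproduce the paper's argument: the grading, the $\Cl(L)$-action, and the tensors $\bpi_\ell$ for $\ell\neq p$ are handled by smooth--proper base change, and the only real content is at $p$, where one transports the condition through the crystalline--\'etale comparison using that $\bpi_{\cris,s}$ corresponds to $\bpi_{p,s_{\bar F}}$ under $\otimes B_\cris$ (the paper cites \cite[Rmk~3.1]{Yang} for exactly this). That part is correct and complete.

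The third paragraph, however, is built on a misreading and then resolved by assertions rather than proofs. The definition of a CSpin-isogeny requires $\bpi_{\ell}$ to be respected for every $\ell\neq\mathrm{char}\,\kappa$; since $\mathrm{char}\,F=0$, the condition on $\psi_F$ already includes $\ell=p$, so the ``main obstacle'' you identify does not exist. Even where a version of it genuinely arises (in the direction ``$\psi$ CSpin $\Rightarrow$ $\psi_F$ CSpin'' one only gets $\ell\neq p$ from specialization), your own $B_\cris$ argument from the second paragraph already supplies $\bpi_p$, so nothing more is needed. The three arguments you offer in their place --- that $\psi_F$ ``must come from an element of $\wt{G}(\bA_f)$,'' that respecting the prime-to-$p$ structures forces membership in $\wt{G}(\bQ_p)$ ``by the characterization of the Shimura datum,'' and the appeal to a moduli interpretation of $\wt{\shS}_{\cK}(L)$ --- are not proofs as stated; a Hodge-type integral model does not carry the moduli interpretation you want, and compatibility away from $p$ does not formally imply anything at $p$ without a comparison. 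The correct (and one-line) way to see that in characteristic $0$ the tensors at all primes are respected simultaneously is the one the paper uses: base change to $\bC$ and note that every $\bpi_\ell$ is the image of the single Betti tensor $\bpi_B$ under the comparison isomorphisms, which are functorial for the algebraic correspondence $\psi_F$. With that substitution your write-up would match the paper's proof.
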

\begin{proof}
Clearly, $\psi$ respects the $\bZ/2 \bZ$-grading and the $\Cl(L)$-actions if and only if $\psi_F$ also respects these structures. Let $s_{\bar{K}}$ and $s'_{\bar{K}}$ denote the $\bar{K}$-valued geometric points over $s_F$ and $s'_F$. To check whether $\psi_F$ sends $\bpi_{\ell, s_{\bar{K}}}$ to $\bpi_{\ell, s'_{\bar{K}}}$ for every $\ell$, it suffices to check this for one $\ell$, as one can always take a base change to $\bC$ and use Betti realizations. Therefore, the only part of the statement which does not follow directly from the smooth and proper base change theorem is that if $\psi_F$ is a CSpin-isogeny, then $\psi$ sends $\bpi_{\cris, s}$ to $\bpi_{\cris, s'}$. This follows from \cite[Rmk~3.1]{Yang}. 
\end{proof} 
\begin{lemma}
\label{lem: liftisog}
Let $s_\bC, s_\bC'$ be two $\bC$-points on ${\wt{\shS}}_\cK(L)$. For every Hodge isometry $$g: \bL_{B, s'_\bC} \tensor \bQ \sto \bL_{B,s_\bC} \tensor \bQ$$ which sends $\bd_{2, s_\bC}$ to $\bd_{2, s'_\bC}$, there exists a CSpin-isogeny $\shA_{s_\bC} \sto \shA_{s'_\bC}$ which induces $g$ by conjugation. 
\end{lemma}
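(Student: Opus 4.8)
The plan is to build the CSpin-isogeny directly out of the Clifford-algebra description of the Kuga--Satake abelian scheme $\shA$ over $\bC$. First I would fix, for each of the two $\bC$-points $t\in\{s_\bC,s'_\bC\}$, an isometry $\bL_{B,t}\tensor\bQ\cong L_\bQ$ of quadratic $\bQ$-spaces; this is possible because $\bL_{B,t}$ lies in the genus of $L$, hence is $\bQ$-isometric to $L$. Under such a choice the Kuga--Satake construction identifies $\bH_{B,t}\tensor\bQ$ with the Clifford algebra $\Cl(L_\bQ)$, with the $\Cl(L)$-action given by right multiplication, the $\bZ/2\bZ$-grading the usual one, the tensor $\bpi_{B,t}$ the canonical $\CSpin(L_\bQ)$-invariant tensor realizing $L_\bQ\hookrightarrow\End(\Cl(L_\bQ))$, and the Hodge structure the Kuga--Satake one attached to the period point $x_t\in\Ohm$ recording the line $\Fil^1\bL_{B,t,\bC}\subset L_\bC$. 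Under these identifications the given Hodge isometry $g$ becomes $\bar g\in\O(L_\bQ)$ with $\bar g(x_{s'})=x_s$, and since $g$ is an isometry carrying $\bd_{2,s_\bC}$ to $\bd_{2,s'_\bC}$, its determinant --- a sign detected already $2$-adically --- equals $1$, so $\bar g\in\SO(L_\bQ)$.

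Next I would lift $\bar g$ to the spin group. Since $H^1(\bQ,\mathbf{G}_m)=0$, the map $\CSpin(L_\bQ)(\bQ)\to\SO(L_\bQ)(\bQ)$ is surjective, so there is a lift $\widetilde g\in\CSpin(L_\bQ)(\bQ)$ of $\bar g$. Left multiplication by $\widetilde g$ on $\Cl(L_\bQ)$ is right-$\Cl(L)$-linear, preserves the grading (as $\widetilde g$ is even), fixes the $\CSpin(L_\bQ)$-invariant tensor $\bpi$, and induces $\bar g$ under the conjugation action on $L_\bQ\subset\End(\Cl(L_\bQ))$. It is moreover an isomorphism of Hodge structures from the $x_{s'}$-structure to the $x_s$-structure: conjugation by $\widetilde g$ sends the canonical $\CSpin$-lift of the $\SO$-Hodge structure at $x_{s'}$ to a $\CSpin$-lift of that at $x_s$ with the same (abelian-variety) Hodge type on $\Cl(L_\bR)$, and such a lift is unique. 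Transporting back through the two identifications, this produces an isomorphism $\psi_B\colon\bH_{B,s'_\bC}\tensor\bQ\sto\bH_{B,s_\bC}\tensor\bQ$ of rational Hodge structures respecting the grading, the $\Cl(L)$-action and the tensors $\bpi_B$, and inducing $g$ on $\bL_B$ by conjugation.

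Finally I would geometrize $\psi_B$: since $\bH_{B,t}\tensor\bQ$ is the rational Betti cohomology of $\shA_{t}$ and, for complex abelian varieties, $\Hom$ tensored with $\bQ$ coincides with the space of morphisms of rational Hodge structures between the $H^1$'s, the map $\psi_B$ is the cohomological realization of a quasi-isogeny $\psi\colon\shA_{s_\bC}\to\shA_{s'_\bC}$ (inverting if needed to get this direction). This $\psi$ respects the grading and the $\Cl(L)$-action because these are built into $\psi_B$ and a quasi-isogeny of abelian varieties is pinned down by its effect on $H^1$; and it sends $\bpi_{\ell,s'_\bC}$ to $\bpi_{\ell,s_\bC}$ for each prime $\ell$ via the Artin comparison $\bH_{B,t}\tensor\bQ_\ell\cong\bH_{\ell,t}$, under which $\bpi_B$ corresponds to $\bpi_\ell$, together with the same statement for $\psi_B$; no crystalline compatibility enters since we are in characteristic $0$. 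Hence $\psi$ is a CSpin-isogeny inducing $g$ by conjugation.

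The step I expect to require the most care is the middle one: correctly unwinding the Kuga--Satake dictionary so that a rational spin lift $\widetilde g$ of $\bar g$ really does intertwine the two Hodge structures and fix the invariant tensor $\bpi$, and extracting $\det\bar g=1$ from the hypothesis on the $2$-adic orientation $\bd_2$. The rest --- surjectivity of $\CSpin(\bQ)\to\SO(\bQ)$, passing from a Hodge isomorphism of $H^1$'s to an actual quasi-isogeny, and transporting the $\bpi_B$-compatibility to the $\ell$-adic tensors via Artin comparison --- is formal. This is the characteristic-$0$, Betti analogue of the way CSpin-isogenies are produced and compared in \cite{CSpin} and \cite{Yang}.
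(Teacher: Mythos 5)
Your proposal is correct and follows essentially the same strategy as the paper's proof: reduce the problem to the Clifford-algebra/Kuga--Satake picture, use the orientation hypothesis on $\bd_2$ to conclude $\det = 1$ (placing the isometry in $\SO$), lift to $\CSpin(L_\bQ)(\bQ)$ using surjectivity of $\CSpin(\bQ)\to\SO(\bQ)$, and then check Hodge compatibility via the fact that the filtration on $\bH$ is functorially determined by the filtration on $\bL$ (the paper invokes Prop.~\ref{prop: LH fil}, which is what underlies your "such a lift is unique" step). The only cosmetic difference is that the paper first picks an auxiliary $\bZ_{(p)}$-linear reference isomorphism $\psi'$ respecting the CSpin structures and then corrects it, whereas you build the lift directly via left multiplication; both routes converge on the same ingredients, including the final step of converting a Hodge isomorphism on $H^1$ into a genuine quasi-isogeny of complex abelian varieties.
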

\begin{proof}
From the construction of the local system $\bH_{B}$ (see \cite[\S 3.3]{CSpin}) it is clear that there exists an isomorphism of free $\bZ_{(p)}$-modules $H \sto \bH_{B, s_\bC}$ which respects the CSpin-structures, i.e., it respects the $\bZ/2\bZ$-grading, $\Cl(L)$-action and sends $\pi$ to $\bpi_{B, s_\bC}$. The same is true for $s'_\bC$, so there exists an isomorphism of $\bZ_{(p)}$-modules $\psi' : \bH_{B, s_\bC} \sto \bH_{B, s'_\bC}$ which respects the CSpin structures. The map $g' : \bL_{B, s_\bC} \tensor \bQ \sto \bL_{B, s'_\bC} \tensor \bQ$ induced by $\psi'$ by conjugation sends $\bd_{2, s_\bC}$ to $\bd_{2, s'_\bC}$. Therefore, the composition $g^{-1} \circ g'$ lies in $\SO(\bL_{B, s_\bC} \tensor \bQ)$. Since the natural morphism $\CSpin(L_\bQ) \to \SO(L_\bQ)$ is surjective, we may lift $g^{-1} \circ g'$ to an automorphism of $\bH_{B, s_\bC}$ which preserves the CSpin structures and use it to adjust $\psi'$ to obtain a morphism $\psi$ which induces $g$ by conjugation. It follows from Prop.~\ref{prop: LH fil} that $f$ preserves the Hodge structures, so that $\wt{g}$ comes from a CSpin-isogeny. 
\end{proof}

\subsection{Hilbert Squares and Period Morphisms}
\label{sec: period} We will apply the period morphism construction to Hilbert squares of K3 surfaces, so we recollect some basic facts and set up some notations here. Let $k$ be any algebraically closed field of characteristic $0$ or $p > 2$, $X$ be any K3 surface over $k$ and $Y := X^{[2]}$ be the Hilbert scheme of $2$ points on $X$. The lemma below implies that $Y$ is a $\mathrm{K3}^{[2]}$-type variety in the sense of \cite[Def.~1]{Yang2}.

\begin{lemma}
    When $\mathrm{char\,} k = p > 2$ or $0$, $Y$ has the same Hodge numbers as those of a complex $\mathrm{K3}^{[2]}$-type variety, and the Hodge-de Rham spectral sequence of $Y$ degenerates at the $E_1$-page. 
\end{lemma}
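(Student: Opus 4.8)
The plan is to handle the two characteristics separately. When $\mathrm{char\,} k = 0$ the argument is essentially formal: $Y = X^{[2]}$ is smooth projective (Fogarty), the dimensions $h^{i,j}(Y) = \dim_k \H^j(Y,\Omega^i_Y)$ and $\dim_k \H^n_\dR(Y/k)$ are unchanged under extension of the algebraically closed base field, $X$ descends to a finitely generated subfield of $k$ which one embeds into $\bC$, and over $\bC$ the variety $X_\bC^{[2]}$ is by definition of $\mathrm{K3}^{[2]}$-type while the Hodge--de Rham spectral sequence of any smooth projective variety in characteristic $0$ degenerates at $E_1$. So I would dispose of this case first and then assume $\mathrm{char\,} k = p > 2$.

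For the positive-characteristic case I would use the classical description $X^{[2]} \cong \widetilde{X\times X}\,/\,\tau$, where $\widetilde{X\times X}$ is the blow-up of $X\times X$ along the diagonal $\Delta\cong X$ and $\tau$ is the involution coming from the factor swap. Since $\deg(\widetilde{X\times X}\to Y) = 2$ is prime to $p$ and $Y$ is smooth, the pushforward $Rq_*\Omega^\bullet_{\widetilde{X\times X}}$ splits off $\Omega^\bullet_Y$ as its $\tau$-invariant summand, so the Hodge groups, the de Rham groups, and the entire Hodge--de Rham spectral sequence of $Y$ are the $\tau$-invariant parts of those of $\widetilde{X\times X}$. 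Hence it suffices to treat $\widetilde{X\times X}$ and then take $\tau$-invariants. For that I would invoke the characteristic-free blow-up formulas
\[
\H^j(\widetilde{X\times X},\Omega^i)\cong\H^j(X\times X,\Omega^i)\oplus\H^{j-1}(X,\Omega^{i-1}),\qquad \H^n_\dR(\widetilde{X\times X})\cong\H^n_\dR(X\times X)\oplus\H^{n-2}_\dR(X)
\]
(the center $\Delta$ has codimension $2$) together with the K\"unneth formula relating the Hodge and de Rham cohomology of $X\times X$, and the spectral sequence of $X\times X$, to those of $X$. Counting dimensions, $E_1$-degeneration for $\widetilde{X\times X}$ reduces to $E_1$-degeneration for $X\times X$, which by K\"unneth reduces to $E_1$-degeneration for $X$.

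This leaves the surface $X$: I would use that a K3 surface over $k$ has $\H^1(\mathcal{O}_X) = 0$ (by definition) and $\H^0(\Omega^1_X) = 0$ (classical), hence $\H^1_\dR(X/k) = \H^3_\dR(X/k) = 0$ by Serre duality, while the crystalline cohomology $\H^*(X/W)$ is torsion-free (as recalled in \S\ref{sec:twisted Mukai lattices}), so $\dim_k\H^2_\dR(X/k) = 22$; the remaining Hodge--de Rham differentials then vanish for degree reasons, giving degeneration and $h^{1,1}(X) = 20$. Running the resulting Hodge diamond of $X$ through the K\"unneth and blow-up formulas and taking $\tau$-invariants computes every $h^{i,j}(Y)$; since the identical computation over $\bC$ produces the Hodge numbers of a complex $\mathrm{K3}^{[2]}$-type variety, the two coincide. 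The step I expect to require the most care is the descent of coherent and de Rham cohomology, and of the Hodge--de Rham spectral sequence, along the prime-to-$p$ quotient $\widetilde{X\times X}\to Y$ (and the identification $Y\cong\widetilde{X\times X}/\tau$ itself); this is exactly where $p > 2$ is used, and it is what lets me reach the fourfold $Y$ by bootstrapping from the surface $X$, rather than applying Deligne--Illusie directly in dimension $4$ (which would force $p\ge 5$). As an alternative I would instead lift $X$ to a projective K3 surface over a mixed-characteristic DVR, obtain a smooth proper lift $\mathcal X^{[2]}$ of $Y$, and combine upper semicontinuity of Hodge numbers with the Berthelot--Ogus comparison and the universal coefficient theorem for crystalline cohomology; that route, however, requires a separate input on torsion-freeness of the crystalline cohomology of $Y$, which the blow-up argument supplies for free.
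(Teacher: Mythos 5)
Your proof is correct but takes a genuinely different route from the paper's. Both start from the same geometric picture $Y=Y'/\tau$ with $Y'=\mathrm{Bl}_\Delta(X\times X)$, but they extract the Hodge numbers and the degeneration differently. You work directly with the identification of $\Omega^\bullet_Y$ as the $\tau$-invariant summand of $q_*\Omega^\bullet_{Y'}$, then apply the (characteristic-free) blow-up and K\"unneth formulas to $Y'$ and take $\tau$-invariants. The paper instead works with $q^*\Omega^i_Y\cong\Omega^i_{Y'}(-iE)$, whose cohomology on $Y'$ splits via $q_*\mathcal{O}_{Y'}=\mathcal{O}_Y\oplus\mathcal{L}$ into $\H^j(Y,\Omega^i_Y)\oplus\H^j(Y,\Omega^i_Y\otimes\mathcal{L})$; it computes the total dimension of the left side by a direct pushforward computation to $X\times X$ and then invokes upper semicontinuity of both summands in a spread-out family over a flat finite-type $\mathbf{Z}$-scheme to pin down each one. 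The semicontinuity trick buys the paper freedom from tracking how $\tau$ acts on cohomology; your approach buys a more direct computation of each $h^{i,j}(Y)$, at the cost of having to keep track of the sign of $\tau$ on the K\"unneth and exceptional pieces. More notably, the degeneration arguments are different: once the Hodge numbers are known, the paper simply observes that $\H^j(Y,\Omega^i_Y)=0$ whenever $i+j$ is odd, so every differential in the Hodge--de~Rham spectral sequence vanishes for degree reasons, and this step is essentially free. You instead propagate degeneration through the de Rham blow-up and K\"unneth formulas, which is valid but leans on the blow-up formula for de Rham cohomology in positive characteristic --- true, but heavier machinery than the parity observation, and worth a precise reference if you take that route. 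So your proof goes through, but the degeneration step is noticeably leaner in the paper.
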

\begin{proof}
Let $Y':=\mathrm{Bl}_\Delta(X \times X)$ be the blow-up of $X\times X$ along the diagonal $\Delta\subset X\times X$. Let $E\subset Y'$ be the exceptional divisor, which is isomorphic to the projectivization of the tangent bundle of $X$. There is an action of $\mathbf{Z}/2$ on $X\times X$ given by permuting the factors; this action lifts to an action on $Y'$ which is trivial on $E$, and there is a natural map $q:Y'\to Y$ which identifies $Y$ with the quotient $Y'/(\mathbf{Z}/2)$. The map $q$ is a double cover branched over the divisor $D=q(E)\subset Y$, which may be described explicitly as the locus of non--reduced subschemes. Using our assumption that 2 is invertible in $k$, we obtain a canonical direct sum decomposition
\[
    q_*\ms O_{Y'}=\ms O_Y\oplus \ms L
\]
where $\ms L$ is the cokernel of the pullback map $\ms O_Y\to q_*\ms O_{Y'}$. From this and the projection formula we deduce the equality
\[
    \H^j(Y',q^*\Omega^i_{Y})=\H^j(Y,\Omega^i_Y)\oplus\H^j(Y,\Omega^i_Y\otimes\ms L)
\]
All of these data may be defined in a flat family over a flat finite type $\mathbf{Z}$-scheme. By semi-continuity, the dimensions of both summands on the right hand side must be greater than or equal to their corresponding values over the complex numbers. Thus, it will suffice to verify that the groups $\H^j(Y',q^*\Omega^i_{Y})$ have the same dimensions as over the complex numbers. This can be done via a direct computation. In more detail, we compute using the identification $q^*\Omega^1_{Y}\cong\Omega^1_{Y'}(-E)$, which yields isomorphisms
\[
    q^*\Omega^i_Y\cong\Omega_{Y'}^i(-iE)
\]
The cohomology of these sheaves may be related to the Hodge cohomology of $X$ by pushing forward along the blow-up morphism $Y'\to X\times X$. The result then follows (eventually) from the fact that the Hodge numbers of $X$ do not depend on the characteristic of the ground field.



The degeneration of the Hodge-de Rham spectral sequence at the $E_1$-page follows from the fact that $\H^i(Y, \Ohm_Y^j) = 0$ for $i + j$ odd. 
\end{proof}

Let $\sfH^*(-)$ be a Weil cohomology with coefficient field $\mathcal{K}$.\footnote{Here we are using a different font for $\H$ to distinguish from the $\H^*(-)$ in Notation~\ref{ssec:notation}.} We will only make use of Betti, $\ell$-adic, crystalline, de Rham when appropriate. When there is a specified polarization, let $\sfP^*(-)$ denote the corresponding primitive cohomology. We will view $\NS(Y)$ as a $\bZ$-lattice inside $\sfH^2(Y)$ via $c_1$, and will not write $c_1$ explicitly. $\sfH^2(Y)$ is equipped with natural \textit{Beauville-Bogomolov forms} (BBF). When $\mathrm{char\,} k = 0$, these forms are well known. When $\mathrm{char\,}k = p > n + 1$, the \'etale and crystalline versions of these forms for $\mathrm{K3}^{[n]}$-type varieties were defined by the second author in \cite[\S 2.1]{Yang2}. Since $Y$ is a Hilbert square on a K3 surface $X$, as opposed to a general deformation of such a variety, the Beauville-Bogomolov form on $Y$ is easily described by the Poincar\'e pairing on $X$: Let $\delta$ be the class of the exceptional divisor. Then $\delta^2 = -2$ under the BBF. The incidence correspondence between $X$ and $Y$ embeds $\sfH^2(X)$ isometrically into $\sfH^2(Y)$ such that $\sfH^2(Y)$ admits a natural orthogonal decomposition $\sfH^2(X) \oplus \mathcal{K} \delta$. Similarly, $\NS(Y)$ decomposes as $\NS(X) \oplus \bZ \delta$. 

\begin{lemma}
\label{eqn: extend isometry} Let $\xi$ be a polarization on $X$ and $\zeta$ be a polarization on $Y$ of the form $m \xi - \delta$. Denote by $\mathrm{proj}_{\sfP^2(Y)} \delta$ the projection of $\delta$ to $\sfP^2(Y)$, and $\mathrm{Isom}(-, -)$ the set of isometries between two quadratic lattices. Now let $X'$ be another K3 surface over $k$, take $Y', \xi', \delta'$ similarly, and suppose $Y'$ is polarized by $\zeta' := m \xi - \delta'$. There are natural identifications
\begin{align}
    \Isom(\sfP^2(X), \sfP^2(X')) &= \{ f \in \Isom(\sfH^2(X), \sfH^2(X') : f(\xi) = \xi' \} \nonumber \\
    &= \{ f \in \Isom(\sfH^2(Y), \sfH^2(Y')) : f(\zeta) = \zeta', f(\delta) = \delta' \} \nonumber \\
    &= \{ f \in \Isom(\sfP^2(Y), \sfP^2(Y') : f(\mathrm{proj}_{\sfP^2(Y)} \delta) = \mathrm{proj}_{\sfP^2(Y')} \delta' \}. 
\end{align}
\end{lemma}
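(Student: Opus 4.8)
The four sets are to be shown equal via the two orthogonal decompositions $\sfH^2(X)=\sfP^2(X)\oplus\mathcal K\xi$ (and similarly for $X'$) and $\sfH^2(Y)=\sfH^2(X)\oplus\mathcal K\delta$ (and similarly for $Y'$), together with the fact that $\zeta=m\xi-\delta$ pins down $\delta$ inside $\sfH^2(Y)$ once $\xi$ is fixed. The strategy is to exhibit each equality as an elementary consequence of restriction/extension of isometries across an orthogonal direct sum, being careful that the orthogonal complements in question are one-dimensional and anisotropic (so that an isometry of the ambient lattice sending $\xi$ to $\xi'$ automatically carries $\xi^\perp$ to $(\xi')^\perp$, hence restricts, and conversely any isometry of the complements extends by acting as $\xi\mapsto\xi'$ on the line).

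First I would treat the equality of the first two sets. An isometry $f\colon \sfH^2(X)\sto\sfH^2(X')$ with $f(\xi)=\xi'$ sends $\xi^\perp=\sfP^2(X)$ to $(\xi')^\perp=\sfP^2(X')$, so restricts to an element of $\Isom(\sfP^2(X),\sfP^2(X'))$; conversely, given $g\in\Isom(\sfP^2(X),\sfP^2(X'))$ we extend by $g\oplus(\xi\mapsto\xi')$, which is an isometry because $\langle\xi,\xi\rangle=\langle\xi',\xi'\rangle$ (both K3 surfaces carry polarizations of the \emph{same} degree since $\zeta=m\xi-\delta$ and $\zeta'=m\xi-\delta'$ force $\xi^2=(\zeta^2+2)/m^2=(\xi')^2$, using $\delta^2=(\delta')^2=-2$ and $\delta\perp\xi$, $\delta'\perp\xi'$). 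This gives a bijection, and one checks the two assignments are mutually inverse. Next, for the equality with the third set: an isometry $f\colon\sfH^2(Y)\sto\sfH^2(Y')$ with $f(\zeta)=\zeta'$ and $f(\delta)=\delta'$ then has $f(\xi)=f\bigl(m^{-1}(\zeta+\delta)\bigr)=m^{-1}(\zeta'+\delta')=\xi'$, hence $f$ preserves the decomposition $\sfH^2(X)\oplus\mathcal K\delta$ and restricts to an isometry $\sfH^2(X)\sto\sfH^2(X')$ with $f(\xi)=\xi'$; conversely any such isometry extends by $\delta\mapsto\delta'$, and the condition $f(\zeta)=\zeta'$ is then automatic from $\zeta=m\xi-\delta$. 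The identification with the fourth set is the same argument as the first equality, now applied to $\sfH^2(Y)=\sfP^2(Y)\oplus\mathcal K\zeta$: the extra bookkeeping is that $\delta$ itself need not be primitive, so one records instead the class $\mathrm{proj}_{\sfP^2(Y)}\delta=\delta-\tfrac{\langle\delta,\zeta\rangle}{\langle\zeta,\zeta\rangle}\zeta$, and the condition $f(\delta)=\delta'$ is equivalent, in the presence of $f(\zeta)=\zeta'$, to $f(\mathrm{proj}_{\sfP^2(Y)}\delta)=\mathrm{proj}_{\sfP^2(Y')}\delta'$ because $\langle\delta,\zeta\rangle=\langle\delta',\zeta'\rangle$ (both equal $-\delta^2=2$, using $\zeta=m\xi-\delta$).

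The only genuinely substantive point — and the one I would write out with a little care — is the verification that the degrees match, i.e.\ $\xi^2=(\xi')^2$ and the various pairings $\langle\delta,\zeta\rangle$, $\langle\delta',\zeta'\rangle$, $\langle\zeta,\zeta\rangle$, $\langle\zeta',\zeta'\rangle$ agree, since this is what makes the extension maps isometries and it relies on the hypothesis that $\zeta$ and $\zeta'$ have the \emph{common} form $m\xi-\delta$ and $m\xi-\delta'$ (with the same $m$ and the same $\xi$); without this the four sets need not even be nonempty simultaneously. Everything else is formal linear algebra over the coefficient field $\mathcal K$, valid uniformly for Betti, $\ell$-adic, de Rham and crystalline cohomology, using only the orthogonal decompositions $\sfH^2(Y)=\sfH^2(X)\oplus\mathcal K\delta$, $\NS(Y)=\NS(X)\oplus\bZ\delta$ and $\delta^2=-2$ recalled above, so I do not anticipate any real obstacle.
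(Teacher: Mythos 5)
Your proof is correct; the paper states this lemma without proof, and your argument — restricting and extending isometries across the orthogonal decompositions $\sfH^2(X)=\sfP^2(X)\oplus\mathcal{K}\xi$ and $\sfH^2(Y)=\sfH^2(X)\oplus\mathcal{K}\delta=\sfP^2(Y)\oplus\mathcal{K}\zeta$, with the projection formula handling the fourth set — is exactly the intended elementary verification. The one caveat: your derivation of $\xi^2=(\xi')^2$ via $\xi^2=(\zeta^2+2)/m^2=(\xi')^2$ tacitly assumes $\zeta^2=(\zeta')^2$; this matching of degrees is really an implicit hypothesis of the lemma (automatic in every application, where the isogenies carry $\xi$ to $\xi'$), and you rightly flag that without it the four sets need not be simultaneously nonempty.
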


Assume now $p \ge 5$ to apply the results of \cite{Yang2}. Let $X$ be a K3 surface and $Y:= X^{[2]}$. Let $\zeta$ be any primitive polarization on $Y$ such that $p$ is prime to the top intersection number $\zeta^4$. Let $\Def(Y; \zeta)$ denote the deformation functor of the pair $(Y, \zeta)$, i.e., the functor which sends an Artin $W$-algera $A$ to the set of isomorphism classes of the flat deformations of $(Y, \zeta)$ over $A$. We have that $\Def(Y; \zeta)$ is representable by a formal scheme isomorphic to $\mathrm{Spf}(\cR)$ for $\cR := W[\![x_1, \cdots, x_{20}]\!]$. Let $(\sY, \bzeta)$ denote the universal family over $\Def(Y; \zeta)$. Note that $\bzeta$ algebraizes $\sY$ into a scheme over $\mathrm{Spec}(\cR)$. Again we use the symbol $\PH^2(-)$ for the primitive cohomologies of $(Y, \zeta)$. There are natural pairings on $\PH^2(Y, \what{\bZ}^p)$ and $\PH^2(Y/W)$ given by restricting the Beauville-Bogomolov forms (see \cite[\S2.1]{Yang2}).  

Let $F \subset \bar{K}$ be any finite extension of $K$ and $\wt{b}$ be any $\cO_{F}$-point on $\Def(X; \zeta)$. Choose an isomorphism $\iota : \bar{K} \iso \bC$. Let $L$ be the quadratic lattice $\PH^2(\sY_{\wt{b}}(\bC), \bZ_{(p)})$, equipped with the restriction of the negative Beauville-Bogomolov form. We remark that since $\H^2(\sY_{\wt{b}}(\bC), \bZ)$ is always isomorphic to the lattice $\Lambda^{[2]} := \Lambda \oplus \bZ(-2)$ and $p \nmid  c_1(\bzeta_{\wt{b}_\bC})^2 $, the isomorphism class of $L$ as a quadratic lattice over $\bZ_{(p)}$ is completely determined by the number $c_1(\bzeta_{\wt{b}_\bC})^2$ (\cite[I Lem.~4.2]{Milnor}). 

Let $b$ be the closed point of $\Def(X, \zeta)$. We pack the input we need from the the Kuga-Satake period morphism into the following proposition: 

\begin{proposition}
\label{prop: compMotive} 
Assume $p \ge 5$. There exists a local period morphism $\rho : \mathrm{Spec\,} \cR \to \shS_{\sfK_p}(L)$ which identifies $\mathrm{Spec\,}\cR$ with the complete local ring $\what{\cO}_s$ of $s := \rho(b)$ on $\shS(L)_W$ such that: 
\begin{enumerate}[label=\upshape{(\alph*)}]
    \item There are isometries $\alpha_{\dR} : \bP^2_\dR \sto \rho^* \bL_{\dR}(-1)$ of filtered vector bundles, and $\alpha_{\cris} : \bP^2_\cris \sto \rho^* \bL_{\cris}(-1)$ of F-crystals which are compatible via the crystalline-de Rham comparison isomorphisms;
    \item There is an isometry $\alpha_{\bA_f, b} : \PH^2_\et(\sY_{b, \bA_f}) \to \bL_{\bA_f, b}$ such that for any geometric $\wt{b}'$ of characteristic zero on $\mathrm{Spec\,} \cR$, the pair of isometries $(\alpha_{\bA_f, \wt{b}'}, \alpha_{\dR, \wt{b}'})$, where $\alpha_{\bA_f, \wt{b}'} : \PH^2_\et(\sY_{\wt{b}'}, \bA_f) \to \bL_{\bA_f, \wt{b}'}$ is induced by the smooth and proper base change theorem, is absolute Hodge. 
\end{enumerate}
Moreover, for any choice of trivialization $\epsilon_2 : \det(L \tensor \bQ_2) \sto \det(\PH^2_\et(Y, \bQ_2))$, $s$ can always be chosen such that $\det(\alpha_{2, b})$ sends $\epsilon_2$ to $\bd_{2, s}$. 
\end{proposition}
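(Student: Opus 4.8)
The plan is to obtain Proposition~\ref{prop: compMotive} as an instance of the Kuga--Satake theory for $\mathrm{K3}^{[n]}$-type varieties of \cite{Yang2}, applied to the algebraized universal family $\sY\to\Spec\cR$ of Hilbert squares, together with a connected-component argument at the prime $2$ to match the orientation tensor. As observed in the discussion preceding the proposition, $p\nmid\zeta^4$ and $p\ge 5$ imply $p\nmid c_1(\bzeta_{\wt{b}_\bC})^2$, so $L$ is self-dual over $\bZ_{(p)}$ of rank $22$ and signature $(2,20)$; hence the Shimura datum $(\wt{G},\Ohm)$, the integral model $\shS_{\sfK_p}(L)$, and the CSpin sheaves $\bH_\ast,\bL_\ast$ with their tensors $\bpi_\ast,\bd_\ast$ from \S\ref{sec: spin} are all available for this $L$.

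First I would construct $\rho$. Fixing $\iota\colon\bar{K}\iso\bC$, the primitive cohomology $\bP^2_B$ of $\sY_\bC\to(\Spec\cR)_\bC$ is a polarized variation of Hodge structures of K3 type with fibre lattice $L$; the even Clifford algebra construction attaches to it a Kuga--Satake abelian scheme with its CSpin-structure, hence a period morphism to $\shS_{\sfK_p}(L)_\bC$, which by the moduli description of the canonical integral model is algebraic and prolongs to a morphism $\rho\colon\Spec\cR\to\shS_{\sfK_p}(L)_W$ --- this is exactly the local period morphism produced in \cite{Yang2}, the extension over $W$ relying on the smoothness of $\shS(L)$ over $\bZ_{(p)}$ and Grothendieck--Messing/crystalline deformation theory, as in \cite[\S4]{CSpin}. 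Both $\Spec\cR=\Def(Y;\zeta)$ and the complete local ring $\what{\cO}_s$ at $s=\rho(b)$ are formally smooth over $W$ of dimension $20=m-2$, and I would check that $\rho$ is an isomorphism on tangent spaces: by Proposition~\ref{prop: LH fil} the Hodge line $\Fil^1\bH_\dR$ is determined by $\Fil^1\bL_\dR$, so the derivative of $\rho$ at $b$ is identified with the Kodaira--Spencer map of $\sY$, which is an isomorphism by the infinitesimal Torelli theorem for $\mathrm{K3}^{[2]}$-type varieties \cite{Yang2}. Therefore $\rho$ identifies $\Spec\cR$ with $\what{\cO}_s$.

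Next I would read off (a) and (b) from this construction. By definition $\bL_\dR$ (resp.\ $\bL_\cris$) is the isometric sub-bundle (resp.\ sub-$F$-crystal) of $\bH_\dR^{\tensor(2,2)}$ (resp.\ of its crystalline analogue) cut out by $\bpi$, and the Clifford construction underlying $\rho$ identifies $\rho^*\bH_\dR$ with the even Clifford algebra of $\bP^2_\dR$ compatibly with all the tensors; extracting the weight-one piece yields the isometry $\alpha_\dR\colon\bP^2_\dR\sto\rho^*\bL_\dR(-1)$ of filtered vector bundles and, in the same way, its crystalline avatar $\alpha_\cris$, the two being compatible via the crystalline--de Rham comparison because the corresponding comparisons for $\bH$ and for $\sY$ are (cf.\ \cite[\S4]{CSpin}, \cite[(3.1.3)]{Yang}). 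For (b), $\alpha_{\bA_f,b}$ is the $\bA_f$-component of the same comparison; at a characteristic-zero geometric point $\wt{b}'$ of $\Spec\cR$ the pair $(\alpha_{\bA_f,\wt{b}'},\alpha_{\dR,\wt{b}'})$ is compatible with the Betti realization on both sides and is therefore absolute Hodge, since the Kuga--Satake correspondence is absolutely Hodge over $\bC$ --- classical for K3 surfaces and proved for $\mathrm{K3}^{[n]}$-type varieties in \cite{Yang2}, cf.\ also the motivic formalism of \cite{Keerthi}.

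The hard part will be the final assertion, matching $\det(\alpha_{2,b})$ with $\bd_{2,s}$; I expect this connected-component bookkeeping to be the only genuinely delicate point, the rest being a transcription of \cite{Yang2}. For the given $\epsilon_2$, both $\bd_{2,s}$ and $\det(\alpha_{2,b})(\epsilon_2)$ trivialize $\det(\bL_{2,s})$, so they differ by a sign. The point $s=\rho(b)$ depends on a choice of trivialization of $\PH^2_\et(\sY_b,\bA_f)$ over $L\tensor\bA_f$; replacing it on the $2$-adic factor by an orientation-reversing element of $\O(L\tensor\bQ_2)$ moves $s$ to a different geometric connected component of $\shS_{\sfK_p}(L)_W$ --- the components being separated precisely by this kind of spinor-norm/orientation data --- while leaving (a), (b) and the identification $\Spec\cR=\what{\cO}_s$ intact. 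Choosing the trivialization so that $\det(\alpha_{2,b})(\epsilon_2)=\bd_{2,s}$ then completes the proof, in a manner parallel to \cite[(3.1.3)]{Yang} and \cite[\S4]{CSpin}.
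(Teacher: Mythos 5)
Your proposal is correct and follows essentially the same route as the paper, which simply proves this proposition by citing \cite[\S3.3]{Yang2} (itself a generalization of \cite[\S5]{Keerthi}); your sketch is a faithful outline of what that citation contains — the complex period map via the Clifford construction, its extension over $W$ and identification with $\what{\cO}_s$ via local Torelli, the tensor compatibilities giving (a) and (b), and the adjustment of the connected component to match $\bd_{2,s}$.
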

\begin{proof}
See \cite[\S3.3]{Yang2}, which is a direct generalization of the results in \cite[\S5]{Keerthi}.
\end{proof}

\begin{remark}
We remark that in order to construct the local period morphism $\rho$, we actually have to choose an appropriate $\bZ$-integral structure for the $\bZ_{(p)}$-lattice $L$. However, once it is constructed, we are allowed to forget about the $\bZ$-integral structure, as the integral models of the relevant Shimura varieties only depend on the $\bZ_{(p)}$-lattice $L$. 
\end{remark}

\subsection{Twisted Derived Torelli Theorem}
\label{sec: derived Torelli}
\begin{definition}
\label{def: liftable}
Let $X, X'$ be K3 surfaces over an algebraically closed field $k$ of characteristic $p > 0$. Let $f : \fh^2(X') \sto \fh^2(X)$ be an isogeny. We say that $f$ is \textit{liftable} if for some finite extension $F$ of $K$ with $V := \cO_F$ and projective schemes $X_V$ and $X'_V$ over $V$ which deform $X$ and $X'$ to $V$, $f$ lifts to an isogeny $f_F : \fh^2(X'_F) \sto \fh^2(X_F)$. If $X, X'$ are non-supersingular, we say that $f$ is \textit{perfectly liftable}, if $X_V$ and $X_V'$ can be chosen to be perfect liftings . 
\end{definition}

For the rest of \S\ref{sec: derived Torelli}, let $k$ be an algebraically closed field of $p \ge 5$. 
\begin{lemma}
\label{lem: liftable}
Let $(X_0, \xi_0), \cdots, (X_m, \xi_m)$ be finitely many non-supersingular polarized K3 surfaces over $k$ and let $f_i : \fh^2(X_i) \stackrel{\sim}{\to} \fh^2(X_{i + 1})$ be a perfectly liftable isogeny which sends $\xi_i$ to $\xi_{i + 1}$ for $i = 0, 1, \cdots, m - 1$. If $f := f_{m - 1} \circ \cdots \circ f_0 : (\fh^2(X_0), \xi_0) \sto (\fh^2(X_m), \xi_m)$ induces an integral isomorphism $\H^2_\cris(X_0/W) \sto \H^2_\cris(X_m/W)$, then $f$ is perfectly liftable to $K$ up to equivalence.  
\end{lemma}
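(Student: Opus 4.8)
The plan is to reduce the composition of isogenies of K3 surfaces to a composition of CSpin-isogenies of Kuga--Satake abelian varieties, where compositions behave well, and then invert this reduction at the last step. First I would pass from each $X_i$ to its Hilbert square $Y_i := X_i^{[2]}$. Using Lemma~\ref{eqn: extend isometry}, each isogeny $f_i$, which sends $\xi_i$ to $\xi_{i+1}$, induces an isometry of primitive cohomology $\PH^2(Y_i) \sto \PH^2(Y_{i+1})$ (integrally in the relevant realizations, and compatibly with the orientation tensor $\bd_2$ after a harmless choice), where the $Y_i$ are polarized by $\zeta_i := m\xi_i - \delta_i$ for a fixed large $m$ making $p \nmid \zeta_i^4$. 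Since all these polarized varieties have the same polarization degree and sit in the same deformation space, Proposition~\ref{prop: compMotive} gives local period morphisms $\rho_i : \Spec \cR \to \shS_{\sfK_p}(L)$ (with a common $L$, determined by $\zeta_i^4$) identifying the formal deformation spaces with complete local rings $\what{\cO}_{s_i}$ at points $s_i := \rho_i(b_i)$. The compatibility of the CSpin structures with $\bpi_*$ and $\bd_*$ then means that each $f_i$ corresponds to a CSpin-isogeny $\psi_i : \shA_{s_i} \to \shA_{s_{i+1}}$ between the Kuga--Satake abelian varieties at these points.

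Next I would compose: $\psi := \psi_{m-1} \circ \cdots \circ \psi_0 : \shA_{s_0} \to \shA_{s_m}$ is again a CSpin-isogeny, because the defining conditions (respecting the $\bZ/2\bZ$-grading, the $\Cl(L)$-action, and sending $\bpi_{\ell}$ to $\bpi_{\ell}$ for all $\ell$ and $\bpi_{\cris}$ to $\bpi_{\cris}$) are manifestly closed under composition. The hypothesis that $f$ induces an \emph{integral} isomorphism $\H^2_\cris(X_0/W) \sto \H^2_\cris(X_m/W)$ translates, via the orthogonal decomposition $\PH^2(Y) = \PH^2(X) \oplus (\text{stuff})$ and the crystalline realization of the tensors $\bL_*$, into the statement that the induced isometry $\bL_{\cris, s_0} \sto \bL_{\cris, s_m}$ is integral, i.e. $\psi$ restricts to an actual isomorphism $\H^1_\cris(\shA_{s_0}/W) \sto \H^1_\cris(\shA_{s_m}/W)$ rather than merely a quasi-isogeny at $p$; equivalently $\psi$ is a prime-to-$p$ quasi-isogeny (its $p$-part is an isomorphism). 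By the deformation theory of CSpin-isogenies (as in the liftability arguments of \cite{Yang}, using that each $f_i$ is \emph{perfectly} liftable so that the corresponding $\psi_i$ lifts to a CSpin-isogeny over $V_i := \cO_{F_i}$ for a finite extension, after enlarging $F$), we may simultaneously lift all the $s_i$ to $F$-points $\tilde{s}_i$ of $\shS_{\sfK}(L)$ and lift each $\psi_i$ to a CSpin-isogeny $\tilde{\psi}_i : \shA_{\tilde{s}_i} \to \shA_{\tilde{s}_{i+1}}$ over $F$; here the key point is that a CSpin-isogeny over the generic fiber is detected by its reduction (Lemma~\ref{lem: CSpin-isog}), so the composed lift $\tilde{\psi} := \tilde{\psi}_{m-1} \circ \cdots \circ \tilde{\psi}_0$ is a CSpin-isogeny over $F$ lifting $\psi$.

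Finally I would descend back to K3 surfaces. The $F$-points $\tilde{s}_i$ lie in the image of the period morphism, so by Proposition~\ref{prop: compMotive}(b) and Lemma~\ref{lem: liftisog} they come from polarized $\mathrm{K3}^{[2]}$-type varieties; since the $\tilde{s}_i$ are deformations of the Hilbert squares $Y_i$ of \emph{K3 surfaces} within $\Spec \cR$, these varieties are themselves Hilbert squares $Y_{i,F} = X_{i,F}^{[2]}$ of polarized K3 surfaces $X_{i,F}/F$ lifting $X_i$ (the deformation space of the Hilbert square is identified with that of the K3, cf.\ \cite{Yang2}). The CSpin-isogeny $\tilde{\psi}$ then corresponds, again via Lemma~\ref{eqn: extend isometry} run in reverse, to an isogeny $f_F : \fh^2(X_{0,F}) \sto \fh^2(X_{m,F})$ of K3 motives lifting $f$ up to equivalence, and because each $X_{i,V_i}$ can be taken to be a perfect lifting (this is where we use that the $f_i$ are perfectly liftable, hence the period points lie on the Noether--Lefschetz loci that are themselves lifted, so $\Pic$ does not jump) we conclude $f$ is perfectly liftable to $K$ up to equivalence. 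The main obstacle, and where the argument needs the most care, is the \emph{simultaneous} lifting step: the individual equivalences need not lift over a common base compatibly with all the intermediate K3 surfaces, and the device for getting around this is precisely to lift the CSpin-isogenies $\psi_i$ (where obstruction theory is controlled, since abelian schemes and their Hodge tensors deform along the period domain) rather than the Fourier--Mukai kernels directly, and then to use the rigidity of CSpin-isogenies under specialization to recognize the composite as the reduction of its lift. A secondary technical point is verifying that "integral on $\H^2_\cris$" for the K3 composition is equivalent to "integral at $p$" for $\tilde\psi$, which follows from the decomposition $\PH^2(Y) = \PH^2(X) \oplus \mathcal{K}\delta$ together with the fact that $\bL_{\cris}$ is cut out inside $\bH_{\cris}^{\otimes(2,2)}$ by the CSpin tensors.
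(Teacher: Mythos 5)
Your overall architecture --- pass to Hilbert squares, translate each $f_i$ into a CSpin-isogeny $\psi_i$ of Kuga--Satake abelian varieties, compose there, and come back --- is the paper's, but the pivotal step is a genuine gap. You propose to ``simultaneously lift all the $s_i$ to $F$-points'' and to lift each $\psi_i$ to $\tilde\psi_i$ over a common $F$, recognizing the composite in characteristic $p$ as the reduction of $\tilde\psi_{m-1}\circ\cdots\circ\tilde\psi_0$. This is exactly what cannot be done directly: the liftings witnessing the perfect liftability of $f_i$ and of $f_{i+1}$ involve two liftings of the intermediate surface $X_{i+1}$ that need not agree, so there is no common base over which all the $\tilde\psi_i$ coexist compatibly --- this is the ``main difficulty'' the Kuga--Satake detour is designed to circumvent, not a step one may assume. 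The paper's argument runs in the opposite direction: each $f_i$ is lifted \emph{individually}, transported to $\bC$, converted into a CSpin-isogeny by Lem.~\ref{lem: liftisog} (pure Hodge theory), and then \emph{specialized back to characteristic $p$} via Lem.~\ref{lem: CSpin-isog}; the resulting $\psi_i$ are composed in characteristic $p$; and only the single composite $\psi$ is lifted, via the Berthelot--Ogus criterion, after compatible liftings of the two endpoints have been built. That last point is where the integrality hypothesis actually enters: one chooses a lift $X_{0,W}$ of $X_0$ carrying all line bundles and uses the integral isomorphism $\H^2_\cris(X_0/W)\sto\H^2_\cris(X_m/W)$ to transport its Hodge filtration and thereby \emph{define} the lift $X_{m,W}$. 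Your reading of the hypothesis as ``$\psi$ is a prime-to-$p$ quasi-isogeny'' does not by itself yield liftability of $\psi$, since lifting a quasi-isogeny of abelian varieties requires compatibility with the Hodge filtrations of the chosen lifts, not integrality at $p$.

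A secondary gap: at the end, a CSpin-isogeny over $\bC$ only hands you a rational Hodge isometry $\H^2(X_{0,K}(\bC),\bQ)\sto\H^2(X_{m,K}(\bC),\bQ)$. To promote this to an isogeny of motives in the sense of Def.~\ref{def:isogeny} one must invoke Huybrechts' theorem (\cite[Thm~0.2]{Huy}), not merely ``run Lem.~\ref{eqn: extend isometry} in reverse''; the resulting $f_\bC$ is then specialized, which is why the conclusion holds only up to equivalence.
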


\begin{proof}
Set $Y_i := X_i^{[2]}$ and let $\delta_i$ be the exceptional divisor on $Y_i$. For some number $N \gg 0$, $\zeta_i := p^N \xi_i - \delta_i$ is a polarization on $Y_i$ for each $i$. The number $\< \zeta_i, \zeta_i\>$ under the Beauville-Bogomolov form on $Y_i$ is an integer $M$ which is independent of $i$. Let $L$ denote a $\bZ_{(p)}$-lattice which is isomorphic to the orthogonal complement of an element $\lambda \in \Lambda^{[2]} \tensor \bZ_{(p)}$ with $\< \lambda, \lambda \> = M$. We choose trivializations $\epsilon_i : \det(L \tensor \bQ_2) \sto \det(\PH^2_\et(Y_i, \bQ_2))$ such that $f_i$ sends $\epsilon_i$ to $\epsilon_{i + 1}$. Let $\rho_i$ denote a local period morphism obtained by applying Prop.~\ref{prop: compMotive} to $(Y_i, \zeta_i)$ and $\epsilon_i$ and let $s_i$ denote the image of the base point under $\rho_i$. Let $\wt{s}_i$ be a lift of $s_i$ to $\wt{\shS}_{\cK_p}(L)$. 

We claim that there exists a CSpin-isogeny $\psi_i : \shA_{\wt{s}_i} \to \shA_{\wt{s}_{i + 1}}$ which induces the same isometries $\bL_{\ell, s_i} \sto \bL_{\ell, s_{i + 1}}$ and $\bL_{\cris, s_i} \sto \bL_{\cris, s_{i + 1}}$ as $f_i$ for each $i = 0, \cdots, m - 1$. Indeed, fix an $i$ and let $X_{i, V}, X_{i + 1, V}$ be perfect liftings of $X_i, X_{i + 1}$ over some finite extension $V$ of $W$ such that $f$ lifts to $f_F : X_{i, F} \sto X_{i + 1, F}$, where $F = V[1/p]$. Let $Y_{i, V}, Y_{i + 1, V}$ be the Hilbert squares of $X_{i, V}, X_{i + 1, V}$. Note that $Y_{i, V}$ and $Y_{i + 1, V}$ carry liftings of $\zeta_i$ and $\zeta_{i + 1}$, so via the local Torelli morphisms $\rho_i, \rho_{i + 1}$, $X_{i, V}$ and $X_{i + 1, V}$ induce $V$-points $s_{i, V}, s_{i+1, V}$ on $\shS_{\sfK}(L)$. Lift these points to $V$-points $\wt{s}_{i, V}, \wt{s}_{i+1, V}$ on $\wt{\shS}_{\cK}(L)$, which is \'etale over $\shS_{\sfK}(L)$. Now choose an isomorphism $\bar{F} \iso \bC$. The isogeny $f_{i, F}(\bC)$ induces a Hodge isometry $\PH^2(X_{i,F}(\bC), \bQ) \sto \PH^2(X_{i+1, F}(\bC), \bQ)$, which canonically extends to a Hodge isometry $\PH^2(Y_{i,F}(\bC), \bQ) \sto \PH^2(Y_{i+1, F}(\bC), \bQ)$ via Lem.~\ref{eqn: extend isometry}. By Prop.~\ref{prop: compMotive}, the latter can be identified with a Hodge isometry $\bL_{B, s_{i, F}(\bC)} \tensor \bQ \sto \bL_{B, s_{i + 1, F}(\bC)}\tensor \bQ$. Note that we have required that $f_i$ sends $\epsilon_i$ to $\epsilon_{i + 1}$. By \ref{lem: liftisog}, we obtain a CSpin-isogeny $\psi_{i, \bC} : \shA_{\wt{s}_{i, F}(\bC)} \sto \shA_{\wt{s}_{i + 1, F}(\bC)}$. By Lem.~\ref{lem: CSpin-isog}, $\psi_{i, \bC}$ specializes to a CSpin-isogeny $\psi_i$, which can be easily checked to have the desired properties. 

By \cite[Cor.~4.2]{LM}, we can find a lifting $X_{0, W}$ of $X_0$ which also lifts all line bundles on $X_0$. We transport the induced Hodge filtration on $\H^2_\cris(X_0/W)$ to $\H^2_\cris(X_m/W)$ using $f$, which induces a lift $X_{m, W}$ of $X_m$ over $W$. It is easy to check that $X_{m, W}$ also carries liftings of all line bundles on $X_m$ using \cite[Prop.~1.12]{Ogus}. Just as in the previous paragraph, after taking Hilbert squares of the liftings, we obtain via the local period morphisms $K$-valued points $s_{0, K}$, $s_{m, K}$, $\wt{s}_{0, K}, \wt{s}_{m, K}$ which lift $s_0, s_m, \wt{s}_0, \wt{s}_m$. It follows from Prop.~\ref{prop: LH fil} that the crystalline realization of $\psi := \psi_{m - 1} \circ \cdots \circ \psi_0$ preserves the Hodge filtrations of $\shA_{s_{0, K}}$ and $\shA_{s_{m, K}}$ via the Berthelot-Ogus comparison isomorphisms. By \cite[Thm~3.15]{BO} $\psi$ lifts to a CSpin-isogeny $\psi_K : \shA_{s_{0, K}} \sto \shA_{s_{m, K}}$. Choose an isomorphism $\bar{K} \cong \bC$. By running the arguments in the preceeding paragraph backwards, we obtain a rational Hodge isometry $\H^2(X_{0, K}(\bC), \bQ) \sto \H^2(X_{m, K}(\bC), \bQ)$, which by Huybrechts' theorem \cite[Thm~0.2]{Huy} is induced by an isogeny $f_\bC$. We get the desired isogeny $f$ by specializing $f_\bC$.
\end{proof}

\begin{proof}[Proof of Theorem \ref{thm: Torelli}.]
The forward direction is immediate (and does not need the restriction on $p$). For the converse, suppose that $f : \fh^2(X') \sto \fh^2(X)$ is polarizable and $\bZ$-integral. It is easy to see that if $X$ is supersingular, then so is $X'$. In this case, the result follows from Ogus' crystalline Torelli theorem \cite[Thm II]{Ogus2} (cf. \cite[Thm 6.5]{Yang}). Therefore, we reduce to the case when $X$ and $X'$ have finite height. We first remark that $f$ maps $\NS(X')$ isomorphically onto $\NS(X)$, so that by the structure of ample cones of K3 surfaces \cite[Prop.~1.10]{Ogus2}, $f(\xi')$ is ample for any ample $\xi$. 
By definition, there exists a sequence of K3 surfaces $X' = X_0, \cdots, X_m = X$ over $k$ and primitive derived isogenies $f_i : \fh^2(X_i) \sto \fh^2(X_{i + 1})$ such that $f = f_{m - 1} \circ \cdots \circ f_0$.

We now show that, there exists a sequence $\delta_i : \fh^2(X_i) \sto \fh^2(X_i)$ given by compositions of reflections in $(-2)$-curves up to a sign and a sequence of ample classs $\xi_i \in \NS(X_i)_\bQ$ such that $ (\delta_{i + 1} \circ f_i)(\xi_i) = \xi_{i + 1}$ for each $i$. We do this by slightly refining the argument of \cite[Lem.~6.2]{Yang}. Set $\delta_0$ to be the identity. Choose any ample class $\zeta_0 \in \NS(X_0)_\bQ$ and $\epsilon_0 > 0$, such that the open ball $B(\zeta_0, \epsilon_0)$ centered at $\zeta_0$ of radius $\epsilon_0$ in $\NS(X_0)_\bR$ lies inside the ample cone. By \cite[Lem.~7.9]{Ogus}, there exists some $\delta_1$, such that $\zeta_1' := \delta_1 \circ f_0 (\zeta_0)$ is big and nef. The image of $B(\zeta_0, \epsilon_0)$ in $\NS(X_1)_\bR$ under $\delta_1 \circ f_0$ is an open neighborhood of $\zeta_1'$ which necessarily intersects the ample cone of $X_1$. Therefore, we may now choose $\zeta_1$ together with $\epsilon_1 > 0$ such that $(\delta_1 \circ f_0)^{-1} B(\zeta_1, \epsilon_1) \subseteq B(\zeta_0, \epsilon_0)$. We iterate this process to obtain a sequence of open balls $B(\zeta_i, \epsilon_i) \subset \NS(X_i)_\bR$ which lie inside the ample cones, and a sequence of $\delta_i$'s such that $(\delta_{i + 1} \circ f_i)^{-1}(B(\zeta_{i + 1}, \epsilon_{i + 1})) \subseteq B(\zeta_{i}, \epsilon_i)$. Now we win by choosing an element $\xi_m \in B(\zeta_m, \epsilon_m)$ and iteratively set $\xi_i := (\delta_{i + 1} \circ f_i)^{-1}(\xi_{i + 1})$. By clearing denominators we may assume that each $\xi_i$ is integral.

Set $\xi = \xi_m$, $\xi' = \xi_0$, $h_i := \delta_{i + 1} \circ f_i$ for each $i < m$ and $f' := h_{m - 1} \circ \cdots \circ h_0 = f$. For each $i$, consider $T(X_i) := \NS(X_i)^\perp \subset \H^2(X_i)$. Clearly $f_i$ and $h_i$ induce the same maps on transcedental lattices $\mathrm{T}(X_i)_\bQ \sto \mathrm{T}(X_{i + 1})_\bQ$. Therefore, $f$ and $f'$ induce the same maps $\mathrm{T}(X')_\bQ \sto \mathrm{T}(X)_\bQ$ but their induced maps $\NS(X') \sto \NS(X)$ may differ by an automorphism of $\NS(X)$ which preserves the ample cone. By Thm~\ref{thm:lifting isogenies}, each $h_i$ is liftable, so that by Lem.~\ref{lem: liftable}, $f' : \fh^2(X') \sto \fh^2(X)$ admits a perfect lifting $f'_K : \fh^2(X'_K) \sto \fh^2(X_K)$. Therefore, $f'$, and hence $f$, lifts to a Hodge isometry $\H^2(X'_K(\bC), \bQ) \sto \H^2(X_K(\bC), \bQ)$ for a chosen isomorphism $\bar{K} \cong \bC$. Using the smooth and proper base change theorem for \'etale cohomology, we see that this rational Hodge isometry is $\bZ[1/p]$-integral. Now we show that see that it is $\bZ$-integral. Indeed, we first note that $f$ induces isomorphism $f_p : \H^2_\et(X'_{\bar{K}}, \bQ_p) \sto \H^2_\et(X_{\bar{K}}, \bQ_p)$ and $f_\cris : \H^2_\cris(X'/W)[1/p] \sto \H^2_\cris(X/W)[1/p]$. We have $f_p \tensor_{\bZ_p} B_\cris = f_\cris \tensor_W B_\cris$ under the $p$-adic comparison isomorphism (see (\ref{eq: p-adic comp}) in the appendix) as it is compatible with cycle class maps, Poincar\'e duality and trace maps (\cite[Cor.~11.6]{IIK}). Let $S$ be Breuil's $S$-ring. Then we have an identification $\H^2_\cris(X/W) \tensor_W S = \H^2_\cris(X/S)$ and a similar one for $X'$. Now, we are given that $f_\cris \tensor_W B_\cris$ sends the $S$-module $\H^2_\cris(X'/S)$ isomorphically onto $\H^2_\cris(X/S)$. By \cite[Thm 5.2]{CaisLiu} (see also Thm~\ref{thm: Cais-Liu} and Rmk~\ref{rem: Tcris} below), $f_p$ sends the $\bZ_p$-lattice $\H^2_\et(X'_{\bar{K}}, \bZ_p)$ isomorphically onto $\H^2_\et(X_{\bar{K}}, \bZ_p)$. Therefore, we have shown that $f$ in fact induces an integral Hodge isometry $\H^2(X'_K(\bC), \bZ) \sto \H^2(X_K(\bC), \bZ)$ which preserves the ample cones. We may now conclude using the global Torelli theorem and \cite[Thm 2]{MM}.
\end{proof}

\section{Isogenies and Hecke Orbits}
\label{sec: Hecke}
We briefly recall the definition of prime-to-$p$ Hecke orbit on the orthogonal Shimura varieties. Let $\Lambda$ be the K3 lattice $\mathbf{U}^{\oplus 3} \oplus \mathbf{E}_8^{\oplus 2}$, $\lambda \in \Lambda$ be a primitive element with $d := \lambda^2$ and $p > 2$ be a prime such that $p \nmid d$. We shall use the same notations for orthogonal and spinor Shimura varieties as in \S\ref{sec: spin} with $L = L_d$ and fix $\sfK_p = G(\bZ_p)$. The only difference is that this time $L_d$ has a $\bZ$-structure, so that the sheaf $\bL_{\bA^p_f}$ also has a $\what{\bZ}^p$-structure. Let $\sfK_0^p$ denote the image of $\CSpin(L_d \tensor \what{\bZ}^p)$ in $G(\bA^p_f)$. More concretely, $\sfK^p_0$ can be described as the maximal subgroup of $\SO(L_d \tensor \what{\bZ}^p)$ which acts trivially on the discriminant group $\mathrm{disc}(L_d \tensor \what{\bZ}^p) = \mathrm{disc}(L_d)$. A more helpful alternative description for us is that $\sfK_0^p$ can be viewed as the stabilizer of the element $\lambda \tensor 1$ of $\SO(\Lambda \tensor \what{\bZ}^p)$, which can naturally be viewed as a subgroup of $G(\bA^p_f)$. 

The limit $\Sh_{\sfK_p}(L_d)$ is equipped with a (right) $G(\bA_f^p)$-action. By the extension property of the canonical integral models, this action extends to $\shS_{\sfK_p}(L_d)$. Recall the complex uniformization of $\Sh_{\sfK_p}(L_d)$ 
$$ \Sh_{\sfK_p}(L_d)(\bC) = G(\bZ_{(p)}) \backslash \Ohm \times G(\bA^p_f), $$
where $\Ohm$ is the period domain parametrizing Hodge structures of K3 type on $L_d$ (\cite[\S3.1, 3.2]{CSpin}, \cite[Def.~3.1]{Yang}). Given a point $(\w, g) \in \Ohm \times G(\bA^p_f)$ and an element $g' \in G(\bA^p_f)$, $g'$ sends the class of $(\w, g)$ in $\Sh_{\sfK_p}(L_d)(\bC)$ to that of $(\w, gg')$. 
Let $k$ be an algebraically closed field of characteristic $0$ or $p$. Let $\Mod_{2d, \sfK_p}$ be the moduli stack over $\bZ_{(p)}$ of oriented quasi-polarized K3 surfaces of degree $2d$ with hyperspecial level structure at $p$ (see \cite[3.3.4]{Yang}, where it is denoted by $\wt{\Mod}_{2d, \sfK^{\mathrm{ad}}_p, \mathbb{Z}_{(p)}}$). By the modular interpretation of $\Mod_{2d, \sfK_p}$, $\Mod_{2d, \sfK_p}(k)$ is in natural bijection with the set of tuples $(X, \xi, \epsilon, \eta)$, where 
\begin{itemize}
    \item $(X, \xi)$ is a quasi-polarized K3 surface of degree $2d$ over $k$,
    \item $\epsilon$ is an isometry $\det(L_d \tensor \bQ_2) \sto \bP^2_\et(X, \bQ_2)$, which naturally extends to an isometry $\epsilon^p : \det(L_d \tensor \bA^p_f) \sto \bP^2_\et(X, \bA^p_f)$,\footnote{For details on how to obtain this extension, see \cite[\S3.3.3]{Yang} or \cite[Cor.~3.3.7]{Yang2}.} 
    \item $\eta$ is an isometry $\Lambda \tensor \what{\bZ}^p \sto \H^2_\et(X, \what{\bZ}^p)$ which sends $\lambda \tensor 1$ to $c_1(\xi)$ and is compatible with the isometry $\epsilon^p$. 
\end{itemize}
Using these explicit descriptions, it is easy to write down the map $\Mod_{2d, \sfK_p}(\bC) \to \Sh_{\sfK_p}(L_d)(\bC)$ explicitly: Let $(X, \xi, \epsilon, \eta)$ be the tuple which corresponds to a point $s \in \Mod_{2d, \sfK_p}(\bC)$. Choose an isomorphism $\alpha : (\Lambda \tensor \bZ_{(p)}) \sto (\H^2(X, \bZ_{(p)}), c_1(\xi))$ which is compatible with $\epsilon$. Then $s$ is sent to the class of $(\w, \eta^{-1} \circ (\alpha \tensor \bA^p_f))$, where $\w$ is the Hodge structure on $L_d$ endowed by $\alpha$. This map is clearly well defined. The integral extension $\Mod_{2d} \to \shS_{\sfK_0}(L_d)$ is constructed and studied in \cite{Keerthi}. The reader can also look at \cite[\S3.3]{Yang} for a quick summary of the properties. 

\begin{theorem}
\label{thm: surj on HO}
Assume $\mathrm{char\,} k = p > 2$. If any point $x \in \shS_{\sfK_p}(L_d)(k)$ lies in the image of $\Mod_{2d, \sfK_p}(k)$, then so does $x \cdot g$ for any $g \in G(\bA^p_f)$.
\end{theorem}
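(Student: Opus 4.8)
The plan is to reduce the statement about Hecke translates on the orthogonal Shimura variety $\shS_{\sfK_p}(L_d)$ to the existence result for derived isogenies, Theorem~\ref{thm: existence} (together with its characteristic $0$ analog, Theorem~\ref{thm: existence, char 0}), by matching up prime-to-$p$ Hecke orbits with derived isogeny classes. Concretely, suppose $x = \rho(s)$ where $s \in \Mod_{2d,\sfK_p}(k)$ corresponds to a tuple $(X,\xi,\epsilon,\eta)$, and let $g \in G(\bA_f^p)$. The point $x\cdot g$ differs from $x$ only in its prime-to-$p$ level structure: it still comes from the same period point on $\Ohm$ (over a field this is phrased via the $G(\bA_f^p)$-action on the pro-\'etale level structures), but with $\eta$ replaced by $\eta\circ g$. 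So what must be produced is a quasi-polarized K3 surface $(X',\xi')$ over $k$ of degree $2d$ together with level data, whose image under $\rho$ is $x\cdot g$. The natural candidate for $X'$ is the target of an isogeny $f\colon X \isog X'$ whose prime-to-$p$ cohomological action realizes $g$ and whose $p$-adic action is the identity.

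First I would set up the lattice-theoretic translation. The Hecke translate $x\cdot g$ corresponds to the new $\widehat{\bZ}^p$-lattice $\Lambda'^p := g^{-1}(\Lambda\otimes\widehat{\bZ}^p) \subset \Lambda\otimes\bA_f^p$, transported via $\eta$ into $\H^2_\et(X,\bA_f^p)$; since $g \in G(\bA_f^p) = \SO(L_d\otimes\bA_f^p)$ fixes $\lambda$, this new lattice still contains $c_1(\xi)$ as a primitive element and has the same discriminant form, hence is abstractly isometric to $\Lambda\otimes\widehat{\bZ}^p$ compatibly with the polarization vector. Now apply Theorem~\ref{thm: existence} with $H^p = \Lambda\otimes\widehat{\bZ}^p$ embedded via the lattice just described and with $H_p = \H^2_\cris(X/W)$ itself (so $\Theta_p$ is the identity, and the slope condition is automatic); this produces a K3 surface $X'$ and a derived isogeny $f\colon X\isog X'$ with $f^*(\H^2(X')) = \Lambda'^p \times \H^2_\cris(X/W)$. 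Because $f$ acts as the identity on crystalline cohomology, it maps $\NS(X)_\bQ$ isometrically onto $\NS(X')_\bQ$ and one can arrange (after composing with reflections in $(-2)$-curves, or simply by tracking the ample cone as in the proof of Theorem~\ref{thm: Torelli}) that $f^{-1}(\xi)$ is an integral ample class $\xi'$ on $X'$ of degree $2d$ with $f^*(\xi') = \xi$. Transporting $\epsilon$ and $\eta$ through $f^*$ then equips $(X',\xi')$ with level data $(\epsilon',\eta')$ compatible with $g$, giving a point $s' \in \Mod_{2d,\sfK_p}(k)$.

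The remaining step is to verify that $\rho(s') = x\cdot g$, i.e.\ that the period point of $s'$ is the asserted Hecke translate. In characteristic $0$ this is immediate from the explicit description of the complex-analytic period map recalled just before the theorem: $f$ being an isogeny which is the identity on transcendental cohomology over $W$ (resp.\ inducing the prescribed identification on Betti cohomology after a choice of $\bar K \cong \bC$) means the Hodge structure, i.e.\ the point of $\Ohm$, is unchanged, while the $\widehat{\bZ}^p$-level structure is precisely translated by $g$. To descend this identity from characteristic $0$ to characteristic $p$ I would lift $(X,\xi)$ and $(X',\xi')$ to perfect liftings over a finite extension $V$ of $W$ — possible since in our situation $X, X'$ have finite height or we may invoke the constructions of Section~\ref{sec:lifting isogenies} — lift $f$ to an isogeny $f_F$ over $F = V[1/p]$ using Theorem~\ref{thm:lifting isogenies} together with the fact that $f$ is $\bZ$-integral at $p$ (so Lemma~\ref{lem: liftable}-type arguments apply), and then compare period points on the generic fiber, where the characteristic $0$ case applies, before specializing back using the extension property of $\shS_{\sfK_p}(L_d)$ and that $\rho$ commutes with reduction. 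Alternatively, and more cleanly, one can argue directly on $k$-points: the isogeny $f$ induces, via the Kuga--Satake functor on Hilbert squares from Section~\ref{sec: period}, a CSpin-isogeny between the Kuga--Satake abelian varieties at $s$ and $s'$ which is the identity on the crystalline realization of $\bL$ and translates the prime-to-$p$ realization by $g$; since a point of $\shS_{\sfK_p}(L_d)(k)$ is determined by its Kuga--Satake abelian variety together with these CSpin-structures, this forces $s' = x\cdot g$.

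The main obstacle I anticipate is the last comparison: matching the prime-to-$p$ level structure coming out of the lattice bookkeeping with the Hecke action $g$ in a way that is literally compatible with the orientation tensor $\bd_2$ and the full CSpin-structure, not just up to $\SO$. This is where one must be careful that $g \in G(\bA_f^p)$ (rather than the full orthogonal group) and that the reflections in $(-2)$-curves introduced to fix the ample cone do not spoil the orientation — one should track $\bd_2$ throughout, as in Proposition~\ref{prop: compMotive} and Lemma~\ref{lem: liftisog}, possibly absorbing a sign by composing with $-\mathrm{id}$ where needed. A secondary technical point is ensuring the degree of $\xi'$ is exactly $2d$ and that $X'$ genuinely admits a prime-to-$p$ polarization so that it lands in $\Mod_{2d,\sfK_p}$ and not merely in some isogenous moduli space; this is handled by the primitivity and discriminant invariance noted above, but deserves an explicit check.
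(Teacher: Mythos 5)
Your proposal follows essentially the same route as the paper: produce $X'$ via Theorem~\ref{thm: existence} with $H_p=\H^2_\cris(X/W)$ and the prime-to-$p$ lattice prescribed by $g$, adjust by reflections in $(-2)$-curves to get a quasi-polarization of degree $2d$, transport the level data, and verify $\rho(s')=x\cdot g$ by lifting everything to characteristic $0$ and comparing period points via the complex uniformization and separatedness. The only point you gloss over is the supersingular case, where perfect liftings of the isogeny are not available from Theorem~\ref{thm:lifting isogenies}; the paper instead transports the Hodge filtration through $f$ and invokes the local Torelli theorem together with Theorem~\ref{thm: existence, char 0} to produce the characteristic-$0$ isogeny there.
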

\begin{proof}
Let $s \in \Mod_{2d, \sfK_p}(k)$ be a point such that $x = \rho_\sfK(s)$. Let $(X, \xi, \eta, \epsilon)$ be the tuple which corresponds to $s$. We view $G(\bA^p_f)$ as the subgroup of $\SO(\Lambda \tensor \bA^p_f)$ which fixes $\lambda \tensor 1$. 

By Thm~\ref{thm: existence} and Rmk~\ref{rmk: relax assumption}, there exists a K3 surface $X'$ together with a derived isogeny $f : \fh^2(X') \to \fh^2(X)$ such that $f_*(\H^2(X')) = \H^2_\cris(X/W) \times \mathrm{im}(g) \subset \H^2(X)_\bQ$. Moreover, $f$ is a composition of primitive derived isogenies which come from twisted derived equivalences involving Brauer classes of prime-to-$p$ order. Since $f_*(\NS(X')) = f_*(\H^2(X')) \cap \NS(X)_\bQ$, $\xi \in f_*(\NS(X'))$ so that $\NS(X')$ contains a primitive vector of degree $2d$. By \cite[Lem.~7.3]{Ogus}, we can find a derived auto-isogeny $\delta$ on $X$ which is given by reflections in $(-2)$-curves up to a sign such that $\delta \circ f$ sends $\xi$ to a quasi-polarization $\xi'$. Now we use $\delta \circ f$ to transport $(\epsilon, \eta)$ to similar structures $(\epsilon', \eta')$ on $(X', \xi')$ so that we obtain a point $s' \in \Mod_{2d, \sfK^p}(k)$. We claim that $\rho(s') = x \cdot g$. Although $\shS_{\sfK_p}(L_d)$ lacks a direct modular interpretation, we can do this by a lifting argument.

We claim that there exist liftings $(X_W, \xi_W), (X'_W, \xi'_W)$ of $(X, \xi)$ and $(X', \xi')$ together with an isogeny $(\fh^2(X'_K), \xi'_K) \to (\fh^2(X_K), \xi_K)$ whose \'etale realization agrees with $\delta \circ f$ via the smooth and proper base change theorem. If $X, X'$ are of finite height, by Thm~\ref{thm:lifting isogenies}, $\delta \circ f$ can be lifted to an isogeny on the nose. In the supersingular case, we first choose a lifting $(X_W, \xi_W)$. Then $X_W$ induces a Hodge filtration on $\H^2_\cris(X/W)$, which can be transported to a filtration on $\H^2_\cris(X'/W)$ lifting the one on $\H^2_\dR(X'/k)$. By the local Torelli theorem, this defines a lifting $X_W'$ of $X'$. One easily checks by \cite[Prop.~1.12]{Ogus} that $\xi'$ lifts to $X_W'$. Now we apply \cite[Lem.~4.3.5]{Yang2} and Thm~\ref{thm: existence, char 0}.

Liftings as above induce $W$-points $s_W$ and $s_W'$ on $\Mod_{2d, \sfK_p}$ which lift $s$ and $s'$.
Let $x_W := \rho(s_W), x'_W := \rho(s'_W)$. Using the $G(\bA^p_f)$-action, the lifting $x_W$ of $x$ induces a lifting $x''_W$ of $x'' := x \cdot g$. Using the complex uniformization one quickly checks that $x''_W \tensor \bC = x_W' \tensor \bC$ for any embedding $K \subset \bC$. Since $\shS_{\sfK_p}(L_d)$ is a limit of separated schemes, we conclude that $x' = x''$ as desired.
\end{proof}


Choose a small enough compact open $\sfK^p \subseteq \sfK_0^p$ such that for $\sfK := \sfK_p \sfK^p$, $\shS_\sfK(L_d)$ is a scheme and denote the period morphism $\Mod_{2d, \sfK} \to \shS_\sfK(L_d)$ by $\rho_\sfK$. For any $k$-point $x \in \shS_{\sfK}(L_d)$, the image of the $G(\bA^p_f)$-orbit of a lift $\wt{x} \in \shS_{\sfK_p}(L_d)(k)$ under the natural projection $\shS_{\sfK_p}(L_d) \to \shS_{\sfK}(L_d)$ is what we call the prime-to-$p$ Hecke orbit of $x$.


Let $\sX$ denote the universal family over $\Mod_{2d, \sfK}$. The the mod $p$ fiber $\Mod_{2d, \sfK, \bF_p}$ (resp. $\shS_\sfK(L_d)_{\bF_p}$) of moduli space $\Mod_{2d, \sfK}$ admits a stratification $\Mod_{2d, \sfK, \bF_p} = \Mod^{1} \supseteq \Mod^{2} \supseteq \cdots \supseteq \Mod^{20}$ (resp. $\shS_{\sfK}(L_d)_{\bF_p} = \shS^{1} \supseteq \shS^{2} \supseteq \cdots \supseteq \shS^{20}$) such that for $1 \le i \le 10$, a geometric point $s$ lies in $\Mod^i$ (resp. $\shS^i$) if and only if $\sX_s$ (resp. $\bL_{\cris, s}(-1)$) has height $\ge i$, and for $11 \le i \le 20$, a geometric point $s$ lies in $\Mod^i$ (resp. $\shS^i$) if and only if $\sX_s$ (resp. $\bL_{\cris, s}(-1)$) is supersingular and has Artin invariant $\le 21 - i$. Set $\mathring{\Mod}^i := \Mod^i - \Mod^{i - 1}$ and $\mathring{\shS}^i := \shS^i - \shS^{i - 1}$. Heights and Artin invariants are rather classical invariants. For a more modern interpretation in terms of Newton and Ekedahl-Oort (E-O) strata for $\shS_\sfK(L_d)_{\bF_p}$, see for example \cite[\S8.4]{Xu}. It follows from \cite[Cor.~5.14]{Keerthi} that the period morphism respects these stratifications in the sense that $\Mod^i = \shS^i \times_{\shS_\sfK(L_d)} \Mod_{2d, \sfK}$. We remark that the Zariski closure of the locally closed subscheme $\mathring{\shS}^i$ is $\shS^i$. By \cite[Cor.~7.2.2, Cor.~7.3.4]{XuZhang}, if $1 \le i \le 10$, $\shS^i$ is a central leaf. The locus $\shS^{20}$ is the superspecial locus (the unique closed E-O stratum), and is also a central leaf (see \cite[Rmk 3.2.2, Ex. 6.2.4]{XuZhang}).

In our case, the Hecke orbit conjecture predicts the following:

\begin{conjecture}
\label{conj: HO for i}
For $1 \le i \le 10$ or $i = 20$, the prime-to-$p$ Hecke orbit of every $s \in \shS^i(\bar{\bF}_p)$ is Zariski dense in $\shS^i$. 
\end{conjecture}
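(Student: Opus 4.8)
The plan is to read this as the Hecke orbit conjecture for the central leaves $\shS^i$ in the special fibre of the GSpin Shimura variety $\shS_\sfK(L_d)$, to follow the Chai--Oort strategy, and to settle directly the two extreme strata --- the ordinary stratum $\shS^1$ and the superspecial stratum $\shS^{20}$ --- which are the only ones needed for Thm~\ref{thm: intro GR}. In each case I would begin with the standard continuity step: letting $Z\subseteq\shS^i$ be the Zariski closure of the prime-to-$p$ Hecke orbit of $s$, one shows $Z$ is stable under every prime-to-$p$ Hecke correspondence and --- after shrinking to a dense open and using that this action extends to an action of the prime-to-$p$ part of the local stabilizer group --- that $Z$ is a union of central leaves of dimension $\dim\shS^i$. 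Since $\shS^i$ is irreducible (its dense open stratum $\mathring{\shS}^i$ is, by \cite[\S5]{Keerthi} and \cite{XuZhang}), it then suffices to show $Z$ is positive-dimensional when $\dim\shS^i>0$ and to control the one residual ``continuous'' direction.

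\textbf{The case $i=20$ (superspecial).} Here $\shS^{20}$ is $0$-dimensional: after forgetting level structure it parametrizes the unique supersingular K3 crystal of Artin invariant $1$, so $\shS^{20}(\bar{\bF}_p)$ is a finite set (it is moreover stable under prime-to-$p$ Hecke, since a prime-to-$p$ isogeny preserves the $p$-divisible group exactly, hence the E--O/Artin invariant). The claim thus reduces to transitivity of the prime-to-$p$ Hecke correspondences on this finite set. I would prove this (in the cases needed) in one of two ways. First, at infinite level: lift a superspecial point to $\wt{\shS}_{\cK_p}(L)$, note its Kuga--Satake abelian variety is superspecial so that its prime-to-$p$ Tate module together with the CSpin structure is rigid up to the obvious adelic ambiguity, and conclude by strong approximation for the relevant spin group, exactly as in Lem.~\ref{lem:strong approx}. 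Alternatively, translate to K3 surfaces: represent two superspecial points by oriented degree-$2d$ polarized K3 surfaces, both supersingular of Artin invariant $1$; by Thm~\ref{thm:supersingular K3 surfaces are derived isogenous} --- and since no change of Artin invariant is involved, by Rmk~\ref{rmk: relax assumption} one may use only Brauer classes of prime-to-$p$ order --- there is a derived isogeny between them, and the argument proving Thm~\ref{thm: surj on HO} turns it into a prime-to-$p$ Hecke relation.

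\textbf{The case $i=1$ (ordinary).} Now $\shS^1$ is the entire special fibre and $\mathring{\shS}^1$ its dense ordinary locus, so the assertion is density in the whole special fibre. This is precisely the content of \cite[Thm~1.4]{maulik2020picard} for the GSpin Shimura varieties at hand, whose proof proceeds through the arithmetic of special (Noether--Lefschetz) divisors and Picard-rank jumping on the Kuga--Satake family rather than through Serre--Tate coordinates; I would simply invoke it. (In principle one could instead run Chai's original argument: irreducibility of the Igusa tower over $\mathring{\shS}^1$ --- a consequence of irreducibility of $\mathring{\shS}^1$ together with bigness of the $\ell$-adic monodromy --- combined with the Serre--Tate formal-torus coordinates and the action of the diagonal torus in the Hecke group forces the Hecke-orbit closure to be all of the ordinary locus.)

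\textbf{The intermediate strata, and the main obstacle.} For $2\le i\le 10$, where $\shS^i$ is a positive-dimensional leaf, the remaining ingredient is the almost-product structure of Chai--Oort: \'etale-locally $\shS^i$ is a product of an isogeny leaf and a ``$p$-divisible'' continuous factor. One then needs (i) density of the prime-to-$p$ Hecke orbit along the isogeny leaf, coming from surjectivity of the relevant Hecke action on Dieudonn\'e lattices, i.e.\ from the geometry of the affine Deligne--Lusztig varieties / Rapoport--Zink spaces attached to the non-basic Newton strata of GSpin; and (ii) density along the continuous factor, via a Hilbert-modular-type trick or a torus-action argument in the spirit of Chai. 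Step (i) is the genuinely hard point --- it is the kind of input only recently established for general Shimura varieties of abelian type --- and it is not needed for the results here, which invoke the conjecture only for $i=1$ (via \cite{maulik2020picard}) and $i=20$ (via the argument above); this is why Thm~\ref{thm: intro GR} is unconditional only in the ordinary and supersingular cases.
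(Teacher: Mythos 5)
You are right that the statement is a conjecture and that only the cases $i=1$ and $i=20$ are within reach: citing \cite{maulik2020picard} for $i=1$ and reducing $i=20$ to transitivity of the prime-to-$p$ Hecke correspondences on the finite set $\shS^{20}(\bar{\bF}_p)$ both match the paper. But your treatment of $i=20$ has a genuine gap precisely where the paper's proof (Thm~\ref{thm: Hecke orbit for superspecial}) does its real work. Producing a prime-to-$p$ CSpin-isogeny $\shA_t\to\shA_{t'}$ requires, in particular, an \emph{integral} isomorphism $\bL_{\cris,x}\xrightarrow{\sim}\bL_{\cris,x'}$ of supersingular K3 crystals compatible with a $\bZ_{(p)}$-isometry $\LEnd(\shA_t)\otimes\bZ_{(p)}\to\LEnd(\shA_{t'})\otimes\bZ_{(p)}$ preserving the orientation class. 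Strong approximation away from $p$ says nothing about this: by Ogus's theory the crystal determines an isotropic line in the discriminant of the Tate module tensored with $\bar{\bF}_p$, and one must exhibit an isometry that matches these lines (and has the right determinant) while fixing a degree-$2d$ vector. This is the ``Claim'' in the paper's proof, and it is settled only by the explicit lattice constructions of Lem.~\ref{lem: superspecial trick} and Lem.~\ref{lem: glue lattice}, which use the Kummer-surface description of the Artin-invariant-$1$ supersingular K3 to manufacture isometries of $N_1$ fixing a polarization class, interchanging the two isotropic lines, and realizing both determinants. Your phrase ``rigid up to the obvious adelic ambiguity'' papers over exactly this point.

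Your alternative route through K3 surfaces fails for two independent reasons. First, it is circular: representing an arbitrary point of $\shS^{20}$ by a quasi-polarized K3 surface presupposes $\shS^{20}\subset\mathrm{im}(\rho_\sfK)$, which in the paper is \emph{deduced from} the superspecial Hecke-orbit theorem (Thm~\ref{thm: HO implies surj precise}); what is known a priori is only that $\Mod^{20}\neq\emptyset$, i.e.\ that some point of $\shS^{20}$ is in the image. Second, the Brauer group of a supersingular K3 surface is $p$-torsion ($\Br(X)\cong\mathbf{G}_a(k)$), so the derived isogenies of Thm~\ref{thm:supersingular K3 surfaces are derived isogenous} necessarily pass through nontrivial $p$-torsion Brauer classes; the associated twisted Mukai crystals change Artin invariant, and the resulting isogeny is not integral at $p$, hence not a prime-to-$p$ Hecke relation. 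Rmk~\ref{rmk: relax assumption} does not apply here, since there are no nontrivial prime-to-$p$ Brauer classes to twist by. Your framing of the intermediate strata $2\le i\le 10$ as the open, Chai--Oort-type problem not needed for the applications is consistent with the paper.
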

We remark that once the above conjecture is known for $\bar{\bF}_p$, it is automatically true for any algebraically closed field over $\bF_p$ by a specialization argument. Conj.~\ref{conj: HO for i} has been proved by Maulik--Shankar--Tang \cite[Thm~1.4]{maulik2020picard} when $i = 1$ and $p \ge 5$. We prove another special case below (Thm \ref{thm: Hecke orbit for superspecial}).

We use $N_\sigma$ to denote the supersingular lattice of Artin invariant $\sigma$. We restrict to considering $p > 2$ case, when these lattices are characterized by \cite[\S17, Prop.~2.20]{K3book}. The original reference \cite{RS2} also treated the $p = 2$ case. 

\begin{lemma}
\label{lem: superspecial trick}
For each $d > 0$ and $i = 0, 1$, there exist a primitive element $\xi \in N_1$ with $\xi^2 = 2d$, an $\alpha_i \in \O(N_1)$ such that $\alpha_i$ fixes $\xi$, interchanges the two isotropic lines in $(N_1^\vee / N_1) \tensor \bF_{p^2}$, and $\det(\alpha_i) = (-1)^i$. 
\end{lemma}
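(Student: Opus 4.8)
The plan is to use Ogus's classification of supersingular K3 lattices over $\mathbf{Z}_{(p)}$ (for $p>2$) together with the fact that $N_1$ contains a hyperbolic plane $\mathbf{U}$ as an orthogonal direct summand, at least $p$-locally, in order to build the required isometry by hand. First I would observe that $N_1 \cong N_1'(-1)$ where $N_1'$ is the rank-$22$ lattice whose discriminant form is $(\mathbf{F}_p^2, \text{hyperbolic})$; in particular $N_1$ is even, of signature $(1,21)$, with discriminant group $(N_1^\vee/N_1) \cong \mathbf{F}_p^2$ carrying a nondegenerate quadratic form, and by \cite[\S17, Prop.~2.20]{K3book} these properties characterize $N_1$. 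The discriminant group contains exactly two isotropic lines $\ell_1, \ell_2 \subset (N_1^\vee/N_1)\otimes\mathbf{F}_{p^2}$; an element $\alpha \in \O(N_1)$ interchanges them if and only if its induced action on $\mathrm{disc}(N_1)\otimes\mathbf{F}_{p^2}$ is the nontrivial element of $\O^+$ vs.\ $\O^-$, equivalently has spinor-type invariant distinguishing the two. The key point is that the map $\O(N_1) \to \O(\mathrm{disc}(N_1))$ is surjective, which follows from Nikulin's theory since $N_1$ is indefinite of large rank; so both the ``spinor norm'' classes and the determinant can be prescribed independently once we also have enough room, which the hyperbolic summand provides.

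The main steps, in order, are as follows. (1) Fix a primitive $\xi \in N_1$ with $\xi^2 = 2d$: since $N_1$ is indefinite and even with a $\mathbf{U}$-summand $p$-locally, the form represents every even integer, and one checks primitivity can be arranged by \cite[I Lem.~4.2]{Milnor}-type arguments (essentially: $N_1$ contains $\mathbf{U}$ over $\mathbf{Z}_q$ for all $q\ne p$ and a suitable form over $\mathbf{Z}_p$, and by strong approximation / Eichler's theorem the genus has one class, so a global primitive vector of the right square exists). (2) Decompose $N_1 = \mathbf{Z}\xi \oplus \xi^\perp$ up to finite index, or better, split off a hyperbolic plane orthogonal to $\xi$: since $N_1$ has signature $(1,21)$ and $\xi$ is positive, $\xi^\perp$ is negative definite of rank $21$, and after passing to $\mathbf{Z}_{(p)}$ we may split $N_1\otimes\mathbf{Z}_{(p)} \cong \mathbf{U}\otimes\mathbf{Z}_{(p)} \oplus M$ with $\xi \in \mathbf{U}\otimes\mathbf{Z}_{(p)}$ — wait, $\xi^2 = 2d \ne 0$, so instead I would arrange $\xi \in M^\perp$ for a complementary $\mathbf{U}$; concretely, use that $\mathbf{U}$ splits off $N_1$ and its orthogonal complement still represents $2d$ primitively, so choose $\xi$ inside that complement, freeing up a full $\mathbf{U}\otimes\mathbf{Z}_{(p)}$ orthogonal to $\xi$. (3) On this hyperbolic plane $\mathbf{U} = \mathbf{Z} e \oplus \mathbf{Z} f$ ($e^2=f^2=0$, $e\cdot f = 1$) consider the isometry $\tau$ swapping $e \leftrightarrow f$: it has determinant $-1$, fixes $\xi$ (which lies in the complement), and one computes its action on the discriminant group. (4) To also get determinant $+1$ while still interchanging the isotropic lines, compose $\tau$ with an isometry of determinant $-1$ acting trivially on $\mathrm{disc}(N_1)$ — for instance $-1$ on another rank-preserving piece or a product of two reflections in vectors of $\xi^\perp$ with units squares mod $p$ — which exists because the negative definite part $\xi^\perp$ has rank $\ge 2$. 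This handles $i = 0,1$ simultaneously: take $\alpha_0 = \tau'$ and $\alpha_1 = \tau$, or products thereof, adjusting the determinant by such discriminant-trivial isometries.

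The step I expect to be the main obstacle is (3)–(4): verifying that the candidate isometry $\tau$ (swap of the hyperbolic basis, or a reflection) really does interchange the two isotropic lines in $(N_1^\vee/N_1)\otimes\mathbf{F}_{p^2}$ rather than preserving each. This requires tracking explicitly how the discriminant quadratic form of $N_1$ — which is the ``hyperbolic'' $\mathbf{F}_p$-plane $q(x,y) = xy$ (up to scaling), with isotropic lines $\mathbf{F}_p\cdot(1,0)$ and $\mathbf{F}_p\cdot(0,1)$ over $\mathbf{F}_{p^2}$ or already over $\mathbf{F}_p$ — sits inside $N_1^\vee/N_1$, and how a global isometry acts on it. The cleanest approach is probably to invoke Ogus's description \cite[\S7]{Ogus} of $\O(N_\sigma)$ and its image in $\O(\mathrm{disc})$: since that image is all of $\O(\mathrm{disc}(N_1))$ (Nikulin, using indefiniteness and the large rank of $N_1$), and $\O(\mathrm{disc}(N_1))$ certainly contains an element swapping the two isotropic lines (the ``hyperbolic swap''), lift it to $\alpha \in \O(N_1)$; then realize that the stabilizer of $\xi$ still surjects onto a big enough subgroup of $\O(\mathrm{disc})$ because $\xi^\perp$ alone has $\mathbf{U}$ as a $p$-local summand and hence its isometry group already surjects onto $\O(\mathrm{disc}(\xi^\perp)) = \O(\mathrm{disc}(N_1))$; finally, within $\O(\xi^\perp)$ one can independently achieve determinant $\pm 1$ by pre/post-composing with a reflection in $\xi^\perp$ that acts trivially on the discriminant (a reflection in a vector of square a $p$-adic unit, which exists in the $\mathbf{U}$-summand). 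Thus $\alpha_0$ and $\alpha_1$ are obtained as $(\text{lift of swap})$ possibly composed with one such discriminant-trivial reflection. The routine calculations I would not grind through here are the explicit verification of Nikulin-type surjectivity for these specific lattices and the bookkeeping of determinants, but all of this is standard lattice theory given $p > 2$ and the characterization in \cite[\S17, Prop.~2.20]{K3book}.
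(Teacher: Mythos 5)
Your strategy is purely lattice-theoretic, whereas the paper's proof is geometric, and the lattice-theoretic route as you describe it has a genuine gap at its key step. First, a local problem: the swap $e\leftrightarrow f$ of a hyperbolic basis of a unimodular summand $\mathbf{U}\subset N_1$ acts \emph{trivially} on $N_1^\vee/N_1$, since the discriminant group is supported entirely on the orthogonal complement of any unimodular summand; so it cannot interchange the two isotropic lines. (Interchanging them is equivalent to acting with determinant $-1$ on the anisotropic plane $N_1^\vee/N_1\cong\mathbf{F}_p^2$, whose two isotropic lines only appear over $\mathbf{F}_{p^2}$ and are Galois-conjugate.) More seriously, your fallback --- lift a line-swapping element of $\O(\mathrm{disc}(N_1))$ into the stabilizer of $\xi$ via Nikulin-type surjectivity applied to $\xi^\perp$ --- fails because $\xi^\perp$ has signature $(0,21)$: it is negative definite, and Nikulin's surjectivity of $\O(L)\to\O(q_L)$ requires $L$ to be indefinite. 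A $p$-local hyperbolic summand of $\xi^\perp\otimes\mathbf{Z}_p$ only gives surjectivity for the local orthogonal group and says nothing about the finite global group $\O(\xi^\perp)$. The paper flags exactly this obstruction in the remark following Thm~\ref{thm: Hecke orbit for superspecial}, which is why it does not argue abstractly.

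The paper's actual proof circumvents the problem geometrically: it realizes $N_1$ as $\NS(X_{\bar{\mathbf{F}}_p})$ for the superspecial Kummer surface $X$ attached to $A=E\times E$ with $E$ a supersingular elliptic curve over $\mathbf{F}_p$. A topological generator $\varphi$ of $\Gal_{\mathbf{F}_p}$ then acts on $N_1$; it interchanges the two isotropic lines (the standard fact that they are conjugate over $\mathbf{F}_{p^2}$ and not individually defined over $\mathbf{F}_p$), and it fixes $\xi=a\mu+b\delta_1$, where $\mu\in\NS(A)(2)$ is Frobenius-invariant with $\mu^2>0$, $\delta_1$ is one of the sixteen exceptional $(-2)$-curves, and $a,b$ are coprime integers with $\xi^2=2d$. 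The determinant is then adjusted by composing with the reflection $s_{\delta_2}$ in another exceptional curve, which fixes $\xi$ and acts trivially on the discriminant group. To salvage a purely lattice-theoretic argument you would have to exhibit by hand an isometry of the negative definite lattice $\xi^\perp$ inducing a reflection on the discriminant; nothing in your proposal produces one.
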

\begin{proof}
The supersingular K3 surface with Artin invariant $1$, which is unique up to isomorphism, is given by the desingularization of $A / A[2]$, where $A = E \times E$ for a supersingular elliptic curve $E$ (\cite[Cor.~7.14]{Ogus}). Since $E$ admits a model over $\bF_p$, so does $X$. Let $\varphi$ be a topological generator of $\Gal_{\bF_p}$. We fix an isomorphism between $N_1$ and $\NS(X_{\bar{\bF}_p})$, so that $N$ is equipped with a $\Gal_{\bF_p}$-action such that $\NS(X)$ is identified with the $\varphi$-invariants $N^\varphi$. 

Let $\NS(A)(2)$ denote the lattice $\NS(A)$ but with the quadratic form multiplied by a factor of $2$. As a result of the Kummer construction, there exist 16 $(-2)$-curves $\delta_1, \cdots, \delta_{16}$ on $X$ and an isometric embedding 
$$ \NS(A)(2) \oplus (\bigoplus_{i = 1}^{16} \bZ \delta_i) \into \NS(X). $$

Let $\mu \in \NS(A)(2)$ be a primitive element with $\mu^2 > 0$. For some coprime numbers $a, b$, $(a \mu + b \delta_1)^2 = 2d$. The generator $\varphi$ fixes $\xi := a \mu + b \delta_1$ and interchanges the isotropic lines in $(N_1^\vee / N_1) \tensor \bF_{p^2}$ (cf. the paragraph below \cite[Ex.~4.20]{Liedtke2}). 

Let $s_{\delta_2}$ be the reflection in $\delta_2$. Note that $s_{\delta_2}$ fixes $\mu$ and $\delta_1$, and hence $\xi$. Moreover, it is not hard to check that $s_{\delta_2}$ acts trivially on $N^\vee/N$. Therefore, we can simply set $\alpha_0, \alpha_1$ to be $\varphi$ and $s_{\delta_2} \circ \varphi$ up to permutation.
\end{proof}

\begin{lemma}
$\mathring{\Mod}^i \neq \emptyset$ for all $i$.  
\end{lemma}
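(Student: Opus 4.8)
The claim is that every stratum $\mathring{\Mod}^i$ of the mod $p$ moduli space of polarized K3 surfaces of degree $2d$ is nonempty, for $1\leq i\leq 20$. The plan is to reduce this to known nonemptiness results for height and Artin invariant strata of polarized K3 surfaces together with the already-established period morphism and isogeny theory. For the finite-height range $1\leq i\leq 10$, one can argue as follows. First, produce a K3 surface $Y$ over $\bar{\bF}_p$ of height exactly $i$ (for $1\leq i\leq 10$) using known existence results: for $i\leq 9$ one has Ito's theorem \cite[Thm 6.4]{Ito-LFunction} (already invoked in Lem.~\ref{lem:lattices in the Tate module}, which requires $p\geq 5$; alternatively the cited existence results of Lieblich--Maulik or other sources handle this), and the height-$10$ and supersingular cases are classical. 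Such a $Y$ may have no polarization of degree $2d$. One then uses Thm~\ref{thm: existence}: choosing an isometric embedding $\iota$ of $\Lambda\otimes\widehat{\bZ}^p\times(\Lambda\otimes W$ with an appropriate K3 crystal structure of height $i)$ into $\H^2(Y)_\bQ$ which is the identity on the crystalline side (so that the slope-$<1$ condition is automatically satisfied, cf.\ Rmk~\ref{rmk: relax assumption}), and which is arranged so that $\mathrm{im}(\iota)$ meets $\NS(Y)_\bQ$ in a sublattice containing a primitive class of degree $2d$, Thm~\ref{thm: existence} produces a derived isogeny $f:Y\isog X'$ with $f_*(\H^2(X'))=\mathrm{im}(\iota)$. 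Since $f$ is a derived isogeny, it does not change the height of the transcendental (slope $<1$) part, so $X'$ again has height $i$; and by construction $\NS(X')=f_*(\H^2(X'))\cap\NS(Y)_\bQ$ contains a primitive class $\xi'$ of degree $2d$. Finally, using reflections in $(-2)$-curves (as in the proof of Thm~\ref{thm: surj on HO}, via \cite[Lem.~7.3]{Ogus}) one moves $\xi'$ into the ample cone, obtaining a genuine polarized K3 surface $(X'',\xi'')$ of degree $2d$ and height $i$. Adding the evident orientation and level data produces a point of $\mathring{\Mod}^i(\bar{\bF}_p)$.

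For the supersingular range $11\leq i\leq 20$, one argues similarly but starts from the supersingular K3 surface of prescribed Artin invariant $\sigma_0=21-i$. Ogus's results on supersingular K3 crystals \cite{Ogus} and the crystalline Torelli theorem \cite{Ogus2} guarantee the existence of a supersingular K3 surface $Y$ with Artin invariant $\sigma_0$ for each $1\leq\sigma_0\leq 10$; one wants in addition that $\NS(Y)$ admits a primitive vector of degree $2d$. This can be arranged directly by a lattice argument: the supersingular lattice $N_{\sigma_0}$ of Artin invariant $\sigma_0$ represents $2d$ primitively (for $p\nmid d$, or after a reflection argument in general — here the case analysis of Lem.~\ref{lem: superspecial trick} for $\sigma_0=1$ is instructive, and the higher $\sigma_0$ cases are easier since the lattice is larger). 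Alternatively, one invokes Thm~\ref{thm:supersingular K3 surfaces are derived isogenous}: any two supersingular K3 surfaces of the same Artin invariant are derived isogenous, and a derived isogeny preserves supersingularity and the Artin invariant (by Prop.~\ref{prop:Newton Hodge decomp of twisted Mukai crystal} and the remarks following, the latter being unchanged when the Brauer class is trivial), so one may freely move to a surface whose Néron--Severi lattice contains the desired primitive class, then reflect into the ample cone as before.

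The main obstacle is the bookkeeping in the first step: ensuring that the target embedding $\iota$ can be chosen so that $\mathrm{im}(\iota)\cap\NS(Y)_\bQ$ contains a \emph{primitive integral} class of the prescribed degree $2d$, while simultaneously satisfying all the integrality constraints of Thm~\ref{thm: existence} (e.g.\ that $\iota_\ell$ is an integral isometry onto $\H^2(Y,\bZ_\ell)$ for almost all $\ell$, and that $\iota_p$ respects Frobenius). Concretely, one should choose the K3 crystal structure $H_p$ on $\Lambda\otimes W$ to be isomorphic to $\H^2(Y/W)$ (possible since $Y$ has the right height), take $\iota_p$ to be such an isomorphism, and then choose $\iota^p$ by picking a primitive element $v\in\Lambda$ with $v^2=2d$ and an isometric embedding $\Lambda\otimes\widehat{\bZ}^p\hookrightarrow\H^2(Y,\widehat{\bZ}^p)$ sending $v$ into $\NS(Y)_\bQ$; the existence of such an embedding is a local-to-global question about quadratic forms, handled exactly by the strong approximation inputs (Lem.~\ref{lem:strong approx}, Lem.~\ref{lem:lattices in the Tate module}, Lem.~\ref{lem:Cartan--Dieudonne}) already developed in the proof of Thm~\ref{thm: existence}. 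Given this, the rest is a routine assembly of known results; I would expect the proof to be short, citing the preceding sections heavily and deferring all serious lattice-theoretic and geometric content to them.
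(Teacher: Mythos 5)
Your strategy is genuinely different from the paper's, and as written it has a real gap at its central step. The crux of your argument is arranging that $\mathrm{im}(\iota)\cap\NS(Y)_\bQ$ contains a \emph{primitive} class of square exactly $2d$. You flag this as ``the main obstacle'' and then assert it is ``handled exactly by the strong approximation inputs'' of Lem.~\ref{lem:strong approx} and Lem.~\ref{lem:lattices in the Tate module}. It is not: those lemmas concern double cosets $\O(L\otimes\bQ)\backslash\O(L\otimes\bQ_q)/\O(L\otimes\bZ_q)$, i.e.\ transitivity of the rational orthogonal group on lattices in a genus; they say nothing about which integers a given lattice represents primitively. What you actually need is (i) that $\NS(Y)_\bQ$ represents $2d$ at all (not automatic when the rank is small; for height $10$ Ito's theorem \cite[Thm~6.4]{Ito-LFunction} does not even apply, so your ``classical'' existence claim needs a reference), (ii) that a representing vector can be chosen primitive in $\H^2(Y/W)$ --- and here you have no freedom, since $\iota_p$ is forced to be the identity --- and (iii) that the lattices $\iota_\ell(\Lambda\otimes\bZ_\ell)$ can be chosen to contain that vector primitively while agreeing with $\H^2(Y,\bZ_\ell)$ for almost all $\ell$. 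Each point is plausible but none is proved in the sections you cite. The supersingular half has the same defect (primitive representation of $2d$ by $N_{\sigma_0}$ for $\sigma_0>1$ is asserted, not proved), and your parenthetical that derived isogenies preserve the Artin invariant is wrong: by the proposition following Def.~\ref{def:twistedK3crystal}, a primitive derived isogeny with nontrivial Brauer class changes the Artin invariant by one, and the chain in Thm.~\ref{thm:supersingular K3 surfaces are derived isogenous} passes through the Artin invariant $1$ surface.

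For comparison, the paper's proof avoids all of this arithmetic by a dimension count: each inclusion $\Mod^{i+1}\subseteq\Mod^i$ is locally cut out by a single equation, $\Mod_{2d,\sfK,\bF_p}$ is smooth of dimension $19$, and $\Mod^{20}$ is zero-dimensional \cite[\S7]{Artin}; hence once $\Mod^{20}\neq\emptyset$ the dimension must drop by exactly one at each of the $19$ steps, forcing every $\mathring{\Mod}^i$ to be nonempty. The only stratum whose nonemptiness must be established by hand is therefore the deepest one --- a quasi-polarization of degree $2d$ on the unique superspecial K3 surface --- and that is exactly what Lem.~\ref{lem: superspecial trick} together with \cite[Lem.~7.9]{Ogus} supplies. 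If you wish to keep your constructive approach, you must actually carry out the primitive-representation argument at each height, which is substantially more work than the dimension count.
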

\begin{proof}
Each $\Mod^{i + 1} \subseteq \Mod^i$ is locally cut out by a single equation. $\Mod_{2d, \sfK, \bF_p}$ is smooth of dimension $19$, and we know that $\Mod^{20}$ is zero-dimensional (cf. \cite[\S7]{Artin}). Therefore, it suffices to show that $\Mod^{20} \neq \emptyset$, i.e., there exists a quasi-polarization of degree $2d$ on the superspecial K3 surface, which is unique up to isomorphism. This follows from the preceeding lemma and \cite[Lem.~7.9]{Ogus}.
\end{proof}

Let $\cK \subset \wt{G}(\bA^p_f)$ be the preimage of $\sfK$. Before preceeding we recall that for any geometric point $t \in \wt{\shS}_{\cK}(L_d)$, there is a distinguished subspace $\LEnd(\shA_t)$ of $\End(\shA_t)$ which consists of the elements whose whose cohomological realizations lie in $\bL_{\bA^p_f, t}$ and $\bL_{\cris, t}$ (\cite[Def. 3.10]{Yang}, cf. \cite[Def. 5.11]{CSpin}). When $t$ is on the supersingular locus, the natural maps $\LEnd(\shA_t) \tensor \what{\bZ}^p \to \bL_{\ell, t}$ and $\LEnd(\shA_t) \tensor \bZ_p \to \bL_{\cris, t}^{F = 1}$ are isomorphisms (\cite[Prop.~3.2.3]{Yang2}). 

\begin{lemma}
\label{lem: glue lattice}
Let $k$ be an algebraically closed field with $\mathrm{char\,} k = p$. Let $x$ be a $k$-point on $\shS^i$ for some $i \ge 11$, $t$ be a $k$-point on $\wt{\shS}_{\cK}(L_d)$ which lifts $x$ and set $P := \LEnd(\shA_t)$. Then there exists a primitive element $\nu \in N_\sigma$ with $\sigma := 21 - i$ and $\nu^2 = 2d$ such that $P \cong \nu^\perp$. 
\end{lemma}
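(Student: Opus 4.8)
The plan is to identify $P := \LEnd(\shA_t)$ with a lattice of the form $\nu^\perp$ inside $N_\sigma$ by working adelically, using the fact that on the supersingular locus the various realizations of $\bL$ recover $P$ exactly. First I would record the local pictures. Since $x \in \shS^i$ with $i \geq 11$, the point $x$ is supersingular with Artin invariant $\sigma = 21 - i$, so $\bL_{\cris,x}(-1)$ is a supersingular K3 crystal of Artin invariant $\sigma$, and its Tate module $\bL_{\cris,x}^{F=1}$ is a $\bZ_p$-lattice whose discriminant is (up to units) $p^{2\sigma}$; by \cite[Prop.~3.2.3]{Yang2} the natural map $P \tensor \bZ_p \to \bL_{\cris,x}^{F=1}$ is an isomorphism, so $P \tensor \bZ_p$ is $p$-adically isometric to the nondegenerate piece of $N_\sigma \tensor \bZ_p$ sitting inside $\bL$. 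Away from $p$, the isomorphisms $P \tensor \what{\bZ}^p \iso \bL_{\ell,x}$ (all $\ell \neq p$) together with the orientation/level data identify $P \tensor \what{\bZ}^p$ with $(\Lambda \tensor \what{\bZ}^p) \cap (\lambda \tensor 1)^\perp$, which matches $N_\sigma \tensor \what{\bZ}^p$ at primes $\neq p$ since $p \nmid d$ and the supersingular lattice $N_\sigma$ becomes isometric to this after inverting $p$.

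Next I would produce the element $\nu$. The Shimura variety $\shS_\sfK(L_d)$ is constructed so that $x$ carries the data of a quasi-polarization class; concretely, $P$ sits in a rank-$22$ lattice which, by the moduli interpretation on the superspecial/supersingular locus, is isometric to the Néron--Severi lattice of a supersingular K3 surface of Artin invariant $\sigma$, i.e. to $N_\sigma$, with $P$ being the orthogonal complement of the polarization vector. So I would take the rank-$22$ lattice $M$ obtained by adjoining the distinguished vector of square $2d$ to $P$ (using the spinor similitude structure, exactly as in the passage recalling $\bL_*$ and the relation $\H^2 = \bL \oplus \bZ\lambda$), check that $M$ is nondegenerate of signature $(1,21)$, has discriminant group $p$-torsion of length $2\sigma$, and $p$-adic/prime-to-$p$ invariants matching $N_\sigma$. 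By the classification of supersingular K3 lattices \cite[\S17, Prop.~2.20]{K3book} (valid for $p > 2$), $M \cong N_\sigma$. Under such an isometry the distinguished square-$2d$ vector goes to a primitive element $\nu \in N_\sigma$ with $\nu^2 = 2d$, and $P$ goes to $\nu^\perp$.

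The remaining point is that the answer does not depend on the choices: any two primitive vectors of square $2d$ in $N_\sigma$ are in the same $\O(N_\sigma)$-orbit. This follows because their orthogonal complements have the same rank, signature, discriminant form (again by $p \nmid 2d$, the discriminant form of $\nu^\perp$ is determined by that of $N_\sigma$ and the residue of $2d$), hence are abstractly isometric, and one invokes Eichler/Nikulin-type extension of isometries — or, more in the spirit of the excerpt, one uses Lem.~\ref{lem: superspecial trick} and \cite[Lem.~7.9]{Ogus} to move polarization vectors around within $N_\sigma$ while staying primitive of the right degree. So the isometry class of $\nu^\perp$ is well-defined, and $P \cong \nu^\perp$ as claimed.

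The main obstacle I anticipate is bookkeeping at $p$: one must argue that $M \cong N_\sigma$ rather than merely that $M \tensor \bZ_\ell \cong N_\sigma \tensor \bZ_\ell$ for all $\ell$ (including $\ell = p$). This requires knowing the discriminant \emph{form} (not just the discriminant group) of $P \tensor \bZ_p = \bL_{\cris,x}^{F=1}$, which is exactly the content of Ogus's classification of supersingular K3 crystals — the characteristic subspace determines the crystal, and the associated lattice has the $p$-adic invariants pinning down $N_\sigma$ uniquely via \cite[\S17, Prop.~2.20]{K3book}. Once that local input is in hand, the global glueing is routine. I would also need to be slightly careful that the rank-$22$ lattice $M$ I build really is the Néron--Severi lattice of the corresponding supersingular K3 (equivalently, that the spinor similitude construction of $\bL$ is compatible with the Kuga--Satake package on the supersingular stratum), but this is already packaged in the references \cite{CSpin,Yang,Yang2,XuZhang} cited in the surrounding text.
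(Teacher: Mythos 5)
Your overall strategy is the paper's: realize $P$ as the orthogonal complement of a square-$2d$ vector in a rank-$22$ overlattice of $P\oplus\bZ\nu$, compute the signature and the discriminant group of that overlattice, and invoke the Rudakov--Shafarevich characterization of supersingular K3 lattices to conclude it is $N_\sigma$. The uniqueness discussion at the end is not needed (the lemma only asserts existence of some $\nu$), and the local computations at $p$ and away from $p$ are fine.

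The step that needs repair is the construction of the overlattice $M$. Your primary justification --- that ``by the moduli interpretation on the superspecial/supersingular locus, $P$ sits in a rank-$22$ lattice isometric to the N\'eron--Severi lattice of a supersingular K3 surface'' --- is circular here: whether the point $x$ on the supersingular stratum comes from a K3 surface at all is precisely what this section is trying to establish (Thm~\ref{thm: HO implies surj precise}), so you cannot invoke a K3 over $k$ attached to $x$. At a mod-$p$ point of the Shimura variety there is no given global $\bZ$-lattice $\H^2\supset P\oplus\bZ\lambda$; only the adelic and crystalline realizations exist. Consequently $M$ must be built by hand as a primitive extension of $P\oplus\bZ\nu$, which requires exhibiting an (anti-)isometry $\phi$ between a subgroup of $\disc(P)$ and $\disc(\bZ\nu)\cong\bZ/2d$ whose graph one glues along; for the resulting lattice to have $p$-elementary discriminant one must take the domain of $\phi$ to be the \emph{entire} prime-to-$p$ part of $\disc(P)$, and one must know that this finite quadratic form is isomorphic to that of $\disc(\bZ\nu)$ up to sign. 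The paper produces $\phi$ by lifting $x$ to a $W$-point, passing to $\bC$ where the period morphism is surjective, and transporting the gluing data of the standard primitive embedding $\P^2(X_\bC,\bZ)\oplus\bZ\xi_\bC\subset\H^2(X_\bC,\bZ)$ back to $(\disc(P\otimes\what{\bZ}^p),\disc(\bZ\nu))$ via the comparison isomorphisms. Your remark that $P\otimes\what{\bZ}^p\cong L_d\otimes\what{\bZ}^p$ could in principle be used to manufacture $\phi$ more directly (since $L_d=\lambda^\perp$ in the unimodular $\Lambda$, its discriminant form is the negative of that of $\bZ\lambda$), but you never actually commit to this, and as written the existence of the gluing datum --- the crux of the lemma --- is left unestablished.
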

\begin{proof}
Let $\bZ \nu$ be a quadratic lattice of rank $1$ generated by $\nu$ with $\nu^2 = 2d$. By the theory of gluing lattices (see \cite[\S2]{Curt} for a quick summary), primitive extensions of $P \oplus \bZ \nu$ corresponds to the data $(G_1, G_2, \phi)$, where $G_1$, $G_2$ are subgroups of $\disc(P)$ and $\disc(\bZ \nu)$ and $\phi$ is an isometry $G_1 \iso G_2$. Therefore, constructing $N$ amounts to choosing appropriate $(G_1, G_2, \phi)$. 

In our case, we take $G_1$ to be the prime-to-$p$ part of $\disc(P)$, i.e., $\disc(P \tensor \what{\bZ}^p)$, and $G_2 = \disc(\bZ \nu)$ which is isomorphic to $\bZ / (2d) \bZ$ as an abelian group. Then we construct $\phi$ by a lifting argument: Let $x_W$ be a $W$-point on $\shS(L_d)$ which lifts $x$ and let $x_\bC$ be $x_W$ for some embedding $W \into \bC$. The period morphism $\rho_\sfK$ is known to be surjective on $\bC$-points, so there exists a quasi-polarized K3 surface $(X_\bC, \xi_\bC)$ such that the $\bZ$-Hodge structure $\bL_{B, x_\bC}$ is naturally identified with $\P^2(X_\bC, \bZ)$. We have that the natural map $P \tensor \what{\bZ}^p \to \bL_{\what{\bZ}^p, x}$ is an isomorphism (\cite[Prop.~3.2.3]{Yang2}) and $\bL_{\what{\bZ}^p, x} \cong \bL_{B, x_\bC} \tensor \what{\bZ}^p$ by smooth and proper base change and the Artin comparison isomorphisms. Therefore, there is an isomorphism $\beta_1 : G_1 \sto \disc(\P^2(X_\bC, \bZ) \tensor \what{\bZ}^p) = \disc(\P^2(X_\bC, \bZ))$. On the other hand, let $\beta_2 : G_2 \sto \disc(\bZ \xi_\bC)$ be the isomorphism given by sending $\nu$ to $\xi_\bC \in \H^2(X_\bC, \bZ)$. 

We may transport the gluing data given by the primitive embedding $\P^2(X_\bC, \bZ) \oplus \bZ \xi \subset \H^2(X_\bC, \bZ)$ to a gluing data $\phi$ for $G_1, G_2$ via $\beta_1, \beta_2$. Let $N$ be the lattice given by $(G_1, G_2, \phi)$. We check that it is a supersingular K3 lattice. Clearly, by our construction, $N \tensor \what{\bZ}^p \cong \Lambda \tensor \what{\bZ}^p$. As $P$ is negative definite, $N$ has signature $(1+, 21-)$. Finally, $\disc(N \tensor \bZ_p) = \disc(P \tensor \bZ_p) \cong (\bZ/ p \bZ)^{2\sigma}$ as an abelian group. Therefore, $N \cong N_\sigma$.
\end{proof}

We now prove another special case of Conj. \ref{conj: HO for i}: 
\begin{theorem}
\label{thm: Hecke orbit for superspecial}
Conj.~\ref{conj: HO for i} holds for $i = 20$. 
\end{theorem}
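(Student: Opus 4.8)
The plan is to prove that the prime-to-$p$ Hecke orbit of every superspecial point $s \in \shS^{20}(\bar{\bF}_p)$ is Zariski dense in $\shS^{20}$. Since $\shS^{20}$ is a finite set of points (being zero-dimensional, by the preceding discussion), Zariski density is equivalent to surjectivity: the Hecke orbit must hit \emph{every} point of $\shS^{20}$. So the real content is to show that the prime-to-$p$ Hecke orbit acts transitively on the superspecial locus $\shS^{20}(\bar{\bF}_p)$.

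The strategy is to reduce transitivity on $\shS^{20}$ to a statement about lattices and adelic orthogonal groups, and then invoke strong approximation together with the gluing-lattice constructions already developed. First I would note that by Thm~\ref{thm: surj on HO}, the image of $\Mod_{2d,\sfK_p}(\bar{\bF}_p)$ in $\shS_{\sfK_p}(L_d)(\bar{\bF}_p)$ is stable under the full $G(\bA^p_f)$-action, and since $\mathring{\Mod}^{20} \neq \emptyset$ (by the lemma above), every point of $\shS^{20}$ that is Hecke-equivalent to a point coming from $\Mod_{2d,\sfK_p}$ again comes from $\Mod_{2d,\sfK_p}$. So it suffices to work with superspecial quasi-polarized K3 surfaces $(X,\xi)$ of degree $2d$. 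Any such $X$ is the unique superspecial K3 surface (with $N(X) \cong N_1$), and by Lem.~\ref{lem: glue lattice} applied with $i = 20$, $\sigma = 1$, the endomorphism lattice $P = \LEnd(\shA_t)$ of any lift $t$ is isometric to $\nu^\perp$ inside $N_1$ for a primitive $\nu$ of degree $2d$. The prime-to-$p$ Hecke orbit of a lift $t$ of a superspecial point, translated into lattice language via the isomorphisms $\LEnd(\shA_t)\tensor \what{\bZ}^p \cong \bL_{\ell,t}$ and $\LEnd(\shA_t)\tensor\bZ_p \cong \bL^{F=1}_{\cris,t}$, corresponds to moving within a single genus of lattices: two superspecial points $s, s'$ with lattices $P, P'$ lie in the same prime-to-$p$ Hecke orbit iff $P \tensor \bZ_p \cong P' \tensor \bZ_p$ (automatic, both being the discriminant-$p^2$ lattice determined by $\sigma_0 = 1$) and $P \tensor \what{\bZ}^p \cong P'\tensor \what{\bZ}^p$ compatibly with the level structure.

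The key step is then to produce, for any two superspecial points with lattices $P$ and $P'$ in the same genus, an honest Hecke translation realizing the isometry. Here I would use strong approximation for the spin group: the double coset $\O(P\tensor\bQ)\backslash \O(P \tensor \bA^p_f)/\O(P\tensor\what{\bZ}^p)$ — or rather its spinor refinement, exactly as in Lem.~\ref{lem:strong approx} and \cite[Lem.~7.7]{Ogus} — is a singleton provided $P$ is indefinite of rank $\ge 3$, or more precisely provided $P\tensor\bZ_q$ contains a hyperbolic plane for some $q$. Since $P = \nu^\perp \subset N_1$ has rank $21$ and $N_1$ (hence $P$) is $\ell$-adically ``large'' for $\ell \neq p$, one verifies this splitting condition at some auxiliary prime. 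This forces any two lattices in the genus of $P$ to be related by a global isometry that is trivial at $p$ and a prescribed local isometry away from $p$; translating back through the period morphism, this says precisely that the two superspecial points are prime-to-$p$ Hecke-equivalent. I would also need to handle the orientation/discriminant data: the subtlety at $2$, where the level structure $\sfK^p$ remembers the orientation and the action on the discriminant group of $N_1$, is exactly what Lem.~\ref{lem: superspecial trick} is built to address — it supplies isometries of $N_1$ fixing $\xi$ with both possible determinants and interchanging the two isotropic lines in $(N_1^\vee/N_1)\tensor\bF_{p^2}$, so that every component of the discriminant-form automorphism group is realized by a Hecke correspondence.

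The main obstacle I expect is bookkeeping the interaction between the superspecial lattice $N_1$, the orthogonal complement $P = \nu^\perp$, and the adelic level structures: one must check that strong approximation can be applied to $P$ (verifying the local splitting hypothesis, using that $\disc(P\tensor\bZ_p)$ is the prescribed $(\bZ/p\bZ)^{2}$ and that $P$ is indefinite), and that the resulting global-to-adelic isometry descends correctly through the projection $\shS_{\sfK_p}(L_d) \to \shS_{\sfK}(L_d)$ to match the definition of the prime-to-$p$ Hecke orbit at the chosen finite level $\sfK^p$. Once these compatibilities are in place, combining Lem.~\ref{lem: glue lattice}, Lem.~\ref{lem: superspecial trick}, Thm~\ref{thm: surj on HO}, and strong approximation yields transitivity of the prime-to-$p$ Hecke orbit on the finite set $\shS^{20}(\bar{\bF}_p)$, which is the desired density statement.
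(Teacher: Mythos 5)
Your overall architecture is right: reduce density on the zero-dimensional stratum $\shS^{20}$ to transitivity, translate transitivity into the existence of a prime-to-$p$ (CSpin-)isogeny between the abelian varieties attached to two superspecial points, and encode that as a lattice problem for $P=\LEnd(\shA_t)$ and $P'=\LEnd(\shA_{t'})$, using Lem.~\ref{lem: glue lattice} and Lem.~\ref{lem: superspecial trick} for the bookkeeping at $p$ and at $2$. But the step that carries all the weight --- producing a \emph{global} isometry $P\otimes\bZ_{(p)}\sto P'\otimes\bZ_{(p)}$ from the local data --- is exactly where your argument breaks. You propose to apply strong approximation (in the form of Lem.~\ref{lem:strong approx}) to $P$ itself. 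That lemma requires the lattice to be \emph{indefinite}; the auxiliary condition that $P\otimes\bZ_q$ contain a hyperbolic plane is an extra hypothesis on top of indefiniteness, not a substitute for it. Here $P$ is \emph{negative definite} of rank $21$ (this is stated in the proof of Lem.~\ref{lem: glue lattice}: it is why the glued lattice $N$ has signature $(1+,21-)$). For a definite lattice $\Spin(P\otimes\bR)$ is compact, strong approximation for the spin group fails, the class number of the genus is in general $>1$, and the double quotient $\O(P\otimes\bQ)\backslash\O(P\otimes\bA_f^p)/\O(P\otimes\widehat{\bZ}^p)$ need not be a singleton. So ``any two lattices in the genus of $P$ are related by a global isometry'' is precisely the assertion you cannot get this way. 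The paper's remark immediately after the theorem flags this very obstruction (definiteness of $P$ and $P'$) as the reason a purely lattice-theoretic argument is unavailable.

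The paper's workaround is to exploit the indefinite ambient lattice rather than $P$ alone: by Lem.~\ref{lem: glue lattice}, $P\cong\xi^\perp$ and $P'\cong(\xi')^\perp$ for primitive vectors $\xi,\xi'$ of the same degree $2d$ in the supersingular K3 lattice $N_1$ of signature $(1+,21-)$; a reflection of $N_1\otimes\bZ_{(p)}$ carries $\xi$ to $\xi'$ (\cite[I, Lem.~4.2]{Milnor}), giving the required isometry $P\otimes\bZ_{(p)}\sto P'\otimes\bZ_{(p)}$ directly, with no genus-theoretic input. The remaining conditions --- matching the orientation class $\delta_2$ and ensuring the isometry extends to an isomorphism of the supersingular K3 crystals $\bL_{\cris}$, i.e.\ respects Ogus's isotropic lines in $(P^\vee/P)\otimes\bar{\bF}_p$ --- are then corrected by the explicitly constructed isometries of Lem.~\ref{lem: superspecial trick}, which come from the Kummer/Frobenius structure of the Artin-invariant-$1$ surface and realize both determinants while swapping the isotropic lines. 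You cite both lemmas, but you deploy them only for the discriminant/orientation bookkeeping; to repair your proof you would need to replace the strong-approximation step by this reflection argument inside $N_1$ (or supply some other genuinely global source of isometries of the definite lattice $P$).
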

\begin{proof}
Take two $\bar{\bF}_p$-points $x, x' \in \shS^{20}$. Choose lifts $t, t'$ for $x, x'$ in $\wt{\shS}_{K}(L_d)$. We only need to show that there exists a CSpin-isogeny $\shA_t \to \shA_{t'}$ which is prime-to-$p$. Indeed, this follows from an explicit description of the isogenies classes in $\wt{\shS}_{\cK_p}(L_d)(\bar{\bF}_p)$ and their images on $\shS_{\sfK_p}(L_d)(\bar{\bF}_p)$ (\cite[\S3.2.3]{Yang}). Let $P$ and $P'$ denote $\LEnd(\shA_t)$ and $\LEnd(\shA_{t'})$ respectively. 

We first show that every isometry $P_\bQ \sto P'_\bQ$ whose induced isomorphism $\bL_{2, t} \tensor \bQ \sto \bL_{2, t'} \tensor \bQ$ sends $\delta_{2, t}$ to $\delta_{2, t'}$ is induced by a CSpin-isogeny $\psi : \shA_t \to \shA_{t'}$ by conjugation. Indeed, by \cite[Prop.~3.2.4]{Yang2}, there exists some CSpin-isogeny $\psi' : \shA_t \to \shA_{t'}$, which induces some isomorphism $P_\bQ \sto P'_\bQ$ whose induced isomorphism $\bL_{2, t} \tensor \bQ \sto \bL_{2, t'} \tensor \bQ$ sends $\delta_{2, t}$ to $\delta_{2, t'}$. The group of CSpin-isogenies from $\shA_{t}$ to itself is identified with $\CSpin(P_\bQ)$, which surjects to $\SO(P_\bQ)$. By composing $\psi'$ with some CSpin-isogeny $\shA_t \to \shA_t$, we get the desired $\psi$. 

We only need to show that there exists a CSpin-isogeny $\shA_t \to \shA_{t'}$ which is prime-to-$p$. By a Cartan decomposition trick \cite[Lem.~3.2.6]{Yang2}, we only need to show the following claim: \\\\
\textit{Claim: There exists an isometry $P \tensor \bZ_{(p)} \sto P' \tensor \bZ_{(p)}$ which sends $\delta_{2,x}$ to $\delta_{2, x'}$ and extends to an isomorphism $\bL_{\cris, x} \sto \bL_{\cris, x'}$.}

By Lem.~\ref{lem: glue lattice}, for some primitive vectors $\xi, \xi'$ in $N_1$ with $\xi^2 = (\xi')^2 = 2d$, we have $P \cong \xi^\perp$ and $P' \cong (\xi')^\perp$. Since some reflection of $N \tensor \bZ_{(p)}$ takes $\xi$ to $\xi'$ (\cite[I, Lem.~4.2]{Milnor}), $P \tensor \bZ_{(p)} \cong P' \tensor \bZ_{(p)}$ as quadratic lattice over $\bZ_{(p)}$. Now $P \tensor \bZ_p$ and $P' \tensor \bZ_p$ are the Tate modules of the supersingular K3 crystals $\bL_{\cris, t}(-1)$ and $\bL_{\cris, t'}(-1)$ respectively. By Ogus' theory of characteristic subspaces \cite[Thm 3.20]{Ogus}, $\bL_{\cris, t}$ (resp. $\bL_{\cris, t'}$) determines an isotropic line of $(P^\vee / P) \tensor \bar{\bF}_p$ (resp. $((P')^\vee/P') \tensor \bar{\bF}_p $) and isomorphism $P \tensor \bZ_p \to P' \tensor \bZ_p$ extends to an isomorphism $\bL_{\cris, t} \sto \bL_{\cris, t'}$ if and only these isotropic lines are respected. Now the claim follows from Lem.~\ref{lem: superspecial trick}. 
\end{proof}

\begin{remark}
As the reader can readily tell, the heart of the above theorem is the claim. Here we have proved the claim in a rather ad hoc way. We go through Lem.~\ref{lem: glue lattice} because there does not seem to be a good classification theory for quadratic lattices over $\bZ_{(p)}$. Moreover, $P$ and $P'$ are negative definite, so one cannot apply, say, Nikulin's theory to generate automorphisms, which only handles indefinite lattices. Luckily, in our special case, there is a geometric way of constructing the automorphisms we need. 
\end{remark}

\begin{lemma}
    Let $k$ be an algebraically closed field with $\mathrm{char\,} k = p > 2$. Let $R$ be a DVR over $k$ with fraction field $\kappa$ and let $X_{\kappa}$ be a supersingular K3 surface over $\kappa$ such that $X_{\bar{\kappa}}$ has Artin invariant $\sigma_0$. There exists a DVR $S$ over $k$ with fraction field $L$, a finite separable map $R\to S$, and an $N_{\sigma_0}$-marked supersingular K3 surface $X_S$ over $S$ such that $(X_S)_L\cong (X_{\kappa})_L$.
\end{lemma}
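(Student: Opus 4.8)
The plan is to exhibit $X_S$ as a smooth proper model of $X_\kappa$ over a suitable DVR, obtained from a potential good reduction statement for polarized K3 surfaces, and then to observe that the marking comes along for free. Indeed, an $N_{\sigma_0}$-marking of the family $X_S\to\mathrm{Spec}\,S$ amounts to an isometry $N_{\sigma_0}\xrightarrow{\sim}\mathrm{NS}(X_S/S)$, and the relative N\'eron--Severi group will be canonically identified with $\mathrm{NS}(X_\kappa)$, so the only real content is the good reduction. I would begin with some reductions, each costing only a finite separable extension of $R$ (and finite separable extensions compose, so this is harmless). Since $X_{\bar\kappa}$ is supersingular, $\mathrm{NS}(X_{\bar\kappa})$ has rank $22$ and $\mathrm{Gal}(\bar\kappa/\kappa)$ acts on it through a finite quotient; after extending $R$ we may assume this action is trivial, so that $\mathrm{NS}(X_\kappa)=\mathrm{NS}(X_{\bar\kappa})\cong N_{\sigma_0}$ and the ample cone is defined over $\kappa$. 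As in Lem.~\ref{lem: superspecial trick} together with \cite[Lem.~7.9]{Ogus}, $N_{\sigma_0}$ contains a primitive class of degree $2d$ with $p\nmid d$ which, after applying reflections in $(-2)$-curves, we may take to be an ample class $\xi$ on $X_\kappa$; thus $(X_\kappa,\xi)$ is polarized of degree prime to $p$. Finally, $\H^2_\et(X_{\bar\kappa},\bQ_\ell)\cong\mathrm{NS}(X_{\bar\kappa})_{\bQ_\ell}(-1)$, so after these extensions the representation $\H^2_\et(X_{\bar\kappa},\bQ_\ell)$ is unramified.

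Next I would invoke potential good reduction. By the equal-characteristic case of the good reduction criterion for polarized K3 surfaces of degree prime to $p$ with unramified $\ell$-adic cohomology (Matsumoto \cite{Matsumoto}, Liedtke--Matsumoto), applied to the completion $\hat R$, and descending back by a standard spreading-out/approximation argument, there is a DVR $S$ finite separable over $R$ and a smooth proper family $X_S\to\mathrm{Spec}\,S$ with $(X_S)_L\cong(X_\kappa)_L$, $L=\mathrm{Frac}(S)$; one reduces to the case of perfect (in the intended applications, algebraically closed) residue field by first spreading the residue field of $R$ out over $k$, which I would suppress. Since the height is upper semicontinuous in a smooth proper family of K3 surfaces and the generic fibre is supersingular, every fibre of $X_S$ is a supersingular K3 surface. (Alternatively one argues inside the Shimura variety $\shS_\sfK(L_d)$: $(X_\kappa,\xi)$, equipped with a prime-to-$p$ level structure after a further finite separable extension, defines $s\in\Mod_{2d,\sfK}(\kappa)$; its period $x=\rho_\sfK(s)$ lies in the supersingular locus, which is known to be projective over $k$, so $x$ extends to $\bar x\colon\mathrm{Spec}\,R\to\shS_\sfK(L_d)$ by the valuative criterion of properness; using Ogus's effectivity theorem for supersingular K3 crystals \cite{Ogus}, and adjusting $\xi$ by reflections in $(-2)$-curves — induced on the family by the crystalline Torelli theorem — so that it stays ample at the special fibre, the point $\bar x$ lifts through $\rho_\sfK$ after a finite separable base change, and one pulls back the universal family.)

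It remains to produce the marking. Because $X_S$ is regular and its special fibre is irreducible, the restriction map $\mathrm{Pic}(X_S)\to\mathrm{Pic}((X_S)_L)=\mathrm{Pic}(X_\kappa\otimes_\kappa L)$ is an isomorphism: line bundles on the generic fibre extend over the regular total space, the restriction is injective, and a line bundle trivial on the generic fibre is supported on the fibre over the closed point, hence trivial. Thus the N\'eron--Severi group of the family $X_S/S$ is identified with $\mathrm{NS}(X_\kappa\otimes_\kappa L)\cong N_{\sigma_0}$, and fixing such an isometry equips $X_S$ with an $N_{\sigma_0}$-marking; the composite $R\to S$ is finite separable by construction. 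The main obstacle is the good reduction step: guaranteeing potential good reduction of $X_\kappa$ with only a finite \emph{separable} base change, and handling a possibly imperfect residue field. In the Shimura-variety approach the delicate points are instead the properness of the supersingular locus and the verification that the extended period point $\bar x$ actually lands in the K3-locus, which is exactly where the effectivity theorem and the $(-2)$-curve reflection trick enter.
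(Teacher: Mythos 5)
Your overall architecture is the same as the paper's: produce a smooth proper model of $X_\kappa$ over some $S$ finite separable over $R$, then extend an $N_{\sigma_0}$-marking from the generic fibre using that $\Pic$ of the total space over a DVR agrees with $\Pic$ of the generic fibre (after a further finite separable extension making all of $\Pic(X_{\bar L})$ Galois-invariant). The marking half of your argument is essentially identical to the paper's and is fine.

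The gap is in how you justify the crucial good-reduction step. You invoke ``the equal-characteristic case of the good reduction criterion for polarized K3 surfaces of degree prime to $p$ with unramified $\ell$-adic cohomology (Matsumoto, Liedtke--Matsumoto)''. Those criteria are proved for K3 surfaces over $p$-adic fields (mixed characteristic), and their unconditional forms depend on hypotheses (existence of semistable/Kulikov models, low degree, etc.) that are not available here; there is no off-the-shelf equal-characteristic version of that criterion to cite. What is actually needed --- and what the paper uses --- is the Rudakov--Shafarevich non-degeneration theorem for supersingular K3 surfaces in equal characteristic $p$ (\cite[Thm 50]{RS}, with \cite[Thm 5.2.1]{BL} covering $p=3$), which directly produces $S$ and $X_S$ after a finite separable extension; no prime-to-$p$ polarization or unramifiedness reduction is required. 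Your fallback Shimura-variety argument does not repair this: the properness of the supersingular locus that you want to feed into the valuative criterion is essentially equivalent to Artin's non-degeneration conjecture for supersingular K3 surfaces, i.e.\ it is a consequence of the very Rudakov--Shafarevich theorem you are trying to avoid, so that route is circular as written. Replacing your good-reduction citation with \cite{RS} (and \cite{BL} for $p=3$) turns your argument into the paper's proof.
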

\begin{proof}
By a result of Rudakov and Shafarevich (see \cite[Thm 50]{RS}, and \cite[Thm 5.2.1]{BL} for $p=3$)  there exists a DVR S, a finite separable map $R\to S$, and a supersingular K3 surface $X_S$ over $S$ such that $(X_S)_L\cong (X_{\kappa})_L$. The Picard scheme $\uPic_{X_{L}}$ is formally \'{e}tale over $\Spec L$. As $\Pic(X_{\overline{L}})$ is finitely generated, after taking a further finite separable extension we may ensure that the restriction map $\Pic(X_L)\cong\Pic(X_{\overline{L}})$ is an isomorphism. Thus, $X_L$ admits an $N_{\sigma_0}$-marking. As $S$ is a DVR, we have $\Pic(X_L)=\Pic(X)$, so the generic marking extends uniquely to an $N_{\sigma_0}$-marking of $X_S$.
\end{proof}

\begin{theorem}
\label{thm: HO implies surj precise}
If Conj.~\ref{conj: HO for i} holds for $i$, or $i \ge 11$, then $\shS^i \subset \mathrm{im}(\rho_\sfK)$.
\end{theorem}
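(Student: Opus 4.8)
The strategy is to show that the image $\mathrm{im}(\rho_{\sfK})$ meets every stratum $\mathring{\shS}^i$, and then use stratum-wise density (either the Hecke orbit conjecture or the superspecial case Thm~\ref{thm: Hecke orbit for superspecial}) together with Thm~\ref{thm: surj on HO} to propagate surjectivity across the whole stratum. First, I would record the two structural inputs: (i) the period morphism is compatible with the stratifications, $\Mod^i = \shS^i \times_{\shS_\sfK(L_d)} \Mod_{2d,\sfK}$, by \cite[Cor.~5.14]{Keerthi}; and (ii) each $\mathring{\Mod}^i$ is nonempty (proved in the excerpt) and $\shS^i$ is the Zariski closure of $\mathring{\shS}^i$. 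From (i)--(ii), $\mathrm{im}(\rho_\sfK)$ meets $\mathring{\shS}^i$ for every $i$, so it certainly meets $\shS^i$ in a nonempty (locally closed, in fact constructible) subset.

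For the finite height case, i.e.\ $1\le i\le 10$ assuming Conj.~\ref{conj: HO for i} for that $i$: pick $x_0 \in \mathring{\shS}^i$ in the image of $\rho_\sfK$; by Conj.~\ref{conj: HO for i}, the prime-to-$p$ Hecke orbit of $x_0$ is Zariski dense in $\shS^i$. Since $\shS^i$ is a central leaf (by \cite[Cor.~7.2.2, Cor.~7.3.4]{XuZhang}), Thm~\ref{thm: surj on HO} shows that the whole Hecke orbit of $x_0$ lies in $\mathrm{im}(\rho_\sfK)$. Now $\mathrm{im}(\rho_\sfK)$ is constructible (image of a morphism of finite type) and $\shS_{\sfK}(L_d)$ is smooth over $\bF_p$, hence $\shS^i$, and in particular $\mathring{\shS}^i$, is irreducible or at least its irreducible components are permuted transitively by the prime-to-$p$ Hecke action; a dense constructible subset of an irreducible scheme contains a dense open, and then combining with the fact that the complement $\shS^{i+1}$ is lower-dimensional, one upgrades ``$\mathrm{im}(\rho_\sfK) \supseteq$ dense subset of $\shS^i$'' to ``$\mathrm{im}(\rho_\sfK) \supseteq \shS^i$'' by descending induction on $i$ (starting from $\shS^{20}$ and working down), because at each step the part of $\shS^i$ already known to lie in the image is Hecke-stable and dense, hence all of $\shS^i$ by the central leaf property. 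The case $i=20$ is handled identically using Thm~\ref{thm: Hecke orbit for superspecial} in place of the conjecture.

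For the case $i \ge 11$ without assuming the conjecture, the point is that $\shS^{20}$ is a single prime-to-$p$ Hecke orbit (this is what the proof of Thm~\ref{thm: Hecke orbit for superspecial} really shows: any two $\bar\bF_p$-points of $\shS^{20}$ are connected by a prime-to-$p$ CSpin-isogeny), so $\shS^{20} \subseteq \mathrm{im}(\rho_\sfK)$ by Thm~\ref{thm: surj on HO}. For $11 \le i \le 19$, I would argue that $\mathring{\shS}^i$ is already entirely in the image: a point $x \in \mathring{\shS}^i$ corresponds, after lifting to $t \in \wt{\shS}_{\cK}(L_d)(\bar\bF_p)$, to the datum of $P = \LEnd(\shA_t) \cong \nu^\perp$ for a primitive $\nu \in N_{21-i}$ of square $2d$ (Lem.~\ref{lem: glue lattice}), and by the explicit description of supersingular isogeny classes (\cite[\S3.2.3]{Yang}, \cite[Prop.~3.2.4]{Yang2}) together with the Cartan-decomposition/gluing argument from Thm~\ref{thm: Hecke orbit for superspecial}, $x$ lies in the prime-to-$p$ Hecke orbit of the superspecial point already known to be in the image; then Thm~\ref{thm: surj on HO} applies. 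Taking Zariski closures gives $\shS^i \subseteq \mathrm{im}(\rho_\sfK)$ since $\mathrm{im}(\rho_\sfK)$ is closed under specialization enough for this (more carefully: one uses properness/valuative arguments as in the displayed lemma about extending $N_{\sigma_0}$-markings over a DVR, to see that a supersingular point specialized from an image point is again an image point).

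The main obstacle I anticipate is the passage from ``$\mathrm{im}(\rho_\sfK)$ contains a Zariski-dense (constructible) subset of $\shS^i$'' to ``$\mathrm{im}(\rho_\sfK) \supseteq \shS^i$'' on the nose. Density alone is not enough since $\mathrm{im}(\rho_\sfK)$ need only be constructible, not closed. The resolution is the descending induction on the stratification: the boundary $\shS^i \setminus \mathring{\shS}^i = \shS^{i+1}$ is handled at an earlier inductive step, so it suffices to show $\mathring{\shS}^i \subseteq \mathrm{im}(\rho_\sfK)$; and for this one uses that $\mathring{\shS}^i$, being (the open part of) a central leaf, is acted on transitively-up-to-density by the prime-to-$p$ Hecke correspondences, so that a single image point together with Thm~\ref{thm: surj on HO} and the density hypothesis forces the whole of $\mathring{\shS}^i$ into the image — here it is essential that Thm~\ref{thm: surj on HO} gives an \emph{exact} orbit statement (every Hecke translate of an image point is an image point), not merely a density statement.
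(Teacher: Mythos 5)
There is a genuine gap in your treatment of the intermediate supersingular strata $11 \le i \le 19$. You claim that a point $x \in \mathring{\shS}^i$ lies in the prime-to-$p$ Hecke orbit of a superspecial point already known to be in the image. This is false: a prime-to-$p$ Hecke translate (equivalently, a prime-to-$p$ CSpin-isogeny) induces an isomorphism on the crystalline realization $\bL_{\cris}$, hence preserves the Artin invariant. A point of $\mathring{\shS}^i$ has Artin invariant $21-i \ge 2$, while a superspecial point has Artin invariant $1$, so they can never be Hecke-equivalent; indeed the prime-to-$p$ Hecke orbit of a supersingular point never leaves its central leaf, which is exactly why Conj.~\ref{conj: HO for i} is not even formulated for $11 \le i \le 19$. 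The paper closes this case by an entirely different mechanism: since $\mathrm{im}(\rho_\sfK)$ is \emph{open} ($\rho_\sfK$ is \'etale) and contains $\shS^{20}$, it is dense in $\shS^i$; one then runs a DVR through a given closed point $x\in\mathring{\shS}^i$ with generic point in the image, invokes Rudakov--Shafarevich to extend the supersingular K3 surface over (a finite cover of) that DVR, and checks via Ogus's moduli of supersingular K3 crystals that the Artin invariant of the special fiber does not jump, so that the quasi-polarization extends and the special fiber furnishes a preimage of $x$. Your closing remark about ``valuative arguments as in the displayed lemma'' gestures at this, but the essential content --- controlling the Artin invariant of the special fiber and extending the quasi-polarization --- is precisely what you would need to supply, and your route to placing the generic point of the DVR in the image (via the false Hecke claim) does not work.

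For the case where Conj.~\ref{conj: HO for i} is assumed, your argument is recoverable but structured backwards, and your worry about constructibility is a red herring. Since $\rho_\sfK$ is \'etale, $\mathrm{im}(\rho_\sfK)$ is open, and it meets $\shS^i$ because $\mathring{\Mod}^i\neq\emptyset$. Given an \emph{arbitrary} $x\in\shS^i(\bar{\bF}_p)$, apply the conjecture to $x$ itself: its Hecke orbit is dense in $\shS^i$, hence meets the nonempty open set $\mathrm{im}(\rho_\sfK)\cap\shS^i$, so some translate $x\cdot g$ is in the image, and Thm~\ref{thm: surj on HO} (applied with $g^{-1}$) puts $x$ itself in the image. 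Applying the conjecture instead to a fixed image point $x_0$, as you do, only shows the image contains a dense subset of $\shS^i$, and your descending induction does not close the resulting gap, since the complement of that dense subset need not lie in $\shS^{i+1}$.
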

\begin{proof}
If Conj.~\ref{conj: HO for i} holds for $i$ then the conclusion is a direct consequence of Thm~\ref{thm: surj on HO} and the fact that $\mathrm{im}(\rho_\sfK)$ is open. Now assume $i \ge 11$ and take $k = \bar{\bF}_p$. Note that by Thm~\ref{thm: Hecke orbit for superspecial}, $\shS^{20} \subset \mathrm{im}(\rho_\sfK)$. Since the Zariski closure of $\mathring{\shS}^i$ is $\shS^i$, the $\mathrm{im}(\rho_\sfK) \cap \shS^i$ is open and dense in $\shS^i$. Take a closed point $x \in \mathring{\shS}^i_k$. Let $\cR$ be the ring $k[\![t]\!]$ and $\cF$ be its fraction field. Choose a $\cR$-valued point $\wt{x}$ which extends $x$ such that $\wt{x}_\cF$ lies in $\mathrm{im}(\rho_\sfK) \cap \mathrm{\shS}^i$. Such a $\wt{x}$ can always be found: we can always choose a smooth curve which passes through $x$ and whose generic point lies in $\mathrm{im}(\rho_\sfK) \cap \mathrm{\shS}^i$. Then we simply take the completion of this curve at $x$. Let $X_\cF$ be a supersingular K3 surface over the generic point of $\wt{x}_\cF$. Note that the geometric fiber of $X_\cF$ has Artin invariant $\sigma := 21 - i$. By the preceeding lemma, there exists a DVR $\cR'$ over $\cR$, whose fraction field $\cF'$ is a finite extension of $\cF$, such that there is an $N_\sigma$-marked supersingular K3 surface $\cX$ over $\cR'$. 

We argue that the special fiber $\cX_k$ of $\cX$ has Artin invariant $\sigma$. There are two families of supersingular K3 crystals over $\cR'$ (see \cite[\S5]{Ogus} for the definition):  One is obtained by pulling back $\bL_{\cris, \wt{x}}(-1)$ along $\cR \to \cR'$. The other is given by $\H^2_\cris(\cX')$. By construction, these two families agree on the generic fiber. By Prop.~4.6 and Thm~5.3 of \cite{Ogus}, there exists a universal family of supersingular K3 crystals over a smooth projective space $\cM$ such that these two families are both obtained by pullback the universal family along morphisms $\cR \to \cM$. Since $\cM$ is in particular separated, these two morphisms have to agree. Therefore, $\H^2_\cris(\cX'/\cR')$ is precisely the pullback of $\bL_{\cris, \wt{x}}(-1)$. Now we conclude by the hypothesis that $x \in \shS^i_k$. 

Now we know that $\cX_{\bar{\cF}} := \cX \tensor \overline{\cF}$ and $\cX_k$ have the same Artin invariant. This guarantees that the specialization map $\Pic(\cX_{\cF'}) \to \Pic(\cX_k)$ must be an isomorphism, and hence must send the ample cone isomorphically onto the ample cone. Since the big and nef cone is the closure of the ample cone, the quasi-polarization on $\cX_{\cF'}$ extends to a quasi-polarization on $\cX_k$. This shows that $x \in \mathrm{im}(\rho_\sfK)$. 
\end{proof}

Finally we discuss some implications of the surjectivity of the period morphism to the good reduction theory of K3 surfaces. As Conj. \ref{conj: HO for i} is known for $i=1$ and $p\geq 5$ (by \cite[Thm~1.4]{maulik2020picard}), the following result in particular implies the unconditional Thm \ref{thm: intro GR}.

\begin{theorem}
\label{thm: GR}
Let $k$ be a perfect field of characteristic $p > 2$. Let $F$ be a finite extension of $K = W[1/p]$. Let $X_F$ be a K3 surface over $F$ equipped with a quasi-polarization $\xi$ of degree $2d$ with $p \nmid d$. Suppose that the $\Gal_F$-action on $\bH^2_\et(X_{\bar{F}}, \bQ_\ell)$ is potentially unramified for some $\ell \neq p$. Then we have: 
\begin{enumerate}[label=\upshape{(\alph*)}]
    \item $\H^2_\et(X_{\bar{F}}, \bA_f^p)$ and $\H^2_\et(X_{\bar{F}}, \bQ_p)$ are potentially unramified and crystalline respectively. 
    \item If $\H^2_\et(X_{\bar{F}}, \bQ_p)$ is crystalline, then $\ID_\cris(\H^2_\et(X_{\bar{F}}, \bQ_p))$ is a K3 crystal. 
    \item Suppose that the hypothesis of $(b)$ is satisfied and $\ID_\cris(\H^2_\et(X_{\bar{F}}, \bQ_p))$ is a K3 crystal of height $i$. If Conj.~\ref{conj: HO for i} holds for $i$ or if $i = \infty$, then $X_F$ has potential good reduction.
\end{enumerate}
\end{theorem}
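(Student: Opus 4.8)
The plan is to run all three parts through the Kuga--Satake period morphism $\rho_\sfK\colon \Mod_{2d,\sfK}\to\shS_\sfK(L_d)$ of §\ref{sec: Hecke}, taking $\sfK^p$ small enough that $\shS_\sfK(L_d)$ is a scheme and $\rho_\sfK$ is an open immersion (it is always \'etale by local Torelli, and a monomorphism once the level away from $p$ is small enough). Replacing $F$ by a finite extension, which affects none of the three conclusions, we may assume $(X_F,\xi)$ carries a $\sfK^p$-level structure and hence defines a point $s_F\in\Mod_{2d,\sfK}(F)$; write $\shA$ for the associated Kuga--Satake abelian variety over $F$ and recall that $\PH^2_\et(X_{\bar F},\bQ_\ell)(1)\cong\bL_{\ell,\rho_\sfK(s_F)}$ sits inside $\End\bigl(\H^1_\et(\shA_{\bar F},\bQ_\ell)\bigr)$ for every $\ell$, while the $\Gal_F$-action on $\H^1_\et(\shA)$ factors through $\CSpin$ and is pinned down by the $\CSpin$-structure. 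For part (a): by the last remark, $\H^2_\et(X_{\bar F},\bQ_\ell)$ being unramified forces the image of inertia in $\SO$ to be trivial, and the residual central ambiguity is controlled exactly as in the standard Kuga--Satake dictionary (cf. \cite{Keerthi}), so that $\shA$ has good reduction over $\cO_F$ after a finite extension. Since $F$ is $p$-adic, the N\'eron--Ogg--Shafarevich criterion then makes $\H^1_\et(\shA_{\bar F},\bQ_{\ell'})$ unramified for all $\ell'\neq p$ and $\H^1_\et(\shA_{\bar F},\bQ_p)$ crystalline; restricting to the direct summand and adding back the algebraic class $\xi$ (whose $\ell'$-adic realizations are unramified and whose $p$-adic realization is crystalline, being Tate) yields that $\H^2_\et(X_{\bar F},\bA^p_f)$ is potentially unramified and $\H^2_\et(X_{\bar F},\bQ_p)$ potentially crystalline, which is (a).

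For part (b), assume $\H^2_\et(X_{\bar F},\bQ_p)$ is crystalline and set $\bar H:=\ID_\cris\bigl(\H^2_\et(X_{\bar F},\bQ_p)\bigr)$, endowed with the lattice coming from $\H^2_\et(X_{\bar F},\bZ_p)$ via the integral $p$-adic Hodge theory recalled in the appendix, with its Frobenius $\Phi$, and with the perfect symmetric Frobenius-compatible pairing furnished by Poincar\'e duality $\H^2\times\H^2\to\bZ_p(-2)$. The inclusion $p^2\bar H\subseteq\Phi(\bar H)$ and the equality $\dim_k(\bar H/\Phi\bar H)=1$ follow from the Hodge numbers $h^{2,0}=1$, $h^{1,1}=20$, $h^{0,2}=1$ of $X_F$ together with the comparison between the Hodge filtration on $\bar H_F$ and the $\Phi$-module structure, and self-duality transports from $\H^2_\et(X_{\bar F},\bZ_p)$; hence $\bar H$ satisfies Ogus' axioms \cite[Def.~3.1]{Ogus}. (Equivalently: crystallinity of $\H^2$ forces good reduction of $\shA$ exactly as in (a), which identifies $\bar H$ up to Tate twist with $\bL_{\cris,s_0}(-1)$ plus a rank one slope one piece for the reduction point $s_0$, and that this is a K3 crystal is built into the integral model.)

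Part (c) is the main point. Assume $\H^2_\et(X_{\bar F},\bQ_p)$ is crystalline with $\bar H$ of height $i\in\{1,\dots,10\}$ or $i=\infty$, and that Conj.~\ref{conj: HO for i} holds for $i$ (automatic when $i=\infty$ by Thm~\ref{thm: Hecke orbit for superspecial}). Combining (a) with the crystallinity hypothesis, $\shA$ has good reduction over $\cO_F$ after a finite extension, so by the extension property of the canonical integral model the point $\rho_\sfK(s_F)$ extends to $\bar s\in\shS_\sfK(L_d)(\cO_F)$; let $s_0\in\shS_\sfK(L_d)(k)$ be its special fibre. The crystalline comparison isomorphism, together with the compatibility of $\bL$ with its realizations, identifies $\bL_{\cris,s_0}(-1)$ with the primitive part of $\bar H$; in particular $s_0$ lies in the stratum $\shS^i$ when $i\leq 10$, and in the supersingular stratum $\shS^{21-\sigma_0}\subseteq\shS^{11}$ of the appropriate Artin invariant $\sigma_0$ when $i=\infty$. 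In either case Thm~\ref{thm: HO implies surj precise} applies and gives $s_0\in\mathrm{im}(\rho_\sfK)$. Since $\rho_\sfK$ is an open immersion and both the generic point $\rho_\sfK(s_F)$ and the closed point $s_0$ of $\bar s$ lie in the open substack $\mathrm{im}(\rho_\sfK)$, the whole of $\bar s$ factors through $\Mod_{2d,\sfK}$, producing a quasi-polarized K3 surface $\cX$ over $\cO_F$ whose generic fibre $\cX_F$ carries the same moduli data as $X_F$; as $\rho_\sfK$ is a monomorphism on points, $\cX_F\cong X_F$. Thus $X_F$ acquires good reduction after the finite extensions made along the way, i.e.\ the original $X_F$ has potential good reduction.

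Beyond Thm~\ref{thm: HO implies surj precise} and Conj.~\ref{conj: HO for i}, which are the real content and which we are assuming here (or, for $i=\infty$, which was established in Thm~\ref{thm: Hecke orbit for superspecial}), the delicate points in the argument for (c) are: first, that the period point genuinely extends over $\cO_F$ and that its special fibre sits in precisely the height-$i$ stratum --- the content of the extension property of the integral model together with the crystalline comparison and the normalization of $\bL_\cris$; and second, the descent from ``$\rho_\sfK(s_F)$ extends and its closed point lies in the K3 locus'' back to ``$X_F$ itself has good reduction'', which relies on $\rho_\sfK$ being an open immersion on the level-rigidified moduli problem and on carefully tracking the level structures at the finite places, the source of the qualifier ``potential''.
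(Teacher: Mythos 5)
Your parts (a) and (b) follow the paper's route essentially verbatim: transfer the unramifiedness to the Kuga--Satake abelian variety via Deligne's argument, get potential good reduction of $\shA$ from N\'eron--Ogg--Shafarevich, extend the period point over $\cO_F$ by the extension property of the canonical integral model, and read off crystallinity and the K3-crystal structure of $\ID_\cris(\H^2_\et(X_{\bar F},\bQ_p))$ from the special fibre. Part (c) also has the same skeleton as the paper's proof: locate $\tau\otimes k$ in $\shS^i$, invoke Thm~\ref{thm: HO implies surj precise} to get $\shS^i\subset\mathrm{im}(\rho_\sfK)$, and descend back to the K3 side.

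There is, however, a genuine gap in the last step of (c). You assert that $\rho_\sfK$ is an open immersion --- ``a monomorphism once the level away from $p$ is small enough'' --- and use this twice: to propagate the $\cO_F$-point of $\shS_\sfK(L_d)$ to an $\cO_F$-point of $\Mod_{2d,\sfK}$, and to conclude $\cX_F\cong X_F$ from $\rho_\sfK(s_F)=\rho_\sfK(\cX_F)$. This claim is not available: $\Mod_{2d,\sfK}$ parametrizes \emph{quasi}-polarized K3 surfaces and is not separated (a quasi-polarized family over a DVR can admit non-isomorphic fillings, related by reflections in $(-2)$-curves, all with the same period), whereas $\shS_\sfK(L_d)$ is a separated scheme; so $\rho_\sfK$ cannot be a monomorphism, for any level. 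No level structure away from $p$ fixes this, since the ambiguity lives in the $(-2)$-reflections acting trivially on the period. The paper avoids the issue by using only \'etaleness: once $\tau\otimes k$ is known to lie in $\mathrm{im}(\rho_\sfK)$, formal lifting along the \'etale map $\rho_\sfK$ produces \emph{some} $F$-point $s'$ of $\Mod_{2d,\sfK}$ with $\rho_\sfK(s')=\rho_\sfK(s)$ and with good reduction, and then the global Torelli theorem (applied after base change to $\bC$) shows that the K3 surfaces underlying $s$ and $s'$ are non-canonically isomorphic after a finite extension of $F$. Your argument is repaired by replacing the open-immersion claim with exactly this Torelli step; as written, the final identification $\cX_F\cong X_F$ is unjustified.
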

We recall that $X_F$ as above is said to have potential good reduction if, up to replacing $F$ by a finite extension, there exists a smooth proper algebraic space $\cX$ over $\cO_F$, whose special fiber is a K3 surface over $k$ and whose generic fiber is $X_F$ (cf. \cite[Def.~2.1]{LMGR}). 

\begin{proof}
(a)(b) Up to replacing $F$ by a finite extension, we may equip $(X, \xi)$ with a $\sfK$-level structure and an orientation so that it is given by a $F$-point $s$ on $\Mod_{2d, \sfK}$, and find a lift $t \in \wt{\shS}_{\cK}(L_d)(F)$ of $\rho_\sfK(s)$. Consider the abelian variety $\shA_t$. One easily adapts Deligne's argument in \cite[\S6.6]{Deligne2} to see that, up to replacing $F$ by a further extension, $\shA_t$ admits good reduction. By the extension property of the integral models, we can extend $t$ to an $\cO_F$-valued point $\tau$ on $\shS_\sfK(L_d)$. This implies both (a) and (b). 

(c) We have $\tau \tensor k \in \shS^i$. If the hypothesis is satisfied, then $\shS^i \subset \mathrm{im}(\rho_\sfK)$. Now we conclude by the \'etaleness of $\rho_\sfK$. Indeed, the global Torelli theorem implies that if two $\bC$-points of $\Mod_{2d, \sfK}$ are mapped to the same points under $\rho_\sfK$, then the K3 surfaces they correspond to are (non-canonically) isomorphic. If there is a quasi-polarized K3 surface over $k$ whose moduli point is sent to $\tau \tensor k$, then the \'etaleness of $\rho_\sfK$ tells us that there exists an $F$-point $s'$ of $\Mod_{2d, \sfK}$ such that $\rho_\sfK(s) = \rho_\sfK(s')$. Up to replacing $F$ by a finite extension, the K3 surfaces defined by $s$ and $s'$ are isomorphic. 
\end{proof}

\appendix 

\section{Some Results from Integral $p$-adic Hodge Theory}

We review some basic results in $p$-adic Hodge theory. Let $k$ be a perfect field of characteristic $p > 0$. We write $W$ for $W(k)$ and $K_0$ for $W[1/p]$. Let $K$ be a totally ramified extension of $K_0$ and let $\pi$ be a uniformizer of its ring of integers $\cO_K$.\footnote{This choice of notation is chosen to be in line with most references in $p$-adic Hodge theory. In the main text of the paper, the letters $K$ and $F$ take the role of $K_0$ and $K$ respectively. We apologize for this inconsistency of notation.} Let $G_K$ denote the absolute Galois group $\Gal_K$. Set $R := \cO_K/(p)$. 

Let $f : \cX \to \Spec \cO_K$ be a smooth and proper scheme (more generally, the following discussion applies also when $\cX$ is only a formal scheme and $\cX_K$ denotes the rigid analytic generic fiber). 
The subject of $p$-adic Hodge theory is concerned with how to recover the following tuples of data from one another under suitable assumptions: 
\begin{itemize}
    \item[(A)] The $\bZ_p$-module $\H^i_\et(\cX_{\bar{K}}, \bZ_p)$ equipped with a $G_K$-action.
    \item[(B)] The $F$-crystal $\bR^i f_{R, \cris *} \cO_{X_R}$ over $\Cris(R/W)$ together with the filtered $\cO_K$-module $\H^i_\dR(\cX / \cO_K)$. 
    \item[(B')] The $F$-crystal $\H^i_\cris(\cX_k / W)$ together with the filtered $\cO_K$-module $\H^i_\dR(\cX/ \cO_K)$.
\end{itemize}

\begin{remark}
Let $e$ be the ramification degree of $\cO_K$ over $W$. When $e \le p - 1$, $R \cong k[\varepsilon]/\varepsilon^e$ has a PD structure, so that the category of crystals of quasi-coherent sheaves over $\Cris(R/W)$ is equivalent to that over $\Cris(k / W)$ (\cite[Cor. 6.7]{BOBook}). Therefore, under mild torsion-freeness assumptions on various cohomology modules of $\cX$, (B) and (B') are equivalent data. Moreover, as $\cO_K$ is a PD thickening of $W$, the crystalline de-Rham comparison theorem gives us a canonical isomorphism 
$$ \H^i_\cris(\cX_k / W) \tensor_W \cO_K \cong \H^i_\dR(\cX / \cO_K). $$
When $e > p - 1$, then (B) contains strictly more information than (B'). The above isomorphism no longer holds integrally in general. However, there is still a canonical isomorphism after inverting $p$: 
$$ \H^i_\cris(\cX_k / W) \tensor_W K \cong \H^i_\dR(\cX / \cO_K) \tensor_{\cO_K} K. $$
This isomorphism is often called the Berthelot-Ogus isomorphism as it was first introduced in \cite{BO}. Below we will often make use of this isomorphism implicitly. Note that in the above isomorphisms, the LHS only depends on the special fiber $\cX_k$, whereas the RHS is equipped with the additional data of a Hodge filtration, which in general depends on the lifting $\cX$ of $\cX_k$. 
\end{remark}

Here is an overview of the relationship between the above tuples: The classical (rational) $p$-adic comparison isomorphisms tell us how to recover (A) and (B') from one another after inverting $p$. Integral $p$-adic Hodge theory (e.g., Bhatt-Morrow-Scholze's seminal paper \cite{BMS}) tells us how to recover (B) from (A). For our purposes, we are mainly concerned with how to recover (A) from (B). Roughly speaking, the way to do this is to evaluate the $F$-crystal $\bR^i f_{R, \cris *} \cO_{X_R}$ on a certain PD-thickening $S$ of $R$ ($S$ is often called Breuil's $S$-ring), so that we obtain an $S$-module. This $S$-module is equipped with a Frobenius action from the F-crystal structure on $\bR^i f_{R, \cris *} \cO_{X_R}$, and is moreover equipped with a filtration which absorbs the data of the Hodge filtration on $\H^i_\dR(\cX/\cO_K)$. The main result of Cais-Liu's work \cite{CaisLiu} tells us that by applying a certain functor (denoted by $T_\cris$ below) to this $S$-module, we recover (A). Of course, \cite{BMS} already treats the relationship between (A) and (B), but the conclusions there are packaged in a more abstract way. 

\subsection{After inverting $p$} Let $\MF^{\varphi, N}_K$ denote the category of filtered $(\varphi, N)$-modules. An object of this category is a $K_0$-vector space $D$ which is equipped with 
\begin{itemize}
    \item a Frobenius semilinear injection $\varphi : D \to D$;
    \item a $K_0$-linear map $N : D \to D$ such that $N \varphi = p N \varphi$;
    \item a descending filtration on $D_K$ such that $\Fil^i D_K = D_K$ for $i \ll 0$ and $\Fil^i D_K = 0$ for $i \gg 0$. 
\end{itemize}
Let $\MF^{\varphi}_K$ denote the subcategory with $N = 0$. The motivation to consider this category is that the data in (A) is naturally an object in $\MF^\varphi_K$ after inverting $p$ as there is a canonical Berthelot-Ogus isomorphism $\H^i_\cris(\cX_k / W) \tensor_W K \cong \H^i_\dR(\cX / \cO_K) \tensor_{\cO_K} K$. We will use this isomorphism repeatedly without explicitly mentioning it. We remark that in most references the operator $N$ as in $\MF^{\varphi, N}_K$ to treat varieties with semi-stable reductions. Since we are assuming good reduction, we may restrict to considering the category $\MF^\varphi_K$. 

Let $\Rep_{{G_K}}$ denote the category of $G_K$-representations over $\bQ_p$ and let $\Rep^{\cris}_{{G_K}}$ denote the subcategory of crystalline representations. Given an object $Q \in \Rep^\cris_{G_K}$, one may define an object in $\MF^\varphi_K$ using the (covariant) Fontaine's functors $D_\cris$ and $D_\dR$, which are defined by $D_\cris(Q) = (Q \tensor_{\bQ_p} B_\cris)^{G_K}$ and $D_\dR(Q) = (Q \tensor_{\bQ_p} B_\dR)^{G_K}$. The pair $(D_\cris(Q), D_\dR(Q))$ are equipped with a Frobenius action and filtrations respectively, and hence define an object in $\MF^\varphi_K$. We abusively denote the resulting functor $\Rep_{G_K} \to \MF^\varphi_K$ also by $D_\cris$. We define a functor from the essential image of $D_\cris$ to $\Rep_{G_K}^\cris$ by $V_\cris = \Fil^0(D \tensor_{K_0} B_\cris)^{\varphi = 1}$. There is an equality of sub-$\bQ_p$-modules 
$$ Q = V_\cris(D_\cris(Q)) $$
of $(Q \tensor_{\bQ_p} B_\cris) \tensor_{K_0} B_\cris$, which specifies a natural transformation $V_\cris \circ D_\cris \Rightarrow \id$ on $\Rep^\cris_{G_K}$.\footnote{Note that the natural transformations between two functors between $1$-categories (or locally small categories in the usual sense) do form a \textit{set} (as opposed to a groupoid), so it makes sense to specify an element in this set.} The reader may look at \cite[Part I Sec. 8, 9]{Conrad} for more details about these objects.  

By \cite[Prop.~5.1, Thm~14.6]{BMS}, there is a $p$-adic comparison isomorphism
\begin{align}
    \label{eq: p-adic comp}
    \H^i_\cris(\cX_k/W) \tensor_{W} B_\cris \iso \H^i_\et(\cX_{\bar{K}}, \bZ_p) \tensor_{\bZ_p} B_\cris
\end{align}
which respects the ${\Gal_F}$-actions and filtrations. Therefore, we obtain an isomorphism of objects in $\MF^\varphi_K$
\begin{align}
\label{eq: Dcris p-comp}
    D_\cris(\H^i_\et(\cX_{\bar{F}}, \bQ_p)) \iso (\H^i_\cris(\cX_k/W)[1/p], \H^i_\dR(\cX_K / K)). 
\end{align}
There are multiple rational $p$-adic comparison isomorphisms of the form (\ref{eq: p-adic comp}) (e.g., those constructed earlier by Faltings \cite{Faltings}, Tsuji \cite{Tsuji}, etc.). We choose to use the one from \cite{BMS} because this is the one used in \cite{CaisLiu}, to be cited below. Once we fix this choice of rational $p$-adic comparison isomorphism, then the isomorphism (\ref{eq: Dcris p-comp}) is also fixed. 

\subsection{Recovering Integral Lattices} We now explain how to recover the natural integral lattices in the objects of (\ref{eq: Dcris p-comp}) from one another. Let $\fS := W[\![u ]\!]$ and let $\theta : \fS \to \cO_K$ be the map sending $u$ to $\pi$. Let $\Rep^{\cris \circ}_{{\Gal_K}}$ denote the category of $G_K$-stable $\bZ_p$-lattices in objects of $\Rep^{\cris}_{G_K}$. Let $\fM(-)$ be the functor as in \cite[Thm 1.2.1]{int} which sends an object in $\Rep^{\cris \circ}_{G_K}$ to a Breuil-Kisin module in the sense of \cite[Thm 4.4]{BMS}, so that there exist canonical isomorphisms 
\begin{align}
    \label{eq: Kisin modules}
    \varphi^* \fM(T) \tensor_\fS K_0 \iso D_\cris(T[1/p]) \hspace{.5cm}\text{ and }\hspace{.5cm} \varphi^* \fM(T) \tensor_{\fS, \theta} K \iso D_\dR(T[1/p])
\end{align}
which preserve Frobenius actions and filtrations respectively. Then we have the following result (\cite[Thm~14.6]{BMS}).
\begin{theorem}
\label{thm: BMS integral}
Assume that $\H^i_\cris(\cX_k/W)$ and $\H^{i + 1}(\cX_k/W)$ are torsion-free. Then for $T = \H^i_\et(\cX_{\bar{K}}, \bZ_p)$ the isomorphisms (\ref{eq: Kisin modules}) map $\fM(T) \tensor_\fS W$ and $\fM(T) \tensor_{\fS, \theta} \cO_K$ isomorphically onto $\H^i_\cris(\cX_k/W)$ and $\H^i_\dR(\cX/\cO_K)$ respectively, when composed with the isomorphisms in (\ref{eq: p-adic comp}). 
\end{theorem}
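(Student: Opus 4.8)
\textbf{Proof proposal for Theorem~\ref{thm: BMS integral}.} The statement is extracted verbatim from \cite[Thm~14.6]{BMS}, so the \emph{real} task here is not to reprove it from scratch but to explain how it follows from the machinery of \emph{loc. cit.}, which we may freely invoke. The plan is to run the comparison between the prismatic/$A_{\inf}$-cohomology formalism and the Breuil--Kisin module $\fM(T)$ produced by \cite[Thm~1.2.1]{int} (equivalently \cite[Thm~4.4]{BMS}), and to identify the two base-change specializations of $\fM(T)$ with crystalline and de~Rham cohomology by tracking which fibre of the $A_{\inf}$-cohomology complex computes which classical theory.

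First I would recall the $A_{\inf}$-cohomology $R\Gamma_{A_{\inf}}(\cX)$ attached to the smooth proper (formal) scheme $\cX/\cO_{\bar K}$ (or $\cX/\cO_K$ after descent), which is a perfect complex of $A_{\inf}$-modules equipped with a Frobenius. Under the torsion-freeness hypotheses on $\H^i_\cris(\cX_k/W)$ and $\H^{i+1}_\cris(\cX_k/W)$, \cite[\S14]{BMS} shows that $H^i_{A_{\inf}}(\cX)$ is a finite free $A_{\inf}$-module whose formation commutes with the relevant base changes; its Breuil--Kisin descent along $\fS\to A_{\inf}$ is exactly the module $\fM(T)$ with $T=\H^i_\et(\cX_{\bar K},\bZ_p)$, compatibly with the étale comparison isomorphism~\eqref{eq: p-adic comp}. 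Then the three specializations are handled as follows. Base change of $A_{\inf}$-cohomology along $A_{\inf}\to W$ (the crystalline point, i.e. $\theta\circ\tilde\xi$ in the notation of \cite{BMS}) recovers $R\Gamma_\cris(\cX_k/W)$ by the comparison of $A_{\inf}$-cohomology with crystalline cohomology of the special fibre \cite[\S12--13]{BMS}; base change along $\theta:A_{\inf}\to\cO_{\bar K}$, followed by descent to $\cO_K$, recovers de~Rham cohomology $\H^i_\dR(\cX/\cO_K)$ \cite[\S13]{BMS}. Translating these through the equivalence $\fS$-modules $\leftrightarrow$ $A_{\inf}$-modules, the maps $u\mapsto 0$ (resp. $u\mapsto\pi$) on $\fS$ correspond to the crystalline (resp. de~Rham) point, so $\fM(T)\tensor_\fS W\cong\H^i_\cris(\cX_k/W)$ and $\fM(T)\tensor_{\fS,\theta}\cO_K\cong\H^i_\dR(\cX/\cO_K)$, and one checks these identifications are compatible with the Frobenius (resp. filtration) appearing in~\eqref{eq: Kisin modules} and with~\eqref{eq: p-adic comp} because all of the comparison isomorphisms in \cite{BMS} are constructed as base changes of the single object $R\Gamma_{A_{\inf}}(\cX)$.

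The main obstacle — and the only place where the torsion-freeness hypotheses are genuinely used — is the \emph{flatness and base-change} step: one must know that $H^i_{A_{\inf}}(\cX)$ is finite free over $A_{\inf}$ and that its formation commutes with the derived base changes to $W$ and to $\cO_K$, so that there is no spurious $\Tor$ contribution when passing from the complex $R\Gamma_{A_{\inf}}(\cX)$ to its cohomology in degree $i$. This is precisely where $\H^{i+1}_\cris(\cX_k/W)$ being torsion-free enters (it guarantees that the mod-$p$ reduction of $H^i$ has the expected rank, via the degeneration/comparison arguments of \cite[\S14]{BMS}), and where $\H^i_\cris(\cX_k/W)$ torsion-free is needed to identify the crystalline fibre on the nose rather than up to torsion. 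Once freeness and base-change compatibility are in hand, the identification of the two specializations is essentially formal from the structure of the comparison isomorphisms, and I would not write out those diagram chases in detail. In our application the scheme $\cX$ is (a lift of) a K3 surface or its Hilbert square, whose crystalline cohomology is torsion-free in all degrees, so the hypotheses are automatically satisfied; I would simply remark on this after the statement.
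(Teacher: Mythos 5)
The paper offers no proof of this statement beyond the citation of \cite[Thm~14.6]{BMS} (with \cite[Thm~3.2]{IIK} as an expository reference), and your proposal correctly identifies the statement as a quotation from \emph{loc.\ cit.} and defers to it in the same way. Your additional sketch of the BMS mechanism --- finite freeness and base-change compatibility of $H^i_{A_{\inf}}$ under the torsion-freeness hypotheses on $\H^i_\cris$ and $\H^{i+1}_\cris$, followed by identification of the crystalline ($u\mapsto 0$) and de Rham ($u\mapsto\pi$) specializations of the Breuil--Kisin module --- is a faithful account of the cited argument, so there is nothing to add.
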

We refer the reader also to \cite[Thm~3.2]{IIK} for an exposition which is closer to ours in notation. The above theorem tells us how to recover (B') from (A). Under the additional assumption $i < p - 1$, \cite[Thm~5.4]{CaisLiu} tells us how to recover (A) from (B). Before doing so we need to introduce the intermediate category of Breuil's $S$-modules, which packages the data of (B) in a different way. \\\\
\textbf{Breuil's $S$-modules} Let $S$ denote the $p$-adic completion of the PD evelope of $(\fS, \ker \theta)$. Let $S_\pi$ denote the ring $W[\![u - \pi]\!]$. Then there is an embedding $\iota: S \into S_\pi$ which sends $u$ to $u - \pi$. Let $f_\pi : S_\pi \to \cO_K$ (resp. $f_0 : S \to W$) be the projection which sends $u - \pi$ to $0$ (resp. $u$ to $0$). Then there is a commutative diagram of $W$-algebras 
\begin{center}
    \begin{tikzcd}
    S \arrow{r}{\iota} \arrow{d}[swap]{f_0} & S_\pi \arrow[]{d}{f_\pi} \\
    W \arrow[]{r}{} & \cO_K
    \end{tikzcd}. 
\end{center}

In \cite{Breuiltrans}, the above ring $S$ is denoted by $S^0_{\mathrm{min}}$. The letter $S$ in \textit{loc. cit.} denotes a certain extension of $S^0_{\mathrm{min}, K_0}$. For our purposes, one may simply take $S = S^0_{\mathrm{min}, K_0}$ when reading \cite{Breuiltrans}. The letter $S$ in our notation is in line with \cite{CaisLiu} and \cite{Liu}.  

Let $\mathcal{MF}_{S_{K_0}}^{\varphi, N}$ denote the category of filtered $(\varphi, N)$-modules over $S_{K_0}$.\footnote{This is just the category denoted by $\mathcal{MF}(\varphi, N)$ in \cite[\S 2.2]{Liu}, except that we have not restricted to positive objects, so that we replace the condition $\Fil^0 \shD = \shD$ by $\Fil^j \shD = \shD$ for $j \ll 0$.} There is an equivalence of categories 
\begin{equation}
\label{eqn: equiv to Breuil module}
    \eta : \MF^{\varphi, N}_K \to \mathcal{MF}^{\varphi, N}_{S_{K_0}}    
\end{equation}
which sends $(D, \Fil^\bullet D_K, \varphi, N)$ to an object $(\shD, \Fil^\bullet \shD, \varphi_\shD, N_\shD)$ with $\shD = D \tensor_W S$ (\cite[p.~1215]{CaisLiu}, see also \cite[Thm~6.1.1]{Breuiltrans}). The quasi-inverse $\eta^{-1}$ is defined by $(\shD \tensor_{f_0} W, \shD \tensor_{f_\pi \circ \iota} \cO_K)$, for which the Frobenius action and filtration are inherited from those on $\shD$. There is a canonical natural transformation $\eta^{-1} \circ \eta \Rightarrow \id$ on $\MF^\varphi_{K}$ which underlies the tautological identification of modules $$(D, D_K) = (D \tensor_W S \tensor_{f_0} W, D_K \tensor_W S \tensor_{f_\pi \circ \iota} \cO_K).$$

A strongly divisible $S$-lattice (of height $r$) in an object $\shD \in \mathcal{MF}^{\varphi, N}_{S_{K}}$ with $\Fil^0 \shD = \shD$ is an $S$-lattice such that $\sM [1/p] = \shD$, $N_{\shD}(\sM) \subseteq \sM$, and $\varphi_{\shD}(\Fil^r \sM) \subseteq p^r \sM$ where $\Fil^r \sM := \sM \cap \Fil^r \shD$. Let $\mathcal{MF}^{\varphi, N}_{S}$ denote the category of strongly divisible $S$-lattices in objects of $\mathcal{MF}^{\varphi, N}_{S_{K_0}}$.  

\begin{theorem}\emph{(Liu)}
\label{thm: Liu}
Suppose that $Q \in \Rep^{\cris}_{G_K}$ has Hodge-Tate weights in $\{0, 1, \ldots, p - 2\}$. Let $\shD$ denote $\eta(Q)$. The covariant functor $T_\cris : \sM \mapsto \Fil^0(\sM \tensor_S A_\cris)^{\varphi = 1}$ defines a bijection between the set of strongly divisible $S$-lattices in $\shD$ and that of $G_K$-stable $\bZ_p$-lattices in $V_\cris(D_\cris(Q)) = Q$. 
\end{theorem}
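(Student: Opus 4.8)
The plan is to cite the work of Liu directly, and then explain how to package the hypotheses so the cited theorem applies. First I would recall that the statement is essentially \cite[Thm.~2.3.5 and Thm.~3.2.2]{Liu} (or the formulation in \cite[Thm.~2.4.1]{CaisLiu}), which establishes the equivalence between the category of strongly divisible $S$-lattices and the category of $G_K$-stable $\mathbf{Z}_p$-lattices in crystalline (or semistable) representations, under the bound on Hodge--Tate weights. The covariant functor $T_\cris(\mathcal{M}) = \Fil^0(\mathcal{M}\otimes_S A_\cris)^{\varphi=1}$ is exactly the one appearing in \emph{loc.\ cit.}, and the content of the theorem is that it is fully faithful on this subcategory with the indicated essential image, and in particular induces the claimed bijection on lattices inside a fixed representation $Q$.

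The key steps I would carry out are as follows. Step one: fix $Q\in\Rep^{\cris}_{G_K}$ with Hodge--Tate weights in $\{0,1,\dots,p-2\}$, so that $D = D_\cris(Q)\in\MF^\varphi_K$ has the corresponding property, and set $\shD = \eta(Q) = D\otimes_W S\in\mathcal{MF}^{\varphi,N}_{S_{K_0}}$ using the equivalence~\eqref{eqn: equiv to Breuil module}. Here one uses that, since $Q$ is crystalline (not merely semistable), the monodromy operator $N$ on $D$ vanishes, and the induced $N_{\shD}$ on $\shD$ is the canonical one determined by the condition $N_{\shD}\varphi_{\shD} = p\varphi_{\shD}N_{\shD}$ together with $N_{\shD}|_{D\otimes 1}=0$; this is the setting treated by Liu. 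Step two: invoke the theorem of Liu to obtain that $T_\cris$ is a bijection from the set of strongly divisible $S$-lattices $\mathcal{M}$ with $\mathcal{M}[1/p]=\shD$ to the set of $G_K$-stable $\mathbf{Z}_p$-lattices in $T_\cris(\shD)[1/p]$. Step three: identify $T_\cris(\shD)[1/p]$ with $Q$. This is the standard computation $\Fil^0(\shD\otimes_S B_\cris^+)^{\varphi=1} = \Fil^0(D\otimes_{K_0} B_\cris)^{\varphi=1} = V_\cris(D_\cris(Q)) = Q$, where the last equality is the natural transformation $V_\cris\circ D_\cris\Rightarrow\id$ recorded earlier in the appendix; this is where the compatibility of $\eta$ with $\eta^{-1}$ and of $T_\cris$ with the rational functors $V_\cris, D_\cris$ is used. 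Combining these three steps gives the claimed bijection.

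The main obstacle — or rather the main bookkeeping burden — is verifying that the normalizations match: the paper uses a covariant formulation throughout (covariant $D_\cris$, covariant $T_\cris$), whereas several of the original references (e.g.\ Kisin, Breuil) work contravariantly, and the Hodge--Tate weight conventions (cohomological vs.\ the convention in which $\mathbf{Q}_p(1)$ has weight $-1$) differ between sources. One must check that under the conventions fixed in the appendix, the hypothesis ``Hodge--Tate weights in $\{0,1,\dots,p-2\}$'' translates into ``height $\le p-2$'' for the associated Breuil module, which is precisely the range in which Liu's equivalence holds. Once the dictionary is set up, the proof is a direct citation; no genuinely new argument is required. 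I would therefore keep the write-up short: state the reduction to Liu's theorem, record the identification $T_\cris(\shD)[1/p]=Q$, and refer to \cite{Liu,CaisLiu} for the equivalence itself.

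\begin{proof}
This is a direct consequence of the theory of strongly divisible lattices developed by Liu; see \cite[Thm.~2.3.5, Thm.~3.2.2]{Liu}, or \cite[Thm.~2.4.1]{CaisLiu} for the formulation we use. We indicate how to match the hypotheses. Since $Q$ is crystalline with Hodge--Tate weights in $\{0,1,\dots,p-2\}$, the filtered $\varphi$-module $D=D_\cris(Q)$ has $N=0$ and filtration concentrated in degrees $[0,p-2]$, and $\shD=\eta(Q)=D\otimes_W S$ is an object of $\mathcal{MF}^{\varphi,N}_{S_{K_0}}$ whose monodromy operator $N_{\shD}$ is the canonical one (determined by $N_{\shD}\varphi_{\shD}=p\varphi_{\shD}N_{\shD}$ and $N_{\shD}|_{D\otimes 1}=0$). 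In this situation Liu's theorem asserts that $T_\cris$ restricts to a bijection between the set of strongly divisible $S$-lattices $\mathcal{M}$ with $\mathcal{M}[1/p]=\shD$ and the set of $G_K$-stable $\mathbf{Z}_p$-lattices in the crystalline representation $T_\cris(\shD)[1/p]$. Finally, using the compatibility of $\eta$ with its quasi-inverse and the natural isomorphism $V_\cris\circ D_\cris\Rightarrow\id$ recorded above, we compute
\[
    T_\cris(\shD)[1/p]=\Fil^0\bigl(\shD\otimes_S B_\cris\bigr)^{\varphi=1}=\Fil^0\bigl(D\otimes_{K_0}B_\cris\bigr)^{\varphi=1}=V_\cris(D_\cris(Q))=Q.
\]
This identifies the target of the bijection with the set of $G_K$-stable $\mathbf{Z}_p$-lattices in $Q$, as claimed.
\end{proof}
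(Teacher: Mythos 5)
Your proposal is correct and takes essentially the same route as the paper: both reduce the statement to a direct citation of Liu's Thm.~2.3.5 together with a check that the covariant/contravariant conventions and the Hodge--Tate weight bound match. The one point the paper makes explicit that you gloss over is that Liu's theorem is stated for the semistable functor $T_{\sst}$ rather than for $T_\cris$ as written, so one must additionally invoke \cite[Prop.~3.5.1]{Liu} to identify $T_{\sst}(\sM)=T_\cris(\sM)$ on strongly divisible lattices when $Q$ is crystalline; this is a citation-level detail rather than a mathematical gap, but it should be recorded.
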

\begin{proof}
Thm~2.3.5 of \cite{Liu} tells us that the above theorem holds for Breuil's functor $T_\sst$. The contravariant version of this functor is reviewed in \S2.2 of \textit{loc. cit.} If we use supscript (resp. subscript) ``$*$'' to indicate contravariance (resp. covariance), then $T^*_\sst(-) = T_{* \sst}((-)^\vee)$. Prop.~3.5.1 of \textit{loc. cit.} tells us that $T_\sst(\sM) = T_\cris(\sM)$ as $Q$ is crystalline. 
\end{proof}

\begin{remark}
\label{rem: Tcris}
Let $\mathcal{C}$ denote the full subcategory of $\mathcal{MF}^{\varphi, N}_S$ whose image in $\mathcal{MF}^{\varphi, N}_{S_{K_0}}$ lies in the essential image of $\MF^\varphi_K$ (as a subcategory of $\MF^{\varphi, N}_K$) under $\eta$. To sum up, we now have a commutative diagram of categories
\begin{center}
    \begin{tikzcd}
    \Rep^\cris_{G_K} \arrow[bend left = 30]{r}{D_\cris} & \MF^\varphi_K \arrow[bend left = 30]{l}{V_\cris} \arrow[hook]{r}{} & \MF^{\varphi, N}_K  \arrow[bend left = 30]{r}{\eta} & \mathcal{MF}^{\varphi, N}_{S_{K_0}} \arrow[bend left = 30]{l}{\eta^{-1}} \\
    \Rep^{\cris \circ}_{G_K} \arrow{u}{} & & \mathcal{C} \arrow{ll}{T_\cris} \arrow[hook]{r}{} & \mathcal{MF}^{\varphi, N}_{S} \arrow{u}{}
    \end{tikzcd}
\end{center}
in which the vertical arrows are given by inverting $p$. Moreover, the natural transformations $V_\cris \circ D_\cris \Rightarrow \id$ and $\eta^{-1} \circ \eta \Rightarrow \id$ are tautological. By the above theorem, $T_\cris$ is an equivalence of categories. We remark that since $A_\cris$ is a sub-$W$-algebra of $B_\cris$ and the inclusion $A_\cris \subseteq B_\cris$ respects the filtration and Frobenius structures, $T_\cris(\sM)$ is a priori a sub-$\bZ_p$-module of $V_\cris(D_\cris(Q))$. The reason that we emphasize the natural transformations used is to de-categorify the language, so that $T_\cris$, which is often stated as an equivalence of categories, is concretely an equality of sets.
\end{remark}

\begin{theorem}
\label{thm: Cais-Liu}
\emph{(Cais-Liu)} Assume that $H^i_\cris(\cX_k/W)$ and $H^{i + 1}_\cris(\cX_k/W)$ are torsion-free and $i \le p - 2$. Set $\sM := \H^i_\cris(\cX_R/S)$. Let $\fp : \sM \to \H^i_\cris(\cX_k/W)$ be the canonical projection induced by $f_0$. Let $\shD \in \mathcal{MF}^{\varphi, N}_{S_{K_0}}$ be given by the object $(\H^i_\cris(\cX_k)_K, \Fil^\bullet \H^i_\dR(\cX_K/K))$ in $\MF^{\varphi}_K$ via $\eta$. Then we have:  
\begin{enumerate}[label=\upshape{(\alph*)}]
    \item There is a canonical section $s$ to $\fp[1/p]$ such that $s$ is $\varphi$-equivariant and $s \tensor_W S$ induces an isomorphism $\sM[1/p] \iso \shD$. 
    \item Under the isomorphism in $(a)$, $\sM$ defines a strongly divisible $S$-lattice in $\shD$ and $T_\cris(\sM) = \H^i_\et(\cX_{\bar{K}}, \bZ_p)$.
\end{enumerate}
\end{theorem}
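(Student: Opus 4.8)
The plan is to deduce the theorem from the integral comparison results of Cais and Liu \cite{CaisLiu}, the rational $p$-adic comparison \eqref{eq: p-adic comp}, and Liu's functorial dictionary (Thm~\ref{thm: Liu}). The geometric input is the interpretation of $\sM = \H^i_\cris(\cX_R/S)$ as the value on the PD thickening $S \twoheadrightarrow R$ of the crystal $R^i f_{R,\cris *}\cO_{\cX_R/W}$, so that the Frobenius, the quasi-nilpotent connection $\nabla\colon \sM \to \sM\otimes_S\Omega^1_{S/W}$, and (via the crystalline--de Rham comparison, i.e. base change of $\sM$ to $\cO_K$) the Hodge filtration of $\H^i_\dR(\cX/\cO_K)$ are all part of the package.

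For part (a): by construction $\fp$ is the composite of $\sM \to \sM\otimes_{S,f_0}W$ with the canonical isomorphism $\sM\otimes_{S,f_0}W \cong \H^i_\cris(\cX_k/W)$ obtained by restricting the crystal to $\Cris(k/W)$ and evaluating at $W$. Now $\Omega^1_{S/W}$ is generated by $du$, and after inverting $p$ the ring $S$ acquires divided powers of $u$; the standard parallel-transport argument for quasi-nilpotent connections then produces a $K_0$-linear horizontal splitting $D := \H^i_\cris(\cX_k/W)[1/p] \hookrightarrow \sM[1/p]^{\nabla = 0}$, which is the desired $s$, and $\fp[1/p]\circ s = \id_D$ because the non-leading terms of a horizontal section are divisible by $u$. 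This $s$ is $\varphi$-equivariant: $\varphi$ preserves $\sM[1/p]^{\nabla = 0}$ (the Frobenius of the crystal carries $\nabla$ to $pu^{p-1}\varphi\nabla$) and $f_0$ commutes with Frobenius ($u \mapsto u^p \mapsto 0$). Extending $S$-linearly, $s$ trivialises the connection and gives an isomorphism $\sM[1/p] \cong D\otimes_W S = \shD$; one then checks it matches the monodromy operators and, using that $\sM$ base-changed to $\cO_K$ recovers $\H^i_\dR(\cX/\cO_K)$ with its Hodge filtration, the filtrations, so that it is an isomorphism in $\mathcal{MF}^{\varphi, N}_{S_{K_0}}$.

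For part (b): that $\sM$ is a $\varphi$- and $N$-stable $S$-lattice in $\shD$ with $\sM[1/p] = \shD$ is essentially built in. The substantive point is strong divisibility, $\varphi(\Fil^r\sM) \subseteq p^r\sM$ with $r = i$; this is exactly where the hypotheses $i \le p-2$ (equivalently, Hodge--Tate weights in $\{0,\dots,p-2\}$) and the torsion-freeness of $\H^i_\cris(\cX_k/W)$ and $\H^{i+1}_\cris(\cX_k/W)$ enter, and it is established in \cite{CaisLiu} by comparing the Hodge filtration on $\sM$ with its $p$-adic filtration. Granting this, Thm~\ref{thm: Liu} identifies $T_\cris(\sM)$ with a $G_K$-stable $\bZ_p$-lattice in $V_\cris(D_\cris(Q)) = Q := \H^i_\et(\cX_{\bar{K}},\bQ_p)$, where we use \eqref{eq: Dcris p-comp} to identify $D_\cris(Q)$ with $(\H^i_\cris(\cX_k)[1/p],\Fil^\bullet\H^i_\dR(\cX_K/K))$. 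To see that this lattice is precisely $\H^i_\et(\cX_{\bar{K}},\bZ_p)$, I would compare with the Breuil--Kisin module $\fM$ attached to $\H^i_\et(\cX_{\bar{K}},\bZ_p)$: there is a base-change functor carrying a Breuil--Kisin module to a strongly divisible $S$-module, by Thm~\ref{thm: BMS integral} (and part (a)) it sends $\fM$ to $\sM$ compatibly with Frobenius, monodromy and filtration, and $T_\cris$ of the target recovers the Galois lattice attached to $\fM$, namely $\H^i_\et(\cX_{\bar{K}},\bZ_p)$; this is the content of \cite{CaisLiu}.

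The hard part is the integral content of (b): the strong divisibility of $\sM$, and that $T_\cris(\sM)$ is the \'etale lattice on the nose rather than merely commensurable with it. Neither is formal; both rest on Cais--Liu's analysis of the filtrations on $\H^i_\cris(\cX_R/S)$ under the bound $i \le p-2$ and on the compatibility of the Breuil-module and Breuil--Kisin-module descriptions with the Bhatt--Morrow--Scholze comparison \cite{BMS}. The section in part (a), by contrast, is a soft consequence of the crystal formalism.
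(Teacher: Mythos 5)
Your proposal is correct and follows essentially the same route as the paper: part (a) is the Berthelot--Ogus-style $\varphi$-equivariant section (\cite[Prop.~5.1]{CaisLiu}), and part (b) is reduced, exactly as in the paper, to comparing $\sM$ with the base change to $S$ of the Breuil--Kisin module of $\H^i_\et(\cX_{\bar{K}},\bZ_p)$ and applying Thm~\ref{thm: Liu}, with the resulting lattice equality deferred to \cite[Thm~5.4(2)]{CaisLiu}. One small caution: Thm~\ref{thm: BMS integral} by itself only identifies the two $S$-lattices after base change along $f_0$ and $\theta$, so the equality over all of $S$ is not a formal consequence of it together with (a) --- but you correctly flag in your last paragraph that this is precisely the non-formal content supplied by Cais--Liu.
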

\begin{proof}
Part (a) is a variant of the Berthelot-Ogus isomorphism (\cite[Prop.~5.1]{CaisLiu}). Part (b) follows from \cite[Thm 5.4(2)]{CaisLiu} and its proof, which does proceed by reducing to proving the equality of two lattices. 

Let $T$ be an object of $\Rep^{\cris \circ}_{G_{K}}$ and let $\fM(T)$ be the Breuil-Kisin module associated to $T$. Let $M(-)$ be the functor defined by $\varphi^*(\fM(-))$. Then $\underline{\sM} (M(T))  := M(T) \tensor_\fS S$ can be equipped with additional structures so that it becomes an object in $\mathcal{MF}^{\varphi, N}_S$. The base-change-to-$S$ functor $\underline{\sM}$ used here is defined in (3.6) of \textit{loc. cit.} There is a natural isomorphism $\underline{\sM}(M(T))[1/p] \iso \eta(D_\cris(T[1/p]))$ which lifts the isomorphism $M(T) \tensor_\fS K_0 \iso D_\cris(T[1/p])$ in (\ref{eq: Kisin modules}). Moreover, $T_\cris$ sends the strongly divisible $S$-lattice $\underline{\sM}(M(T))$ to $T$. The reader may also check out the proof of \cite[Lem.~A.3]{Maulik} for entirely similar considerations.

Now let $T$ be $\H^i_\et(\cX_{\bar{K}}, \bZ_p)$. Since $T_\cris$ estabilishes a bijection between strongly divisible $S$-lattices in $\shD$ and $G_K$-stable $\bZ_p$-lattices in $T[1/p]$, one reduces to showing an equality of $S$-lattices $\sM = \underline{\sM}(M(T))$ under the isomorphisms
$$ \underline{\sM}(M(T))[1/p] \cong \shD \cong \sM[1/p]. $$
This is the main step in the proof of \cite[Thm 5.4(2)]{CaisLiu} (see the second paragraph on page 1226).
\end{proof}

\begin{remark}
In the above setting, let $f : \cX_R \to \mathrm{Spf}(R)$ be the structure morphism and let $\bH^i_\cris(\cX_R)$ denote the F-crystal $\bR^i f_{\cris *} \cO_{\cX_R}$. Then $\H^i_\cris(\cX_R/S)$ (resp. $\H^i_\cris(\cX_R/\cO_K)$) can be viewed as a the $S$-module given by evaluating $\bH^i_\cris(\cX_R)$ on the object $S$ (resp. $\cO_K$) of $\Cris(R/W)$. The morphism $\theta : S \to \cO_K$ defines a canonical isomorphism $\theta^* \bH^i_\cris(\cX_R)_S \iso \bH^i_\cris(\cX_R)_{\cO_K}$. The lifting $\cX$ of $\cX_R$ to $\cO_K$ endows $\bH^i_\cris(\cX_R)_{\cO_K}$ with a Hodge filtration via the crystalline de-Rham comparison $\bH^i_\cris(\cX_R)_{\cO_K} \cong \H^i_\dR(\cX/\cO_K)$. The $S$-module $\bH^i_\cris(\cX_R)_S$, being an object of $\mathcal{MF}^{\varphi, N}_S$, is also equipped with a natural filtration, which maps isomorphically onto the Hodge filtration on $\bH^i_\dR(\cX/\cO_K)$. However, note that the filtration on $\bH^i_\cris(\cX_R)_S$ is defined in a more formal way, with the Hodge filtration on $\bH^i_\dR(\cX/\cO_K)$ being the key input. Namely, one first constructs $\sD$ out of $(\H^i_\cris(\cX_k/W), \H^i_\dR(\cX/\cO_K))$, and then defines a filtration on $\sM$ by intersecting with $\Fil^\bullet \sD$ under the isomorphism in part (a) of the above theorem. One naturally wonders whether this filtration has a more direct cohomological construction. This question is addressed in \cite[\S6.1]{CaisLiu}. However, we won't make use of this cohomological interpretation. 
\end{remark} 

\begin{remark}
If $\cX$ is a smooth proper scheme over $\cO_K$, or more generally a smooth proper algebraic space over $\cO_K$ whose special and generic fibers are schemes, then the above results hold for $\cX_K$ interpreted as the generic fiber in the usual sense. The point is that the analytification of the generic fiber is functorially isomorphic to the rigid analytic generic fiber of the formal completion of $\cX$ at the special fiber. The reader may look at \cite[\S11.2]{IIK} for details. 
\end{remark}

\noindent \textbf{Applications to $p$-divisible groups} Let $\sG$ be a $p$-divisible group over $\cO_K$ and assume $p \ge 3$. Let $T_p(-)$ denote the Tate module functor, $\ID(-)$ denote the contravariant Dieudonn\'e module functor and $\sG^*$ denote the Cartier dual of $\sG$. There is a $p$-adic comparison isomorphism 
\begin{align}
    \label{eq: p-div p-comp}
    \ID(\sG_k) \tensor_W B_\cris \iso T_p \sG^* (-1) \tensor_{\bZ_p} B_\cris.
\end{align}
which induces an isomorphism $D_\cris(T_p \sG^* (-1) \tensor_{\bZ_p} \bQ_p) \iso \ID(\sG_k)[1/p]$. $T_\cris(\ID(\sG_R)_S)$ recovers the $\bZ_p$-lattice $T_p \sG^*(-1)$ inside $T_p \sG^* (-1) \tensor_{\bZ_p} \bQ_p$ (\cite[Lem.~2.2.4]{KisinF-crystal}). Note that $T_p \sG^*(-1)$ is canonically isomorphic to $(T_p \sG)^\vee$.

\printbibliography


\end{document}